\setlist[enumerate]{itemsep=0pt}
\colorlet{inlinkcolor}{purple}
\colorlet{exlinkcolor}{purple}
\colorlet{citecolor}{teal!50!blue}
\renewcommand*{\backrefalt}[4]{%
    \ifcase #1 \footnotesize{(Not cited.)}%
    \or        \footnotesize{(Cited on page~#2.)}%
    \else      \footnotesize{(Cited on pages~#2.)}%
    \fi}
\newtheorem{theorem}{Theorem}[section]
\newtheorem{proposition}[theorem]{Proposition}
\newtheorem{lemma}[theorem]{Lemma}
\newenvironment{customproposition}[1]
{\innercustomproposition}
{\endinnercustomproposition}
\newenvironment{customlemma}[1]
{\innercustomlemma}
{\endinnercustomlemma}
\newenvironment{customtheorem}[1]
{\innercustomtheorem}
{\endinnercustomtheorem}
\theoremstyle{definition}
\newtheorem{assumption}{Assumption}
\theoremstyle{remark}
\newtheorem{remark}[theorem]{Remark}
\newtheorem{example}[theorem]{Example}
\numberwithin{equation}{section} 
\newcommand{\setheader}[2]{%
    \pagestyle{fancy}
    \fancyhf{}
    \fancyhead[LO]{#1}
    \fancyhead[RO]{}
    \fancyhead[LE]{}
    \fancyhead[RE]{#2}
    \renewcommand{\headrulewidth}{0.4pt}
    \fancyfoot[C]{\thepage}
    \renewcommand{\headrule}{\color{gray!75}\hrule width\textwidth height 1.0pt}
}
\renewcommand{\maketitle}{
    \begin{center}
        {\Large\bfseries\@title\par}
        \vskip 1.5em
        {\@author\par}
        \vskip 1em
        {\@date\par}
    \end{center}
    \vskip 1.5em
}
\renewenvironment{abstract}{
    \small
    \begin{adjustwidth}{0.3in}{0.3in} 
    \noindent{\bfseries Abstract.}
    \ignorespaces
}{
    \end{adjustwidth}
    \par\vspace{0.5em}
}
\newcommand{\keywords}[1]{
    \begin{adjustwidth}{0.3in}{0.3in} 
    \noindent{\small\bfseries Keywords:} {\small #1}
    \end{adjustwidth}
    \par\noindent\textcolor{gray}{\rule{\linewidth}{0.5pt}}
}
\titleformat{\subsection}[runin]
    {\normalfont\normalsize\bfseries}
    {\thesubsection}{1em}{}[.]
\titleformat{\subsubsection}[runin]
    {\normalfont\normalsize\itshape}
    {\thesubsubsection}{1em}{}[.]
\titleformat{\paragraph}[runin]
    {\normalfont\normalsize\bfseries}
    {\theparagraph}{1em}{}[.]
\def\d{\,\mathrm{d}}
\def\dt{\,\mathrm{d}t}
\def\ds{\,\mathrm{d}s}
\def\what{\widehat}
\def\calN{{\mathcal{N}}}
\def\calO{{\mathcal{O}}}
\def\bbE{{\mathbb{E}}}
\def\bbP{{\mathbb{P}}}
\def\bbR{{\mathbb{R}}}
\def\unif{\mathsf{unif}}
\def\kl{\mathrm{KL}}
\def\iid{\mathrm{i.i.d.}}
\def\supp{\mathrm{supp}}
\def\LSI{\mathrm{LSI}}
\def\diag{\mathrm{diag}}
\DeclareMathOperator\Div{div}
\DeclareMathOperator{\Cov}{Cov}
\DeclareMathOperator{\trace}{tr}
\pgfplotsset{compat=1.18}
\title{Sampling via Stochastic Interpolants by Langevin-based Velocity  and Initialization Estimation in Flow ODEs}
\author[1]{Chenguang Duan}
\author[2]{Yuling Jiao}
\author[3]{Gabriele Steidl}
\author[4]{Christian Wald}
\author[5]{Jerry Zhijian Yang}
\author[5]{Ruizhe Zhang}
\affil[1]{Institut f{\"u}r Geometrie und Praktische Mathematik, RWTH Aachen University,  \protect\\ Im S{\"u}sterfeld 2, 52072 Aachen, Germany. \protect\\ \texttt{duan@igpm.rwth-aachen.de}}
\affil[2]{School of Artificial Intelligence, Wuhan University, Wuhan, Hubei 430072, China. \protect\\ \texttt{yulingjiaomath@whu.edu.cn}}
\affil[3]{Institut f{\"u}r Mathematik, Technische Universit{\"a}t Berlin, \protect\\ Stra{\ss}e des 17. Juni 136, 10623 Berlin, Germany. \protect\\ \texttt{steidl@math.tu-berlin.de}}
\affil[4]{INSA Lyon, Centrale Lyon, Universit{\'e} Lyon 1, Universit{\'e} Jean Monnet, CNRS, ICJ UMR5208, 69621 Villeurbanne, France. \protect\\ \texttt{christian.wald@insa-lyon.fr}}
\affil[5]{School of Mathematics and Statistics, Wuhan University, Wuhan, Hubei 430072, China. \protect\\ \texttt{\{zjyang.math,ruizhezhang\}@whu.edu.cn}}
\date{\today}
\begin{document}

\thispagestyle{plain}
\maketitle

\begin{abstract}
We propose a novel method for sampling from unnormalized Boltzmann densities based on a probability flow ordinary differential equation (ODE) derived from linear stochastic interpolants. The key innovation of our approach is the use of a sequence of Langevin samplers to enable efficient simulation of the flow. Specifically, these Langevin samplers are employed (i) to generate samples from the interpolant distribution at intermediate times and (ii) to construct, starting from these intermediate times, a robust estimator of the velocity field governing the probability flow ODE. Theoretically, we provide convergence guarantees for both Langevin components, and establish a non-asymptotic convergence rate for the probability flow ODE. Extensive numerical experiments demonstrate the efficiency of the proposed method on challenging multimodal distributions across a range of dimensions, as well as its effectiveness in Bayesian inference tasks.
\end{abstract}
\keywords{sampling, probability flow ODE, Langevin diffusion, stochastic interpolants, log-Sobolev inequality, strongly log-concavity}


\section{Introduction}

\par The fundamental goal of sampling involves generating particles from an unnormalized Boltzmann density, serving as a cornerstone in statistical physics~\cite{Landau2014Guide,Frank2019Boltzmann,midgley2023flow, he2025feat}, machine learning~\cite{Song2019Generative,Cobb2021Scaling,Izmailov2021what}, and Bayesian inference~\cite{Durmus2019High,Overstall2020Bayesian,Heng2021Gibbs,Stanton2022Accelerating,chung2023diffusion,ding2025nonlinear,martin2025pnpflow,chang2025provable,Durmus2025Generative,cardoso2024monte}.

\par However, sampling becomes difficult when the target distribution is multi-modal. In such scenarios, classical Markov Chain Monte Carlo (MCMC) methods~\cite{GRS1995}, such as Langevin Monte Carlo, Hamiltonian Monte Carlo (HMC)~\cite{HG2014}, often fail to explore the global structure of the probability space. They tend to become trapped in local modes, particularly when these modes are separated by high-energy barriers or extensive low-density regions where transitions occur rarely.

\par A common strategy for sampling from complex multi-modal target distributions is to construct a curve in a measure space that gradually pushes an easy-to-sample initialization distribution to the target distribution. This approach traces its roots to sequential Monte Carlo (SMC) methods~\cite{neal2001annealed,DDJ2006}, which typically define a sequence of intermediate distributions via a linear interpolation between the target and initialization energy functions. Mathematically, this interpolant scheme corresponds to the normalized product of the target and initialization densities, which is also utilized 
by~\cite{wu2020Stochastic,HHS2022,Heng2021Gibbs,  wu2025annealing,guo2025provable,ding2025nonlinear,Arbel2021Annealed}. However, multiplying the target by a Gaussian merely rescales the mode heights without necessarily merging them. Consequently, deep low-density regions separating modes are preserved, potentially causing samplers to become trapped~\cite{Chen2024Diffusive,he2025training}. Furthermore, because the modes effectively remain distinct until the very end of the annealing process, the necessary mass transport between distant modes is forced to occur at a late stage. This phenomenon is commonly referred to as the ``teleportation issue''~\cite{mate2023learning,chemseddine2025neural}.

\par Inspired by diffusion models~\cite{Sohl2015Deep,Ho2020Denoising,Song2019Generative,song2021scorebased} and flow-based models~\cite{lipman2023flow,albergo2023building, albergo2025stochastic}, constructing interpolants via Gaussian convolution has garnered significant attention. By convolving the target density with a Gaussian kernel, the energy landscape is effectively smoothed. This process merges isolated modes and populates low-density regions, yielding intermediate distributions that are significantly easier to sample from~\cite{Mobahi2015Link,Song2019Generative,mate2023learning,Chen2024Diffusive,he2025training}.

\par  Within the framework of flow-based or diffusion-based sampling methods, the central technical challenge lies in accurately estimating the velocity field that drives the probability flow ODE, or equivalently, the drift term of the associated SDE. In the sampling setting only an unnormalized target density is accessible, making the velocity field or score of the intermediate distributions hard to estimate. Leveraging Tweedie's formula~\cite{Tweedie1956}, see also~\cite{Efron2011Tweedie,Laumont2022an}, the velocity field or score function can be expressed as an conditional expectation, enabling on-the-fly Monte Carlo estimation without pre-training a neural network. Existing estimators include importance sampling~\cite{huang2024reverse,Huang2025Schrodinger}, 
see~\cite[Section 4.2]{vacher2025sampling} for a review, and rejection sampling~\cite{he2024zeroth}. However, these approaches typically suffer in high-dimensional settings due to the curse of dimensionality. 
The recent work \cite{young2026diffusion}
tackles the score estimation problem by developing an efficient sequential Monte Carlo sampler that evolves auxiliary variables from conditional distributions along the path, providing principled score and density estimates for time-varying distributions inclusive theoretical guarantees. An alternative line of work uses Langevin-based methods to estimate the score function~\cite{huang2024reverse,Huang2024faster,Grenioux2024Stochastic}, thereby improving scalability in high-dimensional settings.  Our approach builds on those line of work, particularly by using Langevin techniques to construct estimators of the velocity field. However, rather than relying on diffusion-based methods, which estimate the drift of an SDE, we focus on the probability-flow ODE and provide a detailed error analysis.

\par In this work, we propose a novel framework for sampling from unnormalized Boltzmann densities, driven by linear stochastic interpolants~\cite{albergo2023building, albergo2025stochastic}; see also~\cite{WS2025} for an overview. Linear stochastic interpolants induce a probability flow ODE that transports an easy-to-sample initialization distribution to the complex target distribution via Gaussian convolution. Our approach decomposes the intractable problem of sampling from a multi-modal target into a sequence of tractable subproblems, each solvable via Langevin Monte Carlo. Specifically, we employ Langevin Monte Carlo to (i) generate samples from the distribution of an intermediate time of the interpolant, where the distribution is still easy to sample from via Langevin Monte Carlo. Starting from this intermediate time we (ii) use the ODE associated to the interpolant and estimate the vector field using Langevin Monte Carlo as well. For the whole procedure see Fig. \ref{fig:strategy}. In order to improve both (i) and (ii), we apply preconditioning to these Langevin diffusions. This adaptively scales step sizes based on the local geometry of the distribution, thereby enhancing convergence.

\subsection{Contributions} 

\par Our main contributions are summarized as follows:  
\begin{enumerate}[label=(\roman*)]
\item We propose a novel framework for sampling from unnormalized Boltzmann densities based on linear stochastic interpolants. Unlike previous works, such as \cite{huang2024reverse,Huang2024faster,Grenioux2024Stochastic}, our approach is based on stochastic interpolants rather than the diffusion framework. We derive a probability-flow ODE that transports an initial distribution to the target distribution. Both the generation of the initial particles and the estimation of the velocity field are reduced to substantially simpler sampling problems, which can be solved using Langevin Monte Carlo.
\item We provide a rigorous convergence analysis for the Langevin-based velocity estimation and ODE initialization. Furthermore, we establish non-asymptotic convergence rates for the probability flow ODE, detailing how the sampling error is governed by discretization, velocity estimation, and initialization errors.
\item We introduce an RMSprop-based preconditioning strategy for Langevin Monte Carlo, which enables adaptive step sizes. This preconditioning improves the ability of the sampler to escape saddle points in complex energy landscapes, thereby enhancing exploration and facilitating transitions across energy barriers. Numerical results demonstrate clear advantages over vanilla Langevin dynamics.
\item We perform extensive numerical experiments to validate the efficiency of the proposed method on multi-modal target distributions in various dimensions. We further apply our method to Bayesian inference, demonstrating its capability for uncertainty quantification on toy problems. In addition, ablation studies investigate the impact of the initialization time, the necessity of Langevin-based initialization, and the performance gains achieved through preconditioning.
\end{enumerate}

\subsection{Organization}

\par Section~\ref{section:prelim} provides a brief introduction to Langevin diffusion and establishes the basic notation used throughout the work. In Section~\ref{section:method}, we present the proposed framework for sampling via stochastic interpolants, followed by the Langevin-based velocity field estimation in Section~\ref{section:method:velocity}. Section~\ref{section:computation:flow} details the computational aspects of the probability flow ODE, including initialization procedures and simulation techniques. In Section~\ref{section:convergence}, we provide rigorous error analysis for Langevin-based velocity estimation, ODE initialization, and the sampling error of the probability flow ODE. To enhance sampling efficiency, we introduce a preconditioning strategy for Langevin samplers in Section~\ref{section:preconditioning}. In Section~\ref{section:experiments}, we demonstrate the efficacy of our approach through extensive numerical experiments. Finally, we conclude the paper in Section~\ref{section:conclusion}. Detailed proofs and additional experimental settings are provided in the appendices.

\section{Preliminaries} \label{section:prelim}
Let $(\Omega,\Sigma, \mathbb P)$ be a probability space.
In the following, we denote random variables $X: \Omega \to \bbR^d$ by capital letters and write $X \sim \mu_X$ if $\mu_X = X_\sharp \mathbb P = \mathbb P \circ X^{-1}$. If $\mu$ is absolutely continuous with density $p$, we write just $p_X$.

Our approach is heavily based on the Langevin diffusion with a stationary density $p \in C^1(\mathbb R^d)$ given by
\begin{equation}\label{eq:langevin_basic}
\d U_{s} = \nabla \log p (U_s)\d s + \sqrt{2}\d B_{s}, \quad s\geq 0
\end{equation}
and starting in the random variable $U_0 \in \mathbb R^d$. Here $B_s$ denotes a $d$-dimensional Brownian motion.
Under some assumptions on the stationary density $p$, we have that the marginals $U_s$ converge for $s \to \infty$
in distribution to a random variable with law $p$. 
To this end, recall that a density $p$
satisfies a \emph{log-Sobolev inequality} with constant $C_{\LSI}(p)$, 
if
\begin{equation}\label{eq:lsi}
\mathrm{Ent}_{p}(f^{2})\leq 2 C_{\LSI}(p) \bbE_{p} |\nabla f|^{2}, \quad \text{for all } f\in C_b^{\infty}(\bbR^{d}),
\end{equation}
where $\mathrm{Ent}_p$ represents the entropy defined as
\begin{equation*}
\mathrm{Ent}_{p}(f^2) \coloneq \int f^2 \log \frac{f^2}{\bbE_p[f^2]} \, p \d x.
\end{equation*}
A density $p \in C^2(\mathbb{R}^{d})$ is called 
\emph{$\beta$-strongly log-concave} with some $\beta >0$, if the Hessian of its logarithm fulfills 
\begin{equation*}
\beta\,  I_d \preceq  -\nabla^2 \log p.
\end{equation*}
By the result of Bakry-{\'E}mery~\cite{Bakry1985Diffusions} a $\beta$-strongly log-concave density satisfies a log-Sobolev inequality with a constant $C_{\LSI}(p) = \beta^{-1}$.

The \emph{Kullback-Leibler} (KL) \emph{divergence} of two densities $p$ and $q$ is defined by $\kl (p,q) \coloneqq \int \log(\frac{p}{q}) \, p \d x$, if $q(x) = 0$ implies $p(x) = 0$ a.e., and otherwise by $\kl (p,q) \coloneqq + \infty$.

Then the following theorem was proved in~\cite{Vempala2019Rapid} and~\cite{Bakry1985Diffusions}, see also the recent paper~\cite{CELSZ2025}.

\begin{theorem} \label{thm:conv}
Let $U_s$ with law $p_{U_s}$ be generated by~\eqref{eq:langevin_basic} with an arbitrary initial distribution $p_{U_0}$. Then the following holds true.
\begin{itemize}
\item[(i)]
If $p$ satisfies a log-Sobolev inequality with constant $C_{\LSI}(p)$, then
\begin{equation*}
\kl(p_{U_s},p) \leq e^{-\frac{2s}{C_{\LSI}(p)}} \kl(p_{U_0},p),
\end{equation*}
\item[(ii)] If  $p$ is $\beta$-strongly log-concave, then 
\begin{equation*}
\kl(p_{U_s},p) \leq e^{-2s \beta} \kl(p_{U_0},p).
\end{equation*}
\end{itemize}
\end{theorem}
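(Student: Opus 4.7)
The plan is to follow the classical de Bruijn--Bakry--Émery argument: differentiate the KL functional along the Langevin flow, recognize that the derivative equals minus the relative Fisher information, and then close the loop via the log-Sobolev inequality to obtain a Grönwall-type exponential decay.

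First I would recall that if $U_s$ solves the SDE \eqref{eq:langevin_basic}, then its marginal density $p_s \coloneqq p_{U_s}$ satisfies the Fokker--Planck equation
\begin{equation*}
\partial_s p_s = -\nabla \cdot (p_s \nabla \log p) + \Delta p_s = \nabla \cdot \!\left( p_s \nabla \log \tfrac{p_s}{p} \right),
\end{equation*}
where the last form follows from the identity $\Delta p_s = \nabla \cdot (p_s \nabla \log p_s)$. Next I would compute
\begin{equation*}
\frac{\d}{\d s}\kl(p_s,p) = \int (\partial_s p_s)\log\tfrac{p_s}{p}\d x,
\end{equation*}
(using that the additional $\int \partial_s p_s \d x = 0$ term vanishes by mass conservation), substitute the Fokker--Planck equation, and integrate by parts. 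This yields the classical de Bruijn identity
\begin{equation*}
\frac{\d}{\d s}\kl(p_s,p) = -\int p_s \Big|\nabla \log \tfrac{p_s}{p}\Big|^2 \d x =: -\calI(p_s \,\|\, p),
\end{equation*}
where $\calI(\cdot\,\|\,\cdot)$ is the relative Fisher information.

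The second step is to connect Fisher information to KL via the log-Sobolev inequality. Setting $f^2 = p_s/p$ in the hypothesis, the left side becomes
\begin{equation*}
\mathrm{Ent}_p(p_s/p) = \int \tfrac{p_s}{p} \log \tfrac{p_s}{p} \, p \d x = \kl(p_s,p),
\end{equation*}
since $\bbE_p[p_s/p] = 1$. The right side becomes
\begin{equation*}
2 C_{\LSI}(p) \, \bbE_p |\nabla \sqrt{p_s/p}|^2 = \tfrac{1}{2}C_{\LSI}(p)\, \bbE_p \tfrac{|\nabla (p_s/p)|^2}{p_s/p} = \tfrac{1}{2}C_{\LSI}(p)\,\calI(p_s \,\|\, p),
\end{equation*}
where the last equality uses $|\nabla (p_s/p)|^2/(p_s/p) = (p_s/p)\,|\nabla \log(p_s/p)|^2$. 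Combining with de Bruijn yields
\begin{equation*}
\frac{\d}{\d s}\kl(p_s,p) \leq -\frac{2}{C_{\LSI}(p)}\kl(p_s,p),
\end{equation*}
and Grönwall's lemma gives part (i). Part (ii) then follows immediately from the Bakry--Émery criterion cited in the excerpt, which guarantees $C_{\LSI}(p) \leq \beta^{-1}$ whenever $p$ is $\beta$-strongly log-concave.

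The main obstacle is not any single algebraic step, but justifying the formal manipulations rigorously: one needs enough regularity and integrability of $p_s$ (and $p_s/p$) to differentiate under the integral sign, to integrate by parts without boundary contributions, and to ensure $\kl(p_{U_0},p) < \infty$ so that the bound is non-vacuous. Standard routes are either to establish smoothness and decay of $p_s$ via the heat-kernel type regularization induced by the Brownian noise, or to argue first for a regularized initial condition and pass to the limit. Once these technicalities are handled (and referenced e.g.\ from the cited works of Vempala--Wibisono and Bakry--Émery), the chain of identities and the LSI bound close the proof.
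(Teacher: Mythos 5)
Your argument is correct and is essentially the standard de Bruijn--Bakry--\'Emery proof found in the references the paper cites (\citealp{Vempala2019Rapid}, \citealp{Bakry1985Diffusions}); the paper itself does not reprove the theorem but invokes those sources. The Fokker--Planck/de Bruijn identity, the substitution $f^2 = p_s/p$ into the LSI, the factor bookkeeping, and the Gr\"onwall closure are all carried out correctly, and part (ii) follows from the Bakry--\'Emery bound $C_{\LSI}(p)\le\beta^{-1}$ exactly as you say.
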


In the following, we denote by $\gamma_{\sigma^2}$  the density of a Gaussian distribution $\mathcal N(0, \sigma^2  I_d)$, and set $\gamma \coloneqq \gamma_1$ for the density of the standard Gaussian distribution. Further, $\Xi$ always denotes a standard Gaussian distributed random variable, so that $\sigma \Xi$, $\sigma \not = 0$, has density $\gamma_{\sigma^2}$.
Recall that for two independent random variables $X$ and $Z$ with densities
$p_X$ and $p_Z$, resp., the random variable $a X + b Z$, $a,b \in \mathbb R\setminus \{0\}$ has the density
\begin{equation} \label{eq:conv}
p_{a X + b Z} = p_{a X} * p_{b Z} =\int p_X(a^{-1} x) p_Z\big(b^{-1} (\cdot -x) \big)
\d x.
\end{equation}
In particular, the convolution of two Gaussians $\mathcal{N}(m, \sigma^2)$ and $\mathcal{N}(\tilde m, \tilde{\sigma}^2)$ results in the Gaussian 
$\mathcal{N}(m + \tilde m, \sigma^2 + \tilde{\sigma}^2)$.


\section{Sampling via Stochastic Interpolants}\label{section:method}

To sample from a potentially complex target distribution $p_{X_1}$, we consider the linear interpolant~\cite{albergo2023building, albergo2025stochastic} for $(X_{0},X_{1}) \sim \gamma \otimes p_{X_1}$ defined by
\begin{equation}\label{eq:stochastic:interporlant}
X_t=tX_{1}+(1-t)X_{0}, \quad t\in(0,1).
\end{equation}
This evolution is governed by a  \textbf{continuity equation}
\begin{equation}\label{eq:transport}
\partial_{t}p_{X_t}(x_{t})+\nabla\cdot(u(t,x_{t})p_{X_t}(x_{t}))=0, \quad t\in(0,1), ~ x_{t}\in\mathbb{R}^{d}, 
\end{equation}
where the velocity field $u:(0,1)\times\bbR^{d}\to\bbR^{d}$ is defined as
\begin{align}
u(t,x_{t})&\coloneq\bbE[X_{1}-X_{0}|X_{t}=x_{t}]=-\frac{1}{1-t} x_{t}+\frac{1}{1-t}\bbE[X_{1}|X_{t}=x_{t}] \nonumber \\
&=-\frac{1}{1-t} x_{t}+\frac{1}{1-t} D(t,x_{t}), \label{eq:velocity}
\end{align}
see~\cite{AGS2008,liu2023flow}. Here $D:(0,1)\times\bbR^{d}\to\bbR^{d}$ is known as the denoiser in the context of generative models~\cite{Kingma2021Variational,Karras2022Elucidating,song2023Consistency,albergo2025stochastic}. If $X_1\in L^2(\mathbb R^d,\mathbb P)$, then $\|u\|_{L^2(p_{X_t})}\in L^1([0,1])$ is always true for a curve of type~\eqref{eq:stochastic:interporlant}, see~\cite[Theorem 4.6 and Corollary 6.4]{WS2025}. Note that Assumption \ref{assumption:Gaussian:convolution} below guarantees that $X_1\in L^2(\mathbb R^d, \mathbb P)$. Conditioned on $X_{1}=x_{1}$, the density of the interpolant $X_{t}$ is given as 
\begin{equation}\label{eq:transition}
p_{X_t|X_1 = x_1} = \gamma_{(1-t)^{2}}(\cdot - tx_{1}),
\end{equation}
and by the law of total probability we have
\begin{equation}\label{eq:marginal}
p_{X_t}
\coloneq\int p_{X_t|X_1= x_1}\, p_{X_1}( x_{1}) \d x_{1}=
\int\gamma_{(1-t)^{2}}(\cdot - tx_{1}) p_{X_1} (x_{1}) \d x_{1}.
\end{equation}
Moreover, we have the important relation
between the score of $p_{X_t}$ and the velocity field
\begin{equation}\label{eq:score:velocity}
 \nabla\log p_{X_t}(x_{t})= \frac{t}{1-t} u(t,x_{t})-\frac{1}{1-t}x_{t}, \quad t \in [0,1), ~ x_{t}\in\mathbb{R}^{d},
\end{equation}
see, e.g.,~\cite[Appendix A1]{zhang2024flow},~\cite[Remark 6.5]{WS2025} and~\cite[Proposition 3.1]{ding2024characteristic}. 

Let $T_0\in(0,1)$ be a fixed starting time point. The characteristic curves of the continuity equation~\eqref{eq:transport} define a flow map $\psi:(T_0,1)\times\bbR^{d}\rightarrow\bbR^{d}$, specified by the following \textbf{probability flow ODE}:
\begin{equation}\label{eq:PFODE}
\begin{aligned}
\frac{\d}{\dt}\psi(t,x_{T_{0}})&=u(t,\psi(t,x_{T_{0}})), \quad t\in (T_0,1), ~ x_{T_{0}}\in\mathbb{R}^{d}, \\
\psi(T_0,x_{T_{0}})&=x_{T_{0}}. 
\end{aligned}
\end{equation}
If the following condition is fulfilled
\begin{align}\label{eq:cond}
\int_0^1\sup_B\{\|u(t,\cdot)\|_{2}\}+\mathrm{Lip}(u(t,\cdot),B)\d t <\infty \text{ for all compact }B\subset \bbR^d,
\end{align}
then the ODE has a unique solution, see~\cite[Proposition 8.1.8]{AGS2008}. Further, it
holds that if $X_{T_0}\sim p_{X_{T_0}}$, then $\psi(t,X_{T_0})\sim p_{X_t}$, which implies
\begin{equation*}
p_{X_t} = \psi(t,\cdot)_{\sharp} p_{X_{T_0}}
\coloneqq  p_{X_{T_0}} \circ \psi^{-1}(t,\cdot), \quad t\in (T_0,1).
\end{equation*}

\par Our sampling algorithm via the probability flow ODE is based on the observation that intuitively, the Gaussian-convolved distribution $p_{X_{T_0}}$ in~\eqref{eq:marginal} is already much easier to sample than the original distribution $p_{X_1}$, when $T_0$ is small. 
Let us illustrate this by the following example. See~\cite{Biroli2024Dynamical,Ambrogioni2025Statistical} for more discussions on mode speciation.

\begin{example}[Gaussian mixture] \label{example:gmm}
Consider a mixture of two 1-dimensional Gaussians
\begin{equation*}
p_{X_1}(x_1) = \frac{1}{2}\gamma(x_1 - m) + \frac{1}{2}\gamma(x_1 + m)
\end{equation*}
with a constant $m > \sqrt{2}$. Then, the marginal density of the interpolant is a Gaussian mixture
\begin{align} \label{eq:gmm_interpolant}
p_{X_t}(x_{t}) 
&= \frac{1}{2}\int \gamma_{(1-t)^2}(x_{t} - t x_1) \gamma(x_1 - m) \d x_1 + \frac{1}{2}\int \gamma_{(1-t)^2}(x_{t} - t x_1) \gamma(x_1 + m) \d x_1 \nonumber \\
&= \frac{1}{2}\gamma_{\sigma_t^2}(x_{t} - mt) + \frac{1}{2}\gamma_{\sigma_t^2}(x_{t} + mt), \qquad \sigma_t^2 \coloneqq t^2 + (1-t)^2.
\end{align}
Differentiating with respect to $x_{t}$ yields
\begin{equation}
\frac{\d}{\d x_{t}}p_{X_t}(x_{t}) 
= -\frac{1}{(2\pi\sigma_t^2)^{1/2}}\Big\{\frac{x_{t}-mt}{2\sigma_t^2}\exp\Big(-\frac{(x_{t}-mt)^2}{2\sigma_t^2}\Big)+\frac{x_{t}+mt}{2\sigma_t^2}\exp\Big(-\frac{(x_{t}+mt)^2}{2\sigma_t^2}\Big)\Big\},
\end{equation}
so that critical points have to fulfill 
\begin{equation*}
(x_{t}-mt)\exp(-\frac{(x_{t}-mt)^2}{2\sigma_{t}^2})=-(x_{t}+mt)\exp(-\frac{(x_{t}+mt)^2}{2\sigma_{t}^2}),
\end{equation*}
which simplifies to the transcendental equation $x_{t} = mt \tanh(\frac{mtx_{t}}{\sigma_t^2})$. While $x_{t}=0$ is always a solution, non-zero solutions indicating bimodality emerge when the curvature at the origin becomes positive, i.e., when the squared mean exceeds the variance $\sigma_t^2$. Thus, we obtain
\begin{align*}
    (m t)^2 &> \sigma_t^ 2\quad \Longleftrightarrow \quad   (m^2 - 2)t^2 + 2t - 1 > 0.
\end{align*}
Solving for root in $(0,1)$, we find the distribution transitions from unimodal to bimodal at
\begin{equation}
    t^* = \frac{\sqrt{m^2 - 1}-1}{m^2 - 2}.
\end{equation}
Crucially, the interpolant density $p_{X_t}$ remains unimodal for all $t \in (0, t^*]$ which is a key property, see Figure~\ref{fig:mixture_transition}.
\hfill $\Box$
\end{example}

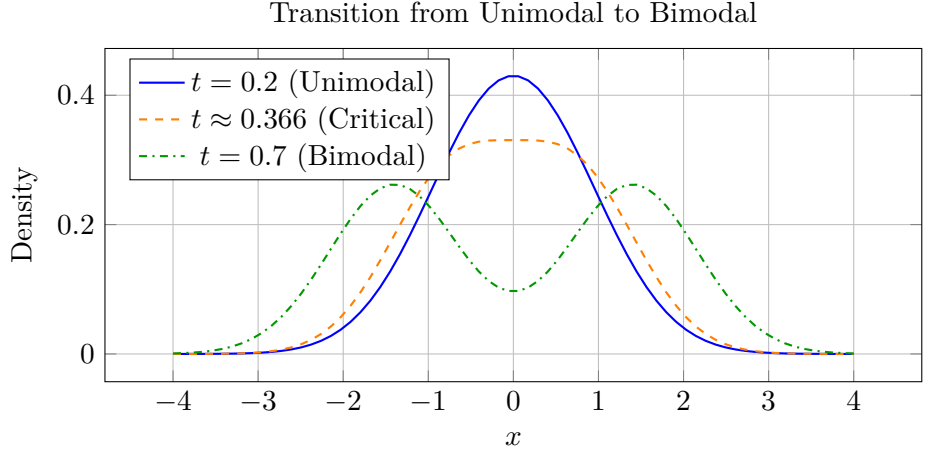
\begin{figure}[htbp]
\centering
\begin{tikzpicture}
    \begin{axis}[
        width=0.8\textwidth,
        height=6cm,
        xlabel={$x$},
        ylabel={Density},
        domain=-4:4,
        samples=70,
        legend pos=north west,
        grid=major,
        title={Transition from Unimodal to Bimodal}
    ]
    \addplot [blue, thick] {
        0.5 * (1/sqrt(2*3.14159*0.68)) * exp(-(x-0.4)^2/(2*0.68)) +
        0.5 * (1/sqrt(2*3.14159*0.68)) * exp(-(x+0.4)^2/(2*0.68))
    };
    \addlegendentry{$t=0.2$ (Unimodal)}

    \addplot [orange, thick, dashed] {
        0.5 * (1/sqrt(2*3.14159*0.536)) * exp(-(x-0.732)^2/(2*0.536)) +
        0.5 * (1/sqrt(2*3.14159*0.536)) * exp(-(x+0.732)^2/(2*0.536))
    };
    \addlegendentry{$t \approx 0.366$ (Critical)}

    \addplot [green!60!black, thick, dashdotted] {
        0.5 * (1/sqrt(2*3.14159*0.58)) * exp(-(x-1.4)^2/(2*0.58)) +
        0.5 * (1/sqrt(2*3.14159*0.58)) * exp(-(x+1.4)^2/(2*0.58))
    };
    \addlegendentry{$t=0.7$ (Bimodal)}
    \end{axis}
\end{tikzpicture}
\caption{Density $p_{X_t}$ in~\eqref{eq:gmm_interpolant} with $m=2$ at three time points, illustrating the pitchfork bifurcation.}
\label{fig:mixture_transition}
\end{figure}

Based on the above observation, depending  on  some critical hyperparameter $T_0\in(0,1)$, we propose the following \textbf{sampling strategy}.
\\[1ex]
1.
\textbf{Velocity field estimator:} 
For any $t \in [T_0,1)$ and any $x_t \in \mathbb R^d$, we propose the construction of estimators of the velocity field $u(t,x_t)$ based on~\eqref{eq:velocity}, or~\eqref{eq:stable:velocity:representation} introduced later, by approximating the conditional expectation using samples from $p_{X_{1}|X_{t}=x_{t}}$. These samples are provided by the Euler-Maruyama scheme of the Langevin diffusion of $Z_s \coloneq Z_s^{t,x_t}$ determined by
\begin{equation*}
\d Z_{s} = \nabla\log p_{X_{1}|X_{t}=x_{t}}(Z_{s})\ds + \sqrt{2}\d B_{s}, \quad s\geq 0.
\end{equation*}    
2. \textbf{Computation of flow ODE:}
Having such velocity estimators at hand, it is used twofold, namely to sample from the initial distribution $p_{X_{T_0}}$ of our probability flow ODE~\eqref{eq:PFODE} and to simulate the flow itself:
\begin{itemize}
\item[(i)] \textbf{Flow initialization:} We apply the Langevin diffusion
\begin{equation*}
\d U_{s}=\nabla\log p_{X_{T_{0}}}(U_{s})\d r+\sqrt{2}\d B_{s}, \quad s\geq 0.
\end{equation*}
starting in a standard normal distribution $U_0\sim\calN(0, I_{d})$. Using our velocity estimators, the score  $\nabla\log p_{X_{T_{0}}}$ can be simply approximated by~\eqref{eq:score:velocity}.
\item[(ii)] \textbf{Sampling via flow ODE:}
Having an estimator for the velocity field $u$, we can successive run the time-discretization of the probability flows ODE~\eqref{eq:PFODE} starting in samples from $p_{X_{T_0}}$.
\end{itemize}

\begin{figure}
\includegraphics[clip, trim={0cm 0.0cm 0cm 0.0cm},width=\textwidth]{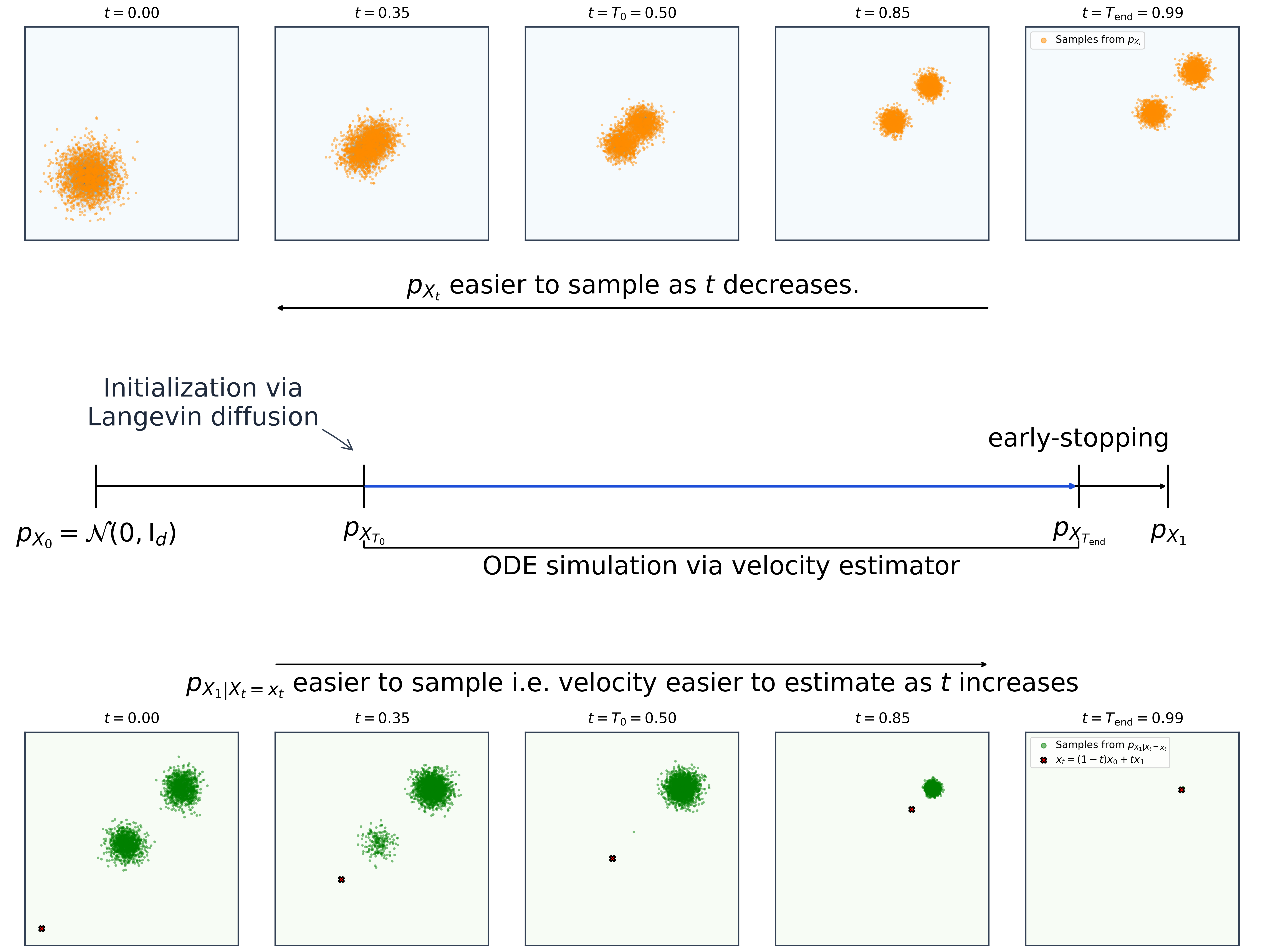}
\caption{Illustration of the algorithm, which consists of two parts. For small $t$ it is easy to sample from $p_{X_t}$ via Langevin diffusion. For the first part $\mathrm{(i)}$ \textbf{Flow initialization}, we initialize the samples at time $T_0$ via this technique. For the second part $\mathrm{(ii)}$ \textbf{Sampling via flow ODE}, note that for big $t$, it is easy to sample from $p_{X_1|X_t=x_t}$ and thus the velocity field is easy to approximate. That is why for $t\in(T_0,T_{\mathrm{end}}]$ we estimate the velocity field and simulate the ODE.}\label{fig:strategy}
\end{figure}

In the following sections, we describe our strategy in detail including convergence results.

We adopt a standard Gaussian convolution assumption, see, e.g.,~\cite{beyler2025convergence,ding2024characteristic, Grenioux2024Stochastic,saremi2024chain}.

\begin{assumption}\label{assumption:Gaussian:convolution}
There exists a random variable 
$Y \in \mathbb R^d$ with law  $p_Y$, where
$$\supp(p_Y)\subseteq B(0,R) \coloneq\{y\in\bbR^{d}:\| y\|_{2}\leq R\}, \quad R>0,
$$
so that 
$X_1 = Y + \sigma \Xi$, $\sigma \in (0,1)$, i.e., the target density fulfills
\begin{equation}\label{eq:ass1}
p_{X_1} =\int\gamma_{\sigma^{2}}(\cdot-y) p_Y(y)\d y.
\end{equation}
\end{assumption}

The following result from~\cite[Corollary 1]{Chen2011Dimension} states that $p_{X_1}$ satisfies a log-Sobolev inequality.

\begin{lemma}\label{lem_chen}
Let $p$ be a density with $\supp ( p ) \subseteq B(0,R)$. Then, for any $\sigma^2 \in (0,1)$, the density $p * \gamma_{\sigma^2}$  fulfills a log-Sobolev inequality with constant
\begin{equation*}
C_{\LSI} (p * \gamma_{\sigma^2}) \le 6\,(4 R^2 + \sigma^2 )\exp\Big(\frac{4R^2}{\sigma^2}\Big)
= 6\sigma^2 \Big(1+ \frac{4R^2}{\sigma^2} \Big)\exp\Big(\frac{4R^2}{\sigma^2}\Big)
\le 6 \exp\Big(\frac{8R^2}{\sigma^2}\Big).
\end{equation*}
\end{lemma}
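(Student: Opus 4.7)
The plan is to split the statement into its three displayed bounds. The first one, $C_{\LSI}(p*\gamma_{\sigma^{2}})\le 6(4R^{2}+\sigma^{2})\mathrm{e}^{4R^{2}/\sigma^{2}}$, is the real content; the other two are algebraic consequences of it via $\sigma^{2}\in(0,1)$. My plan is therefore to invoke the Chen bound for the first inequality and to derive the other two by elementary manipulation.

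For the main estimate I would write $X_{1}=Y+\sigma\Xi$ with $Y\sim p$ and $\Xi\sim\calN(0,{\rm I}_d)$ independent, and start from the Tweedie-type Hessian identity
$$-\nabla^{2}\log(p*\gamma_{\sigma^{2}})(x)=\frac{1}{\sigma^{2}}{\rm I}_d-\frac{1}{\sigma^{4}}\Cov(Y\mid Y+\sigma\Xi=x).$$
Since $\supp(p)\subseteq B(0,R)$, the conditional covariance has operator norm at most $R^{2}$ uniformly in $x$. For $\sigma^{2}<1$ this is too weak to conclude strong log-concavity, so one cannot directly invoke the Bakry--\'Emery criterion used in Theorem~\ref{thm:conv}~ii). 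Chen's strategy is instead to compare $p*\gamma_{\sigma^{2}}$ with a widened Gaussian reference $\gamma_{\sigma^{2}+4R^{2}}$, which is strongly log-concave and hence carries an explicit LSI constant, to bound the density ratio through a completion-of-squares estimate that uses the support of $p$, and to conclude via a Holley--Stroock-type perturbation. Running this program quantitatively is precisely the content of~\citep[Corollary~1]{Chen2011Dimension} and produces the bound $6(4R^{2}+\sigma^{2})\mathrm{e}^{4R^{2}/\sigma^{2}}$.

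The remaining two inequalities are then one-line cleanups. Factoring $\sigma^{2}$ out of $4R^{2}+\sigma^{2}$ gives the middle form $6\sigma^{2}(1+4R^{2}/\sigma^{2})\mathrm{e}^{4R^{2}/\sigma^{2}}$; applying $1+x\le \mathrm{e}^{x}$ with $x=4R^{2}/\sigma^{2}$ yields $6\sigma^{2}\mathrm{e}^{8R^{2}/\sigma^{2}}$, and dropping the prefactor via $\sigma^{2}<1$ collapses this to the final bound $6\,\mathrm{e}^{8R^{2}/\sigma^{2}}$.

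The principal obstacle is the faithful reproduction of Chen's quantitative estimate. Because $p*\gamma_{\sigma^{2}}$ is not strongly log-concave when $\sigma$ is small relative to $R$, one cannot close a Bakry--\'Emery argument by pure convexity but must absorb the non-convex defect $-\sigma^{-4}\Cov(Y\mid X=\cdot)$ into a perturbation of a strongly log-concave reference while keeping both the exponential factor $\mathrm{e}^{4R^{2}/\sigma^{2}}$ and the linear prefactor $(4R^{2}+\sigma^{2})$ under tight control. If this constant tracking becomes too painful, the pragmatic fallback is to cite~\citep[Corollary~1]{Chen2011Dimension} verbatim for the first inequality and restrict the proof we write to the algebraic postprocessing described above.
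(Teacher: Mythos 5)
Your proposal is correct and matches the paper's treatment: the paper itself supplies no proof of the first inequality beyond invoking \citep[Corollary~1]{Chen2011Dimension}, and the middle equality and last inequality are exactly the algebraic cleanups you describe (factor $\sigma^2$ out of $4R^2+\sigma^2$, apply $1+x\le e^x$ with $x=4R^2/\sigma^2$, and drop $\sigma^2<1$). Your Tweedie-type Hessian identity $-\nabla^2\log(p*\gamma_{\sigma^2})=\sigma^{-2}{\rm I}_d-\sigma^{-4}\Cov(Y\mid Y+\sigma\Xi=\cdot)$ is also consistent with the paper's Lemma~\ref{lemma:tweedies:second:order:gaussian:convolution} and equation~\eqref{lemma:tweedies:gaussian:convolution}, and your observation that strong log-concavity fails for small $\sigma$ (so Bakry--\'Emery alone cannot close the argument) is the correct reason one must cite Chen's quantitative result rather than Theorem~\ref{thm:conv}~ii).
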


\begin{remark}[Dimension-independent log-Sobolev constant]
The bound in Lemma~\ref{lem_chen} is independent of the dimension $d$. This independence is only apparent, however, since the dimension is implicitly encoded in the $\ell^{2}$-radius $R$. Indeed, if the support condition in Assumption~\ref{assumption:Gaussian:convolution} were instead imposed with respect to the $\ell^{\infty}$-norm, say $\supp(p_Y)\subseteq\{y\in\mathbb{R}^{d}:\|y\|_{\infty}\leq R_{\infty}\}$, then $\|y\|_{2}\leq\sqrt{d}\,\|y\|_{\infty}$ yields the $\ell^{2}$-radius $R=\sqrt{d}\,R_{\infty}$, and the resulting bound $6\exp\big(8dR_{\infty}^{2}\sigma^{-2}\big)$ grows exponentially in $d$.
\end{remark}

\section{Velocity Field Estimators}\label{section:method:velocity}

\par According to~\eqref{eq:velocity}, estimating the velocity field $u(t,\cdot)$ of the flow ODE reduces to estimating the conditional expectation of $X_{1}$ given $X_{t} = x_t$. 
By  Bayes' rule, we know that
\begin{equation}\label{eq:denoising:density}
p_{X_{1}|X_{t}=x_{t}} = \frac{1}{p_{X_t}} \gamma_{(1-t)^{2}}(x_{t}-t\cdot )p_{X_{1}} =\frac{1}{p_{X_t}}
\exp\Big\{-\frac{\|x_{t}-t \cdot \|_{2}^{2}}{2(1-t)^{2}}+\log p_{X_{1}}\Big\},
\end{equation}
which is knows as the denoising density.

\subsection{Langevin diffusion}
For a fixed $x_{t}\in\bbR^{d}$, the Langevin diffusion admitting $p_{X_{1}|X_{t}=x_{t}}$ as its stationary density is given by
\begin{equation}\label{eq:langevin:score:estimation}
\d Z_{s}^{t,x_t} = \nabla\log p_{X_{1}|X_{t}=x_{t}}(Z_{s}^{t,x_t})\ds + \sqrt{2}\d B_{s}, \quad s\geq 0. 
\end{equation}
For simplicity of notation, we write $Z_s$ instead of $Z_{s}^{t,x_t}$ if the conditioning on $X_t = x_t$ is clear from the context.
By~\eqref{eq:denoising:density}, the score function reads as
\begin{equation}\label{eq:denoising_score_function}
\nabla\log p_{X_{1}|X_{t}=x_{t}}(x_{1}) = \frac{t(x_{t}-tx_{1})}{(1-t)^{2}} + \nabla\log p_{X_{1}}(x_{1}).
\end{equation}
Langevin diffusions involving this type of score have previously been investigated in the context of proximal sampling, parallel tempering, and annealed Langevin diffusion, e.g., in~\cite{Lee2021Structured,Dong2022Spectral,guo2025provable}. 

\par By the following proposition, the Langevin diffusion~\eqref{eq:langevin:score:estimation} converges to the denoising density~\eqref{eq:denoising:density} under mild conditions. For a similar result with different interpolation, see also \cite{Grenioux2024Stochastic}. The proof is provided in Appendix~\ref{section:proof:method_1}.

\begin{proposition}\label{lemma:denoising:langevin:convergence}
Let Assumption~\ref{assumption:Gaussian:convolution} be fulfilled, and let $X_t$ be given by the linear interpolant~\eqref{eq:stochastic:interporlant}. 
For $t \in (0,1)$ and $\sigma^2 >0$, set
\begin{equation}\label{beta_t}
\beta_t \coloneqq \frac{t^2}{(1-t)^{2}}+\frac{1}{\sigma^{2}} - \frac{R^2}{\sigma^4}.
\end{equation}
Then, for every $t\in(0,1)$, it holds
\begin{equation*} 
\beta_t \,  I_{d}
\preceq 
-\nabla^2 \log p_{X_1|X_t = x_t}
\preceq
\Big(\beta_t + \frac{R^2}{\sigma^4} \Big)\,  I_{d},
\end{equation*}
and $p_{X_1|X_t = x_t}$ is $\beta_t$-strongly log-concave  for $t \in (T^*,1)$, where
\begin{equation}\label{eq:T}
T^* =
\left\{
\begin{array}{ll}
0& \text{if } R^{2}\leq\sigma^{2}
\\[1ex]
\frac{1}{2} & \text{if } R^{2}=\sigma^{2}+\sigma^{4},
\\[1ex]
\frac{R^{2}-\sigma^{2}-\sigma^{2}\sqrt{R^{2}-\sigma^{2}}}{R^{2}-\sigma^{2}-\sigma^{4}}&\text{ otherwise}.
\end{array}
\right.
\end{equation}
We have $T^*< \frac12$ if $\sigma^{2}<R^{2}<\sigma^{2}+\sigma^{4}$, and $T^*> \frac12$ if $R^{2}>\sigma^{2}+\sigma^{4}$.

Further, for $t \in (T^*,1)$, it holds
\begin{equation}\label{est}
\kl \left(p_{Z_s}, p_{X_1|X_t = x_t} \right)
\leq
e^{-2\beta_t s} \, 
\kl \left(p_{Z_0},p_{X_1|X_t = x_t} \right).
\end{equation}
where $Z_{s}$,  $s\geq 0$ is defined by the Langevin diffusion~\eqref{eq:langevin:score:estimation}. 
\end{proposition}

The proposition indicates that the denoising density $p_{X_1|X_t = x_t}$ becomes $\beta_t$-strongly log-concave with a larger $\beta_{t}$ as $t$ increases. Specifically, in the over-smoothed regime, i.e., $R^{2}\leq\sigma^{2}$, the denoising density is log-concave for all $t\in(0,1)$. In contrast, in the under-smoothed regime, $R^{2}>\sigma^{2}$, to ensure the log-concavity of the denoising density for any $t\in(T^*,1)$, we cannot allow the initial time $T_0$  to approach zero. 

\subsection{Euler-Maruyama scheme} 
Although the convergence of the Langevin diffusion ~\eqref{eq:langevin:score:estimation} is theoretically guaranteed, analytical simulation is intractable, necessitating Euler-Maruyama time-discretization. Let $\eta>0$ denote the step size, and consider the time partition $0<\eta<\cdots<K\eta$. Solving the SDE~\eqref{eq:langevin:score:estimation} with the Euler-Maruyama scheme results in
\begin{equation}\label{eq:langevin:monte:carlo}
\bar{Z}_{(k+1)\eta} = \bar{Z}_{k\eta}+\eta\nabla\log p_{X_{1}|X_{t}=x_{t}}(\bar{Z}_{k\eta})+\sqrt{2\eta}\, \xi_{k}, \quad \xi_{k}\sim\calN(0,I_{d})
\end{equation} 
with the score from~\eqref{eq:denoising_score_function} and $\bar{Z}_{0}\sim p_{Z_{0}}$. This scheme is also known as Langevin Monte Carlo (LMC) or unadjusted Langevin algorithm (ULA).  

\par By~\eqref{est}, the convergence of the Langevin diffusion~\eqref{eq:langevin:score:estimation} depends on the quality of the initialization distribution $p_{Z_{0}}$. To select an initial distribution that approximates the target density $p_{X_1|X_t=x_t}$ as closely as possible, we employ importance sampling~\cite{neal2001annealed}. Specifically, we utilize the Gaussian $\calN(t^{-1}x_{t},(1-t)^{2}t^{-2} I_{d})$ as a proposal distribution, and treat the target density $p_{X_{1}}$ as the weight function:
\begin{equation}\label{eq:is}
p_{X_{1}|X_{t}=x_{t}}\propto\underbrace{p_{X_{1}}(x_{1})}_{\text{weight}}\underbrace{\gamma_{(1-t)^{2}t^{-2}}(x_{1}-t^{-1}x_{t})}_{\text{proposal}}.
\end{equation}
We first take i.i.d. samples $Z^{1},\ldots,Z^{n}$ from the Gaussian proposal distribution. Substituting the proposal distribution with its empirical approximation and applying~\eqref{eq:is}, we obtain an empirical approximation of the denoising distribution
\begin{equation}\label{eq:is:Z0}
\what{p}_{X_{1}|X_{t}=x_{t}}\coloneq\sum_{i=1}^{n}\frac{p_{X_{1}}(Z^{i})}{\sum_{k=1}^{n}p_{X_{1}}(Z^{k})}\delta_{Z^{i}}.
\end{equation}
This empirical measure $\what{p}_{X_{1}|X_{t}=x_{t}}$ converges to the denoising distribution $p_{X_{1}|X_{t}=x_{t}}$ as the number of samples $n$ approaches infinity~\cite{neal2001annealed}. By resampling from this empirical measure, we generate particles that approximately follow the denoising distribution, providing a suitable initialization for Langevin sampling, i.e., $p_{Z_{0}} \coloneq \what{p}_{X_{1}|X_{t}=x_{t}}$. Some other papers which employ similar importance sampling strategies are \cite{DPVH2025,durmus2026sampling,huang2024reverse,VCK2024}.

\subsection{Langevin-based velocity approximation}\label{subsect:vel_approx}
For a fixed $(t,x_{t})\in(0,1)\times\mathbb{R}^{d}$, using~\eqref{eq:velocity} and i.i.d. random variables 
$\bar{Z}_{K\eta}^{1},\ldots,\bar{Z}_{K\eta}^{n}$ generated by~\eqref{eq:langevin:monte:carlo}, we propose the following \textbf{estimator of the velocity field}:
\begin{equation}\label{eq:velocity:estimator}
\what{U}(t,x_{t})\coloneqq  -\frac{1}{1-t}x_{t} + \frac{1}{1-t}\what{D}(t,x_{t}),
\end{equation}
where the estimator of the denoiser in~\eqref{eq:velocity} is defined as 
\begin{equation}\label{eq:denoiser:estimator}
\what{D}(t,x_{t}) \coloneq \frac{1}{n}\sum_{i=1}^n\bar{Z}_{K\eta}^i.
\end{equation}
The Langevin-based velocity estimator~\eqref{eq:velocity:estimator} is also utilized in~\cite{durmus2026sampling} for sampling from distributions on Riemannian manifolds.

\par The velocity estimator defined in~\eqref{eq:velocity:estimator} diverges if $t$ approaches $1$, resulting in numerical instabilities. To address this issue, we introduce the following rescaled representation of the velocity field, similarly as in~\cite[Equation 9]{akhound2024iterated},~\cite[Corollary 2.4]{bortoli2024target}, and~\cite[Equation (10)]{Grenioux2024Stochastic}. 

\begin{proposition}[Rescaled representation of velocity field]\label{proposition:rescaling:velocity}
The velocity field $u:\bbR\times\bbR^{d}\to\bbR^{d}$ in~\eqref{eq:velocity} satisfies for any $t\in(0,1)$ and any $x_{t}\in\bbR^{d}$ the relation
\begin{align}
u(t,x_{t}) &= \frac{1}{t}x_{t} + \frac{1-t}{t^2}\bbE[\nabla\log p_{X_1} (X_1)|X_t= x_t] \nonumber \\
&= \frac{1}{t}x_{t} + \frac{1}{t^2}F(t,x_t). \label{eq:stable:velocity:representation}
\end{align}
\end{proposition}

\par The proof of Proposition~\ref{proposition:rescaling:velocity} is provided in Appendix~\ref{section:proof:method_1}. For a fixed $(t,x_{t})\in(0,1)\times\mathbb{R}^{d}$, using the Monte Carlo method and $\bar{Z}_{K\eta}^{1},\ldots,\bar{Z}_{K\eta}^{n}$ generated by~\eqref{eq:langevin:monte:carlo}, we obtain the \textbf{stable estimator of the velocity field}:
\begin{equation}\label{eq:stable:velocity:estimator}
\widehat{U}_{\mathrm{stab}}(t,x_{t}) \coloneq \frac{1}{t}x_{t} + \frac{1}{t^2}\what{F}(t,x_{t}),
\end{equation}
where
\begin{equation}\label{eq:stable:F:estimator}
\what{F}(t,x_{t}) \coloneq \frac{1-t}{n}\sum_{i=1}^{n}\nabla\log p_{X_1}(\bar{Z}_{K\eta}^{i}).
\end{equation} 
In contrast to the estimator in~\eqref{eq:velocity:estimator}, the stable velocity estimator $\widehat{U}_{\mathrm{stab}}$ eliminates the numerical instability as $t$ approaches $1$. Note that the stabilized estimator is superior for times near to one, while for time near to zero the original estimator should be preferred, see~\cite{bortoli2024target}.

\par The complete algorithm for the velocity estimation is summarized in Algorithm~\ref{alg:velocity:estimation}.

\begin{algorithm}[htbp]\label{alg:velocity:estimation}
\caption{Langevin-based velocity field estimation}
\KwIn{Time and location of interest $(t,x_{t})$, target score $\nabla\log p_{X_{1}}$,  step size $\eta$, and number of steps $K$.}
\KwOut{Velocity estimator $\what{U}(t,x_{t})$ or $\widehat{U}_{\mathrm{stab}}(t,x_{t})$.}
Initialize particles $\bar{Z}_{0}^{1},\ldots,\bar{Z}_{0}^{n}$ via importance sampling~\eqref{eq:is:Z0}. \\    
\For {$k= 0,\ldots,K-1$}{
\texttt{\# Calculate the denoising score} \\
$S_{k+1}^{i}\leftarrow-\frac{t^{2}}{(1-t)^{2}}\bar{Z}_{k\eta}^{i}+\frac{t}{(1-t)^{2}}x_{t}+\nabla\log p_{X_{1}}(\bar{Z}_{k\eta}^{i})$ for $1\leq i\leq n$. \\
\texttt{\# Langevin update} \\
$\bar{Z}_{(k+1)\eta}^{i}\sim\calN(\bar{Z}_{k\eta}^{i}+\eta S_{k+1}^{i},2\eta \,  I_{d})$ for $1\leq i\leq n$. \\
} 
\texttt{\# Monte Carlo velocity estimator} \\
\eIf{\normalfont using stable estimation}{
$\widehat{U}_{\mathrm{stab}}(t,x_{t}) \leftarrow \frac{1}{t}x_{t} + \frac{1-t}{t^2}\frac{1}{n}\sum_{i=1}^{n}\nabla\log p_{X_1}(\bar{Z}_{K\eta}^{i})$.}{
$\what{U}(t,x_{t})=-\frac{1}{1-t}x_{t}+ \frac{1}{1-t}\frac{1}{n}\sum_{i=1}^{n}\bar{Z}_{K\eta}^{i}$.
}
\Return{\normalfont $\what{U}(t,x_{t})$ or $\widehat{U}_{\mathrm{stab}}(t,x_{t})$}
\end{algorithm}

\section{Computation of Flow ODE}\label{section:computation:flow}

Based on the velocity field estimators constructed in the previous section,  we can handle the probability flow ODE.
We start with the initialization, i.e. sampling from $p_{X_{T_0}}$, where $T_0 > T^{*}$ and consider the probability flow ODE numerically afterwards.

\subsection{Flow initialization}\label{section:method:initialization}
Langevin diffusion admitting  $p_{X_{T_0}}$ as its stationary distribution reads as
\begin{equation}\label{eq:langevin:diffusion:warmstart}
\d U_{s}=\nabla\log p_{X_{T_{0}}}(U_{s})\d s + \sqrt{2}\d B_{s}, \quad s\geq 0.
\end{equation}
In practice, we initialize $U_{0}$ 
by the standard Gaussian distribution $\calN(0, I_{d})$.
For $T_0 > T^*$, we apply a score estimator based on the velocity estimator~\eqref{eq:velocity:estimator} and~\eqref{eq:score:velocity}, i.e.,
\begin{equation}\label{eq:score:estimation}
\what{S}(T_{0},x_{T_{0}})=\frac{T_{0}}{1-T_{0}}\what{U}(T_{0},x_{T_{0}})-\frac{1}{1-T_{0}}x_{T_{0}}.
\end{equation}
Alternatively, one may employ the stable velocity estimator~\eqref{eq:stable:velocity:estimator} to obtain a score estimator through~\eqref{eq:score:velocity}. However, this additional stabilization is generally unnecessary in our setting, since $T_{0}$ is not close to one. For simplicity, we therefore restrict our attention to the score estimator based on~\eqref{eq:velocity:estimator}. 

\par Let $\tau>0$ be the step size, and let $L$ be the number of steps. Then, applying the Euler-Maruyama discretization with step size $\tau>0$ and $L$ steps yields 
\begin{equation}\label{eq:langevin:monte:carlo:warmstart}
\what{U}_{(\ell+1)\tau}=\what{U}_{\ell\tau}+\tau\what{S}(T_{0},\what{U}_{\ell\tau})+\sqrt{2\tau}\zeta_{\ell}, \quad \zeta_{\ell}\sim \calN(0,I_{d}), \quad 0\leq\ell\leq L-1.
\end{equation} 
The resulting particle $\what{U}_{L\tau}$ approximates  $p_{X_{T_{0}}}$, thereby serving as the initialization for the probability flow ODE~\eqref{eq:PFODE}. 
The initialization algorithm is summarized in Algorithm~\ref{alg:initialization}.

\begin{algorithm}[htbp]\label{alg:initialization}
\caption{Initialization of probability flow ODE}
\KwIn{The initialization time $T_{0}$, the step size $\tau$, and the number of steps $L$.}
\KwOut{Particle $\what{U}_{L\tau}$ approximately following $p_{X_{T_{0}}}$.}
Initialize particle: $\what{U}_{0}\sim\calN(0, I_{d})$. \\    
\For {$\ell\in\{0,\ldots,L-1\}$}{
\texttt{\# Score estimation} \\
Calculate the velocity estimator $\what{U}(T_{0},\what{U}_{\ell\tau})$ using Algorithm~\ref{alg:velocity:estimation}. \\
Calculate the score estimator $\what{S}(T_{0},\what{U}_{\ell\tau})$ using~\eqref{eq:score:estimation}. \\
\texttt{\# Langevin update} \\
$\what{U}_{(\ell+1)\tau}\sim\calN(\what{U}_{\ell\tau}+\tau\what{S}(T_{0},\what{U}_{\ell\tau}),2\tau I_{d})$. \\
} 
\Return{$\what{U}_{L\tau}$}
\end{algorithm}

By following proposition, $p_{U_{s}}$ converges exponentially to the stationary distribution $p_{X_{T_{0}}}$.

\begin{proposition}\label{lemma:initial:langevin:convergence}
Let Assumption~\ref{assumption:Gaussian:convolution}  be fulfilled. Let $p_{X_{T_{0}}}$ be the distribution defined by~\eqref{eq:marginal} and let $U_{s}$ be given by in~\eqref{eq:langevin:diffusion:warmstart}.
Then, for every $s>0$, it holds 
\begin{equation*}
\kl\left(p_{U_{s}},p_{X_{T_{0}}}\right)
\leq
\exp\Big\{
-\frac{s}{3} 
\exp \Big\{-\frac{8 T_0^2 R^2}{ T_0^2 \sigma^2+ (1-T_0)^2}\Big\} \Big\} \kl\left(p_{U_0},p_{X_{T_0}}\right).
\end{equation*}
\end{proposition}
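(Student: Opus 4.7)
The plan is to combine two ingredients: Theorem~\ref{thm:conv}(i), which converts a log-Sobolev inequality on the stationary law into exponential KL-decay along the Langevin flow, and Lemma~\ref{lem_chen}, which supplies a quantitative LSI bound for any Gaussian convolution of a compactly supported density. Since~\eqref{eq:langevin:diffusion:warmstart} admits $p_{X_{T_0}}$ as its invariant measure, the whole proof reduces to identifying $p_{X_{T_0}}$ as such a convolution and then controlling $C_{\LSI}(p_{X_{T_0}})$.

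First, I would exploit Assumption~\ref{assumption:Gaussian:convolution} to write $X_{T_0}$ as the sum of a bounded random variable and a Gaussian. Using $X_1 \stackrel{d}{=} Y + \sigma \Xi$ with $\supp(p_Y) \subseteq B(0,R)$ and $\Xi, X_0 \sim \mathcal{N}(0, {\rm I}_d)$ jointly independent, the linear interpolant~\eqref{eq:stochastic:interporlant} yields
\begin{equation*}
X_{T_0} \;\stackrel{d}{=}\; T_0\, Y + \tilde\sigma_{T_0}\, \tilde\Xi, \qquad \tilde\sigma_{T_0}^2 \coloneqq T_0^2 \sigma^2 + (1-T_0)^2, \qquad \tilde\Xi \sim \mathcal{N}(0, {\rm I}_d),
\end{equation*}
where $T_0 Y$ is supported in $B(0, T_0 R)$. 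By~\eqref{eq:conv} this identifies $p_{X_{T_0}} = (T_0)_\sharp p_Y * \gamma_{\tilde\sigma_{T_0}^2}$, putting it squarely in the framework of Lemma~\ref{lem_chen}.

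Next, I would apply Lemma~\ref{lem_chen} with radius $T_0 R$ and variance $\tilde\sigma_{T_0}^2$ (the required $\tilde\sigma_{T_0}^2 \in (0,1)$ holds whenever $(1+\sigma^2) T_0 < 2$, which covers the regime of interest), obtaining $C_{\LSI}(p_{X_{T_0}}) \le 6 \exp\bigl(8 T_0^2 R^2 / \tilde\sigma_{T_0}^2\bigr)$. Plugging this into Theorem~\ref{thm:conv}(i) gives
\begin{equation*}
\kl(p_{U_s}, p_{X_{T_0}}) \;\le\; \exp\!\left(-\frac{2s}{C_{\LSI}(p_{X_{T_0}})}\right) \kl(p_{U_0}, p_{X_{T_0}}),
\end{equation*}
with the prefactor $\tfrac{1}{3} = \tfrac{2}{6}$ in the stated bound emerging directly from the constant in Lemma~\ref{lem_chen}.

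The main obstacle is the seemingly cosmetic but non-trivial task of matching the denominator in the exponent. The natural argument above produces $\tilde\sigma_{T_0}^2 = T_0^2 \sigma^2 + (1-T_0)^2$, whereas the proposition states $(1 - T_0(1-\sigma))^2 = (T_0 \sigma + (1-T_0))^2$; the two differ by the positive cross term $2 T_0 \sigma (1-T_0)$. Closing this gap will require either a convexity-based regrouping of the two independent Gaussian noise contributions into a single effective scale, or a slightly sharper LSI estimate tuned to the specific variance schedule of the interpolant. Aside from this bookkeeping, the remaining steps are direct invocations of the quoted results.
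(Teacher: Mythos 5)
Your approach is exactly the paper's: decompose $X_{T_0}$ into a bounded random variable plus a Gaussian, invoke Lemma~\ref{lem_chen} with radius $T_0 R$, then plug into Theorem~\ref{thm:conv}(i); the prefactor $1/3$ arises from the constant $6$ in the lemma as you observed. The difference is that your variance computation is the \emph{correct} one and the paper's is not. Since $\Xi$ (the noise inside $X_1 = Y + \sigma\Xi$) and $X_0$ are independent Gaussians, their variances add: $T_0\sigma\Xi + (1-T_0)X_0 \sim \calN(0, (T_0^2\sigma^2 + (1-T_0)^2)\,{\rm I}_d)$. The paper instead writes $T_0 Y + (1 - T_0(1-\sigma))\Xi$, i.e., adds the \emph{standard deviations} $T_0\sigma$ and $1-T_0$, yielding the inflated variance $(T_0\sigma + (1-T_0))^2 = T_0^2\sigma^2 + (1-T_0)^2 + 2T_0\sigma(1-T_0)$. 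This contradicts the convolution rule the paper itself states in Section~2.

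Consequently, you should not attempt to ``close the gap''; there is nothing to close. Since Lemma~\ref{lem_chen} gives $C_{\LSI}(p * \gamma_{v}) \le 6\exp(8(T_0R)^2/v)$, a monotone \emph{decreasing} function of the Gaussian variance $v$, substituting the paper's too-large variance $(1-T_0(1-\sigma))^2$ produces a smaller LSI constant and hence a faster claimed decay rate than the argument actually delivers. The bound as printed is strictly stronger than what follows from this chain of reasoning and does not hold by this proof; the correct conclusion, which you obtained, is
\begin{equation*}
\kl\bigl(p_{U_s}, p_{X_{T_0}}\bigr) \;\le\; \exp\Bigl\{-\tfrac{s}{3}\exp\Bigl(-\tfrac{8T_0^2 R^2}{T_0^2\sigma^2 + (1-T_0)^2}\Bigr)\Bigr\}\,\kl\bigl(p_{U_0}, p_{X_{T_0}}\bigr),
\end{equation*}
valid whenever $T_0^2\sigma^2 + (1-T_0)^2 \in (0,1)$, i.e., $T_0(1+\sigma^2) < 2$. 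In short: your derivation is right, the stated denominator in the proposition (and the identical identity in the paper's proof) should be replaced by $T_0^2\sigma^2 + (1-T_0)^2$, and neither a ``convexity-based regrouping'' nor a sharper LSI estimate will recover the printed expression.
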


\begin{proof}
By assumptions we have
\begin{equation*}
X_{T_0}=T_0 X_{1}+(1-T_0)X_{0}
\stackrel{\d}{=}T_0(Y+\sigma \Xi)+(1-T_0)X_{0}
\stackrel{\d}{=}
T_0 Y + \left(T_0^2 \sigma^2+ (1-T_0)^2\right)^\frac12 \Xi.
\end{equation*}
This is a convolution of a density with support in the ball of radius $T_0 R$ and a Gaussian of variance $T_0^2 \sigma^2+ (1-T_0)^2 \in (0,1)$.
Thus, by Lemma~\ref{lem_chen}, we conclude
\begin{align}\label{eq:Clsi}
C_{\LSI}(p_{X_{T_0}}) 
&
\le 6 \, \exp\Big\{\frac{8T_0^{2}R^{2}}{T_0^2 \sigma^2+ (1-T_0)^2}\Big\}
\end{align}
and the assertion follows by Theorem~\ref{thm:conv} (ii).
\end{proof}

The proposition suggests that the Langevin diffusion~\eqref{eq:langevin:diffusion:warmstart} converges more rapidly when $T_0^2/(T_0^2 \sigma^2+ (1-T_0)^{2})$ is small, i.e.,
$T_{0}$ is small. 
Combining Propositions~\ref{lemma:denoising:langevin:convergence} and~\ref{lemma:initial:langevin:convergence} reveals a trade-off in the selection of $T_{0}$: while a larger initial time $T_{0}$ accelerates the convergence of~\eqref{eq:langevin:score:estimation}, it hinders the convergence of~\eqref{eq:langevin:diffusion:warmstart}.
A $T_0$-tradeoff was also discussed in another setting in~\cite{Grenioux2024Stochastic}.

The following remark underlines the intuition from Example~\ref{example:gmm}
that $p_{T_0}$ is significantly better tractable by Langevin diffusion than the target distribution $p_{X_{1}}$.

\begin{remark}[Regularizing effect of Gaussian convolution]
By Assumption~\ref{assumption:Gaussian:convolution} and Lemma~\ref{lem_chen}
the target distribution $p_{X_{1}}$  satisfies a log-Sobolev inequality with constant $C_{\rm{LSI}}(p_{X_{1}})\le 6 \, {\rm e}^{\frac{8 R^2}{\sigma^2}}$, $\sigma \in (0,1)$.
By Theorem~\ref{thm:conv} (i), this provides an exponential convergence of the Langevin diffusion
\begin{equation*}
\d U_s = \nabla \log p_{X_1} ( U_s) \d s + \sqrt{2} \d B_s, \quad s \ge 0
\end{equation*}
towards the stationary distribution $p_{X_{1}}$. However, for a fixed initialization time $T_{0}\in(0,1)$, the log-Sobolev inequality constant $C_{\rm{LSI}}(p_{X_{T_{0}}})$ in~\eqref{eq:Clsi} is significantly smaller than $C_{\rm{LSI}}(p_{X_{1}})$, since  
\begin{equation*}
\frac{1}{\sigma^2}  > \frac{T_0^2}{T_0^2 \sigma^2+ (1-T_0)^2}
\end{equation*}
and the right-hand side becomes smaller the smaller $T_0 \in (T^*,1)$ is. In particular, in the regime, where $\sigma$ is small,
the Langevin diffusion towards $p_{X_{T_0}}$ converges much faster making sampling more efficient. For reverse diffusion interpolation a similar expression for the log-Sobolev constant under a Lipschitz assumption on the score function, which, in the present work, follows from Assumption 1, was established in~\cite{huang2024reverse}.
\end{remark}

\subsection{Sampling via flow ODE}\label{section:method:sampling}
 Once we can sample from our initial distribution $p_{X_{T_{0}}}$ and have  an estimator for the velocity field, we proceed to construct a practical sampling scheme via time discretization of the probability flow ODE~\eqref{eq:PFODE}. The use of a uniform time discretization is primarily motivated by simplicity. Alternative discretization strategies, such as log-SNR-based time discretizations could also be applied~\cite{Grenioux2024Stochastic}. Notably, both the vanilla velocity estimator~\eqref{eq:velocity:estimator} and the stable velocity estimator~\eqref{eq:stable:velocity:estimator} exhibit a semi-linear structure. Consequently, we can integrate the linear part exactly and approximate only the nonlinear one using an exponential integrator~\cite{Hochbruck2010Exponential}, which offers certain advantages over standard explicit Euler scheme.

\par We apply the exponential integrator to the probability flow ODE driven by the stable velocity estimator~\eqref{eq:stable:velocity:estimator}. For a total number $M$ of discretization steps, we choose the step size $h\coloneqq (T_{{\rm end}}-T_{0})/M$, and set $t_{m}\coloneqq T_{0}+mh$, $0\le m\le M$. In each sub-interval, we approximate the nonlinear term of the velocity estimator using its value at the left endpoint
\begin{equation*}
\frac{\d}{\dt}\what{\psi}_{\rm stab}(t,x_{T_{0}}) 
= \frac{1}{t}\what{\psi}_{\rm stab}(t,x_{T_{0}}) + \frac{1}{t^{2}}\what{F}(t_{m},\what{\psi}_{\rm stab}(t_{m},x_{T_{0}})), \quad t\in(t_{m},t_{m+1}).
\end{equation*}
Hence, we obtain 
\begin{equation*}
\frac{1}{t}\frac{\d}{\dt}\what{\psi}_{\rm stab}(t,x_{T_{0}}) 
= \frac{1}{t^{2}}\what{\psi}_{\rm stab}(t,x_{T_{0}}) + \frac{1}{t^{3}}\what{F}(t_{m},\what{\psi}_{\rm stab}(t_{m},x_{T_{0}})), \quad t\in(t_{m},t_{m+1})
\end{equation*}
and using the integrating factor $1/t$ further 
\begin{equation*}
\frac{\d}{\dt}\Big(\frac{1}{t}\what{\psi}_{\rm stab}(t,x_{T_{0}})\Big)=-\frac{1}{t^{2}}\what{\psi}_{\rm stab}(t,x_{T_{0}})+\frac{1}{t}\frac{\d}{\dt}\what{\psi}_{\rm stab}(t,x_{T_{0}})=\frac{1}{t^{3}}\what{F}(t_{m},\what{\psi}_{\rm stab}(t_{m},x_{T_{0}}))
\end{equation*}
integrating this ODE from $t_{m}$ to $t_{m+1}$ yields
\begin{equation}\label{eq:PFODE:velocity:ei:stable}
\begin{aligned}
\what{\psi}_{\rm stab}(t_{m+1},x_{T_{0}}) 
&= \frac{t_{m+1}}{t_{m}}\what{\psi}_{\rm stab}(t_{m},x_{T_{0}}) + \frac{t_{m}+t_{m+1}}{2t_{m}^{2}t_{m+1}} h\what{F}(t_{m}, \what{\psi}_{\rm stab}(t_{m},x_{T_{0}})), \\
\what{\psi}_{\rm stab}(T_{0},x_{T_{0}}) &= x_{T_{0}}.
\end{aligned}
\end{equation}
The exponential integrator in~\eqref{eq:PFODE:velocity:ei:stable} approximates only the nonlinear component of the velocity field while integrating the linear component analytically, thereby eliminating the approximation error associated with the linear part found in the Euler method.

\par For the  velocity estimator~\eqref{eq:velocity:estimator},  the exponential integrator yields the same  discrete-time flow as the Euler forward scheme
\begin{equation}\label{eq:PFODE:velocity:ei}
\begin{aligned}
\what{\psi}(t_{m+1},x_{T_{0}}) &= \frac{1-t_{m+1}}{1-t_m}\what{\psi}(t_{m},x_{T_{0}}) + \frac{1}{1-t_m} h\what{D}(t_m, \what{\psi}(t_{m},x_{T_{0}})), \\
\what{\psi}(T_{0},x_{T_{0}}) &= x_{T_{0}},
\end{aligned}
\end{equation}
see Remark~\ref{remark:euler:ei} at the end of Appendix \ref{section:proof:method}. 

\par Under mild assumptions, the pushforward density 
$(\what{\psi}(T_{\rm end},\cdot))_{\sharp} p_{T_{0}}$ or $(\what{\psi}_{\rm stab}(T_{\rm end},\cdot))_{\sharp} p_{T_{0}}$ approximates the target distribution $p_{X_{T_{\rm end}}}$, provided the velocity estimation errors is sufficiently small.
Incorporating the initialization mechanism from Section~\ref{section:method:initialization}, the complete sampling pipeline is given by
\begin{equation}\label{eq:PFODE:map}
\what{X}_{{T_{\rm end}}}\coloneq\what{\psi}(T_{\rm end},\what{X}_{T_{0}}), \quad \text{or} \quad \what{X}_{{T_{\rm end}}}\coloneq\what{\psi}_{\rm stab}(T_{\rm end},\what{X}_{T_{0}}), 
\end{equation}
where the samples $\what{X}_{T_{0}}\coloneq\what{U}_{L\tau}$ serve as the starting point for the ODE integration, and $\what{U}_{L\tau}$ is defined in~\eqref{eq:langevin:monte:carlo:warmstart}. The ODE integration stops at $T_{\rm end}<1$. Since $X_{T_{\rm end}}\stackrel{\d}{=}T_{\rm end}X_{1}+(1-T_{\rm end})X_{0}$ and $\bbE[X_{0}]=0$, the scaled particle 
\begin{equation*}
T_{\rm end}^{-1}X_{T_{\rm end}} \stackrel{\d}{=} X_{1}+\frac{1-T_{\rm end}}{T_{\rm end}}X_{0}
\end{equation*}
serves as an unbiased estimator of $X_{1}$, i.e., $\mathbb{E}[T_{\rm end}^{-1} X_{T_{\rm end}}]=\mathbb{E}[X_{1}]$. We apply this scaling map to the particle $\what{X}_{T_{\rm end}}$ in~\eqref{eq:PFODE:map}.

\par The full sampling procedure of~\eqref{eq:PFODE:velocity:ei} is summarized in Algorithm~\ref{alg:pfode:sampling}. The computing procedure of~\eqref{eq:PFODE:velocity:ei:stable} can be obtained similarly.

\begin{algorithm}[htbp]\label{alg:pfode:sampling}
\caption{Sampling via probability flow ODE}
\KwIn{The initial time $T_{0}$, the terminal time $T_{\rm end}$, and the number of steps $M$.}
\KwOut{Particle $T_{\rm end}^{-1}\what{\psi}(T_{\rm end},\what{X}_{T_{0}})$ approximately following $p_{X_{1}}$.}
Calculate the step size: $h=(T_{\rm end}-T_{0})/M$. \\
Generate time points: $t_{m}=T_{0}+mh$ for $0\leq m\leq M$. \\
Initialize particle $\what{\psi}(t_{0},\what{X}_{T_{0}})=\what{X}_{T_{0}}\leftarrow\what{U}_{L\tau}$ using Algorithm~\ref{alg:initialization}. \\
\For {$m\in\{0,\ldots,M-1\}$}{
\texttt{\# Denoising Estimation} \\
Obtain the denoiser $\what{D}_m \leftarrow \what{D}(t_{m},\what{\psi}(t_{m},\what{X}_{T_{0}}))$ using Algorithm \ref{alg:velocity:estimation}. \\
\texttt{\# Exponential integrator update} \\
$\alpha_m \leftarrow (1-t_{m+1})/(1-t_m)$. \\
$\what{\psi}(t_{m+1},\what{X}_{T_{0}})\leftarrow \alpha_m \what{\psi}(t_{m},\what{X}_{T_{0}}) + (1-\alpha_m)\what{D}_m$. \\
}
\Return{$T_{\rm end}^{-1}\what{\psi}(T_{\rm end},\what{X}_{T_{0}})$}
\end{algorithm}

\subsection{Well-posedness of the flow ODE}
Finally, we verify that the condition~\eqref{eq:cond} is fulfilled. Clearly, it suffices to prove that $\nabla u(t,x)$ is bounded for all 
$t\in (0,1)$ and all $x \in \mathbb R^d$.
Indeed, following similar lines as in the paper~\cite{ding2024characteristic} of some of the  authors, we have the following results which proofs can be found for convenience in Appendix~\ref{section:proof:method}.

\begin{lemma}[Linear growth of velocity field]\label{lem1}
Let Assumption~\ref{assumption:Gaussian:convolution} be fulfilled, and let the velocity field $u:(0,1)\times\mathbb{R}^{d}\to\mathbb{R}^{d}$ be defined by~\eqref{eq:velocity}. Then, for every $t\in(0,1)$, it holds 
\begin{equation*}
\|u(t,x_{t})\|_{2}\leq B(1+\|x_{t}\|_{2}), \quad x_{t}\in\bbR^{d},
\end{equation*}
where $B$ is a constant only depending on $R$ and $\sigma$.
\end{lemma}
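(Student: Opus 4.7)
My plan is to exploit Assumption~\ref{assumption:Gaussian:convolution} to split $X_1$ into its bounded and Gaussian parts, reduce the problem to a standard Tweedie/Gaussian conditioning calculation, and then verify that the apparently singular factor $(1-t)^{-1}$ in the definition of $u$ actually cancels.

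First, I would write $X_1 = Y + \sigma\Xi$ with $\Xi\sim\gamma$ independent of $Y$ and $\mathrm{supp}(p_Y)\subseteq B(0,R)$. Substituting into the interpolant~\eqref{eq:stochastic:interporlant} yields
\begin{equation*}
X_t = tY + W, \qquad W\coloneqq t\sigma\Xi + (1-t)X_0 \sim \mathcal N(0,\alpha_t^2\, \mathrm I_d),\quad \alpha_t^2\coloneqq t^2\sigma^2+(1-t)^2,
\end{equation*}
where the variance formula follows from the convolution identity recalled after~\eqref{eq:conv}. Hence the pair $(X_t,\sigma\Xi)$ is, conditionally on $Y$, jointly Gaussian, which lets me compute the relevant conditional expectations in closed form.

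Next I would split $\mathbb E[X_1\mid X_t=x_t]=\mathbb E[Y\mid X_t=x_t]+\mathbb E[\sigma\Xi\mid X_t=x_t]$ and bound each piece. Since $Y\in B(0,R)$ almost surely, Jensen's inequality for conditional expectation gives $\|\mathbb E[Y\mid X_t=x_t]\|_2\le R$. For the second piece, I first condition on $Y$: given $Y=y$ and $X_t=x_t$ one has $t\sigma\Xi+(1-t)X_0=x_t-ty$, and the standard Gaussian conditioning formula yields $\mathbb E[\sigma\Xi\mid Y=y,X_t=x_t]=\frac{t\sigma^2}{\alpha_t^2}(x_t-ty)$. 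Taking expectation over $Y\mid X_t=x_t$ and using the tower property,
\begin{equation*}
\mathbb E[\sigma\Xi\mid X_t=x_t]=\frac{t\sigma^2}{\alpha_t^2}\bigl(x_t - t\,\mathbb E[Y\mid X_t=x_t]\bigr).
\end{equation*}

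The key algebraic step is to substitute these two identities into $u(t,x_t)=\frac{1}{1-t}\bigl(\mathbb E[X_1\mid X_t=x_t]-x_t\bigr)$ and check that the factor $(1-t)$ cancels. A direct computation, using $\alpha_t^2-t^2\sigma^2=(1-t)^2$, gives
\begin{equation*}
u(t,x_t)=\frac{1-t}{\alpha_t^2}\,\mathbb E[Y\mid X_t=x_t] + \frac{t\sigma^2-(1-t)}{\alpha_t^2}\,x_t,
\end{equation*}
which is manifestly regular on $[0,1]$. This cancellation is the only place where one has to be slightly careful; it is essentially the same mechanism underlying Tweedie-type formulas, so I expect it to be the main (but modest) obstacle.

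To conclude, I would use $\min_{t\in[0,1]}\alpha_t^2=\frac{\sigma^2}{1+\sigma^2}$, together with $\|\mathbb E[Y\mid X_t=x_t]\|_2\le R$, $1-t\le 1$, and $|t\sigma^2-(1-t)|\le\max(\sigma^2,1)$. These give
\begin{equation*}
\|u(t,x_t)\|_2 \le \frac{1+\sigma^2}{\sigma^2}\bigl(R + \max(\sigma^2,1)\,\|x_t\|_2\bigr) \le B\bigl(1+\|x_t\|_2\bigr),
\end{equation*}
with $B\coloneqq \frac{1+\sigma^2}{\sigma^2}\max\{R,\sigma^2,1\}$ depending only on $\sigma$ and $R$, as claimed.
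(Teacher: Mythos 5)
Your proof is correct and takes essentially the same route as the paper's: both decompose $X_1 = Y + \sigma\Xi$, compute $\mathbb{E}[X_1\mid X_t = x_t]$ via Gaussian conditioning on the bounded component $Y$, observe that the apparent $(1-t)^{-1}$ singularity cancels to produce an expression with denominator $\alpha_t^2 = t^2\sigma^2 + (1-t)^2$, and bound the resulting coefficients. The paper invokes its auxiliary Lemma~\ref{lemma:representation:conditional} to reach this formula, whereas you derive it directly via the tower property; your explicit final bound (minimizing $\alpha_t^2$ over $t\in[0,1]$ and bounding the $x_t$-coefficient by $\max(\sigma^2,1)$) is actually slightly more complete than the paper's concluding step, which leaves the uniform-in-$t$ bound implicit.
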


\begin{theorem}[Lipschitz continuity of velocity field]\label{thm}
Let Assumption~\ref{assumption:Gaussian:convolution} be fulfilled, and let the velocity field $u:(0,1)\times\mathbb{R}^{d}\to\mathbb{R}^{d}$ be defined by~\eqref{eq:velocity}. Then, for every $t\in(0,1)$, it holds 
\begin{equation*}
\|\nabla u(t,x_t)\|_{\mathrm{op}}\leq G,
\end{equation*}
where $G$ is a constant only depending on $R$ and $\sigma$.
\end{theorem}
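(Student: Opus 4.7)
The plan is to recast $u(t,x)$ in a form where the apparent $1/(1-t)$ singularity cancels, differentiate using a Tweedie-type covariance identity, and bound everything uniformly in $t\in(0,1)$ via the bounded support of $Y$ from Assumption~\ref{assumption:Gaussian:convolution}. Combining the independent Gaussians $\sigma\Xi$ (hidden in $X_1$) and $X_0$, one has $X_t=tY+\alpha_t W$ in law, with $\alpha_t^2 := t^2\sigma^2+(1-t)^2$ and $W\sim\calN(0,{\rm I}_d)$ independent of $Y$. Conditioning on $(Y,X_t)$ and using the Gaussian conditional expectation of $\sigma\Xi$ given $t\sigma\Xi+(1-t)X_0=x-tY$ yields
\begin{equation*}
\bbE[X_1\mid X_t=x] = \tfrac{(1-t)^2}{\alpha_t^2}\bbE[Y\mid X_t=x]+\tfrac{t\sigma^2}{\alpha_t^2}x.
\end{equation*}
Plugging this into~\eqref{eq:velocity} produces the clean representation
\begin{equation*}
u(t,x) = \frac{\bigl(t\sigma^2-(1-t)\bigr)\,x+(1-t)\,\bbE[Y\mid X_t=x]}{\alpha_t^2},
\end{equation*}
in which both $\alpha_t^2$ and $|t\sigma^2-(1-t)|$ stay bounded on $[0,1]$, so the apparent $1/(1-t)$ blow-up disappears.

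Next, I would differentiate using the posterior $p_{Y\mid X_t=x}(y)\propto p_Y(y)\,\gamma_{\alpha_t^2}(x-ty)$. Differentiation under the integral with $\nabla_x\log\gamma_{\alpha_t^2}(x-ty)=-(x-ty)/\alpha_t^2$ produces the covariance identity
\begin{equation*}
\nabla_x\bbE[Y\mid X_t=x] = \frac{t}{\alpha_t^2}\,\Cov(Y\mid X_t=x),
\end{equation*}
from which
\begin{equation*}
\nabla u(t,x) = \frac{t\sigma^2-(1-t)}{\alpha_t^2}\,{\rm I}_d + \frac{t(1-t)}{\alpha_t^4}\,\Cov(Y\mid X_t=x).
\end{equation*}

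Finally, I would bound both terms uniformly in $t$. Since $\supp(p_Y)\subseteq B(0,R)$, every one-dimensional projection of $Y$ has variance at most $R^2$, so $\|\Cov(Y\mid X_t=x)\|_{\mathrm{op}}\le R^2$. The convex quadratic $\alpha_t^2=(1+\sigma^2)t^2-2t+1$ attains its minimum $\sigma^2/(1+\sigma^2)$ at $t^{*}=1/(1+\sigma^2)$, giving $\alpha_t^{-2}\le(1+\sigma^2)/\sigma^2$. Combined with $|t\sigma^2-(1-t)|\le\max(\sigma^2,1)$ and $t(1-t)\le 1/4$, this yields
\begin{equation*}
\|\nabla u(t,x)\|_{\mathrm{op}} \le \frac{(1+\sigma^2)\max(\sigma^2,1)}{\sigma^2}+\frac{R^2(1+\sigma^2)^2}{4\sigma^4} =: G,
\end{equation*}
a constant depending only on $\sigma$ and $R$. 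The main technical hurdle is the rewriting step that cancels the $1/(1-t)$ singularity: this is exactly where the Gaussian-convolution structure of Assumption~\ref{assumption:Gaussian:convolution} enters, routing $\bbE[X_1\mid X_t]$ through the bounded variable $Y$; once that cancellation is secured, the covariance identity together with the support bound on $Y$ makes the remaining estimates routine.
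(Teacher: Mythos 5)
Your argument is correct and takes essentially the same route as the paper: express $u$ through $\bbE[Y\mid X_t=x]$, differentiate via a Tweedie-type covariance identity, and bound $\|\Cov(Y\mid X_t=x)\|_{\mathrm{op}}\le R^2$ from the compact support of $Y$ (the paper's $Y^{x_t}_t$ in Lemma~\ref{lemma:representation:conditional} is exactly your $Y\mid X_t=x$, and your covariance identity is the content of Lemmas~\ref{lemma:tweedies:second:order}--\ref{lemma:representation:conditional}). A small bonus on your side: you supply the $t$-uniform lower bound $\alpha_t^2\ge\sigma^2/(1+\sigma^2)$ that makes the constant $G$ fully explicit, whereas the paper stops at the $t$-dependent estimate.
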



\section{Non-Asymptotic Convergence Rates}\label{section:convergence}

\par This section presents an end-to-end error analysis for sampling via stochastic interpolants. In particular, we establish the convergence rates of the velocity estimator~\eqref{eq:velocity:estimator}, the Langevin-based initialization scheme~\eqref{eq:langevin:monte:carlo:warmstart}, and the probability flow ODE discretization~\eqref{eq:PFODE:velocity:ei:stable} and~\eqref{eq:PFODE:velocity:ei}.

\par Before proceeding, we recall the definition of the Wasserstein-2 distance; see~\cite[Chapter~6]{Villani2009Optimal}. The \emph{Wasserstein-2 distance} between two  measures $\mu, \nu \in \mathcal P_2(\mathbb R^d)$, where $\mathcal P_2(\mathbb R^d)$ 
denotes the space of probability measures with finite second moments,  is defined as
\begin{equation*}
W_{2}(\mu,\nu) \coloneq \Big(\inf_{\pi\in\Pi(\mu,\nu)}\int\|x-y\|_{2}^{2} \d \pi(x,y)
\Big)^{1/2} = \inf_{X \sim \mu, Y \sim \nu} \bbE \big[\|X-Y\|_{2}^{2}\big]^{1/2},
\end{equation*}
where $\Pi(\mu,\nu)$ denotes the set of all probability measures on $\bbR^d \times \bbR^d$
having marginal $\mu$ and $\nu$.
We are interested in bounding the Wasserstein-2 distance between the target density \(p_{X_{1}}\) and the distribution of particles generated by the approximate flow $\what{\psi}_{\mathrm{stab}}(T_{\rm end},\cdot)$ in~\eqref{eq:PFODE:velocity:ei:stable}, and $\what{\psi}(T_{\rm end},\cdot)$ in~\eqref{eq:PFODE:velocity:ei}.

\paragraph{Notations} 
Throughout this section, we say $f(\delta) \lesssim g(\delta)$, if there exist an absolute constant $C>0$, such that $f(\delta)\leq Cg(\delta)$. We also use the Big-Theta notation, meaning that $g(\delta) = \Theta(f(\delta))$, if both $f(\delta) \lesssim g(\delta)$ and $g(\delta)\lesssim f(\delta)$ hold.

\subsection{Error bounds for velocity estimations}

\par To begin with, the following lemma provides non-asymptotic convergence rates for velocity estimation in~\eqref{eq:velocity:estimator}.

\begin{lemma}[Error bounds of velocity estimation]\label{lemma:velocity:estimation}
Let Assumption~\ref{assumption:Gaussian:convolution} be fulfilled. For $t\in(T^{*},1)$ with $T^{*}$ in~\eqref{eq:T}, let $\what{U}(t,\cdot)$ be the velocity estimator defined in~\eqref{eq:velocity:estimator}, and let $\widehat{D}(t,\cdot)$ be the denoiser estimator defined in~\eqref{eq:denoiser:estimator}. Then, for any $\delta\in(0,1)$ and every $x_{t}\in\bbR^{d}$, we have
\begin{equation}\label{eq:lemma:velocity:estimation:0:0}
\bbE\Big[\|D(t,x_{t})-\what{D}(t,x_{t})\|_{2}^{2}\Big]
\lesssim\delta^{2},
\end{equation}
and, as a consequence, 
\begin{equation}\label{eq:lemma:velocity:estimation:0}
\bbE\Big[\|u(t,x_{t})-\what{U}(t,x_{t})\|_{2}^{2}\Big]
\lesssim
\frac{\delta^{2}}{(1-t)^{2}},
\end{equation}
provided that, with $\beta_t$ in~\eqref{beta_t}, the following relations are fulfilled
\begin{align*}
&n=\Theta\Big(\frac{(1-t)^{2}((1-t)^{2}R^2 +d\sigma^{4}t^{2}+d\sigma^{2}(1-t)^{2})}{(\sigma^{2}t^{2}+(1-t)^{2})^{2}\delta^{2}}\Big), \quad \eta=\Theta\Big(\frac{\sigma^{8}\beta_{t}^{2}\delta^{2}}{d(\sigma^{4}\beta_{t}+R^{2})^{2}}\Big), \\
&K=\Theta\Big(\frac{d}{\beta_{t}\delta^{2}}\Big(1+\frac{R^{2}}{\sigma^{4}\beta_{t}}\Big)^{2}\log\Big(\frac{W_{2}^{2}(p_{Z_{0}},p_{X_{1}|X_{t}=x_{t}})}{\delta^{2}}\vee 1\Big)\Big).
\end{align*}
\end{lemma}

\par The proof  is given in Appendix~\ref{section:proof:convergence:velocity}. 
Lemma~\ref{lemma:velocity:estimation} provides theoretical guidance for selecting the number of particles $n$ in the Monte Carlo approximation, the step size $\eta$ of the Langevin algorithm, and the number of Langevin iterations $K$. In particular, larger values of $t$ correspond to larger $\beta_{t}$, which in turn require fewer Langevin iterations $K$ and allow larger step size $\eta$. Moreover, larger $t$ also necessitates a fewer number $n$ of particles  for the Monte Carlo approximation. Consequently, the difficulty of denoiser estimation~\eqref{eq:denoiser:estimator} decreases as $t$ approaches 1. However, the denoiser estimation error in~\eqref{eq:lemma:velocity:estimation:0:0} is amplified by a factor of $(1-t)^{-2}$ when bounding the velocity field estimation error in~\eqref{eq:lemma:velocity:estimation:0}, which diverges as $t\to 1$.

\begin{lemma}[Error bounds of stable velocity estimation]\label{lemma:velocity:estimation:stable}
Let Assumption~\ref{assumption:Gaussian:convolution} be fulfilled. For $t\in(T^{*},1)$ with $T^{*}$ in~\eqref{eq:T}, let $\what{U}_{\mathrm{stab}}(t,\cdot)$ be the velocity estimator defined in~\eqref{eq:stable:velocity:estimator}, and let $\widehat{F}(t,\cdot)$ defined as~\eqref{eq:stable:F:estimator}. Then, for any $\delta\in(0,1)$ and every $x_{t}\in\bbR^{d}$, we have 
\begin{equation}
\bbE\Big[\|F(t,x_{t})-\what{F}(t,x_{t})\|_{2}^{2}\Big]
\lesssim (1-t)^{2}\delta^{2},
\end{equation}
and, as a consequence, 
\begin{equation}\label{eq:stable:velocity:error}
\bbE\Big[\|u(t,x_{t})-\what{U}_{\mathrm{stab}}(t,x_{t})\|_{2}^{2}\Big]
\lesssim
\frac{(1-t)^{2}}{t^{4}}\delta^{2},
\end{equation}
provided that, with $\beta_t$ in~\eqref{beta_t}, the following relations are fulfilled 
\begin{align*}
&n=\Theta\Big(\frac{d(1-t)^{2}(\sigma^{2}t^{2}+(1-t)^{2})+\sigma^{2}t^{4}R^{2}}{\sigma^{2}(\sigma^{2}t^{2}+(1-t)^{2})^{2}\delta^{2}}\Big), \quad \eta=\Theta\Big(\frac{\sigma^{8}\beta_{t}^{2}\delta^{2}}{d(\sigma^{4}\beta_{t}+R^{2})^{2}}\Big(\frac{\sigma^{4}}{\sigma^{2}+R^{2}}\Big)^2\Big), \\
&K=\Theta\Big(\frac{d}{\beta_{t}\delta^{2}}\Big(\frac{\sigma^{2}+R^{2}}{\sigma^{4}}\Big)^{2}\Big(1+\frac{R^{2}}{\sigma^{4}\beta_{t}}\Big)^{2}\log\Big(\Big(\frac{\sigma^{2}+R^{2}}{\sigma^{4}}\Big)^{2}\frac{W_{2}^{2}(p_{Z_{0}},p_{X_{1}|X_{t}=x_{t}})}{\delta^{2}}\vee 1\Big)\Big).
\end{align*}
\end{lemma}

\par The proof of Lemma~\ref{lemma:velocity:estimation:stable} is deferred to Appendix~\ref{section:proof:convergence:velocity}. In contrast to the velocity estimator in~\eqref{eq:velocity:estimator}, which represents the conditional expectation of $X_{1}$ given $X_{t}=x_{t}$, the stable velocity estimator in~\eqref{eq:stable:velocity:estimator} represents the conditional expectation of $\nabla\log p_{X_{1}}(X_{1})$ given $X_{t}=x_{t}$. Consequently, the convergence rate of the stable velocity estimator~\eqref{eq:stable:velocity:estimator} in Lemma~\ref{lemma:velocity:estimation:stable} depends on both the Lipschitz constant of the target score $\nabla\log p_{X_{1}}$ and the conditional covariance of $\nabla\log p_{X_{1}}(X_{1})$, whereas the convergence rate of the velocity estimator~\eqref{eq:velocity:estimator} provided in Lemma~\ref{lemma:velocity:estimation} does not. More importantly, the error of the stable estimator in~\eqref{eq:stable:velocity:error} does not diverge as $t\to 1$. Although this error diverges as $t\to 0$, this behavior is inconsequential for our purposes, as we are exclusively interested in the regime $t\in(T^{*},1)$, where $T^{*}$ is strictly away from zero.

\begin{remark}[Condition number of Langevin Monte Carlo]\label{remark:condition:number}
As demonstrated in Lemmas~\ref{lemma:velocity:estimation} and~\ref{lemma:velocity:estimation:stable}, the required number of iterations $K$ of Langevin Monte Carlo is governed by the condition number of the denoising distribution:
\begin{equation*}
\mathrm{cond}(p_{X_{1}|X_{t}=x_{t}}) \coloneq \frac{\lambda_{\max}(-\nabla^{2}\log p_{X_{1}|X_{t}=x_{t}})}{\lambda_{\min}(-\nabla^{2}\log p_{X_{1}|X_{t}=x_{t}})}\leq 1+\frac{R^{2}}{\sigma^{4}\beta_{t}^{2}},
\end{equation*}
where $\beta_{t}$ is defined in Proposition~\ref{lemma:denoising:langevin:convergence}. This observation has been shown by, e.g.,~\cite{Dalalyan2016Theoretical,Durmus2017Nonasymptotic,Vempala2019Rapid,CELSZ2025,mou2022Improved}. Although the convergence of the Langevin Monte Carlo and the resulting velocity estimators~\eqref{eq:velocity:estimator} and~\eqref{eq:stable:velocity:estimator} is theoretically guaranteed, a large condition number necessitates an excessively high number of iterations, resulting in slow convergence. To mitigate this ill-conditioning, we employ a preconditioned Langevin sampler to our framework in Section~\ref{section:preconditioning}, which demonstrates clear advantages in our numerical experiments as shown in Section~\ref{section:experiments}. A formal theoretical analysis of this preconditioning scheme is left for future work.
\end{remark}

\subsection{Error bounds for flow initialization}

Next, we give error bounds for flow initialization in Section~\ref{section:method:initialization}.

\begin{lemma}[Error bounds of flow initialization]\label{lemma:error:initialization}
Let Assumption~\ref{assumption:Gaussian:convolution} be fulfilled. Let $\what{X}_{T_{0}}\coloneq\what{U}_{L\tau}$ be the terminal state of the Langevin Monte Carlo in~\eqref{eq:langevin:monte:carlo:warmstart}. Then, for any $\kappa \in(0,1)$, we have 
\begin{equation}\label{eq:lemma:error:initialization:0}
\bbE\Big[W_{2}^{2}\Big(p_{\what{X}_{T_{0}}},p_{X_{T_{0}}}\Big)\Big] \lesssim \kappa^{2},    
\end{equation}
provided that the error of the velocity estimation $\delta^{2}$ in~\eqref{eq:lemma:velocity:estimation:0}, the step size $\tau$, and the number of steps $L$ fulfill
\begin{align*}
&\delta^{2}=\Theta\Big(\exp\Big\{-\frac{16T_0^{2}R^{2}}{T_0\sigma^2 + (1-T_0)^2}\Big\}\frac{(1-T_{0})^{4}}{T_{0}^{2}}\kappa^{2}\Big), \\
&\tau=\Theta\Big(\exp\Big\{-\frac{16T_0^{2}R^{2}}{T_0\sigma^2 + (1-T_0)^2}\Big\}\frac{(1-T_{0})^{2}}{d(G+1)^{2}}\kappa^{2}\Big), \\
&L=\Theta\Big(\exp\Big\{\frac{24T_0^{2}R^{2}}{T_0\sigma^2 + (1-T_0)^2}\Big\}\frac{d(G+1)^{2}}{(1-T_{0})^{2}\kappa^{2}}\Big(\frac{8T_0^{2}R^{2}}{T_0^2\sigma^2+(1-T_0)^2}+\log\frac{\kl(p_{U_{0}},p_{X_{T_{0}}})}{\kappa^{2}}\Big)\vee 0\Big).
\end{align*}
Here $G$ is a constant from Theorem~\ref{thm} only depending on $R$ and $\sigma$, the expectation in~\eqref{eq:lemma:error:initialization:0} is taken with respect to particles of Monte Carlo approximation~\eqref{eq:velocity:estimator} to the velocity field.
\end{lemma}

\par The proof of Lemma~\ref{lemma:error:initialization} is given in Appendix~\ref{section:proof:convergence:initialization}. The error bounds in Lemma~\ref{lemma:error:initialization} provide guidance for choosing the accuracy tolerance $\delta$ of the velocity estimation, the step size $\tau$, and the number of iterations $L$. Smaller values of $T_{0}$ lead to a smaller log-Sobolev inequality constant $C_{\LSI}(p_{X_{T_{0}}})$; consequently, the Langevin algorithm used for initialization requires fewer iterations $L$ and allows for a larger step size $\tau$. Furthermore, smaller values of $T_{0}$ permit a larger accuracy tolerance $\delta$ in the velocity estimation.

\subsection{Main results} \label{sec:main}
\par Based on Lemmas~\ref{lemma:velocity:estimation} and~\ref{lemma:error:initialization}, the following theorem presents a full error analysis for our sampling method. The proof is provided in Appendix~\ref{section:error:ode}.

\begin{theorem}[Flow ODEs with vanilla velocity estimation]\label{theorem:error:ode}
Let Assumption~\ref{assumption:Gaussian:convolution} be fulfilled. Let $\what{\psi}$ be the discrete-time flow map defined by the exponential integrator in~\eqref{eq:PFODE:velocity:ei}. Assume $T^{*}<T_{0}<T_{\rm end}<1$. Then we have 
\begin{align*}
&\bbE\Big[W_{2}^{2}\Big(p_{X_{1}},(T_{\rm end}^{-1}\what{\psi}(T_{\rm end},\cdot))_{\sharp}p_{\what{X}_{T_{0}}}\Big)\Big]  \\
&\lesssim \underbrace{\frac{d(1-T_{\rm end})^{2}}{T_{\rm end}^{2}}}_{\text{early-stopping error}}
+ \underbrace{\frac{\exp(2G(T_{\rm end}-T_{0}))}{T_{\rm end}^{2}}\kappa^{2}}_{\text{initialization error}} \\
&\quad + \underbrace{\frac{\exp(2G(T_{\rm end}-T_{0}))}{T_{\rm end}^{2}}\frac{H^{2}}{T_{0}^{4}}\frac{d+R^{2}+1}{(1-T_{\rm end})^{2}}h^{2}}_{\text{discretization error}} + \underbrace{\frac{\exp(2G(T_{\rm end}-T_{0}))}{T_{\rm end}^{2}}\frac{(T_{\rm end}-T_0)^2}{(1-T_0)(1-T_{\rm end})}\delta^{2}}_{\text{velocity estimation error}},
\end{align*}
where $G$ the a constant from Theorem~\ref{thm} only depending on $R$ and $\sigma$, the constant $H$ from Lemma~\ref{lemma:derivative:denoiser} only depends on $R$ and $\sigma$. The error of the velocity estimation $\delta$ is defined in Lemma~\ref{lemma:velocity:estimation}, and the error of flow initialization $\kappa$ is defined in Lemma~\ref{lemma:error:initialization}.
\end{theorem}

\par By similar arguments, we also establish the convergence rate for flow ODE with stable velocity estimation~\eqref{eq:PFODE:velocity:ei:stable}, the proof of which is provided in Appendix~\ref{section:error:ode}.

\begin{theorem}[Flow ODEs with stable velocity estimation]\label{theorem:error:ode:stable}
Let Assumption~\ref{assumption:Gaussian:convolution} be fulfilled. Let $\what{\psi}_{\rm stab}$ be the discrete-time flow map defined by the exponential integrator in~\eqref{eq:PFODE:velocity:ei:stable}. Assume $T^{*}<T_{0}<T_{\rm end}<1$. Then we have 
\begin{align*}
&\bbE\Big[W_{2}^{2}\Big(p_{X_{1}},(T_{\rm end}^{-1}\what{\psi}_{\rm stab}(T_{\rm end},\cdot))_{\sharp}p_{\what{X}_{T_{0}}}\Big)\Big]  \\
&\lesssim \underbrace{\frac{d(1-T_{\rm end})^{2}}{T_{\rm end}^{2}}}_{\text{early-stopping error}} \!
+ \underbrace{\exp\Big(2\Big(G+\frac{2}{T_{0}}\Big)(T_{\rm end}-T_{0})\Big)\frac{\kappa^{2}}{T_{\rm end}^{2}}}_{\text{initialization error}} \\
&\quad + \underbrace{\exp\Big(2\Big(G+\frac{2}{T_{0}}\Big)(T_{\rm end}-T_{0})\Big)\frac{V^{2}}{T_{0}^{6}}\frac{d+R^{2}+1}{(1-T_{\rm end})^{2}}h^{2}}_{\text{discretization error}} \\ 
&\quad + \underbrace{\exp\Big(2\Big(G+\frac{2}{T_{0}}\Big)(T_{\rm end}-T_{0})\Big)\frac{(T_{\rm end}-T_{0})^{2}}{T_{\rm end}^{2}T_{0}^{4}}\delta^{2}}_{\text{velocity estimation error}},
\end{align*}
where $G$ the a constant from Theorem~\ref{thm} only depending on $R$ and $\sigma$, the constant $V$ from Lemma~\ref{lemma:derivative:stable} only depends on $R$ and $\sigma$. The error of the velocity estimation $\delta$ is defined in Lemma~\ref{lemma:velocity:estimation:stable}, and the error of flow initialization $\kappa$ is defined in Lemma~\ref{lemma:error:initialization}.
\end{theorem}

\par Both Theorems~\ref{theorem:error:ode} and~\ref{theorem:error:ode:stable} decompose the total error of the discrete-time flow $\what{\psi}(T_{\rm end},\cdot)$ into four components: (i) the early-stopping error, (ii) the initialization error, (iii) the discretization error, and (iv) the velocity estimation error. The early-stopping error quantifies the discrepancy between the terminal density $p_{X_{T_{\rm end}}}$ and the target density $p_{X_{1}}$. The discretization error arises from the exponential integrator discretization of the probability flow ODE. While the initialization error and the velocity estimation error are analyzed in Lemmas~\ref{lemma:error:initialization} and~\ref{lemma:velocity:estimation}, respectively, Theorems~\ref{theorem:error:ode} and~\ref{theorem:error:ode:stable} further elucidate how these errors are amplified during the numerical simulation of the probability flow ODE. Notably, both the discretization error and the velocity estimation error diverge as the terminal time $T_{\rm end}$ approaches one, highlighting the necessity of early-stopping.

\par The authors of~\cite{ding2024characteristic} provide an error analysis for simulating the probability flow ODE induced by stochastic interpolants. However, their discretization error analysis relies on the boundedness and Lipschitz continuity of the estimated velocity field. In this work, the velocity is parameterized by a deep neural network, for which boundedness and Lipschitz continuity can be enforced during training; see, for example,~\cite{Drucker1991Double,Drucker1992Improving,ding2025semi,benton2024error,bansal2026convergence}. In contrast, ensuring Lipschitz continuity for a Monte Carlo-based velocity estimator is generally intractable. In contrast, Theorems~\ref{theorem:error:ode} and~\ref{theorem:error:ode:stable} derive convergence rates without requiring any regularity assumptions on the velocity estimator.

\begin{remark}\label{remark:sde:ode}
Theorems~\ref{theorem:error:ode} and~\ref{theorem:error:ode:stable} highlight the theoretical advantages of sampling via stochastic interpolants over SDE-based sampling methods that rely on Monte Carlo score estimation~\cite{huang2024reverse,he2024zeroth,Grenioux2024Stochastic}. In particular, Theorems~\ref{theorem:error:ode} and~\ref{theorem:error:ode:stable} establish that the discretization error of the Euler method applied to the probability flow ODE converges at rate of $\calO(h)$, whereas the analogue discretization of SDEs achieves only a strong convergence rate of $\calO(h^{1/2})$; see~\cite[Theorem~10.6.3]{Kloeden1992Numerical}. Consequently, to attain the same accuracy in Wasserstein-2 distance, SDE-based methods require a step size on the order of the square root of that used for ODE-based methods, resulting in a substantially larger number of discretization steps. Since evaluating either the velocity field or the score function via Monte Carlo approximation is computationally expensive, this increase in the number of time steps leads to a significant growth in overall computational cost.
\end{remark}


\section{Preconditioned Langevin Algorithm}
\label{section:preconditioning}

\par Although we have established the convergence of the Langevin Monte Carlo for both velocity estimation in Lemmas~\ref{lemma:velocity:estimation} and~\ref{lemma:velocity:estimation:stable}, and for flow initialization in Lemma~\eqref{lemma:error:initialization}, the vanilla Langevin Monte Carlo suffers from slow convergence when the condition number is excessively large, as noted in Remark~\ref{remark:condition:number}. To address this, the current section introduces a preconditioning scheme into the Langevin algorithms for both tasks. We demonstrate the numerical advantages of this preconditioning in Section~\ref{section:experiments}, leaving a rigorous theoretical analysis for future work.

\par Before proceeding, we recall the Langevin diffusion with invariant distribution $\pi\in C^{2}(\bbR^{d})$:
\begin{equation}\label{eq:langevin}
\d\widetilde{\theta}_{t}=\nabla\log\pi(\widetilde{\theta}_{t})\dt+\sqrt{2}\d B_{t},\quad t\geq 0.
\end{equation}
The preconditioned Langevin diffusion is defined as 
\begin{equation}\label{eq:langevin:preconditioned}
\d\theta_{t}=\underbrace{P(\theta_{t})\nabla\log\pi(\theta_{t})\dt}_{\text{preconditioned score}}+\underbrace{\Div P(\theta_{t})\dt}_{\text{correction}}+\underbrace{\sqrt{2 P(\theta_{t})}\d B_{t}}_{\text{preconditioned noise}}, \quad t\geq 0,
\end{equation}
where $P(\theta_{t})\succeq 0$ is a probably state-dependent preconditioner, and $\Div$ denotes the row-wise divergence of a matrix-valued function, that is, $(\Div P(\theta_{t}))_{k}=\sum_{\ell=1}^{d}\partial_{\ell}P_{k\ell}(\theta_{t})$. If $P(\theta)\equiv P$ is a state-independent preconditioner, then the correction term vanishes. In particular, if $P(\theta)\equiv I_{d}$, the preconditioned Langevin diffusion degenerates to the vanilla Langevin diffusion~\eqref{eq:langevin}.

\begin{remark}[Comparison with preconditioned optimization]
In preconditioned gradient flow~\cite{Amari1998Natural,Martens2010deep,Duchi2011Adaptive,Kingma2015adam,dahmen2025expansive,cayci2025riemannian}, the preconditioner modifies only the gradient of the objective function. In contrast, the preconditioned Langevin diffusion~\eqref{eq:langevin:preconditioned} alters not only the gradient (drift) term but also the noise term, and additionally introduces a correction term. 
\end{remark}

This correction term in~\eqref{eq:langevin:preconditioned} is essential to ensure that the invariant distribution of the preconditioned Langevin diffusion coincides with that of the vanilla Langevin diffusion~\eqref{eq:langevin}, as established in the following proposition, see~\cite[Theorem 1]{ma2015complete}.

\begin{proposition}[Invariant distribution of preconditioned Langevin diffusion]\label{lemma:invariant:preconditioned:LD}
Suppose that $\pi\in C^{2}(\bbR^{d})$. Assume further $P(\theta)\succeq 0$ for each $\theta\in\bbR^{d}$. Then the invariant distribution of the preconditioned Langevin diffusion~\eqref{eq:langevin:preconditioned} is the same as that of the vanilla Langevin diffusion~\eqref{eq:langevin}.
\end{proposition}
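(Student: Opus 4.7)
The plan is to verify that $\pi$ is a stationary solution of the Fokker--Planck equation associated with~\eqref{eq:langevin:preconditioned}. Since the vanilla Langevin diffusion~\eqref{eq:langevin} is standardly known to admit $\pi$ as its invariant distribution, coincidence of the two stationary densities then follows. For an It\^o SDE $d\theta_t = b(\theta_t)\,dt + \sigma(\theta_t)\,dB_t$ with diffusion matrix $a \coloneqq \sigma\sigma^{\top}$, the Kolmogorov forward equation for the density $p_t$ reads
$$\partial_t p_t = -\sum_k \partial_k(b_k\,p_t) + \frac{1}{2}\sum_{k,\ell}\partial_k \partial_\ell(a_{k\ell}\,p_t).$$
In our setting $b_k(\theta) = (P(\theta)\nabla\log\pi(\theta))_k + (\Div P(\theta))_k$ and $a_{k\ell}(\theta) = 2 P_{k\ell}(\theta)$, so it suffices to show that the right-hand side vanishes identically when $p_t \equiv \pi$.

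The main step is a careful expansion of the diffusion term via the product rule, respecting the (asymmetric) index convention $(\Div P)_k = \sum_\ell \partial_\ell P_{k\ell}$ given in the statement:
\begin{align*}
\sum_{k,\ell}\partial_k\partial_\ell(P_{k\ell}\,\pi)
&= \sum_k \partial_k\Big(\sum_\ell (\partial_\ell P_{k\ell})\,\pi + \sum_\ell P_{k\ell}\,\partial_\ell \pi\Big) \\
&= \Div\bigl((\Div P)\,\pi\bigr) + \Div\bigl(P\nabla \pi\bigr).
\end{align*}
On the drift side, the elementary identity $\pi\,\nabla\log \pi = \nabla \pi$ yields
$$\sum_k \partial_k(b_k\,\pi) = \Div\bigl(P \nabla\pi\bigr) + \Div\bigl((\Div P)\,\pi\bigr).$$
Subtracting these two expressions term by term, the Fokker--Planck right-hand side vanishes identically at $p_t = \pi$, which is exactly the stationarity condition.

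The only subtle point is bookkeeping the asymmetric convention in $\Div P$ so that the cross term $\Div((\Div P)\,\pi)$ produced by expanding the diffusion cancels precisely against the contribution of the correction term in the drift; this is in fact the very reason the correction $\Div P$ is built into~\eqref{eq:langevin:preconditioned} in the first place. Under the standing hypotheses that $\pi \in C^2(\bbR^d)$ and $P$ is smooth with $P(\theta)\succeq 0$, the forward equation is well-defined and the calculation above shows that $\pi$ is indeed invariant for the preconditioned diffusion, matching the invariant distribution of~\eqref{eq:langevin} and completing the proof.
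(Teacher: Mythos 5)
Your proof is correct and takes essentially the same route as the paper's: both reduce the claim to showing that $\pi$ is a stationary solution of the Fokker--Planck equation associated with~\eqref{eq:langevin:preconditioned}, and both rely on the observation that the cross term $\Div\bigl((\Div P)\,\pi\bigr)$ produced by expanding the second-order diffusion term cancels exactly against the contribution of the correction drift $\Div P$, while $\pi\nabla\log\pi = \nabla\pi$ collapses the remaining pieces. The only cosmetic difference is that the paper carries the general time-dependent density $\pi_t$ through the cancellation and arrives at the compact form $\partial_t\pi_t = \Div\bigl(P\,\pi_t\,\nabla\log(\pi_t/\pi)\bigr)$ before setting $\partial_t\pi_t = 0$, whereas you substitute $p_t = \pi$ at the outset and verify the right-hand side vanishes term by term; these are algebraically equivalent, and your index-level bookkeeping of the asymmetric $(\Div P)_k = \sum_\ell\partial_\ell P_{k\ell}$ convention is a nice touch of explicitness.
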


\par In this work, we adopt the RMSprop-based preconditioning strategy proposed by~\cite{Li2016Preconditioned} for Langevin diffusion. This approach is Hessian-free and circumvents the computational burden of matrix inversion. The diagonal preconditioner $P(\theta_{t})$ is defined as
\begin{align*}
P(\theta_{t})&=\diag\Big(\frac{1}{\sqrt{v(\theta_{t})}+\epsilon}\Big), \\
\frac{\d v(\theta_{t})}{\dt}&=(1-\alpha)(\nabla\log\pi(\theta_{t})\odot\nabla\log\pi(\theta_{t})-v(\theta_{t})), 
\end{align*}
where the second equality is known as accumulator dynamics, which describes an exponential moving average of squared score in continuous time. Here $\epsilon>0$ is the precision tolerance for numerical stabiltiy, $\alpha\in(0,1)$ is the smoothing parameter for exponential moving average, and the notation $\odot$ denotes the entry-wise multiplication of two vectors.

\begin{remark}
\cite[Corollary 2]{Li2016Preconditioned} shows that, when RMSprop-based preconditioning is used, the effect of the correction term $\Div P(\theta_{t})$ in~\eqref{eq:langevin:preconditioned} becomes negligible provided that the smoothing parameter $\alpha$ is close to $1$. Consequently, this term is typically omitted in practical implementations.
\end{remark}

\par RMSprop-based preconditioning is effective for both velocity estimation and initialization for the following reasons:
\begin{enumerate}[label=(\roman*)]
\item \textbf{Velocity estimation.} In Langevin Monte Carlo used for velocity estimation, the target distribution is log-concave. An RMSprop-based preconditioner can reduce the effective condition number by approximating the diagonal of the inverse Hessian~\cite{Dauphin2015Equilibrated,Li2016Preconditioned}. Intuitively, RMSprop-based preconditioning induces adaptive step sizes that reflect the local geometry of the target distribution: it assigns smaller step sizes to steep directions (characterized by large squared gradients) and larger step sizes to flatter directions (characterized by small squared gradients). This adaptive behavior is more efficient than using a fixed step size in standard Langevin dynamics~\cite{Leroy2024Adaptive}.
\item \textbf{Initialization.} During the initialization phase, although the starting distribution $p_{X_{T_{0}}}$ is generally more tractable than the final target, it may typically exhibit multi-modality. In this context, RMSprop-based preconditioning is crucial for facilitating transitions between distinct modes. The regions separating these modes (energy barriers) are often characterized by saddle points or plateaus where the energy landscape is flat, resulting in vanishing gradients~\cite{Dauphin2014Identifying}. Under vanilla Langevin diffusion, the sampler struggles to traverse these low-gradient regions due to the coupling of the step size with the gradient magnitude. In contrast, preconditioned Langevin diffusion rescales the dynamics by the inverse of the gradient magnitude; this effectively amplifies the step size and stochastic noise in flat directions, thereby enhancing exploration and significantly improving the capability to escape saddle points~\cite{Dauphin2014Identifying,Li2016Preconditioned,zhou2024on,Zuo2025Gradient}.
\end{enumerate}

\par The preconditioned counterparts of velocity estimation (Algorithm~\ref{alg:velocity:estimation}) and initialization (Algorithm~\ref{alg:initialization}) are provided in Appendix~\ref{section:precondition:alg}.


\section{Numerical Experiments}
\label{section:experiments}

\par In this work, we conduct a series of numerical experiments to demonstrate the efficiency of our proposed sampling via stochastic interpolants (SSI). In Section~\ref{section:experiments:two:dim}, we investigate two-dimensional examples, while Section~\ref{section:experiments:high:dim} extends this to high-dimensional settings. In Section~\ref{Bayesian:inferrence:gmm}, we explore applications in Bayesian inference. For all of these experiments, an initialization time $T_0$ has to be chosen. From Proposition~\ref{lemma:denoising:langevin:convergence} we can deduce suitable values of $T_0$ if the target distribution satisfies Assumption~\ref{assumption:Gaussian:convolution} and we are given $R$ and $\sigma$. In practice, however, the target distribution may not satisfy Assumption~\ref{assumption:Gaussian:convolution}, or $R$ and $\sigma$ are not both available. We therefore present ablation studies in Section~\ref{section:experiments:ablation} that quantify the influence of $T_{0}$; these also demonstrate the benefits of the preconditioning introduced in Section~\ref{section:preconditioning}. The computational cost is shown in Appendix~\ref{appendix:exp:cost}. Throughout this section, SSI~w/o~precond. means~\eqref{eq:PFODE:velocity:ei} using Langevin Monte Carlo without preconditioning, SSI means~\eqref{eq:PFODE:velocity:ei} using Langevin Monte Carlo with preconditioning, and Stab.~SSI means~\eqref{eq:PFODE:velocity:ei:stable} using Langevin Monte Carlo without preconditioning for \texttt{rings} and with preconditioning for other distributions.

\subsection{Two-dimensional distributions}
\label{section:experiments:two:dim}

\par This subsection compares SSI with different baselines in two-dimensional distributions. The proposed SSI demonstrates superior performance in capturing complex, multi-modal distributions, consistently achieving the best results in distributional metrics.

\paragraph{Baselines} 
We benchmark our method against several well-established algorithms: unadjusted Langevin algorithm (ULA), Metropolis-adjusted Langevin algorithm (MALA), preconditioned ULA (pULA) in Section~\ref{section:preconditioning}, Hamiltonian Monte Carlo (HMC)~\cite{HG2014}, parallel tempering (PT), and non-reversible parallel tempering method DEO-PT~\cite{Syed2022NonReversible}. The implementation of the parallel tempering (PT) is based on~\cite{Chen2024Diffusive}. For all baselines, we employ initialization from either a standard Gaussian distribution or the origin point. Detailed hyperparameters for these methods are summarized in Appendix~\ref{section:exp:details}.

\paragraph{Target distributions}
We evaluate our method and baselines on three challenging two-dimensional distributions: the \texttt{rings} distribution~\cite{Grenioux2024Stochastic}, a mixture of $7\times7$ Gaussian grid (\texttt{MoG7x7}), and a random mixture of 40 Gaussians (\texttt{MoG40})~\cite{midgley2023flow}. These distributions are visualized in the left-top panel of Figures~\ref{fig:example2d:rings},~\ref{fig:example2d:MoG7x7}, and~\ref{fig:example2d:mog40}, respectively.

\paragraph{Experimental results}

\par The quantitative results for the different sampling methods across various metrics are summarized in Table~\ref{tab:exp:results}. Here, the reference metrics are computed using the same number of samples from the target distribution. To provide a visual comparison, the sampling results for the \texttt{rings}, \texttt{MoG7x7}, and \texttt{MoG40} distributions are illustrated in Figures~\ref{fig:example2d:rings},~\ref{fig:example2d:MoG7x7}, and~\ref{fig:example2d:mog40}, respectively.

\begin{table}[htbp]
\centering\small
\setlength{\tabcolsep}{4pt}
\caption{Comparison of negative log likelihood (NLL), maximum mean discrepancy (MMD), and Wasserstein-2 distance ($W_{2}$) for different sampling methods.}
\label{tab:exp:results}
\begin{tabular}{lrcccccccc}
\toprule
 & Metric & ULA & MALA & pULA & HMC & PT \\
\midrule
\texttt{rings}
& NLL $\downarrow$
& $1.42\times10^{2}$ & $3.83\times10^{1}$
& $1.06\times10^{1}$ & $5.66$ & $5.53$ \\
& MMD $\downarrow$
& $2.06\times10^{-3}$ & $5.14\times10^{-2}$
& $2.56\times10^{-2}$ & $5.17\times10^{-3}$
& $5.89\times10^{-2}$ \\
& $W_{2}$ $\downarrow$
& $1.77$ & $5.44$ & $4.81$ & $1.79$ & $3.53$ \\
\midrule
\texttt{MoG7x7}
& NLL $\downarrow$
& $5.58$ & $4.86$ & $5.44$ & $8.03$ & $5.34$ \\
& MMD $\downarrow$
& $2.23$ & $2.24$ & $2.24$ & $2.23$
& $2.91\times10^{-1}$ \\
& $W_{2}$ $\downarrow$
& $2.58\times10^{1}$ & $2.61\times10^{1}$ & $9.18$
& $2.53\times10^{1}$ & $1.03\times10^{1}$ \\
\midrule
\texttt{MoG40}
& NLL $\downarrow$
& $7.10$ & $6.54$ & $7.15$ & $6.94$ & $6.95$ \\
& MMD $\downarrow$
& $7.40\times10^{-1}$ & $1.95$ & $5.40\times10^{-1}$
& $3.46\times10^{-1}$ & $2.08\times10^{-1}$ \\
& $W_{2}$ $\downarrow$
& $1.86\times10^{1}$ & $2.70\times10^{1}$
& $1.65\times10^{1}$ & $1.33\times10^{1}$
& $1.06\times10^{1}$ \\
\midrule
 & Metric & DEO-PT &\textbf{SSI w/o precond.} & \textbf{SSI}
 & \textbf{Stab. SSI} & Reference \\
\midrule
\texttt{rings}
& NLL $\downarrow$ & 5.72
& $1.78\times10^{1}$ & $1.88\times10^{1}$ & {5.83} & $5.74$ \\
& MMD $\downarrow$ & $2.60\times 10^{-3}$
& $3.97\times10^{-3}$ & $1.70\times10^{-2}$
& {$3.97\times 10^{-3}$} & $3.92\times10^{-4}$ \\
& $W_{2}$ $\downarrow$ & 1.43
& $1.38$ & $2.20$ & {2.34} & $0.58$ \\
\midrule
\texttt{MoG7x7}
& NLL $\downarrow$ & 5.38
& $7.18$ & $7.42$ & $2.12\times10^{1}$ & $5.35$ \\
& MMD $\downarrow$ & $3.97\times 10^{-3}$
& $4.08\times10^{-2}$ & $6.88\times10^{-3}$
& $1.29\times10^{-3}$ & $4.98\times10^{-4}$ \\
& $W_{2}$ $\downarrow$ & 2.34
& $4.05$ & $1.94$ & $2.52$ & $0.71$ \\
\midrule
\texttt{MoG40}
& NLL $\downarrow$ & 6.87
& $7.49$ & $7.54$ & $1.27\times10^{1}$ & $6.86$ \\
& MMD $\downarrow$ & $6.30\times 10^{-3}$
& $2.86\times10^{-2}$ & $2.26\times10^{-2}$
& $1.35\times10^{-2}$ & $3.92\times10^{-4}$ \\
& $W_{2}$ $\downarrow$ & 2.35
& $3.94$ & $4.00$ & $4.33$ & $0.79$ \\
\bottomrule
\end{tabular}
\end{table}

\par Table~\ref{tab:exp:results} quantitatively compares SSI and its variants against baseline methods across four benchmark distributions. Our methods and DEO-PT consistently outperform the standard MCMC baselines
on  \texttt{MoG7x7} and \texttt{MoG40}, particularly in terms of MMD and $W_2$
distance. On \texttt{MoG7x7}, stable SSI~\eqref{eq:PFODE:velocity:ei:stable} achieves the lowest MMD, while vanilla SSI~\eqref{eq:PFODE:velocity:ei}
attains the lowest $W_2$ distance. On \texttt{MoG40}, DEO-PT achieves the best MMD and $W_2$ distance, with the SSI variants remaining substantially more accurate than the other baselines. These results indicate that both approaches capture the multimodal structure of the targets more faithfully and are substantially less affected by the energy barriers between modes. Furthermore, in most cases, stable SSI~\eqref{eq:PFODE:velocity:ei:stable} outperforms vanilla SSI~\eqref{eq:PFODE:velocity:ei} in MMD, demonstrating the benefit of stabilization.

\begin{remark}[Advantages of preconditioning]
Comparing pULA with the vanilla ULA highlights the benefits of the preconditioning strategy employed in pULA. pULA consistently outperforms ULA in the multi-modal Mixture of Gaussians experiments: \texttt{MoG7x7} and \texttt{MoG40}. This suggests that the RMSprop preconditioner helps the sampler traverse the low-density regions between modes more effectively than ULA. The same effect appears within our own method: on \texttt{MoG7x7}, preconditioned SSI significantly outperforms its unpreconditioned counterpart in both MMD and $W_{2}$, providing more direct evidence for the value of preconditioning.
\end{remark}

\begin{remark}[Limitation of NLL]
MALA attains the lowest NLL on \texttt{MoG7x7} and \texttt{MoG40}, but this reflects a limitation of the NLL metric rather than superior sampling quality. First, the NLL only measures whether the particles concentrate in high-probability regions, and not whether they cover the target. A sampler that places all of its particles at the global maximizers of the target density therefore attains an NLL even lower than that of the ground-truth distribution, despite being a poor approximation to it; a smaller NLL is thus symptomatic of over-concentration rather than of accurate sampling. Second, for an equally weighted mixture such as \texttt{MoG7x7} and \texttt{MoG40}, a sampler restricted to a subset of the modes attains the same NLL as one covering all of them, so the metric is blind to mode collapse by construction. Both failure modes are evident in Figures~\ref{fig:example2d:rings},~\ref{fig:example2d:MoG7x7}, and~\ref{fig:example2d:mog40}, and they are captured by MMD and $W_{2}$, on which MALA performs poorly.
\end{remark}

\begin{figure}[htbp]
\centering
\includegraphics[width=1.0\linewidth]{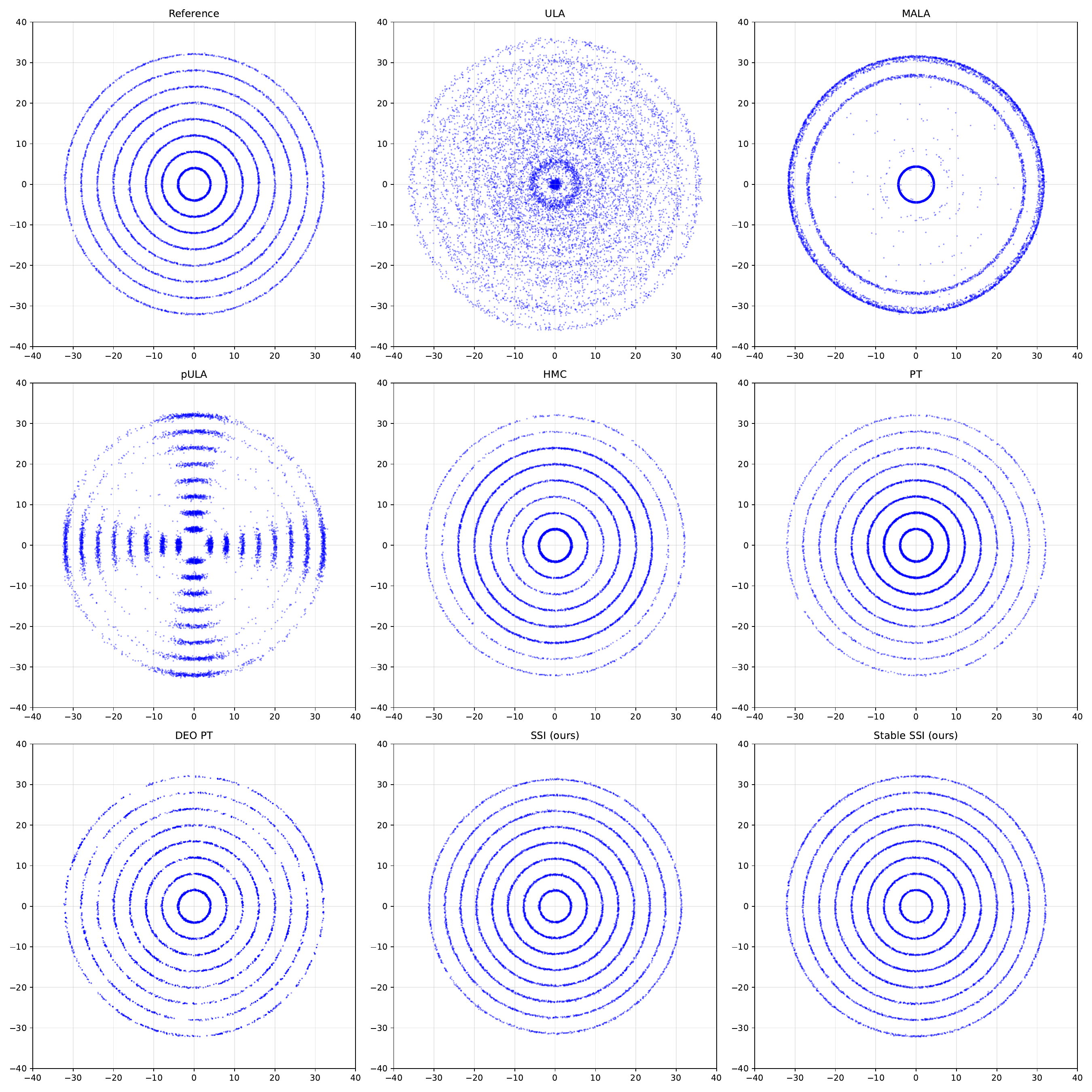}
\caption{Comparison of SSI against baseline methods for the \texttt{rings} distribution in Section~\ref{section:experiments:two:dim}. The blue points represent $10^{4}$ particles generated by each method.}
\label{fig:example2d:rings}
\end{figure}

\par The results for the \texttt{rings} distribution are presented in Figure~\ref{fig:example2d:rings}. To enable ULA to capture all rings of the target distribution, we employed a large step size to enhance its exploratory capability. However, this large step size prevents ULA from capturing the local detailed features of the target distribution. In contrast, due to its rejection procedure and adaptive step size, MALA successfully captures the local detailed structure; nevertheless, it is only able to identify three of the eight rings. Similarly, pULA exhibits an adaptive step size that allows it to resolve local geometry. As discussed in Section~\ref{section:preconditioning}, while preconditioning generally improves exploration, the anisotropy of the RMSprop preconditioner causes pULA to capture structure primarily along the axes. HMC and PT not only capture all eight rings but also adapt to the local geometry. However, by the construction of the \texttt{rings} distribution, each ring carries equal probability mass. Consequently, particles should appear sparser on the outer rings and denser on the inner rings. The particles sampled by HMC or PT do not adhere to this distribution property. In contrast, our proposed SSI not only accurately captures all rings but also correctly recovers the relative weights of the different rings. Empirical estimations of weights of rings generated by different sampling methods are shown in Table~\ref{tab:exp:rings:weight} in Appendix~\ref{appendix:exp:weights}.

\begin{figure}[htbp]
\centering
\includegraphics[width=1.0\linewidth]{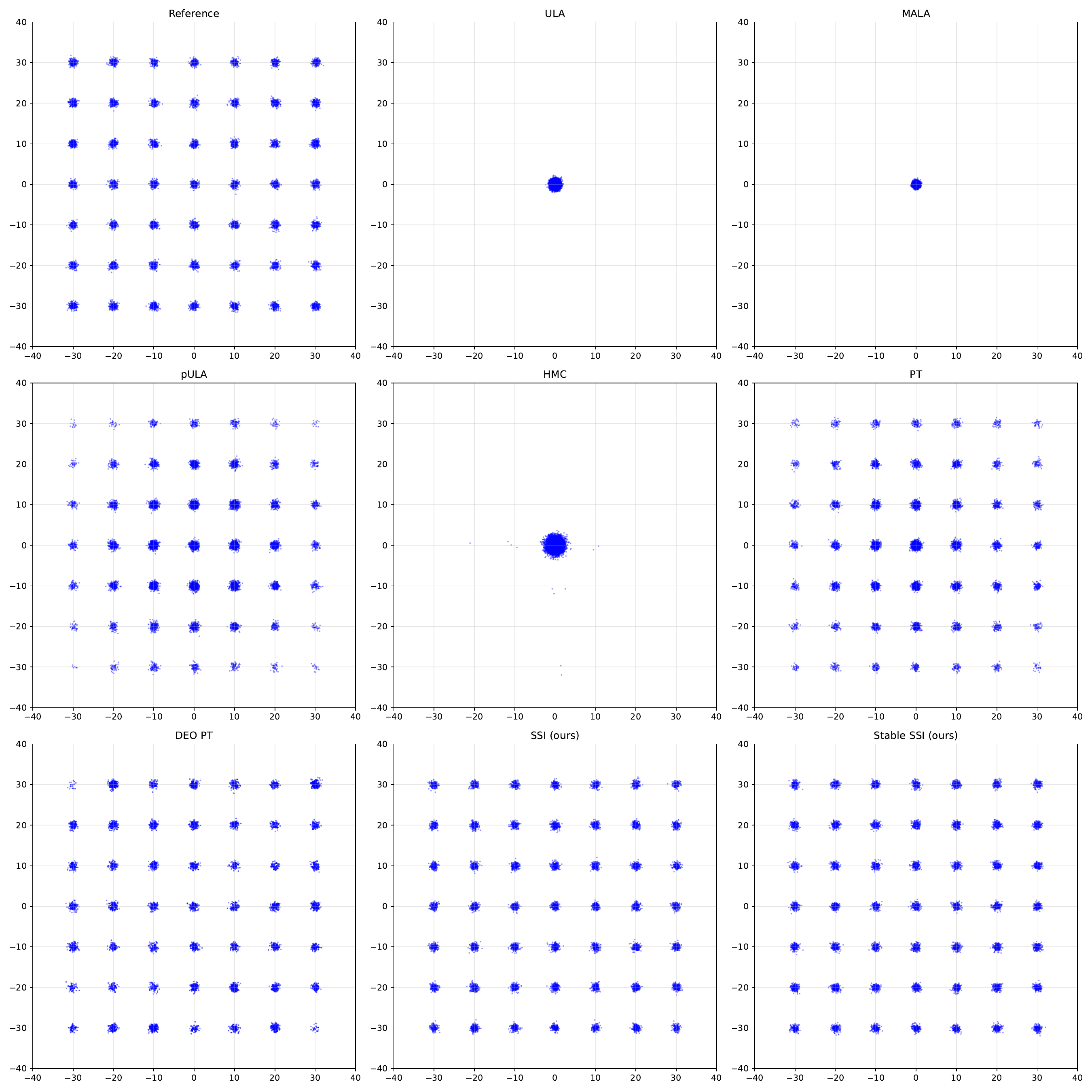}
\caption{Comparison of SSI against baseline methods for the \texttt{MoG7x7} distribution in Section~\ref{section:experiments:two:dim}. The blue points represent $10^{4}$ particles generated by each method.}
\label{fig:example2d:MoG7x7}
\end{figure}

\par The results for the \texttt{MoG7x7} distribution are displayed in Figure~\ref{fig:example2d:MoG7x7}. The particles sampled by ULA, MALA, and HMC remain concentrated around the initialization region. This limitation arises because these methods are unable to traverse the high energy barriers separating the modes. In the case of pULA, the RMSprop preconditioner induces larger step sizes and significant noise perturbations in the low-density regions between distinct modes, thereby facilitating exploration. However, both pULA and PT fail to correctly recover the relative weights of the different modes, as evidenced by the insufficient number of particles in the outer modes. In contrast, SSI not only captures all modes of the Gaussian mixture but also accurately recovers their relative weights.

\begin{figure}[htbp]
\centering
\includegraphics[width=1.0\linewidth]{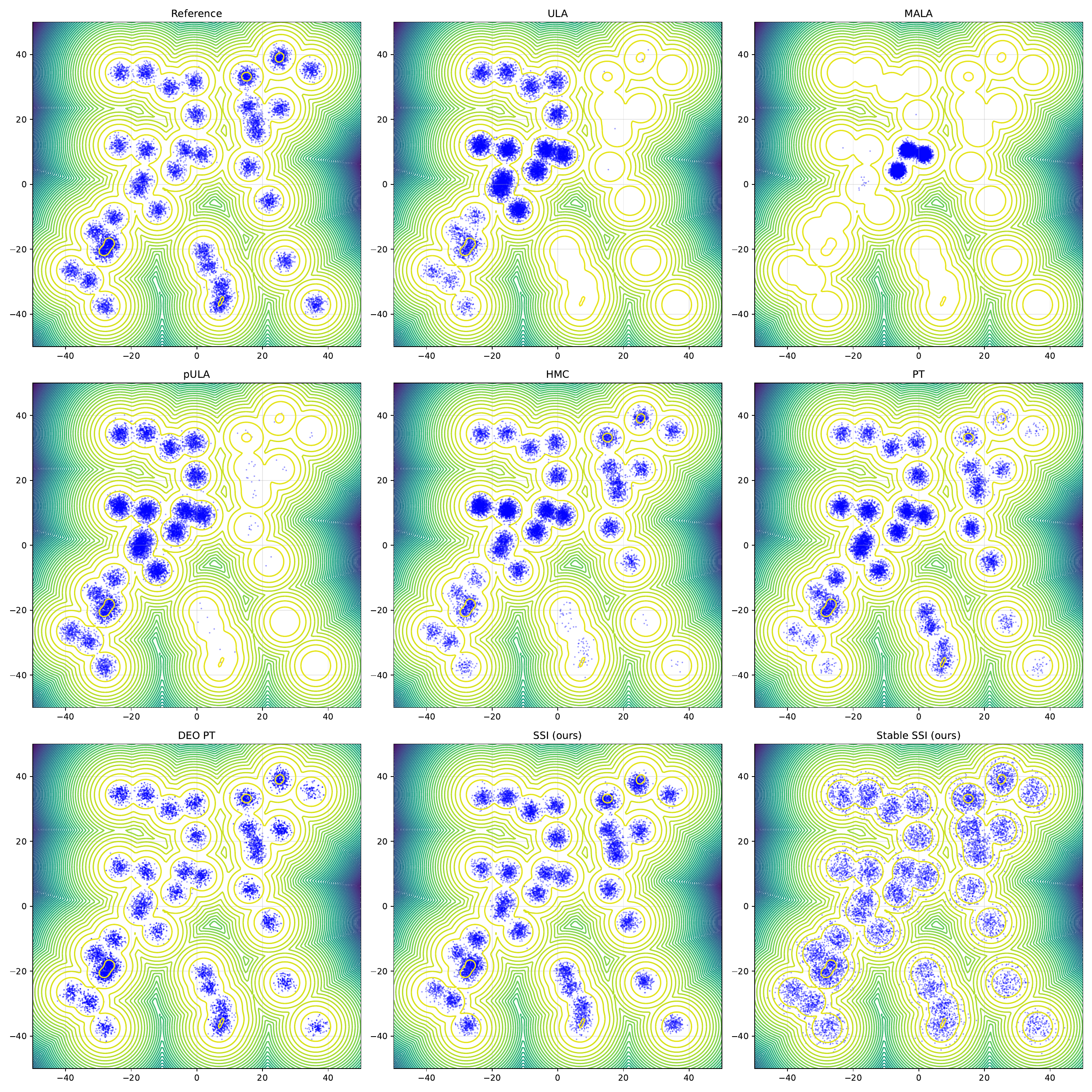}
\caption{Comparison of SSI against baseline methods for the \texttt{MoG40} distribution in Section~\ref{section:experiments:two:dim}. The blue points represent $10^{4}$ particles generated by each method.}
\label{fig:example2d:mog40}
\end{figure}

\par The results for the \texttt{MoG40} distribution are presented in Figure~\ref{fig:example2d:mog40}. Among the baseline methods, only PT successfully captures all 40 modes of the target distribution; however, it fails to recover their relative weights. In contrast, SSI not only captures every mode but also accurately preserves their respective weights.

\paragraph{Additional comparison with DEO-PT}
Table~\ref{tab:exp:results} shows that DEO-PT~\cite{Syed2022NonReversible} has comparable performances as our proposed SSI on \texttt{rings} and \texttt{MoG7x7}, while DEO-PT performs better than ours on \texttt{MoG40}. We give an example where DEO-PT fails while our SSI works. We consider the following two-dimensional Gaussian mixture target:
\begin{equation}
\label{eq:gmm:unequal}
p_{X_{1}}(x) \coloneqq \frac{1}{10}\mathcal{N}\left(x;(4,4),6.0^{2}I_{2}\right)+\frac{9}{10}\mathcal{N}\left(x;(-20,-20),0.2^{2}I_{2}\right).
\end{equation}
This distribution presents a particularly challenging sampling problem because its two components differ substantially in both mixing weight and scale. Specifically, the component with the larger probability mass is highly concentrated, whereas the component with the smaller probability mass is considerably more diffuse. Together with the large separation between the component means, this mismatch creates a narrow, high-density mode that is difficult to discover, as well as pronounced differences in the local geometry of the two modes.

The comparison between SSI and DEO-PT is presented in Figure~\ref{fig:example:pt}. The experimental settings are shown in Appendix~\ref{appendix:exp:pt}. Figure~\ref{fig:example:pt} shows DEO-PT predominantly explores the diffuse mode associated with
$\mathcal{N}\!\left((4,4),6.0^{2}I_{2}\right)$ and fails to adequately capture the narrow mode associated with
$\mathcal{N}\!\left((-20,-20),0.2^{2}I_{2}\right)$. In contrast, SSI successfully discovers and samples from both modes. Nevertheless, the empirical proportions of the samples assigned to the two modes do not yet accurately recover the ground-truth mixing weights.

\begin{figure}[htbp]
\centering
\includegraphics[width=\linewidth]{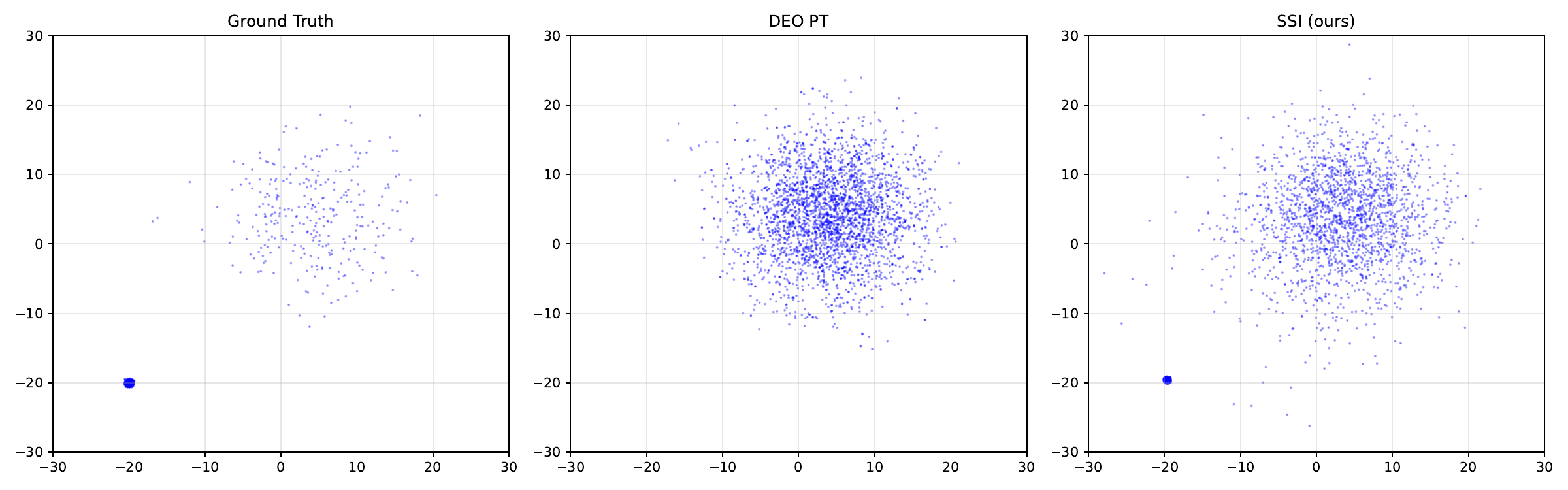}
\caption{Comparison of SSI and DEO-PT on the Gaussian mixture target in~\eqref{eq:gmm:unequal}. The blue points represent $3\times 10^{3}$ particles generated by each method. DEO-PT predominantly captures the diffuse mode, whereas SSI discovers both the diffuse and concentrated modes, although it does not perfectly recover their relative mixing weights.}
\label{fig:example:pt}
\end{figure}

\subsection{High-Dimensional Distributions}
\label{section:experiments:high:dim}

\par In this subsection, we demonstrate the efficacy of SSI on higher-dimensional distributions. We consider the eight-dimensional \texttt{Many Well} distribution~\cite{midgley2023flow} as a representative example, which is characterized by its highly multi-modal nature. Figure~\ref{fig:example:many:well} provides a visualization of these results, showing that SSI successfully captures all modes of the eight-dimensional \texttt{Many Well} distribution.

\begin{figure}[htbp]
\centering
\includegraphics[width=1.0\linewidth]{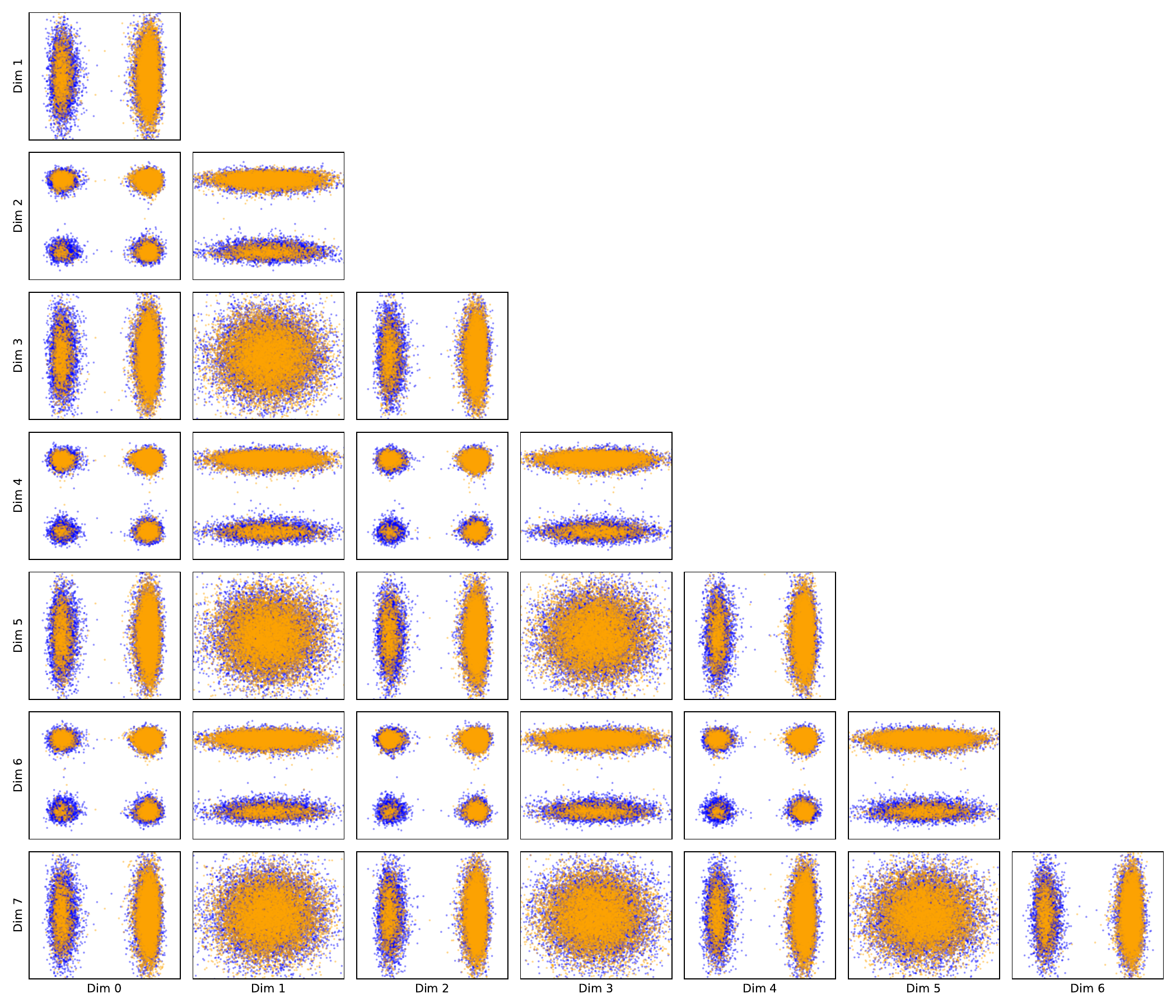}
\caption{Pairwise marginal distributions on $\bbR^{2}$ for the eight-dimensional \texttt{Many Well} distribution. The orange points represent samples from the ground truth distribution, while the blue points represent the particles generated by SSI.}
\label{fig:example:many:well}
\end{figure}

\subsection{Bayesian inference}
\label{Bayesian:inferrence:gmm}

\par In this subsection, we investigate the efficacy of SSI in the context of Bayesian inference~\cite{Lee2010Utility,Heng2021Gibbs,Durmus2019High}. We specifically consider the inference of cluster centers in a one-dimensional Gaussian mixture model~\cite{Lee2010Utility,Heng2021Gibbs}. Let $Y_{1:m}=\{Y_{i}\}_{i=1}^{m}$ denote a set of independent and identically distributed observations drawn from a one-dimensional Gaussian mixture with a known common variance $\sigma^{2}$:
\begin{equation*}
Y_{1},\ldots,Y_{m}\sim^{\iid}\frac{1}{K}\sum_{k=1}^{K}N(\theta_{k},\sigma^{2}),
\end{equation*}
where the vector of centers $\theta_{1:K}=(\theta_{1},\ldots,\theta_{K})^{\top}$ represents the unknown parameters to be estimated. By assuming a uniform prior over the parameter vector $\theta_{1:K}$, we obtain the following unnormalized posterior density given the measurements $Y_{1:m}$:
\begin{equation}\label{section:app:mixture:posterior}
p(\theta_{1:K}|Y_{1:m})\propto\underbrace{\prod_{i=1}^{m}\Big\{\frac{1}{K}\sum_{k=1}^{K}\gamma_{\sigma^{2}}(Y_{i}-\theta_{k})\Big\}}_{\text{likelihood}}\underbrace{\prod_{k=1}^{K}\unif_{[a,b]}(\theta_{k})}_{\text{prior}}.
\end{equation}
The goal of Bayesian inference is to sample from the posterior distribution defined in~\eqref{section:app:mixture:posterior}. 

\par As highlighted by~\cite[Section 5.1]{Heng2021Gibbs}, the posterior distribution in~\eqref{section:app:mixture:posterior} is invariant under any permutation of the parameter vector $\theta_{1:K}$. Specifically, for any permutation matrix $Q\in\bbR^{K\times K}$, the following holds:
\begin{equation*}
p(\theta_{1:K}|Y_{1:m})=p(Q\theta_{1:K}|Y_{1:m}), \quad \theta_{1:K}\in\bbR^{K}.
\end{equation*}
This permutation invariance implies that if the posterior $p(\cdot|Y_{1:m})$ possesses a mode centered at $\theta_{1:K}^{*}\in\bbR^{K}$, where each element $\theta_{k}^{*}$ is distinct (i.e., $\theta_{\ell}^{*}\neq\theta_{k}^{*}$ for all $\ell\neq k$), then there exist $(K\,!-1)$ additional distinct modes. These modes correspond to all possible permutations of $\theta_{1:K}^{*}$, depicted as red crosses in Figure~\ref{fig:bayesian:inference}. Such a multi-modal structure poses significant challenges for conventional sampling algorithms.

\par In this experiment, we consider a mixture of four one-dimensional Gaussians with centers located at $\{-3,0,3,6\}$. The prior is chosen as a uniform distribution on $(-10,10)^{4}$. Consequently, all permutations of $\theta_{1:4}^{*}=(-3,0,3,6)$ represent valid modes of the posterior. The particles generated by SSI are shown as blue points in Figure~\ref{fig:bayesian:inference}. These results demonstrate that SSI successfully captures all $4!=24$ modes of the posterior distribution~\eqref{section:app:mixture:posterior}.

\begin{figure}[htbp]
\centering
\includegraphics[width=0.95\linewidth]{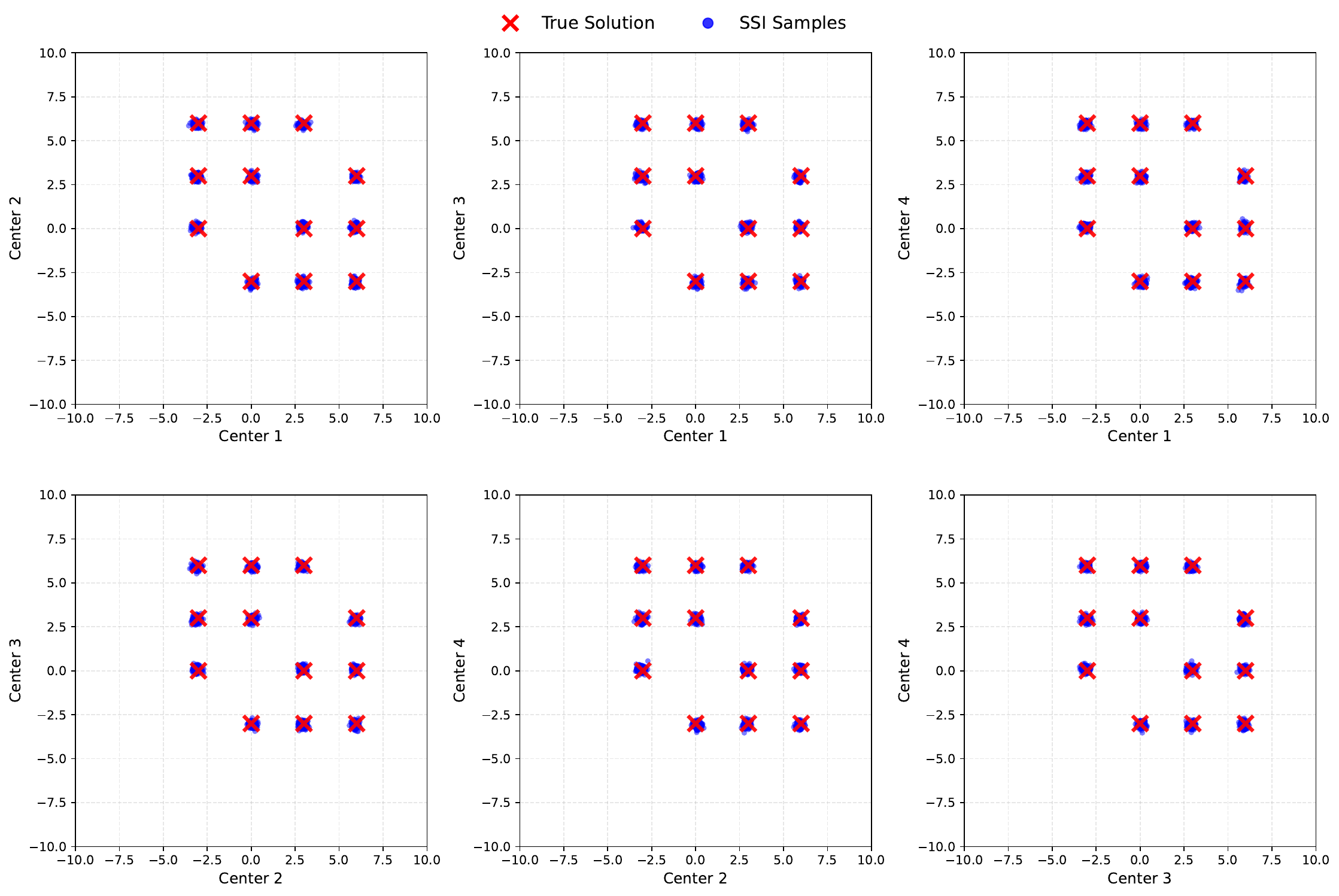}
\caption{Pairwise marginal posterior distributions on $\bbR^{2}$ for the Bayesian Gaussian model in Section~\ref{Bayesian:inferrence:gmm}. The red crosses indicate the true locations of the posterior modes, while the blue points represent the particles sampled by SSI.}
\label{fig:bayesian:inference}
\end{figure}

\begin{remark}[Constrained sampling using projected Langevin Monte Carlo]
Since the prior distribution in~\eqref{section:app:mixture:posterior} is a uniform distribution with compact support, the denoising distribution $p_{X_{1}\mid X_{t}=x_{t}}$ in~\eqref{eq:denoising:density} also has compact support. To sample from this distribution over a constrained domain, in practice we employ the projected Langevin Monte Carlo algorithm~\cite{Bubeck2015Finite}, projecting the particles onto the support of the distribution after each Langevin step~\eqref{eq:langevin:monte:carlo}. The error analysis of projected Langevin Monte Carlo under log-concavity assumptions has been established in the literature; see, e.g.,~\cite{Bubeck2015Finite,Bubeck2018Sampling}. We leave the extension of our analysis to constrained distributions for future work.
\end{remark}

\subsection{Ablation studies}
\label{section:experiments:ablation}

\par In this subsection, we provide an ablation study to show the influence of the choice of the initialization time $T_{0}$ in~\eqref{eq:PFODE}, and demonstrate the benefits of preconditioning introduced in Section~\ref{section:preconditioning}. We take \texttt{MoG7x7} as an example. The MMD and $W_{2}$ change against the initialization time $T_{0}$ are shown in Table~\ref{tab:exp:ablation}.

\begin{table}[htbp]
\centering\scriptsize
\setlength{\tabcolsep}{2pt}
\caption{Comparison of maximum mean discrepancy (MMD), and 2-Wasserstein distance ($W_{2}$) for different initialization time $T_{0}$.}
\label{tab:exp:ablation}
\begin{tabular}{crccccccc}
\toprule
precond. & & $T_{0}=0.01$ & $T_{0}=0.1$ & $T_{0}=0.2$ & $T_{0}=0.3$ & $T_{0}=0.4$ & $T_{0}=0.5$ & $T_{0}=0.6$ \\
\midrule
\ding{56} & MMD $\downarrow$ & $1.82\times 10^{-2}$ & $\bm{1.52\times 10^{-2}}$ & $4.22\times 10^{-2}$ & $3.33\times 10^{-1}$ & $1.51$ & $2.20$ & $2.23$ \\
\ding{56} & $W_{2}$ $\downarrow$ & $2.84$ & $\bm{2.81}$ & $4.04$ & $1.08\times 10^{1}$ & $2.14\times 10^{1}$ & $2.53\times 10^{1}$ & $2.59\times 10^{1}$ \\
\midrule 
\ding{51} & MMD $\downarrow$ & $1.88\times 10^{-2}$ & $1.67\times 10^{-2}$ & $7.24\times 10^{-3}$ & $\bm{1.69\times 10^{-3}}$ & $1.04\times 10^{-1}$ & $7.34\times 10^{-1}$ & $1.21$ \\
\ding{51} & $W_{2}$ $\downarrow$ & $5.48$ & $2.58$ & $1.96$ & $\bm{1.79}$ & $6.22$ & $1.54\times 10^{1}$ & $1.93\times 10^{1}$ \\
\bottomrule
\end{tabular}
\end{table}

\paragraph{The choice of the initialization time} 
As shown in Table~\ref{tab:exp:ablation}, the performance of SSI, both with and without preconditioning, is non-monotonic with respect to the initialization time. Specifically, errors are large for both small initialization times and times approaching one. This empirical observation aligns with our theoretical results in Lemmas~\ref{lemma:velocity:estimation},~\ref{lemma:velocity:estimation:stable}, and~\ref{lemma:error:initialization}. Specifically, for sufficiently small initialization times, the convergence of the Langevin Monte Carlo~\eqref{eq:langevin:monte:carlo} for velocity estimation cannot be guaranteed (Lemmas~\ref{lemma:velocity:estimation} and~\ref{lemma:velocity:estimation:stable}), leading to significant errors. Conversely, for sufficiently large initialization times, the initialization distribution $p_{X_{T_{0}}}$ is close to the target distribution $p_{X_{1}}$; it becomes highly multi-modal, preventing the Langevin Monte Carlo~\eqref{eq:langevin:monte:carlo:warmstart} from converging rapidly during initialization (Lemma~\ref{lemma:error:initialization}).

\begin{remark}[Theory--practice gap]
Note that a gap exists between our theoretical analysis and our numerical experiments, and this experiments indeed indicate that our method performs considerably better than the theory predicts. Consider, for instance, the target distribution \texttt{MoG7x7}, whose parameters are $R^{2}=1800$ and $\sigma=0.5$, so that $R\gg\sigma$. Proposition~\ref{lemma:denoising:langevin:convergence} then requires the initialization time to satisfy $T_{0}>T^{*}\approx 0.994$ in order to guarantee log-concavity of the denoising distribution. Since $T^{*}$ is so close to $1$, the initialization distribution $p_{X_{T_{0}}}$ is nearly indistinguishable from the target $p_{X_{1}}$; sampling from $p_{X_{T_{0}}}$ is therefore almost as difficult as sampling from $p_{X_{1}}$ itself, and the theoretical guarantee becomes essentially vacuous in this regime. In practice, however, we set $T_{0}=0.2$ and our method still performs well. This suggests that the velocity field can be estimated with acceptable accuracy for $t>T_{0}$ even when $T_{0}<T^{*}$, i.e., that the log-concavity requirement of Proposition~\ref{lemma:denoising:langevin:convergence} is sufficient but far from necessary.
\end{remark}

\paragraph{The benefits of preconditioning} 
Table~\ref{tab:exp:ablation} demonstrates that SSI with preconditioning outperforms the method without preconditioning for nearly every choice of initialization time, highlighting the advantages of the technique introduced in Section~\ref{section:preconditioning}. More importantly, the first two lines of Table~\ref{tab:exp:ablation} reveal that SSI without preconditioning is highly sensitive to the initialization time. In contrast, SSI with preconditioning exhibits a broader ``sweet spot,'' ranging from $T_{0}=0.2$ to $T_{0}=0.3$. This demonstrates that preconditioning significantly enhances robustness to the choice of initialization time. Furthermore, enabling the use of a larger initialization time reduces the number of iterations required to simulate the probability flow ODE~\eqref{eq:PFODE:velocity:ei} or~\eqref{eq:PFODE:velocity:ei:stable}, thereby lowering computational costs. This improvement arises because preconditioning simultaneously improves the sampling performance of Langevin diffusion for both velocity estimation and initialization. These results further highlight the advantages of our preconditioned approach over existing methods based on vanilla Langevin diffusion, such as~\cite{huang2024reverse,Grenioux2024Stochastic}.

\paragraph{The necessity of Langevin-based initialization}
For the initialization time $T_{0}=0.01$, Langevin-based initialization becomes unnecessary, and a simple Gaussian initialization suffices, since the initialization distribution $p_{X_{T_{0}}}$ remains close to the standard Gaussian. However, as shown in Table~\ref{tab:exp:ablation}, while choosing a small initialization time eliminates the need for this Langevin-based initialization step, it leads to a non-negligible accuracy loss in both the preconditioned and non-preconditioned cases. 

\subsection{Comparison with SLIPS and RDMC}

We compare SSI, both with and without preconditioning, with SLIPS~\cite{Grenioux2024Stochastic} and RDMC~\cite{huang2024reverse} on the \texttt{MoG40} target. We vary the number of integration steps, $M$, in~\eqref{eq:PFODE:velocity:ei:stable} or~\eqref{eq:PFODE:velocity:ei} from $100$ to $500$. To ensure a fair comparison, we use the same hyperparameter settings for velocity or score estimation across all methods.

\begin{table}[htbp]
\centering
\small
\setlength{\tabcolsep}{4pt}
\caption{Comparison of SSI with SLIPS and RDMC on the \texttt{MoG40} target for different number of integration steps $M$.}
\label{tab:exp:slips:rdmc}
\begin{tabular}{llcccc}
\toprule
Method & Metric & Reference & $M=100$ & $M=140$ & $M=180$ \\
\midrule
RDMC~\cite{huang2024reverse}
& NLL $\downarrow$
& $6.86$ & $8.17$ & $8.14$ & $8.10$ \\
& MMD $\downarrow$
& $3.92\times10^{-4}$ & $4.27\times10^{-2}$
& $4.29\times10^{-2}$ & $4.27\times10^{-2}$ \\
& $W_{2}$ $\downarrow$
& $0.79$ & $6.41$ & $6.43$ & $6.42$ \\
\midrule
SLIPS~\cite{Grenioux2024Stochastic}
& NLL $\downarrow$
& $6.86$ & $7.20$ & $7.17$ & $7.18$ \\
& MMD $\downarrow$
& $3.92\times10^{-4}$ & $3.80\times10^{-2}$
& $3.79\times10^{-2}$ & $3.74\times10^{-2}$ \\
& $W_{2}$ $\downarrow$
& $0.79$ & $5.52$ & $5.50$ & $5.46$ \\
\midrule
\textbf{SSI w/o precond.}
& NLL $\downarrow$
& $6.86$ & $7.49$ & $7.30$ & $7.23$ \\
& MMD $\downarrow$
& $3.92\times10^{-4}$ & $2.86\times10^{-2}$
& $1.69\times10^{-2}$ & $1.09\times10^{-2}$ \\
& $W_{2}$ $\downarrow$
& $0.79$ & $3.94$ & $3.03$ & $2.52$ \\
\midrule
\textbf{SSI}
& NLL $\downarrow$
& $6.86$ & $7.54$ & $7.35$ & $7.27$ \\
& MMD $\downarrow$
& $3.92\times10^{-4}$ & $2.26\times10^{-2}$
& $1.63\times10^{-2}$ & $1.27\times10^{-2}$ \\
& $W_{2}$ $\downarrow$
& $0.79$ & $4.00$ & $3.53$ & $3.13$ \\
\midrule
Method & Metric & $M=200$ & $M=300$ & $M=400$ & $M=500$ \\
\midrule
RDMC~\cite{huang2024reverse}
& NLL $\downarrow$
& $8.12$ & $8.12$ & $8.11$ & $8.12$ \\
& MMD $\downarrow$
& $4.27\times10^{-2}$ & $4.31\times10^{-2}$
& $4.34\times10^{-2}$ & $4.36\times10^{-2}$ \\
& $W_{2}$ $\downarrow$
& $6.43$ & $6.45$ & $6.48$ & $6.49$ \\
\midrule
SLIPS~\cite{Grenioux2024Stochastic}
& NLL $\downarrow$
& $7.18$ & $7.16$ & $7.15$ & $7.15$ \\
& MMD $\downarrow$
& $3.74\times10^{-2}$ & $3.68\times10^{-2}$
& $3.62\times10^{-2}$ & $3.60\times10^{-2}$ \\
& $W_{2}$ $\downarrow$
& $5.47$ & $5.39$ & $5.34$ & $5.33$ \\
\midrule
\textbf{SSI w/o precond.}
& NLL $\downarrow$
& $7.18$ & $7.11$ & $7.11$ & $7.08$ \\
& MMD $\downarrow$
& $1.23\times10^{-2}$ & $1.01\times10^{-2}$
& $7.03\times10^{-3}$ & $4.92\times10^{-3}$ \\
& $W_{2}$ $\downarrow$
& $2.67$ & $2.50$ & $2.15$ & $1.86$ \\
\midrule
\textbf{SSI}
& NLL $\downarrow$
& $7.23$ & $7.16$ & $7.17$ & $7.14$ \\
& MMD $\downarrow$
& $1.31\times10^{-2}$ & $1.38\times10^{-2}$
& $1.50\times10^{-2}$ & $1.42\times10^{-2}$ \\
& $W_{2}$ $\downarrow$
& $3.16$ & $3.18$ & $3.42$ & $3.26$ \\
\bottomrule
\end{tabular}
\end{table}

Table~\ref{tab:exp:slips:rdmc} shows that, across all values of $M$, SSI substantially outperforms both SLIPS and RDMC, with and without preconditioning. This advantage may stem from the deterministic probability flow ODE employed by SSI, which can incur less discretization error than the corresponding SDE under coarse step sizes, as discussed in Remark~\ref{remark:sde:ode}.


\section{Conclusions}
\label{section:conclusion}

\par We propose a novel framework for sampling from unnormalized Boltzmann densities driven by linear stochastic interpolants. Linear stochastic interpolants induce a probability flow ODE that transports an initialization distribution to the target distribution. The initialization distribution is given by a Gaussian convolution of the target density and is therefore substantially easier to sample from. Moreover, by Tweedie's formula, the velocity field of the probability flow ODE can be estimated via Monte Carlo methods, where the associated sampling distribution is likewise significantly simpler. As a result, our approach decomposes the challenging problem of sampling from a multi-modal and complex target density into a sequence of tractable subproblems, each of which can be addressed using Langevin diffusion. These sub-sampling tasks include (i) using Langevin diffusion to generate samples from the initialization distribution, and (ii) employing Langevin diffusion to estimate the velocity field. Simulating the resulting probability flow ODE with Langevin-based initialization and velocity estimation yields samples distributed according to the target density. 

In future work, we will apply our method to high-dimensional and complex Bayesian inverse problems, demonstrating its capability for uncertainty quantification in more realistic settings. We also aim to develop a rigorous theoretical analysis of preconditioned Langevin Monte Carlo, and integrate interacting particle samplers~\cite{Sprungk2025Metropolis,stein2025understanding} to further enhance the performance of the underlying Langevin components.

\section*{Acknowledgement}

GS acknowledges funding by Deutsche Forschungsgemeinschaft (DFG, German Research Foundation), Project STE 571/23-1 and ChW  by the DFG within the SFB “Tomography Across the Scales” (STE 571/19-1, project number: 495365311). CD was supported in part by Deutsche Forschungsgemeinschaft (DFG, German Research Foundation) -- project number  442047500/SFB 1481 \emph{Sparsity and Singular Structures}. YJ was supported by the National Key Research and Development Program of China (2024YFA1014202) and the National Natural Science Foundation of China (No. 12371441, No.  12526216). JZY was supported by the National Natural Science Foundation of China (No. 12125103, No. U24A2002) and the Hubei Natural Science Foundation (No. 2025AFA002).

\bibliographystyle{abbrv}
\bibliography{reference}

\begin{thebibliography}{100}

\bibitem{akhound2024iterated}
T.~Akhound-Sadegh, J.~Rector-Brooks, A.~J. Bose, S.~Mittal, P.~Lemos, C.-H. Liu, M.~Sendera, S.~Ravanbakhsh, G.~Gidel, Y.~Bengio, et~al.
\newblock Iterated denoising energy matching for sampling from {B}oltzmann densities.
\newblock In {\em The Eleventh International Conference on Learning Representations}, 2024.

\bibitem{albergo2025stochastic}
M.~Albergo, N.~M. Boffi, and E.~Vanden-Eijnden.
\newblock Stochastic interpolants: {A} unifying framework for flows and diffusions.
\newblock {\em Journal of Machine Learning Research}, 26(209):1--80, 2025.

\bibitem{albergo2023building}
M.~S. Albergo and E.~Vanden-Eijnden.
\newblock Building normalizing flows with stochastic interpolants.
\newblock In {\em The Eleventh International Conference on Learning Representations}, 2023.

\bibitem{aliprantis2006infinite}
C.~D. Aliprantis and K.~C. Border.
\newblock {\em Infinite Dimensional Analysis: A Hitchhiker's Guide}.
\newblock Springer, 2006.

\bibitem{Amari1998Natural}
S.-i. Amari.
\newblock Natural gradient works efficiently in learning.
\newblock {\em Neural Computation}, 10(2):251--276, 1998.

\bibitem{Ambrogioni2025Statistical}
L.~Ambrogioni.
\newblock The statistical thermodynamics of generative diffusion models: Phase transitions, symmetry breaking, and critical instability.
\newblock {\em Entropy}, 27(3):291, 2025.

\bibitem{AGS2008}
L.~Ambrosio, N.~Gigli, and G.~Savar{\'e}.
\newblock {\em Gradient Flows: in Metric Spaces and in the Space of Probability Measures}.
\newblock Springer Science \& Business Media, 2005.

\bibitem{Arbel2021Annealed}
M.~Arbel, A.~Matthews, and A.~Doucet.
\newblock Annealed flow transport {M}onte {C}arlo.
\newblock In {\em Proceedings of the 38th International Conference on Machine Learning}, volume 139 of {\em Proceedings of Machine Learning Research}, pages 318--330. PMLR, 2021.

\bibitem{Bakry1985Diffusions}
D.~Bakry and M.~{\'E}mery.
\newblock Diffusions hypercontractives.
\newblock In J.~Az{\'e}ma and M.~Yor, editors, {\em S{\'e}minaire de Probabilit{\'e}s XIX 1983/84}, pages 177--206. Springer Berlin Heidelberg, 1985.

\bibitem{bansal2026convergence}
V.~Bansal, S.~Roy, A.~Rinaldo, and P.~Sarkar.
\newblock On the convergence and straightness of rectified flow, 2026.
\newblock arXiv:2410.14949.

\bibitem{benton2024error}
J.~Benton, G.~Deligiannidis, and A.~Doucet.
\newblock Error bounds for flow matching methods.
\newblock {\em Transactions on Machine Learning Research}, 2024.

\bibitem{berner2024an}
J.~Berner, L.~Richter, and K.~Ullrich.
\newblock An optimal control perspective on diffusion-based generative modeling.
\newblock {\em Transactions on Machine Learning Research}, 2024.

\bibitem{beyler2025convergence}
E.~Beyler and F.~Bach.
\newblock Convergence of deterministic and stochastic diffusion-model samplers: {A} simple analysis in {W}asserstein distance, 2025.
\newblock arXiv:2508.03210.

\bibitem{Biroli2024Dynamical}
G.~Biroli, T.~Bonnaire, V.~de~Bortoli, and M.~M{\'e}zard.
\newblock Dynamical regimes of diffusion models.
\newblock {\em Nature Communications}, 15(9957), 2024.

\bibitem{Bubeck2015Finite}
S.~Bubeck, R.~Eldan, and J.~Lehec.
\newblock Finite-time analysis of projected {L}angevin {M}onte {C}arlo.
\newblock In {\em Advances in Neural Information Processing Systems}, volume~28. Curran Associates, Inc., 2015.

\bibitem{Bubeck2018Sampling}
S.~Bubeck, R.~Eldan, and J.~Lehec.
\newblock Sampling from a log-concave distribution with projected {L}angevin {M}onte {C}arlo.
\newblock {\em Discrete \& Computational Geometry}, 59:757--783, 2018.

\bibitem{cardoso2024monte}
G.~Cardoso, Y.~J. el~idrissi, S.~L. Corff, and E.~Moulines.
\newblock Monte {C}arlo guided denoising diffusion models for {B}ayesian linear inverse problems.
\newblock In {\em The Twelfth International Conference on Learning Representations}, 2024.

\bibitem{cayci2025riemannian}
S.~Cayci.
\newblock A {R}iemannian optimization perspective of the {G}auss-{N}ewton method for feedforward neural networks, 2025.
\newblock arXiv:2412.14031.

\bibitem{chang2025provable}
J.~Chang, C.~Duan, Y.~Jiao, R.~Li, J.~Z. Yang, and C.~Yuan.
\newblock Provable diffusion posterior sampling for {B}ayesian inversion, 2025.
\newblock arXiv:2512.08022.

\bibitem{chang2026inference}
J.~Chang, C.~Duan, Y.~Jiao, Y.~Xu, and J.~Z. Yang.
\newblock Inference-time alignment for diffusion models via {D}oob's matching, 2026.
\newblock arXiv:2601.06514.

\bibitem{chemseddine2025neural}
J.~Chemseddine, C.~Wald, R.~Duong, and G.~Steidl.
\newblock Neural sampling from {B}oltzmann densities: {F}isher-{R}ao curves in the {W}asserstein geometry.
\newblock In {\em The Thirteenth International Conference on Learning Representations}, 2025.

\bibitem{Chen2011Dimension}
H.-B. Chen, S.~Chewi, and J.~Niles-Weed.
\newblock Dimension-free log-{S}obolev inequalities for mixture distributions.
\newblock {\em Journal of Functional Analysis}, 281(11):109236, 2021.

\bibitem{Chen2024Diffusive}
W.~Chen, M.~Zhang, B.~Paige, J.~M. Hern\'{a}ndez-Lobato, and D.~Barber.
\newblock Diffusive {G}ibbs sampling.
\newblock In {\em Proceedings of the 41st International Conference on Machine Learning}, volume 235 of {\em Proceedings of Machine Learning Research}, pages 7731--7747. PMLR, 2024.

\bibitem{Chewi2025log}
S.~Chewi.
\newblock Log-concave sampling, 2025.
\newblock unfinished draft.

\bibitem{CELSZ2025}
S.~Chewi, M.~A. Erdogdu, M.~Li, R.~Shen, and M.~S. Zhang.
\newblock Analysis of {L}angevin {M}onte {C}arlo from {P}oincar\'e to {L}og-{S}obolev.
\newblock {\em Foundations of Computational Mathematics}, 25:1345--1395, 2025.

\bibitem{chung2023diffusion}
H.~Chung, J.~Kim, M.~T. Mccann, M.~L. Klasky, and J.~C. Ye.
\newblock Diffusion posterior sampling for general noisy inverse problems.
\newblock In {\em The Eleventh International Conference on Learning Representations}, 2023.

\bibitem{Cobb2021Scaling}
A.~D. Cobb and B.~Jalaian.
\newblock Scaling {H}amiltonian {M}onte {C}arlo inference for {B}ayesian neural networks with symmetric splitting.
\newblock In {\em Proceedings of the Thirty-Seventh Conference on Uncertainty in Artificial Intelligence}, volume 161 of {\em Proceedings of Machine Learning Research}, pages 675--685. PMLR, 2021.

\bibitem{dahmen2025expansive}
W.~Dahmen, W.~Li, Y.~Teng, and Z.~Wang.
\newblock Expansive natural neural gradient flows for energy minimization, 2025.
\newblock arXiv:2507.13475.

\bibitem{Dalalyan2016Theoretical}
A.~S. Dalalyan.
\newblock Theoretical guarantees for approximate sampling from smooth and log-concave densities.
\newblock {\em Journal of the Royal Statistical Society Series B: Statistical Methodology}, 79(3):651--676, 2016.

\bibitem{daras2024consistent}
G.~Daras, A.~G. Dimakis, and C.~Daskalakis.
\newblock Consistent diffusion meets {T}weedie: {T}raining exact ambient diffusion models with noisy data, 2024.
\newblock arXiv:2404.10177.

\bibitem{Dauphin2015Equilibrated}
Y.~Dauphin, H.~de~Vries, and Y.~Bengio.
\newblock Equilibrated adaptive learning rates for non-convex optimization.
\newblock In {\em Advances in Neural Information Processing Systems}, volume~28. Curran Associates, Inc., 2015.

\bibitem{Dauphin2014Identifying}
Y.~N. Dauphin, R.~Pascanu, C.~Gulcehre, K.~Cho, S.~Ganguli, and Y.~Bengio.
\newblock Identifying and attacking the saddle point problem in high-dimensional non-convex optimization.
\newblock In {\em Advances in Neural Information Processing Systems}, volume~27. Curran Associates, Inc., 2014.

\bibitem{bortoli2024target}
V.~De~Bortoli, M.~Hutchinson, P.~Wirnsberger, and A.~Doucet.
\newblock Target score matching, 2024.
\newblock Preprint.

\bibitem{DPVH2025}
A.~Denker, S.~Padhy, F.~Vargas, and J.~Hertrich.
\newblock Iterative importance fine-tuning of diffusion models.
\newblock {\em arXiv:2502.04468}, 2025.

\bibitem{ding2024characteristic}
Z.~Ding, C.~Duan, Y.~Jiao, R.~Li, J.~Z. Yang, and P.~Zhang.
\newblock Characteristic learning for provable one step generation, 2024.
\newblock arXiv:2405.05512.

\bibitem{ding2025semi}
Z.~Ding, C.~Duan, Y.~Jiao, and J.~Z. Yang.
\newblock Semi-supervised deep {S}obolev regression: {E}stimation and variable selection by {ReQU} neural network.
\newblock {\em IEEE Transactions on Information Theory}, 71(4):2955--2981, 2025.

\bibitem{ding2025nonlinear}
Z.~Ding, C.~Duan, Y.~Jiao, J.~Z. Yang, C.~Yuan, and P.~Zhang.
\newblock Nonlinear assimilation via score-based sequential {L}angevin sampling, 2024.
\newblock arXiv:2411.13443.

\bibitem{Dong2022Spectral}
J.~Dong and X.~T. Tong.
\newblock Spectral gap of replica exchange {L}angevin diffusion on mixture distributions.
\newblock {\em Stochastic Processes and their Applications}, 151:451--489, 2022.

\bibitem{Drucker1991Double}
H.~Drucker and Y.~Le~Cun.
\newblock Double backpropagation increasing generalization performance.
\newblock In {\em IJCNN-91-Seattle International Joint Conference on Neural Networks}, volume~2, pages 145--150, 1991.

\bibitem{Drucker1992Improving}
H.~Drucker and Y.~Le~Cun.
\newblock Improving generalization performance using double backpropagation.
\newblock {\em IEEE Transactions on Neural Networks}, 3(6):991--997, 1992.

\bibitem{Duchi2011Adaptive}
J.~Duchi, E.~Hazan, and Y.~Singer.
\newblock Adaptive subgradient methods for online learning and stochastic optimization.
\newblock {\em Journal of Machine Learning Research}, 12(61):2121--2159, 2011.

\bibitem{Durmus2017Nonasymptotic}
A.~Durmus and {\'E}.~Moulines.
\newblock Nonasymptotic convergence analysis for the unadjusted {L}angevin algorithm.
\newblock {\em The Annals of Applied Probability}, 27(3):1551 -- 1587, 2017.

\bibitem{Durmus2019High}
A.~Durmus and {\'E}.~Moulines.
\newblock High-dimensional {B}ayesian inference via the unadjusted {L}angevin algorithm.
\newblock {\em Bernoulli}, 25(4A):2854--2882, 2019.

\bibitem{durmus2026sampling}
A.~Durmus, M.~Noble, and T.~Pellerin.
\newblock Sampling from multi-modal distributions on {R}iemannian manifolds with training-free stochastic interpolants, 2026.
\newblock arXiv:2602.00641.

\bibitem{Efron2011Tweedie}
B.~Efron.
\newblock {T}weedie's formula and selection bias.
\newblock {\em Journal of the American Statistical Association}, 106(496):1602--1614, 2011.

\bibitem{Gentil2020entropc}
I.~Gentil, C.~Léonard, L.~Ripani, and L.~Tamanini.
\newblock An entropic interpolation proof of the hwi inequality.
\newblock {\em Stochastic Processes and their Applications}, 130(2):907--923, 2020.

\bibitem{GRS1995}
W.~R. Gilks, S.~Richardson, and D.~Spiegelhalter.
\newblock {\em {M}arkov {C}hain {M}onte {C}arlo in Practice}.
\newblock Springer, 1995.

\bibitem{Grenioux2024Stochastic}
L.~Grenioux, M.~Noble, M.~Gabri\'{e}, and A.~Oliviero~Durmus.
\newblock Stochastic localization via iterative posterior sampling.
\newblock In {\em Proceedings of the 41st International Conference on Machine Learning}, volume 235 of {\em Proceedings of Machine Learning Research}, pages 16337--16376. PMLR, 2024.

\bibitem{guo2025proximal}
W.~Guo, J.~Choi, Y.~Zhu, M.~Tao, and Y.~Chen.
\newblock Proximal diffusion neural sampler, 2025.
\newblock arXiv:2510.03824.

\bibitem{guo2025provable}
W.~Guo, M.~Tao, and Y.~Chen.
\newblock Provable benefit of annealed {L}angevin {M}onte {C}arlo for non-log-concave sampling.
\newblock In {\em The Thirteenth International Conference on Learning Representations}, 2025.

\bibitem{HHS2022}
P.~Hagemann, J.~Hertrich, and G.~Steidl.
\newblock Stochastic normalizing flows for inverse problems: {A} {M}arkov chains viewpoint.
\newblock {\em SIAM/ASA Journal on Uncertainty Quantification}, 10(3):1162--1190, 2022.

\bibitem{Havens2025Adjoint}
A.~J. Havens, B.~K. Miller, B.~Yan, C.~Domingo-Enrich, A.~Sriram, D.~S. Levine, B.~M. Wood, B.~Hu, B.~Amos, B.~Karrer, X.~Fu, G.-H. Liu, and R.~T.~Q. Chen.
\newblock Adjoint sampling: {H}ighly scalable diffusion samplers via adjoint matching.
\newblock In {\em Proceedings of the 42nd International Conference on Machine Learning}, volume 267 of {\em Proceedings of Machine Learning Research}, pages 22204--22237. PMLR, 2025.

\bibitem{he2025training}
J.~He, W.~Chen, M.~Zhang, D.~Barber, and J.~M. Hern{\'a}ndez-Lobato.
\newblock Training neural samplers with reverse diffusive {KL} divergence.
\newblock In {\em International Conference on Artificial Intelligence and Statistics}, pages 5167--5175. PMLR, 2025.

\bibitem{he2025feat}
J.~He, Y.~Du, F.~Vargas, Y.~Wang, C.~P. Gomes, J.~M. Hernández-Lobato, and E.~Vanden-Eijnden.
\newblock {FEAT}: {F}ree energy estimators with adaptive transport, 2025.
\newblock arXiv:2504.11516.

\bibitem{he2024zeroth}
Y.~He, K.~Rojas, and M.~Tao.
\newblock Zeroth-order sampling methods for non-log-concave distributions: {A}lleviating metastability by denoising diffusion, 2024.
\newblock arXiv:2402.17886.

\bibitem{Heng2021Gibbs}
J.~Heng, A.~Doucet, and Y.~Pokern.
\newblock {G}ibbs flow for approximate transport with applications to {B}ayesian computation.
\newblock {\em Journal of the Royal Statistical Society Series B: Statistical Methodology}, 83(1):156--187, 2021.

\bibitem{Ho2020Denoising}
J.~Ho, A.~Jain, and P.~Abbeel.
\newblock Denoising diffusion probabilistic models.
\newblock In {\em Advances in Neural Information Processing Systems}, volume~33, pages 6840--6851. Curran Associates, Inc., 2020.

\bibitem{Hochbruck2010Exponential}
M.~Hochbruck and A.~Ostermann.
\newblock Exponential integrators.
\newblock {\em Acta Numerica}, 19:209--286, 2010.

\bibitem{HG2014}
M.~D. Hoffman and A.~Gelman.
\newblock The {N}o-{U}-{T}urn sampler: {A}daptively setting path lengths in {H}amiltonian {M}onte {C}arlo.
\newblock {\em Journal of Machine Learning Research}, 15:1593--1623, 2014.

\bibitem{Huang2025Schrodinger}
J.~Huang, Y.~Jiao, L.~Kang, X.~Liao, J.~Liu, and Y.~Liu.
\newblock {S}chr{\"o}dinger-{F}{\"o}llmer sampler.
\newblock {\em IEEE Transactions on Information Theory}, 71(2):1283--1299, 2025.

\bibitem{huang2024reverse}
X.~Huang, H.~Dong, Y.~Hao, Y.~Ma, and T.~Zhang.
\newblock Reverse diffusion {M}onte {C}arlo.
\newblock In {\em The Twelfth International Conference on Learning Representations}, 2024.

\bibitem{Huang2024faster}
X.~Huang, D.~Zou, H.~Dong, Y.-A. Ma, and T.~Zhang.
\newblock Faster sampling without isoperimetry via diffusion-based {M}onte {C}arlo.
\newblock In {\em Proceedings of Thirty Seventh Conference on Learning Theory}, volume 247 of {\em Proceedings of Machine Learning Research}, pages 2438--2493. PMLR, 2024.

\bibitem{hytonen2016analysis}
T.~Hyt{\"o}nen, J.~Van~Neerven, M.~Veraar, and L.~Weis.
\newblock {\em Analysis in Banach spaces}, volume~1.
\newblock Springer, 2016.

\bibitem{Izmailov2021what}
P.~Izmailov, S.~Vikram, M.~D. Hoffman, and A.~G.~G. Wilson.
\newblock What are {B}ayesian neural network posteriors really like?
\newblock In {\em Proceedings of the 38th International Conference on Machine Learning}, volume 139 of {\em Proceedings of Machine Learning Research}, pages 4629--4640. PMLR, 18--24 Jul 2021.

\bibitem{Karras2022Elucidating}
T.~Karras, M.~Aittala, T.~Aila, and S.~Laine.
\newblock Elucidating the design space of diffusion-based generative models.
\newblock In {\em Advances in Neural Information Processing Systems}, volume~35, pages 26565--26577. Curran Associates, Inc., 2022.

\bibitem{Kingma2021Variational}
D.~Kingma, T.~Salimans, B.~Poole, and J.~Ho.
\newblock Variational diffusion models.
\newblock In {\em Advances in Neural Information Processing Systems}, volume~34, pages 21696--21707. Curran Associates, Inc., 2021.

\bibitem{Kingma2015adam}
D.~P. Kingma and J.~Ba.
\newblock {A}dam: {A} method for stochastic optimization.
\newblock In {\em International Conference on Learning Representations}, 2015.

\bibitem{Kloeden1992Numerical}
P.~E. Kloeden and E.~Platen.
\newblock {\em Numerical Solution of Stochastic Differential Equations}.
\newblock Stochastic Modelling and Applied Probability (SMAP). Springer Berlin, Heidelberg, first edition, 1992.

\bibitem{kunita2019stochastic}
H.~Kunita.
\newblock {\em Stochastic Flows and Jump-Diffusions}.
\newblock Springer, 2019.

\bibitem{Landau2014Guide}
D.~P. Landau and K.~Binder.
\newblock {\em A Guide to {M}onte {C}arlo Simulations in Statistical Physics}.
\newblock Cambridge University Press, fourth edition, 2014.

\bibitem{Laumont2022an}
R.~Laumont, D.~B. V, A.~Almansa, J.~Delon, A.~Durmus, and M.~Pereyra.
\newblock Plug \& play priors: When {L}angevin meets {T}weedie.
\newblock {\em SIAM Journal on Imaging Sciences}, 15(2):701--737, 2022.

\bibitem{Lee2010Utility}
A.~Lee, C.~Yau, M.~B. Giles, A.~Doucet, and C.~C. Holmes.
\newblock On the utility of graphics cards to perform massively parallel simulation of advanced {M}onte {C}arlo methods.
\newblock {\em Journal of Computational and Graphical Statistics}, 19(4):769--789, 2010.

\bibitem{Lee2022Convergence}
H.~Lee, J.~Lu, and Y.~Tan.
\newblock Convergence for score-based generative modeling with polynomial complexity.
\newblock In {\em Advances in Neural Information Processing Systems}, volume~35, pages 22870--22882. Curran Associates, Inc., 2022.

\bibitem{Lee2021Structured}
Y.~T. Lee, R.~Shen, and K.~Tian.
\newblock Structured logconcave sampling with a restricted {G}aussian oracle.
\newblock In {\em Proceedings of Thirty Fourth Conference on Learning Theory}, volume 134 of {\em Proceedings of Machine Learning Research}, pages 2993--3050. PMLR, 15--19 Aug 2021.

\bibitem{Leroy2024Adaptive}
A.~Leroy, B.~Leimkuhler, J.~Latz, and D.~J. Higham.
\newblock Adaptive stepsize algorithms for {L}angevin dynamics.
\newblock {\em SIAM Journal on Scientific Computing}, 46(6):A3574--A3598, 2024.

\bibitem{Li2016Preconditioned}
C.~Li, C.~Chen, D.~Carlson, and L.~Carin.
\newblock Preconditioned stochastic gradient {L}angevin dynamics for deep neural networks.
\newblock In {\em Proceedings of the Thirtieth AAAI Conference On Artificial Intelligence}, 2016.

\bibitem{lipman2023flow}
Y.~Lipman, R.~T.~Q. Chen, H.~Ben-Hamu, M.~Nickel, and M.~Le.
\newblock Flow matching for generative modeling.
\newblock In {\em The Eleventh International Conference on Learning Representations}, 2023.

\bibitem{liu2025adjoint}
G.-H. Liu, J.~Choi, Y.~Chen, B.~K. Miller, and R.~T.~Q. Chen.
\newblock Adjoint {S}chr\"odinger bridge sampler.
\newblock In {\em The Thirty-ninth Annual Conference on Neural Information Processing Systems}, 2025.

\bibitem{liu2023flow}
X.~Liu, C.~Gong, and Q.~Liu.
\newblock Flow straight and fast: {L}earning to generate and transfer data with rectified flow.
\newblock In {\em The Eleventh International Conference on Learning Representations}, 2023.

\bibitem{ma2015complete}
Y.-A. Ma, T.~Chen, and E.~Fox.
\newblock A complete recipe for stochastic gradient {MCMC}.
\newblock {\em Advances in Neural Information Processing Systems}, 28, 2015.

\bibitem{Martens2010deep}
J.~Martens.
\newblock Deep learning via {H}essian-free optimization.
\newblock In {\em Proceedings of the 27th International Conference on International Conference on Machine Learning}, pages 735--742, 2010.

\bibitem{martin2025pnpflow}
S.~T. Martin, A.~Gagneux, P.~Hagemann, and G.~Steidl.
\newblock {PnP}-{F}low: {P}lug-and-play image restoration with flow matching.
\newblock In {\em The Thirteenth International Conference on Learning Representations}, 2025.

\bibitem{mate2023learning}
B.~M{\'a}t{\'e} and F.~Fleuret.
\newblock Learning interpolations between {B}oltzmann densities.
\newblock {\em Transactions on Machine Learning Research}, 2023.

\bibitem{midgley2023flow}
L.~I. Midgley, V.~Stimper, G.~N.~C. Simm, B.~Sch{\"o}lkopf, and J.~M. Hern{\'a}ndez-Lobato.
\newblock Flow annealed importance sampling bootstrap.
\newblock In {\em The Eleventh International Conference on Learning Representations}, 2023.

\bibitem{Mobahi2015Link}
H.~Mobahi and J.~W. Fisher.
\newblock On the link between {G}aussian homotopy continuation and convex envelopes.
\newblock In {\em Energy Minimization Methods in Computer Vision and Pattern Recognition}, pages 43--56, Cham, 2015. Springer International Publishing.

\bibitem{DDJ2006}
P.~D. Moral, A.~Doucet, and A.~Jasra.
\newblock Sequential {M}onte {C}arlo samplers.
\newblock {\em Journal of the Royal Statistical Society Series B: Statistical Methodology}, 2006.

\bibitem{mou2022Improved}
W.~Mou, N.~Flammarion, M.~J. Wainwright, and P.~L. Bartlett.
\newblock Improved bounds for discretization of {L}angevin diffusions: {N}ear-optimal rates without convexity.
\newblock {\em Bernoulli}, 28(3):1577 -- 1601, 2022.

\bibitem{neal2001annealed}
R.~M. Neal.
\newblock Annealed importance sampling.
\newblock {\em Statistics and computing}, 11:125--139, 2001.

\bibitem{Frank2019Boltzmann}
F.~No{\`e}, S.~Olsson, J.~K{\"o}hler, and H.~Wu.
\newblock {B}oltzmann generators: {S}ampling equilibrium states of many-body systems with deep learning.
\newblock {\em Science}, 365(6457):eaaw1147, 2019.

\bibitem{Durmus2025Generative}
A.~Oliviero-Durmus, Y.~Janati, E.~Moulines, M.~Pereyra, and S.~Reich.
\newblock Generative modelling meets {B}ayesian inference: a new paradigm for inverse problems.
\newblock {\em Philosophical Transactions of the Royal Society A: Mathematical, Physical and Engineering Sciences}, 383(2299):20240334, 2025.

\bibitem{Overstall2020Bayesian}
A.~M. Overstall, D.~C. Woods, and B.~M. Parker.
\newblock {B}ayesian optimal design for ordinary differential equation models with application in biological science.
\newblock {\em Journal of the American Statistical Association}, 115(530):583--598, 2020.

\bibitem{Phillips2024Particle}
A.~Phillips, H.-D. Dau, M.~J. Hutchinson, V.~De~Bortoli, G.~Deligiannidis, and A.~Doucet.
\newblock Particle denoising diffusion sampler.
\newblock In {\em Proceedings of the 41st International Conference on Machine Learning}, volume 235 of {\em Proceedings of Machine Learning Research}, pages 40688--40724. PMLR, 21--27 Jul 2024.

\bibitem{saremi2024chain}
S.~Saremi, J.~W. Park, and F.~Bach.
\newblock Chain of log-concave {M}arkov chains.
\newblock In {\em The Twelfth International Conference on Learning Representations}, 2024.

\bibitem{Sohl2015Deep}
J.~Sohl-Dickstein, E.~Weiss, N.~Maheswaranathan, and S.~Ganguli.
\newblock Deep unsupervised learning using nonequilibrium thermodynamics.
\newblock In F.~Bach and D.~Blei, editors, {\em Proceedings of the 32nd International Conference on Machine Learning}, volume~37 of {\em Proceedings of Machine Learning Research}, pages 2256--2265, Lille, France, 07--09 Jul 2015. PMLR.

\bibitem{song2023Consistency}
Y.~Song, P.~Dhariwal, M.~Chen, and I.~Sutskever.
\newblock Consistency models.
\newblock In {\em Proceedings of the 40th International Conference on Machine Learning}, volume 202 of {\em Proceedings of Machine Learning Research}, pages 32211--32252. PMLR, 2023.

\bibitem{Song2019Generative}
Y.~Song and S.~Ermon.
\newblock Generative modeling by estimating gradients of the data distribution.
\newblock In {\em Advances in Neural Information Processing Systems}, volume~32. Curran Associates, Inc., 2019.

\bibitem{song2021scorebased}
Y.~Song, J.~Sohl-Dickstein, D.~P. Kingma, A.~Kumar, S.~Ermon, and B.~Poole.
\newblock Score-based generative modeling through stochastic differential equations.
\newblock In {\em International Conference on Learning Representations}, 2021.

\bibitem{Sprungk2025Metropolis}
B.~Sprungk, S.~Weissmann, and J.~Zech.
\newblock Metropolis-adjusted interacting particle sampling.
\newblock {\em Statistics and Computing}, 35(64), 2025.

\bibitem{Stanton2022Accelerating}
S.~Stanton, W.~Maddox, N.~Gruver, P.~Maffettone, E.~Delaney, P.~Greenside, and A.~G. Wilson.
\newblock Accelerating {B}ayesian optimization for biological sequence design with denoising autoencoders.
\newblock In {\em Proceedings of the 39th International Conference on Machine Learning}, volume 162 of {\em Proceedings of Machine Learning Research}, pages 20459--20478. PMLR, 2022.

\bibitem{stein2025understanding}
V.~Stein and W.~Li.
\newblock Towards understanding accelerated {S}tein variational gradient flow -- {A}nalysis of generalized bilinear kernels for {G}aussian target distributions, 2025.
\newblock arXiv:2509.04008.

\bibitem{Syed2022NonReversible}
S.~Syed, A.~Bouchard-C{\^o}t{\'e}, G.~Deligiannidis, and A.~Doucet.
\newblock Non-reversible parallel tempering: A scalable highly parallel {MCMC} scheme.
\newblock {\em Journal of the Royal Statistical Society Series B: Statistical Methodology}, 84(2):321--350, 2022.

\bibitem{Tweedie1956}
M.~Tweedie.
\newblock Some statistical properties of inverse {G}aussian distributions.
\newblock {\em Virginia Journal of Science}, 7:160--–165, 1956.

\bibitem{VCK2024}
A.~Vacher, O.~Chehab, and A.~Korba.
\newblock Polynomial time sampling from log-smooth distributions in fixed dimension under semi-log-concavity of the forward diffusion with application to strongly dissipative distributions.
\newblock {\em arXiv:2501.00565v1}, 2024.

\bibitem{vacher2025sampling}
A.~Vacher, O.~Chehab, and A.~Korba.
\newblock Sampling from multi-modal distributions with polynomial query complexity in fixed dimension via reverse diffusion.
\newblock In {\em The Thirty-ninth Annual Conference on Neural Information Processing Systems}, 2025.

\bibitem{Vempala2019Rapid}
S.~Vempala and A.~Wibisono.
\newblock Rapid convergence of the unadjusted {L}angevin algorithm: isoperimetry suffices.
\newblock In H.~Wallach, H.~Larochelle, A.~Beygelzimer, F.~d'~Alch\'{e}-Buc, E.~Fox, and R.~Garnett, editors, {\em Advances in Neural Information Processing Systems}, volume~32. Curran Associates, Inc., 2019.

\bibitem{Villani2009Optimal}
C.~Villani.
\newblock {\em Optimal Transport: Old and New}, volume 338 of {\em Grundlehren der mathematischen Wissenschaften (GL)}.
\newblock Springer Berlin, Heidelberg, first edition, 2009.

\bibitem{WS2025}
C.~Wald and G.~Steidl.
\newblock {F}low matching: {M}arkov kernels, stochastic processes and transport plans.
\newblock In {\em Variational and Information Flows in Machine Learning and Optimal Transport, Oberwolfach Seminars. Vol. 56}, pages 185--254. Birkh\"auser, 2025.

\bibitem{wang2026energy}
Y.~Wang, L.~Guo, H.~Wu, and T.~Zhou.
\newblock Energy-based diffusion generator for efficient sampling of {B}oltzmann distributions.
\newblock {\em Neural Networks}, 194:108126, 2026.

\bibitem{wu2025annealing}
D.~Wu and Y.~Xie.
\newblock Annealing flow generative models towards sampling high-dimensional and multi-modal distributions.
\newblock In {\em Forty-second International Conference on Machine Learning}, 2025.

\bibitem{wu2020Stochastic}
H.~Wu, J.~K\"{o}hler, and F.~Noe.
\newblock Stochastic normalizing flows.
\newblock In {\em Advances in Neural Information Processing Systems}, volume~33, pages 5933--5944. Curran Associates, Inc., 2020.

\bibitem{young2026diffusion}
J.~M. Young, P.~Cordero-Encinar, S.~Reich, A.~Duncan, and {\"O}.~D. Akyildiz.
\newblock Diffusion path samplers via sequential {M}onte {C}arlo, 2026.
\newblock arXiv:2601.21951.

\bibitem{zhang2022path}
Q.~Zhang and Y.~Chen.
\newblock Path integral sampler: {A} stochastic control approach for sampling.
\newblock In {\em International Conference on Learning Representations}, 2022.

\bibitem{zhang2024flow}
Y.~Zhang, P.~Yu, Y.~Zhu, Y.~Chang, F.~Gao, Y.~N. Wu, and O.~Leong.
\newblock Flow priors for linear inverse problems via iterative corrupted trajectory matching, 2024.
\newblock arXiv:2405.18816.

\bibitem{zhou2024on}
D.~Zhou, J.~Chen, Y.~Cao, Z.~Yang, and Q.~Gu.
\newblock On the convergence of adaptive gradient methods for nonconvex optimization.
\newblock {\em Transactions on Machine Learning Research}, 2024.
\newblock Featured Certification.

\bibitem{Zuo2025Gradient}
X.~Zuo, S.~Osher, and W.~Li.
\newblock Gradient-adjusted underdamped {L}angevin dynamics for sampling.
\newblock {\em SIAM/ASA Journal on Uncertainty Quantification}, 13(4):1735--1765, 2025.

\end{thebibliography}

\appendix

\section{Auxiliary Results}

For the proofs in the following sections, we need some auxiliary computations.

\begin{lemma}\label{lemma:norm:Xt}
Let Assumption~\ref{assumption:Gaussian:convolution} be fulfilled. 
Then, for $\sigma \in (0,1]$ and any $t\in[0,1]$, it holds $\bbE[\|X_{t}\|_{2}^{2}]\leq d+R^{2}$.
\end{lemma}

\begin{proof}
By Assumption~\ref{assumption:Gaussian:convolution} and~\eqref{eq:stochastic:interporlant}, we obtain 
\begin{align*}
\bbE\big[\|X_{t}\|_{2}^{2}\big]
&=\bbE\big[\|tY+(1-t(1-\sigma))\Xi\|_{2}^{2}\big] \\
&=t^{2}\bbE\big[\|Y\|_{2}^{2}\big]+ 2 t(1-t(1-\sigma))\bbE\big[\langle Y,\Xi \rangle\big]+(1-t(1-\sigma))^{2}\bbE\big[\|\Xi\|_{2}^{2}\big] \\
&=t^{2}\bbE\big[\|Y\|_{2}^{2}\big]+(1-t(1-\sigma))^{2}\bbE\big[\|\Xi\|_{2}^{2}\big] \\
&\leq t^2 R^2 + (1-t(1-\sigma))^{2} d \leq
d+R^{2}.
\end{align*}
This completes the proof.
\end{proof}

\begin{lemma}\label{lemma:tweedies:second:order}
For every $t\in(0,1)$, we have
\begin{equation*}
\nabla\bbE[X_{1}|X_{t}=x_{t}]=\frac{t}{(1-t)^2}\Cov(X_{1}|X_{t}=x_{t}).
\end{equation*}
\end{lemma}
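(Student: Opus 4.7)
The plan is to compute the Jacobian directly from the closed-form expression for the conditional expectation that Bayes' rule and equation (3.7) provide. Writing $Z(x_t) \coloneq p_{X_t}(x_t)$ and using $p_{X_1\mid X_t=x_t}(x_1) = Z(x_t)^{-1}\,\gamma_{(1-t)^2}(x_t - t x_1)\, p_{X_1}(x_1)$, one has
\begin{equation*}
\mathbb{E}[X_1 \mid X_t = x_t] \;=\; \frac{1}{Z(x_t)}\int x_1\,\gamma_{(1-t)^2}(x_t - t x_1)\, p_{X_1}(x_1)\,\mathrm{d}x_1.
\end{equation*}

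The key computational input is the Gaussian gradient identity
$\nabla_{x_t}\gamma_{(1-t)^2}(x_t - t x_1) \;=\; -\frac{x_t - t x_1}{(1-t)^2}\,\gamma_{(1-t)^2}(x_t - t x_1),$
and analogously $\nabla Z(x_t) = -Z(x_t)\,\mathbb{E}\!\left[\frac{x_t-tX_1}{(1-t)^2}\,\middle|\,X_t=x_t\right]$. Applying the quotient rule componentwise and recognizing the resulting integrals as conditional expectations against $p_{X_1 \mid X_t=x_t}$, I obtain
\begin{equation*}
\nabla_{x_t}\mathbb{E}[X_1\mid X_t=x_t] \;=\; -\frac{1}{(1-t)^2}\,\mathbb{E}\!\left[X_1 (x_t - t X_1)^\top \,\middle|\, X_t=x_t\right]
+ \frac{1}{(1-t)^2}\,\mathbb{E}[X_1\mid X_t=x_t]\bigl(x_t - t\,\mathbb{E}[X_1\mid X_t=x_t]\bigr)^\top.
\end{equation*}

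Since $x_t$ is a constant under the conditional expectation, the two terms linear in $x_t$ cancel exactly, and only the terms quadratic in $X_1$ remain. Collecting them yields
\begin{equation*}
\nabla_{x_t}\mathbb{E}[X_1\mid X_t=x_t] \;=\; \frac{t}{(1-t)^2}\,\Bigl(\mathbb{E}[X_1 X_1^\top\mid X_t=x_t] - \mathbb{E}[X_1\mid X_t=x_t]\,\mathbb{E}[X_1\mid X_t=x_t]^\top\Bigr),
\end{equation*}
which is precisely $\tfrac{t}{(1-t)^2}\,\mathrm{Cov}(X_1\mid X_t=x_t)$, as claimed.

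The only non-mechanical step is justifying the interchange of $\nabla_{x_t}$ and the integral, but under Assumption~\ref{assumption:Gaussian:convolution} the density $p_{X_1}$ is a Gaussian convolution of a compactly-supported law, so it is smooth with all polynomial moments finite, and a standard dominated-convergence argument with a Gaussian envelope on a neighborhood of $x_t$ suffices. The main obstacle is thus not analytical but purely bookkeeping: keeping the matrix/vector structure straight in the quotient rule and verifying that the linear-in-$x_t$ contributions indeed cancel before the final simplification.
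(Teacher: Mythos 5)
Your proof is correct and follows essentially the same route as the paper: both differentiate the Bayes-formula representation of $\mathbb{E}[X_1\mid X_t=x_t]$ under the integral sign, producing one term from the Gaussian kernel and one from the normalizer $p_{X_t}(x_t)$, and then observe the cancellation of the $x_t$-linear pieces. The only cosmetic difference is that you compute $\nabla Z(x_t)$ directly from the Gaussian gradient identity, whereas the paper expresses the normalizer contribution as $-\mathbb{E}[X_1\mid X_t=x_t]\,\nabla\log p_{X_t}(x_t)^\top$ and then substitutes equations~\eqref{eq:velocity} and \eqref{eq:score:velocity}; the two expressions for $\nabla\log p_{X_t}$ coincide, so the proofs are the same argument.
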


\begin{proof}
It follows from~\eqref{eq:denoising:density} that 
\begin{align*}
\nabla \bbE[X_{1}|X_{t}=x_{t}] 
&= \nabla \Big(\int x_{1} p_{X_1|X_t = x_t} (x_{1}) \d x_{1}\Big) \\
&=
\nabla \Big(\int x_{1} \gamma_{(1-t)^{2}}(x_{t}-tx_{1})
\frac{p_{X_1}(x_{1})}{p_{X_t}(x_{t})}\d x_{1}\Big) \\
&=
-\int x_{1} \Big(\frac{x_{t}-tx_{1}}{(1-t)^{2}}\Big)^{\top} \gamma_{(1-t)^{2}}(x_{t}-tx_{1})
\frac{p_{X_1}(x_1)}{p_{X_t}(x_t)} \d x_{1}
\\
&\quad 
- \int x_{1} \gamma_{(1-t)^{2}}(x_t-tx_1)
\frac{p_{X_1}(x_1)}{p_{X_t}(x_t)} \d x_{1} \nabla\log p_{X_t}(x_{t})^\top
\end{align*}
and further by~\eqref{eq:velocity} and~\eqref{eq:score:velocity} that
\begin{align*}
&\nabla \bbE[X_{1}|X_{t}=x_{t}] \\
&=-\frac{1}{(1-t)^{2}}\bbE[X_{1}|X_{t}=x_{t}]x_{t}^{\top}+\frac{t}{(1-t)^{2}}\bbE[X_{1}X_{1}^{\top}|X_{t}=x_{t}] \\
&\quad+\frac{1}{(1-t)^2}\bbE[X_{1}|X_{t}=x_{t}]x_t^{\top}-\frac{t}{(1-t)^2}\bbE[X_1|X_t=x_t]\bbE[X_{1}|X_{t}=x_{t}]^{\top} \\
&=\frac{t}{(1-t)^2}\Cov(X_{1}|X_{t}=x_{t}). \qedhere
\end{align*}
\end{proof}

\par The following lemma is an extension of Lemma~\ref{lemma:tweedies:second:order}.

\begin{lemma}[{\cite[Lemma C.3]{chang2026inference}}]\label{lemma:tweedies:second:order:f}
Let $f:\bbR^{d}\rightarrow\bbR$ be an integrable function. For every $t\in(0,1)$ and every $x_{t}\in\mathbb{R}^{d}$, we have
\begin{equation*}
\nabla\bbE[f(X_{1})|X_{t}=x_{t}]=\frac{t}{(1-t)^2}(\bbE[f(X_{1})X_{1}|X_{t}=x_{t}]-\bbE[f(X_1)|X_t=x_t]\bbE[X_{1}|X_{t}=x_{t}]).
\end{equation*}
\end{lemma}

\begin{proof}
It follows from~\eqref{eq:denoising:density} that 
\begin{align*}
\nabla \bbE[f(X_{1})|X_{t}=x_{t}] 
&= \nabla \Big(\int f(x_{1}) p_{X_1|X_t = x_t} (x_{1}) \d x_{1}\Big) \\
&=
\nabla \Big(\int f(x_{1}) \gamma_{(1-t)^{2}}(x_{t}-tx_{1})
\frac{p_{X_1}(x_{1})}{p_{X_t}(x_{t})}\d x_{1}\Big) \\
&=
-\int f(x_{1}) \Big(\frac{x_{t}-tx_{1}}{(1-t)^{2}}\Big) \gamma_{(1-t)^{2}}(x_{t}-tx_{1})
\frac{p_{X_1}(x_1)}{p_{X_t}(x_t)} \d x_{1}
\\
&\quad 
- \int f(x_{1}) \gamma_{(1-t)^{2}}(x_t-tx_1)
\frac{p_{X_1}(x_1)}{p_{X_t}(x_t)} \d x_{1} \nabla\log p_{X_t}(x_{t})
\end{align*}
and further by~\eqref{eq:velocity} and~\eqref{eq:score:velocity} that
\begin{align*}
&\nabla \bbE[f(X_{1})|X_{t}=x_{t}] \\
&=-\frac{1}{(1-t)^{2}}\bbE[f(X_{1})|X_{t}=x_{t}]x_{t}+\frac{t}{(1-t)^{2}}\bbE[f(X_{1})X_{1}|X_{t}=x_{t}] \\
&\quad+\frac{1}{(1-t)^2}\bbE[f(X_{1})|X_{t}=x_{t}]x_t-\frac{t}{(1-t)^2}\bbE[f(X_1)|X_t=x_t]\bbE[X_{1}|X_{t}=x_{t}]. \qedhere
\end{align*}
\end{proof}

By similar arguments as the proof of Lemma~\ref{lemma:tweedies:second:order}, we obtain the following lemma.

\begin{lemma}\label{lemma:tweedies:second:order:gaussian:convolution}
Let Assumption~\ref{assumption:Gaussian:convolution} be fulfilled. Then it holds
\begin{equation*}
\nabla\bbE[Y|X_{1}=x_{1}]=\frac{1}{\sigma^2}\Cov(Y|X_{1}=x_{1}).
\end{equation*}
\end{lemma}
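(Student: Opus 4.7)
My plan is to mirror the proof of Lemma~\ref{lemma:tweedies:second:order} almost verbatim, with $\gamma_{(1-t)^2}(x_t - t \cdot)$ replaced by $\gamma_{\sigma^2}(x_1 - \cdot)$ and the scaling factor $t/(1-t)^2$ replaced by $1/\sigma^2$.

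First, by Bayes' rule together with Assumption~\ref{assumption:Gaussian:convolution}, the conditional density of $Y$ given $X_1 = x_1$ is
\begin{equation*}
p_{Y|X_1 = x_1}(y) = \frac{\gamma_{\sigma^2}(x_1 - y)\, p_Y(y)}{p_{X_1}(x_1)}.
\end{equation*}
I would then write $\bbE[Y|X_1 = x_1] = \int y\, p_{Y|X_1=x_1}(y) \d y$ and differentiate under the integral sign (justified by the compact support of $p_Y$, which bounds the integrand and its $x_1$-derivative uniformly). Using the elementary identity $\nabla_{x_1} \gamma_{\sigma^2}(x_1 - y) = -\sigma^{-2}(x_1 - y)\gamma_{\sigma^2}(x_1 - y)$, exactly the same algebraic manipulation as in the proof of Lemma~\ref{lemma:tweedies:second:order} yields
\begin{equation*}
\nabla \bbE[Y|X_1 = x_1]
= -\frac{1}{\sigma^{2}} \bbE[Y|X_1 = x_1] x_1^\top
+ \frac{1}{\sigma^2} \bbE[YY^\top|X_1 = x_1]
- \bbE[Y|X_1 = x_1] \nabla \log p_{X_1}(x_1)^\top.
\end{equation*}

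To close the argument, I would invoke Tweedie's formula for the Gaussian convolution $X_1 = Y + \sigma \Xi$, which gives
\begin{equation*}
\nabla \log p_{X_1}(x_1) = \frac{1}{\sigma^2}\bigl(\bbE[Y|X_1 = x_1] - x_1\bigr).
\end{equation*}
This identity follows immediately from the same differentiation-under-the-integral calculation applied to $p_{X_1}(x_1) = \int \gamma_{\sigma^2}(x_1 - y) p_Y(y) \d y$ (and is in fact a degenerate form of the relation~\eqref{eq:score:velocity} already used in the paper). Substituting it into the displayed expression above, the $x_1^\top$ terms cancel, and the remaining two terms collapse to
\begin{equation*}
\nabla \bbE[Y|X_1 = x_1] = \frac{1}{\sigma^2}\bigl(\bbE[YY^\top|X_1 = x_1] - \bbE[Y|X_1 = x_1]\bbE[Y|X_1 = x_1]^\top\bigr) = \frac{1}{\sigma^2}\Cov(Y|X_1 = x_1).
\end{equation*}

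The only non-routine point is the dominated-convergence justification for passing $\nabla_{x_1}$ inside the integral, but because $\supp(p_Y) \subseteq B(0,R)$ the integrand $y\, \gamma_{\sigma^2}(x_1 - y)p_Y(y)$ and its gradient are bounded by integrable functions uniformly in $x_1$ ranging over any compact set, so no real obstacle arises. Everything else is a direct transcription of the proof of Lemma~\ref{lemma:tweedies:second:order}.
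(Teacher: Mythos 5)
Your proof is correct and is precisely the argument the paper intends: the paper itself gives no separate proof for this lemma, stating only that it follows ``by similar arguments as the proof of Lemma~\ref{lemma:tweedies:second:order},'' and your transcription (replace $\gamma_{(1-t)^2}(x_t - t\cdot)$ by $\gamma_{\sigma^2}(x_1 - \cdot)$, replace the score relation~\eqref{eq:score:velocity} by Tweedie's formula~\eqref{lemma:tweedies:gaussian:convolution}, and the factor $t/(1-t)^2$ by $1/\sigma^2$) is exactly the adaptation being alluded to. The cancellation of the $x_1^\top$ terms and the reduction to $\Cov(Y|X_1=x_1)$ checks out, and your remark about dominated convergence being harmless because $\supp(p_Y) \subseteq B(0,R)$ and $p_{X_1}$ is locally bounded below is adequate.
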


Further, we will need the following relation. 

\begin{lemma}\label{lemma:representation:conditional}
Let  Assumption~\ref{assumption:Gaussian:convolution} be fulfilled. Then, we have for every $t\in(0,1)$ and $x_t \in \mathbb R ^d$ that
\begin{equation*}
( X_{1}| X_{t}= x_{t}) \stackrel{\d}
{=}
\frac{t}{(1-t)^{2}} \sigma_t^{2} x_{t}+\frac{\sigma_t^2}{\sigma^{2}} Y^{x_{t}}_t+\sigma_t \Xi,
\end{equation*}
where  $Y^{x_{t}}_t \sim \frac{\gamma_{\sigma^2 t^2 + (1-t)^2} (x_t - t \, \cdot) 
\, 
p_Y(\cdot)}{p_{X_t}(x_t)}$ and
$$\sigma_t^2 \coloneqq 
    \frac{\sigma^{2}(1-t)^{2}}{\sigma^{2}t^{2}+(1-t)^2}$$
    In particular
    \begin{equation}
    \bbE[f(X_1) \mid X_t = x_t] = \bbE \left[ f\left( \frac{t \sigma_t^2}{(1-t)^2} x_t + \frac{\sigma_t^2}{\sigma^2} Y_t^{x_t} + \sigma_t \Xi \right) \right]
\end{equation}
and thus 
$\supp(Y^{x_{t}}_t)=\supp(p_Y)$ and
\begin{align*}
\bbE[X_{1}|X_{t}=x_{t}]
&= \frac{t}{(1-t)^{2}} \sigma_t^{2} x_{t}
+\frac{\sigma_t^2}{\sigma^{2}} \bbE[Y^{x_{t}}_t],\\
\Cov(X_1|X_t=x_{t})
&=\
\frac{\sigma_t^4}{\sigma^{4}}\Cov(Y^{x_{t}}_t) + \sigma_t^2  I_d.
\end{align*}
\end{lemma}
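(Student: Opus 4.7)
The plan is to exploit the hierarchical structure $X_1 = Y + \sigma \Xi$ with $Y$ and $\Xi$ independent, which produces a joint density over $(Y, X_1, X_t)$ that is Gaussian in $(X_1, X_t)$ given $Y$. From this, both conditionals $X_1 \mid Y, X_t$ and $Y \mid X_t$ can be extracted by elementary manipulations, and the representation then follows by the tower property.

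First, I would apply Bayes' rule as in \eqref{eq:denoising:density_1}, writing
\begin{equation*}
p_{X_1 \mid X_t = x_t}(x_1) \;=\; \frac{1}{p_{X_t}(x_t)}\, \gamma_{(1-t)^2}(x_t - t x_1)\, p_{X_1}(x_1),
\end{equation*}
and then substitute Assumption~\ref{assumption:Gaussian:convolution} in the form $p_{X_1}(x_1) = \int \gamma_{\sigma^2}(x_1 - y)\, p_Y(y)\,\d y$. The resulting integrand is, up to normalization, the joint density $p_{Y, X_1 \mid X_t = x_t}(y, x_1)$. The core computation is completing the square in $x_1$ in the quadratic form
\begin{equation*}
\frac{\|x_t - t x_1\|_2^2}{(1-t)^2} \;+\; \frac{\|x_1 - y\|_2^2}{\sigma^2},
\end{equation*}
which yields, with $\sigma_t^2 = \sigma^2 (1-t)^2 / (\sigma^2 t^2 + (1-t)^2)$, the conditional
\begin{equation*}
\law(X_1 \mid Y = y, X_t = x_t) \;=\; \calN\!\left(\frac{t\,\sigma_t^2}{(1-t)^2} x_t + \frac{\sigma_t^2}{\sigma^2} y,\; \sigma_t^2\, {\rm I}_d\right).
\end{equation*}

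Next, I would identify the marginal $Y \mid X_t = x_t$. Since $X_t = t Y + t\sigma \Xi + (1-t) X_0$ and the two Gaussian summands combine into $\calN(0, (\sigma^2 t^2 + (1-t)^2)\,{\rm I}_d)$, we obtain $X_t \mid Y = y \sim \calN(t y, (\sigma^2 t^2 + (1-t)^2)\,{\rm I}_d)$. Bayes' rule then gives
\begin{equation*}
p_{Y \mid X_t = x_t}(y) \;\propto\; \gamma_{\sigma^2 t^2 + (1-t)^2}(x_t - t y)\, p_Y(y),
\end{equation*}
which is precisely the law of $Y^{x_t}_t$ as defined in the statement. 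Since the Gaussian factor is strictly positive, the support claim $\supp(Y^{x_t}_t) = \supp(p_Y)$ is immediate.

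Combining the two conditionals by the tower property, the distributional equality follows: sample $Y^{x_t}_t$ from the conditional of $Y$, then add an independent $\sigma_t \Xi$ along with the deterministic shift. The moment formulas then reduce to linearity of expectation and the fact that $\Xi \perp Y^{x_t}_t$, so that
\begin{equation*}
\Cov(X_1 \mid X_t = x_t) = \frac{\sigma_t^4}{\sigma^4}\,\Cov(Y^{x_t}_t) + \sigma_t^2\, {\rm I}_d.
\end{equation*}
The main obstacle is bookkeeping in the completion of squares: it is easy to misplace a factor of $t$, $(1-t)$, or $\sigma$, and the identification $a^{-1} = \sigma_t^2$ for $a = t^2/(1-t)^2 + 1/\sigma^2$ must be verified carefully. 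Once this algebra is carried out correctly, everything else is a routine consequence of independence.
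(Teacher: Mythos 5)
Your proof is correct and takes essentially the same route as the paper: both complete the square in $x_1$ in the product $\gamma_{(1-t)^2}(x_t - t x_1)\,\gamma_{\sigma^2}(x_1 - y)$, factoring it into the conditional Gaussian $\calN\big(\tfrac{t\sigma_t^2}{(1-t)^2}x_t + \tfrac{\sigma_t^2}{\sigma^2}y,\ \sigma_t^2 {\rm I}_d\big)$ for $x_1$ and the reweighted marginal $\gamma_{\sigma^2 t^2 + (1-t)^2}(x_t - ty)\,p_Y(y)$ for $y$, then read off the representation and moments via \eqref{eq:conv} and independence. The paper's proof is just terser, combining this into a single display rather than naming the intermediate conditionals.
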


\begin{proof}
By straightforward computation we obtain
\begin{align*}
    p_{X_1|X_t = x_t} (x_1)
    &=\frac{1}{p_{X_t}(x_{t})} \int \gamma_{(1-t)^{2}}(x_{t}-tx_{1})
    \gamma_{\sigma^{2}}(x_{1}-y) \, p_Y (y) \d y  \\
&=
\int \gamma_{\sigma_t^2} 
\Big(x_{1} - \frac{t}{(1-t)^{2}} \sigma_t^{2} x_t
-
\frac{\sigma_t^2}{\sigma^{2}} y\Big) \, 
\frac{\gamma_{\sigma^2 t^2 + (1-t)^2} (x_t - ty) \, p_Y (y)}{p_{X_t}(x_t)} \d y. 
    \end{align*}
Using~\eqref{eq:conv} yields the assertion.
\end{proof}

The following lemma provides a bound on the time partial derivative of the score, which is inspired by~\cite{beyler2025convergence} and~\cite[Appendix B]{saremi2024chain}.

\begin{lemma}\label{lemma:time:derivative:score}
Let Assumption~\ref{assumption:Gaussian:convolution} be fulfilled. Then, for any $t\in(0,1)$, it holds
\begin{equation*}
\|\partial_{t}\nabla\log p_{X_{t}}(x_{t})\|_{2}\leq \tilde H \frac{\|x_{t}\|_{2}+1}{(1-t)^{3}}, \quad x_{t}\in\bbR^{d},
\end{equation*}
where $\tilde H$ is a constant depending only on $R$ and $\sigma$.
\end{lemma}

\begin{proof}
By the continuity equation~\eqref{eq:transport}, we have 
\begin{align*}
\partial_{t}\log p_{X_{t}}(x_{t})
&=\frac{\partial_{t} p_{X_{t}}(x_{t})}{p_{X_{t}}(x_{t})} 
=-\nabla\cdot u(t,x_{t})-\langle u(t,x_{t}) , \nabla\log p_{X_{t}}(x_{t})\rangle \\
&=-\trace(\nabla u(t,x_{t}))-\langle u(t,x_{t}),  \nabla\log p_{X_{t}}(x_{t})\rangle.
\end{align*}
Taking gradient with respect to $x_{t}$ yields
\begin{equation}\label{eq:lemma:time:derivative:score:1}
\nabla\partial_{t}\log p_{X_{t}}(x_{t})=-\underbrace{\nabla\trace(\nabla u(t,x_{t}))}_{A_1}-\underbrace{\nabla u(t,x_{t})\nabla\log p_{X_{t}}(x_{t})}_{A_2}-\underbrace{\nabla^{2}\log p_{X_{t}}(x_{t})u(t,x_{t})}_{A_3}.
\end{equation}

\par\noindent\textbf{Estimation of $A_1$:}
Using~\eqref{eq:velocity} and Lemma~\ref{lemma:tweedies:second:order}, we conclude
\begin{align*}
\nabla u(t,x_{t})
&=-\frac{1}{1-t} I_{d}+\frac{1}{1-t}\nabla\bbE[X_{1}|X_{t}=x_{t}] \\
&=-\frac{1}{1-t} I_{d}+\frac{t}{(1-t)^{3}} \big(\bbE[X_{1}X_{1}^{\top}|X_{t}=x_{t}]-\bbE[X_{1}|X_{t}=x_{t}]\bbE[X_{1}|X_{t}=x_{t}]^{\top} \big),
\end{align*}
and consequently
\begin{equation*}
\trace(\nabla u(t,x_{t}))=-\frac{d}{1-t}+\frac{t}{(1-t)^{3}}(\bbE[\|X_{1}\|_{2}^{2}|X_{t}=x_{t}]-\|\bbE[X_{1}|X_{t}=x_{t}]\|_{2}^{2}).
\end{equation*}
Taking gradient with respect to $x_{t}$ and invoking Lemma~\ref{lemma:tweedies:second:order:f} yields
\begin{equation}\label{eq:lemma:time:derivative:score:2}
\begin{aligned}
\nabla\trace(\nabla u(t,x_{t}))
&=\frac{t}{(1-t)^{3}}(\nabla\bbE[\|X_{1}\|_{2}^{2}|X_{t}=x_{t}]-2\nabla\bbE[X_{1}|X_{t}=x_{t}]\bbE[X_{1}|X_{t}=x_{t}]) \\
&= \frac{t^{2}}{(1-t)^{5}}\underbrace{\bbE[\|X_{1}\|_{2}^{2}(X_{1}-\bbE[X_{1}|X_{t}=x_{t}])|X_{t}=x_{t}]}_{\text{(iv)}} \\
&\quad -\frac{2t}{(1-t)^{3}}\underbrace{\nabla\bbE[X_{1}|X_{t}=x_{t}]\bbE[X_{1}|X_{t}=x_{t}]}_{\text{(v)}}.
\end{aligned}
\end{equation}
 We consider the summands and start with (iv). Recall that $D(t,x_t)= \bbE[X_1 \mid X_t = x_t]$ and we will compute:
\begin{equation}
    \mathcal{T} \coloneqq \bbE \left[ \|X_1\|_2^2 (X_1 - D(t,x_t)) \mid X_t = x_t \right].
\end{equation}
Let $A = \frac{t \sigma_t^2}{(1-t)^2} x_t + \frac{\sigma_t^2}{\sigma^2} Y_t^{x_t}$ and $B = \sigma_t \Xi$. Using the identity $D(t,x_t) = \bbE[A]$ and using Lemma~\ref{lemma:representation:conditional}
\begin{equation}
    \mathcal{T} = \bbE \left[ \|A + B\|_2^2 (A + B - \bbE[A]) \right] = \bbE \left[ (\|A\|_2^2 + 2\langle A, B \rangle + \|B\|_2^2)(A - \bbE[A] + B) \right].
\end{equation}
Expanding this product gives several terms. Since $\Xi$ (and thus $B$) is independent of $Y_t^{x_t}$ (and thus $A$) and has a symmetric distribution ($\bbE[B] = 0$ and $\bbE[\|B\|_2^2 B] = 0$), the following terms vanish:
\begin{enumerate}
    \item $\bbE[\|A\|_2^2 B] = \bbE[\|A\|_2^2] \bbE[B] = 0$.
    \item $\bbE[2\langle A, B \rangle (A - \bbE[A])] = 2\sum_{i=1}^d\bbE[A_i(A - \bbE[A])] \bbE[B_i]  = 0$.
    \item $\bbE[\|B\|_2^2 (A - \bbE[A])] = \bbE[\|B\|_2^2] \bbE[A - \bbE[A]] = \sigma_t^2 d \cdot 0 = 0$.
    \item $\bbE[\|B\|_2^2 B] = 0$ (third-order moment of centered Gaussian).
\end{enumerate}
The remaining non-zero terms are:
\begin{equation}
    \mathcal{T} = \bbE[\|A\|_2^2 (A - \bbE[A])] + \bbE[2 \langle A, B \rangle B].
\end{equation}
For the second term, since $\bbE[B B^\top] = \sigma_t^2 I_d$, we have $\bbE[2 \langle A, B \rangle B] = 2 \bbE[B B^\top] \bbE[A] = 2\sigma_t^2 \bbE[A]$. For the first term, we write $A = \bbE[A] + \frac{\sigma_t^2}{\sigma^2}(Y_t^{x_t} - \bbE[Y_t^{x_t}])$. Expanding this similarly yields:
\begin{equation}
\begin{aligned}
    \mathcal{T} &= 2\frac{\sigma_t^{4}}{\sigma^{4}}\Cov(Y_{t}^{x_{t}})\Big(\frac{t\sigma_t^{2}}{(1-t)^{2}}x_{t}+\frac{\sigma_t^2}{\sigma^{2}}\bbE[Y_{t}^{x_{t}}]\Big) \\
    &\quad +\frac{\sigma_t^{6}}{\sigma^{6}}\bbE\Big[\|Y_{t}^{x_{t}}-\bbE[Y_{t}^{x_{t}}]\|_{2}^{2}(Y_{t}^{x_{t}}-\bbE[Y_{t}^{x_{t}}])\Big] \\
    &\quad +2\sigma_{t}^{2}\Big(\frac{t\sigma_t^{2}}{(1-t)^{2}}x_{t}+\frac{\sigma_t^2}{\sigma^{2}}\bbE[Y^{x_{t}}_{t}]\Big).
\end{aligned}
\end{equation}

Then using the triangular inequality, we have 
\begin{align}
&\|\bbE[\|X_{1}\|_{2}^{2}(X_{1}-\bbE[X_{1}|X_{t}=x_{t}])|X_{t}=x_{t}]\|_{2} \nonumber \\
&\leq 2\Big(\frac{\sigma_t^{4}}{\sigma^{4}}R^{2}+\sigma_{t}^{2}\Big)\Big(\frac{t\sigma_t^{2}}{(1-t)^{2}}\|x_{t}\|_{2}+\frac{\sigma_t^2}{\sigma^{2}}R\Big)+8\frac{\sigma_t^{6}}{\sigma^{6}}R^{3}, \label{eq:lemma:time:derivative:score:3}
\end{align}
where we used the fact that $\|Y^{x_{t}}_{t}\|_{2}\leq R$. For the term (v) in~\eqref{eq:lemma:time:derivative:score:2}, according to Lemmas~\ref{lemma:tweedies:second:order} and~\ref{lemma:representation:conditional}, we have 
\begin{align}
&\|\nabla\bbE[X_{1}|X_{t}=x_{t}]\bbE[X_{1}|X_{t}=x_{t}]\|_{2} \nonumber \\
&\leq\frac{t}{(1-t)^2}\Big(\frac{\sigma_{t}^{4}}{\sigma^{4}}R^{2}+\sigma_{t}^{2}\Big)\Big(\frac{t\sigma_t^{2}}{(1-t)^{2}}\|x_{t}\|_{2}+\frac{\sigma_t^2}{\sigma^{2}}R\Big), \label{eq:lemma:time:derivative:score:4}
\end{align}
where we used the fact that $\|Y^{x_{t}}_{t}\|_{2}\leq R$. Substituting~\eqref{eq:lemma:time:derivative:score:3} and~\eqref{eq:lemma:time:derivative:score:4} into~\eqref{eq:lemma:time:derivative:score:2} and using the triangular inequality, we get
\begin{align}
\|\nabla\trace(\nabla u(t,x_{t}))\|_{2} 
&\leq 4\frac{t^{2}}{(1-t)^{5}}\Big(\frac{\sigma_t^{4}}{\sigma^{4}}R^{2}+\sigma_{t}^{2}\Big)\Big(\frac{t\sigma_t^{2}}{(1-t)^{2}}\|x_{t}\|_{2}+\frac{\sigma_t^2}{\sigma^{2}}R\Big)+8\frac{t^{2}}{(1-t)^{5}}\frac{\sigma_t^{6}}{\sigma^{6}}R^{3} \nonumber \\
&\leq 4\frac{t^{2}}{(1-t)^{3}}\Big(\frac{(1-t)^{2}}{\zeta_{t}^{4}}R^{2}+\frac{\sigma^{2}}{\zeta_{t}^{2}}\Big)\Big(\frac{t\sigma^{2}}{\zeta_{t}^{2}}\|x_{t}\|_{2}+\frac{(1-t)^2}{\zeta_{t}^{2}}R\Big)+8\frac{t^{2}(1-t)}{\zeta_{t}^{6}}R^{3}, \label{eq:lemma:time:derivative:score:5}
\end{align}
where $\zeta_{t}^{2}\coloneq \sigma^{2}t^{2}+(1-t)^{2}$, and the last inequality invokes the definition of $\sigma_{t}^{2}$ in Lemma~\ref{lemma:representation:conditional}.

\vspace{1ex}

\par\noindent\textbf{Estimation of $A_2$:}
Taking gradient with respect to $x_{t}$ on both sides of~\eqref{eq:velocity} implies 
\begin{align}
\nabla u(t,x_{t})
&= -\frac{1}{1-t}\mathrm{I}_{d}+\frac{1}{1-t}\nabla\bbE[X_{1}|X_{t}=x_{t}] \\
&= \frac{t\sigma_{t}^{2}-(1-t)^{2}}{(1-t)^{3}} I_d+\frac{t}{(1-t)^{3}}\frac{\sigma_t^4}{\sigma^{4}}\Cov(Y^{x_{t}}_t) \nonumber  \\
&= \frac{t\sigma^{2}-1+t}{\zeta_{t}^{2}} I_{d}+\frac{t(1-t)}{\zeta_{t}^{4}}\Cov(Y_{t}^{x_{t}}), \label{eq:lemma:time:derivative:score:6:1}
\end{align}
where the second equality holds from Lemmas~\ref{lemma:tweedies:second:order} and~\ref{lemma:representation:conditional}, and the last equality used $\zeta_{t}^{2}\coloneq \sigma^{2}t^{2}+(1-t)^{2}$. Now~\eqref{eq:score:velocity} and Lemma~\ref{lemma:representation:conditional} yield 
\begin{align} 
\nabla\log p_{X_{t}}(x_{t})
&= -\frac{1}{(1-t)^{2}}x_{t}+\frac{t}{(1-t)^{2}}\bbE[X_{1}|X_{t}=x_{t}] \label{eins} \\
&= \frac{t^{2}\sigma_t^{2}-(1-t)^{2}}{(1-t)^{4}}x_{t}
+\frac{t}{(1-t)^{2}}\frac{\sigma_t^2}{\sigma^{2}}\bbE[Y^{x_{t}}_t] \nonumber \\
&= -\frac{1}{\zeta_{t}^{2}}x_{t}
+\frac{t}{\zeta_{t}^{2}}\bbE[Y^{x_{t}}_t]. \label{eq:lemma:time:derivative:score:6:0}
\end{align}
Therefore,  using the fact that $\|Y^{x_{t}}_{t}\|_{2}\leq R$, we conclude
\begin{equation}\label{eq:lemma:time:derivative:score:6}
\|\nabla u(t,x_{t})\nabla\log p_{X_{t}}(x_{t})\|_{2}\leq \Big(\frac{\sigma^{2}+2}{\zeta_{t}^{2}}+\frac{t(1-t)}{\zeta_{t}^{4}}R^{2}\Big)\Big(\frac{1}{\zeta_{t}^{2}}\|x_{t}\|_{2}
+\frac{t}{\zeta_{t}^{2}}R\Big).
\end{equation}

\vspace{1ex}

\par\noindent\textbf{Estimation of $A_3$:}
Taking gradient with respect to $x_{t}$ on both sides of~\eqref{eins} yields 
\begin{align*}
\nabla^{2}\log p_{X_{t}}(x_{t})
&= -\frac{1}{(1-t)^{2}} I_{d}+\frac{t}{(1-t)^{2}}\nabla\bbE[X_{1}|X_{t}=x_{t}] \\
&= \frac{t\sigma_t^{2}-(1-t)^{2}}{(1-t)^{4}} I_{d}
+\frac{t}{(1-t)^{4}}\frac{\sigma_{t}^{4}}{\sigma^{4}}\Cov(Y_{t}^{x_{t}}) \\
&= \frac{1}{1-t}\Big(\frac{t\sigma^{2}-1+t}{\zeta_{t}^{2}} I_{d}+\frac{t}{\zeta_{t}^{4}}\Cov(Y_{t}^{x_{t}})\Big),
\end{align*}
where the second equality holds from Lemmas~\ref{lemma:tweedies:second:order} and~\ref{lemma:representation:conditional}. Similarly, combining~\eqref{eq:velocity} and Lemma~\ref{lemma:representation:conditional} deduces
\begin{align}
u(t,x_{t})
&=-\frac{1}{1-t}x_{t}+\frac{1}{1-t}\bbE[X_{1}|X_{t}=x_{t}] \nonumber \\
&=\frac{t\sigma_t^{2}-(1-t)^{2}}{(1-t)^{3}}x_{t}+\frac{1}{1-t}\frac{\sigma_t^2}{\sigma^{2}}\bbE[Y_{t}^{x_{t}}] \nonumber \\
&=\frac{t\sigma^{2}-1+t}{\zeta_{t}^{2}}x_{t}+\frac{1-t}{\zeta_{t}^{2}}\bbE[Y_{t}^{x_{t}}], \label{eq:lemma:time:derivative:score:8}
\end{align}
As a result, we have 
\begin{equation}\label{eq:lemma:time:derivative:score:7}
\|\nabla^{2}\log p_{X_{t}}(x_{t})u(t,x_{t})\|_{2}\leq \frac{1}{1-t}\Big(\frac{\sigma^{2}+2}{\zeta_{t}^{2}}+\frac{t}{\zeta_{t}^{4}}R^{2}\Big)\Big(\frac{\sigma^{2}+2}{\zeta_{t}^{2}}\|x_{t}\|_{2}+\frac{1-t}{\zeta_{t}^{2}}R\Big), 
\end{equation}
where we used $\|Y^{x_{t}}_{t}\|_{2}\leq R$.

\vspace{1ex}

\par\noindent\textbf{Conclusion.}
Finally, substituting~\eqref{eq:lemma:time:derivative:score:5},~\eqref{eq:lemma:time:derivative:score:6} and~\eqref{eq:lemma:time:derivative:score:7} into~\eqref{eq:lemma:time:derivative:score:1} yields
\begin{align*}
&\|\nabla\partial_{t}\log p_{X_{t}}(x_{t})\|_{2} \\
&\leq 4\frac{t^{2}}{(1-t)^{3}}\Big(\frac{(1-t)^{2}}{\zeta_{t}^{4}}R^{2}+\frac{\sigma^{2}}{\zeta_{t}^{2}}\Big)\Big(\frac{t\sigma^{2}}{\zeta_{t}^{2}}\|x_{t}\|_{2}+\frac{(1-t)^2}{\zeta_{t}^{2}}R\Big)+8\frac{t^{2}(1-t)}{\zeta_{t}^{6}}R^{3} \\
&\quad +\Big(\frac{\sigma^{2}+2}{\zeta_{t}^{2}}+\frac{t(1-t)}{\zeta_{t}^{4}}R^{2}\Big)\Big(\frac{1}{\zeta_{t}^{2}}\|x_{t}\|_{2}
+\frac{t}{\zeta_{t}^{2}}R\Big) \\
&\quad +\frac{1}{1-t}\Big(\frac{\sigma^{2}+2}{\zeta_{t}^{2}}+\frac{t}{\zeta_{t}^{4}}R^{2}\Big)\Big(\frac{\sigma^{2}+2}{\zeta_{t}^{2}}\|x_{t}\|_{2}+\frac{1-t}{\zeta_{t}^{2}}R\Big),
\end{align*}
which completes the proof.
\end{proof}

\begin{lemma}\label{lemma:lipschitz:target:score}
Let Assumption~\ref{assumption:Gaussian:convolution} be fulfilled. Then we have 
\begin{equation}
\sup_{x_{1}\in\bbR^{d}}\|\nabla^{2}\log p_{X_{1}}(x_{1})\|_{\rm op}\leq \frac{\sigma^{2}+  R^{2}}{\sigma^{4}}.
\end{equation}
\end{lemma}

\begin{proof}
For $X_1 = Y +\sigma \Xi$, where $Y$ is independent of $\Xi$, we know
by Tweedie's formula, see, e.g.~\cite{Efron2011Tweedie} or~\cite[Lemma 3.2]{daras2024consistent}, that
\begin{equation} \label{lemma:tweedies:gaussian:convolution}
\nabla\log p_{X_1}(x_{1})=-\frac{1}{\sigma^2}x_{1}+\frac{1}{\sigma^2}\bbE[Y|X_{1}=x_{1}].
\end{equation}
This implies with Lemma~\ref{lemma:tweedies:second:order:gaussian:convolution} that
\begin{align}
\nabla^{2}\log p_{X_1}(x_{1})
&=-\frac{1}{\sigma^2} I_{d}+\frac{1}{\sigma^2}\nabla\bbE[Y|X_{1}=x_{1}] \\
&=-\frac{1}{\sigma^2} I_{d}+\frac{1}{\sigma^{4}}\Cov(Y|X_{1}=x_{1}).
\end{align}
Using triangular inequality yields 
\begin{align}
\|\nabla^{2}\log p_{X_1}(x_{1})\|_{\rm op} 
&\leq \frac{1}{\sigma^2}+\frac{1}{\sigma^4}\|\Cov(Y|X_{1}=x_{1})\|_{\rm op} \nonumber \\
&\leq \frac{1}{\sigma^2}+\frac{1}{\sigma^4}\trace\Cov(Y|X_{1}=x_{1}).\label{eq:lemma:lipschitz:target:score:1}
\end{align}
Further, we have 
\begin{align*}
\trace\Cov(Y|X_{1}=x_{1}) 
&= \trace\bbE[YY^{\top}|X_{1}=x_1]-\trace\bbE[Y|X_{1}=x_1]\bbE[Y|X_{1}=x_1]^{\top} \\
&\leq \bbE[\trace YY^{\top}|X_{1}=x_1] = \bbE[\trace Y^{\top}Y|X_{1}=x_1] \\
&= \bbE[Y^{\top}Y|X_{1}=x_1] = \bbE[\|Y\|_{2}^{2}|X_{1}=x_1] \leq R^{2},
\end{align*}
where the second equality holds from the cyclic property of the trace operator, and the last inequality is due to Assumption~\ref{assumption:Gaussian:convolution}. Substituting this into~\eqref{eq:lemma:lipschitz:target:score:1} completes the proof.
\end{proof}

\begin{lemma}\label{lemma:conditional:cov:score}
Let Assumption~\ref{assumption:Gaussian:convolution} be fulfilled. Then we have 
\begin{align*}
&\Cov(\nabla\log p_{X_{1}}(X_{1})|X_{t}=x_{t}) \\
&=\frac{(1-t)^{2}}{\sigma^{2}(\sigma^{2}t^{2}+(1-t)^{2})} I_{d}+\frac{t^{4}}{(\sigma^{2}t^{2}+(1-t)^{2})^{2}}\Cov( Y \mid X_{t})-\frac{1}{\sigma^{4}}\bbE[\Cov(Y|X_{1})|X_{t}=x_{t}],
\end{align*}
Further, it holds 
\begin{align*}
&\trace(\Cov(\nabla\log p_{X_{1}}(X_{1})|X_{t}=x_{t})) 
\le\frac{(1-t)^{2}d}{\sigma^{2}(\sigma^{2}t^{2}+(1-t)^{2})}+\frac{t^{4}R^{2}}{(\sigma^{2}t^{2}+(1-t)^{2})^{2}}.
\end{align*}
\end{lemma}

\begin{proof}
Recalling the Tweedie's formula~\cite{Efron2011Tweedie} in~\eqref{lemma:tweedies:gaussian:convolution}, we have 
\begin{align}
&\Cov(\nabla\log p_{X_{1}}(X_{1})|X_{t}=x_{t}) \\
&=\Cov\Big(\frac{1}{\sigma^2}\bbE[Y-X_{1}|X_{1}]|X_{t}=x_{t}\Big) \nonumber \\
&=\frac{1}{\sigma^{4}}\Cov(\bbE[Y-X_{1}|X_{1}]|X_{t}=x_{t}) \nonumber \\
&=\frac{1}{\sigma^{4}}\Cov(Y-X_{1}|X_{t}=x_{t})-\frac{1}{\sigma^{4}}\bbE[\Cov(Y-X_{1}|X_{1})|X_{t}=x_{t}] \nonumber \\
&=\frac{1}{\sigma^{4}}\Cov(Y-X_{1}|X_{t}=x_{t})-\frac{1}{\sigma^{4}}\bbE[\Cov(Y|X_{1})|X_{t}=x_{t}], \label{eq:lemma:conditional:cov:score:1}
\end{align}
where the first equality holds from~\eqref{lemma:tweedies:gaussian:convolution}, and the third equality is due to the law of total covariance. It remains to focus on $\Cov(Y-X_{1}|X_{t}=x_{t})$. Recall that 
\begin{equation*}
X_{1}\stackrel{\d}{=}Y+\sigma\Xi, \quad \text{and} \quad X_{t}\stackrel{\d}{=}(1-t)X_{0}+tX_{1},
\end{equation*}
where $\Xi\sim\calN(0, I_{d})$ and $X_{0}\sim\calN(0, I_{d})$ are independent. Then it holds that 
\begin{equation*}
(Y-X_{1} \mid X_{t},Y) \stackrel{\d}{=} (Y-X_{1}|X_{t}-tY,Y) \stackrel{\d}{=} (\sigma\Xi|X_{t}-tY) \stackrel{\d}{=} (\sigma\Xi \mid (1-t)X_{0}+t\sigma\Xi),
\end{equation*}
where the second equality invokes the independence between $\Xi$ and $Y$. Since $(\sigma\Xi, (1-t)X_{0}+t\sigma\Xi)$ follows a joint Gaussian distribution, and the marginal distribution of both $\sigma\Xi$ and $(1-t)X_{0}+t\sigma\Xi$ are Gaussian, the conditional distribution $(\sigma\Xi \mid (1-t)X_{0}+t\sigma\Xi)$ is also a Gaussian with mean and covariance as 
\begin{align}
\bbE[Y-X_{1} \mid X_{t},Y]
&=\bbE[\sigma\Xi \mid (1-t)X_{0}+\sigma\Xi]=-\frac{t\sigma^{2}}{\sigma^{2}t^{2}+(1-t)^{2}}(X_{t}-tY), \label{eq:lemma:conditional:cov:score:2} \\
\Cov(Y-X_{1} \mid X_{t},Y)
&=\Cov(\sigma\Xi \mid (1-t)X_{0}+\sigma\Xi)=\frac{(1-t)^{2}\sigma^{2}}{\sigma^{2}t^{2}+(1-t)^{2}} I_{d}. \label{eq:lemma:conditional:cov:score:3}
\end{align}
Then using the law of total covariance, we have 
\begin{align}
&\Cov(Y-X_{1} \mid X_{t}) \nonumber \\
&=\bbE[\Cov(Y-X_{1} \mid X_{t},Y) \mid X_{t}]+\Cov(\bbE[Y-X_{1} \mid X_{t},Y] \mid X_{t}) \nonumber \\
&=\frac{(1-t)^{2}\sigma^{2}}{\sigma^{2}t^{2}+(1-t)^{2}} I_{d}+\Cov\Big(-\frac{t\sigma^{2}}{\sigma^{2}t^{2}+(1-t)^{2}}(X_{t}-tY) \mid X_{t}\Big) \nonumber \\
&=\frac{(1-t)^{2}\sigma^{2}}{\sigma^{2}t^{2}+(1-t)^{2}} I_{d}+\frac{t^{4}\sigma^{4}}{(\sigma^{2}t^{2}+(1-t)^{2})^{2}}\Cov( Y \mid X_{t}), \label{eq:lemma:conditional:cov:score:4}
\end{align}
where the second equality follows from~\eqref{eq:lemma:conditional:cov:score:2} and~\eqref{eq:lemma:conditional:cov:score:3}. Substituting~\eqref{eq:lemma:conditional:cov:score:4} into~\eqref{eq:lemma:conditional:cov:score:1} completes the proof.
\end{proof}

\par Applying the method of variation-of-constants, we have the following solution of the semi-linear ODE.

\begin{lemma}[Variation-of-constants]\label{lemma:variation:constant}
Suppose $\phi$ satisfies the following semi-linear ODE: 
\begin{equation*}
\frac{\d}{\dt}\phi(t,x) = a(t)\phi(t,x)+b(t)N(t,\phi(t,x)), \quad t\in(t_{1},t_{2}).
\end{equation*}
Then we have 
\begin{equation*}
\phi(t_{2},x) = \exp\Big(\int_{t_{1}}^{t_{2}}a(s)\ds\Big)\phi(t_{1},x) + \int_{t_{1}}^{t_{2}}\exp\Big(\int_{t}^{t_{2}}a(s)\ds\Big)b(t)N(t,\phi(t,x))\dt.
\end{equation*}
\end{lemma}

\section{Proofs of Section~\ref{section:method:velocity} } \label{section:proof:method_1}

\begin{customproposition}{\ref{lemma:denoising:langevin:convergence}}
Let Assumption~\ref{assumption:Gaussian:convolution} be fulfilled, and let $X_t$ be given by the linear interpolant~\eqref{eq:stochastic:interporlant}. 
For $t \in (0,1)$ and $\sigma^2 >0$, set
$$\beta_t \coloneqq \frac{t^2}{(1-t)^{2}}+\frac{1}{\sigma^{2}} - \frac{R^2}{\sigma^4}.$$ 
Then, for every $t\in(0,1)$, it holds
\begin{equation*} 
\beta_t \,  I_{d}
\preceq 
-\nabla^2 \log p_{X_1|X_t = x_t}
\preceq
\Big(\beta_t + \frac{R^2}{\sigma^4} \Big)\,  I_{d},
\end{equation*}
and $p_{X_1|X_t = x_t}$ is $\beta_t$-strongly log-concave  for $t \in (T^*,1)$, where
\begin{equation}
T^* =
\left\{
\begin{array}{ll}
0& \text{if } R^{2}\leq\sigma^{2},
\\[1ex]
\frac{1}{2} & \text{if } R^{2}=\sigma^{2}+\sigma^{4},
\\[1ex]
\frac{R^{2}-\sigma^{2}-\sigma^{2}\sqrt{R^{2}-\sigma^{2}}}{R^{2}-\sigma^{2}-\sigma^{4}}&\text{ otherwise}.
\end{array}
\right.
\end{equation}
We have $T^*< \frac12$ if $\sigma^{2}<R^{2}<\sigma^{2}+\sigma^{4}$, and $T^*> \frac12$ if $R^{2}>\sigma^{2}+\sigma^{4}$.

Further, for $t \in (T^*,1)$, it holds
\begin{equation}
\kl \left(p_{Z_s}, p_{X_1|X_t = x_t} \right)
\leq
{\rm e}^{-2\beta_t s} \, 
\kl \left(p_{Z_0},p_{X_1|X_t = x_t} \right),
\end{equation}
where $Z_{s}$,  $s\geq 0$ is defined by the Langevin diffusion~\eqref{eq:langevin:score:estimation}. 
\end{customproposition}

\begin{proof}
Using~\eqref{eq:denoising_score_function},~\eqref{lemma:tweedies:gaussian:convolution}
as well as Lemma~\ref{lemma:tweedies:second:order:gaussian:convolution}, we obtain
\begin{align*} 
\nabla^2\log p_{X_1|X_t=x_t}(x_{1})
&=-\frac{t^2}{(1-t)^{2}} \,  I_{d}+\nabla^2\log p_{X_1}(x_{1}) \nonumber \\
&=-\frac{t^2}{(1-t)^{2}}\,  I_{d}-\frac{1}{\sigma^{2}} \,  I_{d}+\frac{1}{\sigma^{2}}\nabla\bbE[Y|X_{1}=x_{1}] 
\nonumber \\
&=- \Big(\beta_t + \frac{R^2}{\sigma^4} \Big) \,  I_{d}
+
\frac{1}{\sigma^{4}}\Cov(Y|X_{1}= x_{1}). 
\end{align*} 
Since $\|Y\|_{2}\leq R$ almost surely, we have
\begin{equation*}
0\preceq \Cov(Y|X_1=x_1)\preceq R^{2} \,  I_{d},
\end{equation*}
so that
\begin{equation*}
\beta_t \,  I_{d}
\preceq 
-\nabla^2 \log p_{X_1|X_t = x_t}
\preceq
\Big(\beta_t + \frac{R^2}{\sigma^4} \Big)\,  I_{d},
\end{equation*}
Next, we consider for $t\in (0,1)$ the expression
\begin{align*}
(1-t)^2 \sigma^4 \beta_t 
&= t^2 \sigma^4 + (1-t)^2 (\sigma^2 - R^2)\\
&= (\sigma^2 + \sigma^4 - R^2) t^2 - 2t (\sigma^2 - R^2) + \sigma^2 - R^2.
\end{align*}
If $\sigma^{2}\geq R^{2}$, then clearly $\beta_t > 0$.
If $R^{2} = \sigma^{2} + \sigma^4$, then $\beta_t > 0$  if and only if $t \in (\frac12,1)$.\\
Let $\sigma^{2}<R^{2}$. Then the zeros of the above quadratic equation
are
$$
t_{\pm} = \frac{R^2 - \sigma^2 \pm \sigma^2 \sqrt{R^2 - \sigma^2}}{ R^2 - \sigma^2 - \sigma^4} .
$$
If $R^{2} <\sigma^{2}+\sigma^{4}$, 
then $t_+ < 0$ and $t_- \in (0,\frac12)$, so that $\beta_t > 0$ if and only if
$t \in (t_-,1)$. 
If $R^{2}>\sigma^{2}+\sigma^{4}$, 
then $t_+ > 1$ and $t_- \in (\frac12,1)$, so that $\beta_t >0$ if and only if
$t \in (t_-,1)$.
This gives $T^*$ in~\eqref{eq:T}.
The final relation~\eqref{est} follows from the $\beta_t$-strong log-concavity, see Theorem~\ref{thm:conv}.
This completes the proof.
\end{proof}

\begin{customproposition}{\ref{proposition:rescaling:velocity}}
Let $(X_0,X_1)\sim\gamma\otimes p_{X_1}$ and consider the stochastic
interpolant
\[
X_t:=a_tX_0+b_tX_1,
\]
where $a_t$ and $b_t$ are continuously differentiable schedules satisfying
$a_t,b_t>0$ for $t\in(0,1)$. Then the velocity field
\[
u(t,x_t):=\mathbb{E}[\dot X_t\mid X_t=x_t]
\]
satisfies, for every $t\in(0,1)$ and $x_t\in\mathbb{R}^d$,
\[
u(t,x_t)
=
\frac{\dot b_t}{b_t}x_t
+
\frac{a_t(a_t\dot b_t-\dot a_tb_t)}{b_t^2}
\mathbb{E}\big[
    \nabla\log p_{X_1}(X_1)
    \,\big|\,X_t=x_t
\big].
\]
In particular, for $a_t=1-t$ and $b_t=t$, we obtain
\[
u(t,x_t)
=
\frac{1}{t}x_t
+
\frac{1-t}{t^2}
\mathbb{E}\big[
    \nabla\log p_{X_1}(X_1)
    \,\big|\,X_t=x_t
\big],
\]
which is the assertion of Proposition~4.2.
\end{customproposition}

\begin{proof}
Computing the time derivative of the interpolant gives
\[
\dot X_t=\dot a_tX_0+\dot b_tX_1.
\]
Conditionally on $X_t=x_t$, we have
\[
X_0=\frac{x_t-b_tX_1}{a_t},
\]
and consequently
\[
\begin{aligned}
u(t,x_t)
&=
\mathbb{E}\big[\dot X_t\mid X_t=x_t\big]
\\
&=
\frac{\dot a_t}{a_t}x_t
+
\left(
    \dot b_t-\frac{\dot a_tb_t}{a_t}
\right)
\mathbb{E}\big[X_1\mid X_t=x_t\big].
\end{aligned}
\]

By Bayes' theorem, the posterior density is given by
\[
p_{X_1\mid X_t=x_t}(x_1)
=
\frac{1}{p_{X_t}(x_t)}
\gamma_{a_t^2}(x_t-b_tx_1)p_{X_1}(x_1).
\]
Differentiating its logarithm with respect to $x_1$ yields
\[
\nabla_{x_1}\log p_{X_1\mid X_t=x_t}(x_1)
=
\frac{b_t}{a_t^2}(x_t-b_tx_1)
+
\nabla\log p_{X_1}(x_1).
\]
The conditional score has zero mean. Indeed, by Assumption~1, the
conditional density is continuously differentiable and has sufficient
decay at infinity, so that
\[
\begin{aligned}
&\mathbb{E}\big[
    \nabla_{x_1}\log p_{X_1\mid X_t=x_t}(X_1)
    \,\big|\,X_t=x_t
\big]
\\
&\qquad=
\int_{\mathbb{R}^d}
\nabla_{x_1}p_{X_1\mid X_t=x_t}(x_1)\,\mathrm{d}x_1
=0.
\end{aligned}
\]
Taking the conditional expectation in the preceding score identity
therefore gives
\[
0
=
\frac{b_t}{a_t^2}
\left(
    x_t-b_t\mathbb{E}[X_1\mid X_t=x_t]
\right)
+
\mathbb{E}\big[
    \nabla\log p_{X_1}(X_1)
    \,\big|\,X_t=x_t
\big].
\]
Hence,
\[
\mathbb{E}[X_1\mid X_t=x_t]
=
\frac{1}{b_t}x_t
+
\frac{a_t^2}{b_t^2}
\mathbb{E}\big[
    \nabla\log p_{X_1}(X_1)
    \,\big|\,X_t=x_t
\big].
\]
Substituting this expression into the representation of $u(t,x_t)$
above yields
\[
u(t,x_t)
=
\frac{\dot b_t}{b_t}x_t
+
\frac{a_t(a_t\dot b_t-\dot a_tb_t)}{b_t^2}
\mathbb{E}\big[
    \nabla\log p_{X_1}(X_1)
    \,\big|\,X_t=x_t
\big].
\]
Finally, for $a_t=1-t$ and $b_t=t$, this reduces to
\[
u(t,x_t)
=
\frac{1}{t}x_t
+
\frac{1-t}{t^2}
\mathbb{E}\big[
    \nabla\log p_{X_1}(X_1)
    \,\big|\,X_t=x_t
\big],
\]
which completes the proof.
\end{proof}

\section{Proofs of Section~\ref{section:computation:flow} } \label{section:proof:method}

\begin{customlemma}{\ref{lem1}}
Let Assumption~\ref{assumption:Gaussian:convolution} be fulfilled. For every $t\in(0,1)$, it holds 
\begin{equation*}
\|u(t,x_{t})\|_{2}\leq B(1+\|x_{t}\|_{2}), \quad x_{t}\in\bbR^{d},
\end{equation*}
where $B$ is a constant only depending on $\sigma$ and $R$.
\end{customlemma}

\begin{proof}
According to~\eqref{eq:velocity} and Lemma~\ref{lemma:representation:conditional}, we have 
\begin{align*}
u(t,x_{t})
&=-\frac{1}{1-t} x_{t}+\frac{1}{1-t}\bbE[X_{1}|X_{t}=x_{t}] \\
&=-\frac{1}{1-t} x_{t}+\frac{1}{1-t}\frac{t}{(1-t)^{2}} \sigma_t^{2} x_{t}
+\frac{1}{1-t}\frac{\sigma_t^2}{\sigma^{2}} \bbE[Y^{x_{t}}_t] \\
&=\frac{\sigma^{2}t-(1-t)}{\sigma^{2}t^2+(1-t)^{2}}x_{t}+\frac{1-t}{\sigma^{2}t^{2}+(1-t)^2}\bbE[Y^{x_{t}}_t].
\end{align*}
Then using triangular inequality yields
\begin{align*}
\|u(t,x_{t})\|_{2}
&\leq \frac{|\sigma^{2}t-(1-t)|}{\sigma^{2}t^2+(1-t)^{2}}\|x_{t}\|_{2}+\frac{1-t}{\sigma^{2}t^{2}+(1-t)^2}\|\bbE[Y^{x_{t}}_t]\|_{2} \\
&\leq \frac{|\sigma^{2}t-(1-t)|}{\sigma^{2}t^2+(1-t)^{2}}\|x_{t}\|_{2}+\frac{1-t}{\sigma^{2}t^{2}+(1-t)^2}\bbE[\|Y^{x_{t}}_t]\|_{2}] \\
&\leq \frac{|\sigma^{2}t-(1-t)|}{\sigma^{2}t^2+(1-t)^{2}}\|x_{t}\|_{2}+\frac{1-t}{\sigma^{2}t^{2}+(1-t)^2}R,
\end{align*}
where the second inequality holds from Jensen's inequality, and the last inequality is due to Lemma~\ref{lemma:representation:conditional}. This completes the proof.
Compare to~\cite[Proposition 3.2]{ding2024characteristic}.
\end{proof}

\begin{customtheorem}{\ref{thm}}
Let Assumption~\ref{assumption:Gaussian:convolution} be fulfilled. For every $t\in(0,1)$, it holds 
\begin{equation*}
\|\nabla u(t,x_t)\|_{\mathrm{op}}\leq G,
\end{equation*}
where $G$ is a constant only depending on $R$ and $\sigma$.
\end{customtheorem}

\begin{proof}
According to~\eqref{eq:velocity} and Lemmas~\ref{lemma:tweedies:second:order} and~\ref{lemma:representation:conditional}, we have 
\begin{align*}
\nabla u(t,x_{t})
&=-\frac{1}{1-t}\mathrm{I}_{d}+\frac{1}{1-t}\nabla\bbE[X_{1}|X_{t}=x_{t}] \\
&=-\frac{1}{1-t}\mathrm{I}_{d}+\frac{1}{1-t}\frac{t}{(1-t)^2}\Cov(X_{1}|X_{t}=x_{t}) \\
&=-\frac{1}{1-t}\mathrm{I}_{d}+\frac{1}{1-t}\frac{t}{(1-t)^2}\frac{\sigma_t^4}{\sigma^{4}}\Cov(Y^{x_{t}}_t) + \frac{1}{1-t}\frac{t}{(1-t)^2}\sigma_t^2  I_d \\
&=\frac{\sigma^{2}t-(1-t)}{\sigma^{2}t^2+(1-t)^{2}} I_{d}+\frac{(1-t)t}{(\sigma^{2}t^{2}+(1-t)^2)^{2}}\Cov(Y^{x_{t}}_t).
\end{align*}
Then using triangular inequality yields
\begin{align*}
\|\nabla u(t,x_{t})\|_{\mathrm{op}}
&\leq\frac{|\sigma^{2}t-(1-t)|}{\sigma^{2}t^2+(1-t)^{2}}+\frac{(1-t)t}{(\sigma^{2}t^{2}+(1-t)^2)^{2}}\|\Cov(Y^{x_{t}}_t)\|_{\mathrm{op}} \\
&\leq\frac{|\sigma^{2}t-(1-t)|}{\sigma^{2}t^2+(1-t)^{2}}+\frac{(1-t)t}{(\sigma^{2}t^{2}+(1-t)^2)^{2}}R^2,
\end{align*}
where the first inequality holds from the triangular inequality, and the last inequality is due to Lemma~\ref{lemma:representation:conditional}. This completes the proof. 
See also~\cite[Proposition 3.5]{ding2024characteristic}.
\end{proof}
\vspace{0.2cm}

\begin{remark}[Euler forward coincides with exponential integrator for vanilla velocity estimator]\label{remark:euler:ei}
For
$
D(s,y) := \mathbb{E}[X_1 \mid X_s = y],
$
we consider the ODE
\[
\frac{d}{dt}\psi(t,x)
= u(t,\psi(t,x))
= -\frac{1}{1-t}\psi(t,x)
+ \frac{1}{1-t}D\bigl(t,\psi(t,x)\bigr),
\]
i.e., we consider the vanilla velocity estimator.
Discretizing this via the explicit Euler method with step size \(h>0\), we obtain for
$
\hat{\psi}_m := \psi(t_m,x)
$
that
\begin{align*}
\hat{\psi}_{m+1}
&= \hat{\psi}_m + h\,u(t_m,\hat{\psi}_m) \\
&= \hat{\psi}_m
   - \frac{h}{1-t_m}\hat{\psi}_m
   + \frac{h}{1-t_m}D(t_m,\hat{\psi}_m) \\
&=
\left(
1-\frac{t_{m+1}-t_m}{1-t_m}
\right)\hat{\psi}_m
+ \frac{h}{1-t_m}D(t_m,\hat{\psi}_m) \\
&=
\frac{1-t_{m+1}}{1-t_m}\hat{\psi}_m
+ \frac{h}{1-t_m}D(t_m,\hat{\psi}_m).
\end{align*}
On the other hand, using the method of exponential integrators, we get
\[
\hat{\psi}_{m+1}
=
\exp\!\left(
\int_{t_m}^{t_{m+1}}
-\frac{1}{1-s}\, \d s
\right)
\hat{\psi}_m
+
\int_{t_m}^{t_{m+1}}
\exp\!\left(
\int_s^{t_{m+1}}
-\frac{1}{1-r}\, \d r
\right)
\frac{D(s,\psi(s,x))}{1-s}\, \d s.
\]
The first summand simplifies to
\begin{align*}
\exp\!\left(
\int_{t_m}^{t_{m+1}}
-\frac{1}{1-s}\, \d s
\right)
\hat{\psi}_m
&=
\exp\!\left(
\ln\!\left(
\frac{1-t_{m+1}}{1-t_m}
\right)\right)
\hat{\psi}_m =
\frac{1-t_{m+1}}{1-t_m}\hat{\psi}_m.
\end{align*}
For the second summand, approximating \(D\) by its value at the left boundary of the
time interval (as done in the paper), we obtain
\begin{align*}
&\int_{t_m}^{t_{m+1}}
\exp\!\left(
\int_s^{t_{m+1}}
-\frac{1}{1-r}\, \d r
\right)
\frac{D(s,\psi(s,x))}{1-s}\, \d s 
\approx
D(t_m,\hat{\psi}_m)
\int_{t_m}^{t_{m+1}}
\frac{1-t_{m+1}}{(1-s)^2}\, \d s \\
&\qquad=
(1-t_{m+1})D(t_m,\hat{\psi}_m)
\int_{t_m}^{t_{m+1}}
\frac{1}{(1-s)^2}\, \d s \\
&\qquad=
(1-t_{m+1})D(t_m,\hat{\psi}_m)
\left(
\frac{1}{1-t_{m+1}}
-
\frac{1}{1-t_m}
\right) \\
&\qquad=
\left(
1-\frac{1-t_{m+1}}{1-t_m}
\right)
D(t_m,\hat{\psi}_m) =
\frac{t_{m+1}-t_m}{1-t_m}
D(t_m,\hat{\psi}_m) 
=
\frac{h}{1-t_m}
D(t_m,\hat{\psi}_m).
\end{align*}
Hence, in both cases, we obtain the exact same expression.
\end{remark}

\section{Proofs in Section~\ref{section:convergence}}
\label{section:proof:convergence}

\subsection{Convergence of Langevin Monte Carlo}

We first show a convergence rate of Langevin Monte Carlo with a log-concave invariant distribution.

\begin{theorem}[Convergence rate in Wasserstein distance~{\cite[Theorem 4.1.2]{Chewi2025log}}]\label{lemma:lmc}
Let $p\in C^{2}(\mathbb{R}^{d})$ be a probability density. Suppose there exist $0<\alpha\leq L<\infty$ such that $\alpha I_{d}\preceq \nabla^{2}\log p(x) \preceq LI_{d}$ for any $x\in\mathbb{R}^{d}$. Let $\widehat{p}_{t}$ be the marginal density of Langevin Monte Carlo with exact score
\begin{equation*}
\what{X}_{(k+1)h}=\what{X}_{kh}+h\nabla\log p(\what{X}_{kh})+\sqrt{2h}\xi_{k+1}, \quad \xi_{k+1}\sim\mathcal{N}(0,I_d).
\end{equation*}
Suppose the step size $h>0$ satisfies $\alpha^{-1}L^{2}h\ll 1$, then we have
\begin{equation*}
W_{2}^{2}(\widehat{p}_{Kh},p) \leq \exp(-\alpha Kh)W_{2}^{2}(\widehat{p}_{0},p)+\frac{c}{\alpha^{2}}dL^{2}h,
\end{equation*}
where $c>0$ is an absolute constant.
\end{theorem}

\par Let $p\in C^{2}(\mathbb{R}^{d})$ be a probability density satisfying log-Sobolev inequality with constant $C_{\rm LSI}(p)$, and $\log p$ is $L$-smooth.  Consider the Langevin Monte Carlo with a score estimator $\what{s}$, and initialization $\what{X}_{0}\sim p_{0}$ given by
\begin{equation*}
\what{X}_{(k+1)h}=\what{X}_{kh}+h\what{s}(\what{X}_{kh})+\sqrt{2h}\xi_{k+1}, \quad \xi_{k+1}\sim\mathcal{N}(0,\rm{I}_d).
\end{equation*}
The associated interpolated process is defined as
\begin{equation}\label{eq:lmc:inexact:score}
\d\what{X}_{t}=\what{s}(\what{X}_{kh})\dt+\sqrt{2}\d B_{t}, \quad t\in[kh,(k+1)h],
\end{equation}
where $B_{t}$ represents a $d$-dimensinal Brownian motion. Denote by $\widehat{p}_{t}$ the marginal density of $\widehat{X}_{t}$, denote by $\widehat{p}_{kh|t}(\cdot|x_{t})$ the conditional density of $\widehat{X}_{kh}$ given $\widehat{X}_{t}=x_{t}$, and denote by $\widehat{p}_{kh,t}$ the joint density of $\widehat{X}_{kh}$ and $\widehat{X}_{t}$. The goal of this subsection is to establish the error between $\widehat{p}_{t}$ and $p$ in KL-divergence.

\begin{theorem}\label{lemma:lmc:score}
Let $p\in C^{2}(\mathbb{R}^{d})$ be a probability density satisfying log-Sobolev inequality with constant $C_{\rm LSI}(p)$, and $\log p$ is $L$-smooth. Let $\widehat{p}_{t}$ be the marginal density of Langevin Monte Carlo with inexact score~\eqref{eq:lmc:inexact:score}. Suppose the step size $h>0$ satisfies $16L^{2}hC_{\rm LSI}(p)\leq 1$, then we have
\begin{align}
\kl(\widehat{p}_{Kh},p)
&\leq \exp\Big(-\frac{Kh}{C_{\rm LSI}(p)}\Big)\kl(\widehat{p}_{0},p) + 32dL^{2}hC_{\rm LSI}(p) \\
&\quad +\sum_{j=0}^{K-1}\exp\Big(-\frac{K-1-j}{C_{\rm LSI}(p)}h\Big)12h\int\widehat{p}_{jh}(x_{jh})\|\what{s}(x_{jh})-\nabla\log p(x_{jh})\|_{2}^{2}\d x_{jh}.
\end{align}
\end{theorem}

\begin{proof}
The proof is divided into four steps.
\\[1ex]
\emph{Step 1. Differential equation of KL-divergence.}
According to~\cite[Lemma A.1]{Lee2022Convergence}, the Fokker-Planck equation of the Langevin Monte Carlo with score estimator~\eqref{eq:lmc:inexact:score} is given as 
\begin{equation}\label{eq:lemma:lmc:score:kl:1}
\partial_{t}\widehat{p}_{t}(x_t)=\nabla\cdot\Big\{\widehat{p}_{t}\Big(-\int\what{s}(x_{kh})\widehat{p}_{kh|t}(x_{kh}|x_{t})\d x_{kh}+\nabla\log\widehat{p}_{t}(x_t)\Big)\Big\},
\end{equation}
for any $x\in\mathbb{R}^{d}$ and $t\in[kh,(k+1)h]$. Then it follows from the Fokker-Planck equation~\eqref{eq:lemma:lmc:score:kl:1} and Gauss-Green formula that 
\begin{align}
&\partial_{t}\kl(\widehat{p}_{t},p) \\
&=\partial_{t}\int\widehat{p}_{t}(x_{t})\log\frac{\widehat{p}_{t}(x_{t})}{p(x_t)}\d x_{t} \\
&=\int \partial_{t}\widehat{p}_{t}(x_{t})\log\frac{\widehat{p}_{t}(x_{t})}{p(x_{t})}\d x_{t}+\int\partial_{t}\widehat{p}_{t}(x_{t})\d x_{t} \\
&=\int \partial_{t}\widehat{p}_{t}(x_{t})\log\frac{\widehat{p}_{t}(x_{t})}{p(x_{t})}\d x_{t} \\
&=\int\nabla\cdot\Big\{\widehat{p}_{t}(x_{t})\Big(-\int\what{s}(x_{kh})\widehat{p}_{kh|t}(x_{kh}|x_{t})\d x_{kh}+\nabla\log\widehat{p}_{t}(x_{t})\Big)\Big\}\log\frac{\widehat{p}_{t}(x_{t})}{p(x)}\d x_{t} \\
&=\int\nabla\cdot\Big\{\widehat{p}_{t}(x_{t})\Big(\nabla\log p(x_{t})-\int\what{s}(x_{kh})\widehat{p}_{kh|t}(x_{kh}|x_{t})\d x_{kh}\Big)\Big\}\log\frac{\widehat{p}_{t}(x_{t})}{p(x_{t})}\d x_{t} \\
&\quad +\int\nabla\cdot\Big\{\widehat{p}_{t}(x_{t})\nabla\log\frac{\widehat{p}_{t}(x_{t})}{p(x_{t})}\Big\}\log\frac{\widehat{p}_{t}(x_{t})}{p(x_{t})}\d x_{t} \\
&=-\int\widehat{p}_{t}(x_{t})\Big\langle\nabla\log p(x_{t})-\int\what{s}(x_{kh})\widehat{p}_{kh|t}(x_{kh}|x_{t})\d x_{kh},\nabla\log\frac{\widehat{p}_{t}(x_{t})}{p(x_{t})}\Big\rangle\d x_{t} \\
&\quad -\int\widehat{p}_{t}(x_{t})\Big\langle\nabla\log\frac{\widehat{p}_{t}(x_{t})}{p(x_{t})},\nabla\log\frac{\widehat{p}_{t}(x_{t})}{p(x_{t})}\Big\rangle\d x_{t} \\
&= \iint\widehat{p}_{kh,t}(x_{kh},x_{t})\Big\langle\what{s}(x_{kh})-\nabla\log p(x_{t}),\nabla\log\frac{\widehat{p}_{t}(x_{t})}{p(x_{t})}\Big\rangle\d x_{kh}\d x_{t} \\
&\quad -\int\widehat{p}_{t}(x_{t})\|\nabla\log\frac{\widehat{p}_{t}(x_{t})}{p(x_{t})}\|_{2}^{2}\d x_{t} \\
&\leq \underbrace{\iint\widehat{p}_{kh,t}(x_{kh},x_{t})\|\what{s}(x_{kh})-\nabla\log p(x_{t})\|_{2}^{2}\d x_{kh}\d x_{t}}_{A_{1}} \\
&\quad -\frac{3}{4}\underbrace{\int\widehat{p}_{t}(x_{t})\|\nabla\log\frac{\widehat{p}_{t}(x_{t})}{p(x_{t})}\|_{2}^{2}\d x_{t}}_{A_{2}}, \label{eq:lemma:lmc:score:kl:2}
\end{align}
where the third equality holds from $\int\partial_{t}\widehat{p}_{t}\d x=\partial_{t}\int \widehat{p}_{t}\d x=0$, the sixth equality is due to Gauss-Green formula, and the last inequality invokes $\langle a,b\rangle\leq\|a\|_{2}^{2}+\frac{1}{4}\|b\|_{2}^{2}$. Here the terms $A_{1}$ and $A_{2}$ in~\eqref{eq:lemma:lmc:score:kl:2} will be bounded in the rest of the proof.
\\[1ex]
\emph{Step 2. Discretization error and score estimation error.} 
In this step, we aim to bound $A_{1}$ in~\eqref{eq:lemma:lmc:score:kl:2}. Since $\log p$ is $L$-smooth, we have
\begin{align}
&\iint\widehat{p}_{kh,t}(x_{kh},x_{t})\|\what{s}(x_{kh})-\nabla\log p(x_{t})\|_{2}^{2}\d x_{kh}\d x_{t} \nonumber \\
&\leq 2\int\widehat{p}_{kh}(x_{kh})\|\what{s}(x_{kh})-\nabla\log p(x_{kh})\|_{2}^{2}\d x_{kh} \nonumber \\
&\quad +2\iint\widehat{p}_{kh,t}(x_{kh},x_{t})\|\nabla\log p(x_{kh})-\nabla\log p(x_{t})\|_{2}^{2}\d x_{kh}\d x_{t} \nonumber \\
&\leq 2\int\widehat{p}_{kh}(x_{kh})\|\what{s}(x_{kh})-\nabla\log p(x_{kh})\|_{2}^{2}\d x_{kh} \\
&\quad +2L^{2}\iint\widehat{p}_{kh,t}(x_{kh},x_{t})\|x_{kh}-x_{t}\|_{2}^{2}\d x_{kh}\d x_{t}, \label{eq:lemma:lmc:score:kl:3}
\end{align}
where the first inequality is due to the triangular inequality. We now turn to focus on the second term in~\eqref{eq:lemma:lmc:score:kl:3}. According to the interpolated process~\eqref{eq:lmc:inexact:score}, 
\begin{equation}\label{eq:lemma:lmc:score:kl:4}
\what{X}_{t}=\what{X}_{kh}+(t-kh)\what{s}(\what{X}_{kh})+\sqrt{2(t-kh)}\xi, \quad \xi\sim\mathcal{N}(0,\rm{I}_d),
\end{equation}
where $\xi$ is independent of $\what{X}_{kh}$. Consequently, we have 
\begin{align}
&\iint\widehat{p}_{kh,t}(x_{kh},x_{t})\|x_{kh}-x_{t}\|_{2}^{2}\d x_{kh}\d x_{t} \nonumber \\
&= \iint\widehat{p}_{kh}(x_{kh})\gamma(\xi)\|(t-kh)\what{s}(x_{kh})+\sqrt{2(t-kh)}\xi\|_{2}^{2}\d x_{kh}\d\xi \nonumber \\
&= (t-kh)^{2}\int\widehat{p}_{kh}(x_{kh})\|\what{s}(x_{kh})\|_{2}^{2}\d x_{kh} + 2(t-kh)\int\gamma(\xi)\|\xi\|_{2}^{2}\d\xi \nonumber \\
&\quad +2\iint\widehat{p}_{kh}(x_{kh})\gamma(\xi)\langle (t-kh)\what{s}(x_{kh}),\sqrt{2(t-kh)}\xi\rangle\d x_{kh}\d\xi \nonumber \\
&= (t-kh)^{2}\int\widehat{p}_{kh}(x_{kh})\|\what{s}(x_{kh})\|_{2}^{2}\d x_{kh} + 2(t-kh)\int\gamma(\xi)\|\xi\|_{2}^{2}\d\xi \nonumber \\
&\leq 2(t-kh)^{2}\int\widehat{p}_{kh}(x_{kh})\|\what{s}(x_{kh})-\nabla\log p(x_{kh})\|_{2}^{2}\d x_{kh} \nonumber \\
&\quad + 2(t-kh)^{2}\int\widehat{p}_{kh}(x_{kh})\|\nabla\log p(x_{kh})\|_{2}^{2}\d x_{kh} + 2d(t-kh) \nonumber \\
&\leq 2(t-kh)^{2}\int\widehat{p}_{kh}(x_{kh})\|\what{s}(x_{kh})-\nabla\log p(x_{kh})\|_{2}^{2}\d x_{kh} \nonumber \\
&\quad + 16L^{2}C_{\rm LSI}(p)(t-kh)^{2}\kl(\widehat{p}_{kh},p)+4dL(t-kh)^{2} + 2d(t-kh), \label{eq:lemma:lmc:score:kl:5}
\end{align}
where the first equality holds from~\eqref{eq:lemma:lmc:score:kl:4}, the third equality is due to the independence of $\xi$ and $\what{X}_{kh}$, the first inequality invokes the triangular inequality, and the last inequality is owing to~\cite[Lemma 10]{Vempala2019Rapid}. Substituting~\eqref{eq:lemma:lmc:score:kl:5} into~\eqref{eq:lemma:lmc:score:kl:3} yields
\begin{align*}
&\iint\widehat{p}_{kh,t}(x_{kh},x_{t})\|\what{s}(x_{kh})-\nabla\log p(x_{t})\|_{2}^{2}\d x_{kh}\d x_{t} \nonumber \\
&\leq (2+4L^{2}(t-kh)^{2})\int\widehat{p}_{kh}(x_{kh})\|\what{s}(x_{kh})-\nabla\log p(x_{kh})\|_{2}^{2}\d x_{kh} \\
&\quad + 32L^{4}C_{\rm LSI}(p)(t-kh)^{2}\kl(\widehat{p}_{kh},p)+8dL^{3}(t-kh)^{2} + 4dL^{2}(t-kh).
\end{align*}
Since~\cite[Lemma E.5]{Lee2022Convergence} shows that $LC_{\rm LSI}\geq 1$ and we assume $16L^{2}hC_{\rm LSI}(p)\leq 1$, we have $Lh\leq\frac{1}{LC_{\rm LSI}(p)}\leq 1$. Using this and $t-kh\leq h$ simplifies the above inequality as 
\begin{align}
&\iint\widehat{p}_{kh,t}(x_{kh},x_{t})\|\what{s}(x_{kh})-\nabla\log p(x_{t})\|_{2}^{2}\d x_{kh}\d x_{t} \nonumber \\
&\leq 32L^{4}h^{2}C_{\rm LSI}(p)\kl(\widehat{p}_{kh},p)+12dL^{2}h \nonumber \\
&\quad +6\int\widehat{p}_{kh}(x_{kh})\|\what{s}(x_{kh})-\nabla\log p(x_{kh})\|_{2}^{2}\d x_{kh}. \label{eq:lemma:lmc:score:kl:6}
\end{align}

\par\noindent
\emph{Step 3. Relations between KL-divergence and Fisher information.}
In this step, we aim to bound $A_{2}$ in~\eqref{eq:lemma:lmc:score:kl:2}, which is known as relative Fisher information of $\widehat{p}_{t}$ with respect to $p$. Since the stationary density $p$ satisfies log-Sobolev inequality with constant $C_{\rm LSI}(p)$, setting $f^{2}=\widehat{p}_{t}/p$ in~\eqref{eq:lsi} implies that the relative Fisher information upper bounds the KL-divergence, i.e., 
\begin{equation}\label{eq:lemma:lmc:score:kl:7}
\kl(\widehat{p}_{t},p) = \mathrm{Ent}_{p}\Big(\frac{\widehat{p}_{t}}{p}\Big)\leq \frac{C_{\rm LSI}(p)}{2}\int\widehat{p}_{t}(x_{t})\|\nabla\log\frac{\widehat{p}_{t}(x_{t})}{p(x_{t})}\|_{2}^{2}\d x_{t}.
\end{equation}

\par\noindent
\emph{Step 4. Recursion and conclusion.}
Substituting~\eqref{eq:lemma:lmc:score:kl:6} and~\eqref{eq:lemma:lmc:score:kl:7} into~\eqref{eq:lemma:lmc:score:kl:2} implies a linear differential inequality of KL-divergence:
\begin{align*}
\partial_{t}\kl(\widehat{p}_{t},p)
&\leq -\frac{3}{2C_{\rm LSI}(p)}\kl(\widehat{p}_{t},p)+32L^{4}h^{2}C_{\rm LSI}(p)\kl(\widehat{p}_{kh},p)+12dL^{2}h \\
&\quad +6\int\widehat{p}_{kh}(x_{kh})\|\what{s}(x_{kh})-\nabla\log p(x_{kh})\|_{2}^{2}\d x_{kh}.
\end{align*}
Multiplying both sides of the inequality by the integrating factor $\exp(\frac{3}{2C_{\rm LSI}(p)}t)$ yields
\begin{align*}
&\partial_{t}\Big(\exp\Big(\frac{3}{2C_{\rm LSI}(p)}t\Big)\kl(\widehat{p}_{t},p)\Big) \\
&\leq \exp\Big(\frac{3}{2C_{\rm LSI}(p)}t\Big)32L^{4}h^{2}C_{\rm LSI}(p)\kl(\widehat{p}_{kh},p) \\
&\quad +\exp\Big(\frac{3}{2C_{\rm LSI}(p)}t\Big)\Big\{12dL^{2}h+6\int\widehat{p}_{kh}(x_{kh})\|\what{s}(x_{kh})-\nabla\log p(x_{kh})\|_{2}^{2}\d x_{kh}\Big\}.
\end{align*}
Integrating both sides of the inequality from $kh$ to $(k+1)h$ gives
\begin{align*}
&\exp\Big(\frac{3(k+1)h}{2C_{\rm LSI}(p)}\Big)\kl(\widehat{p}_{(k+1)h},p)-\exp\Big(\frac{3kh}{2C_{\rm LSI}(p)}\Big)\kl(\widehat{p}_{kh},p) \\
&\leq \frac{2C_{\rm LSI}(p)}{3}\Big\{\exp\Big(\frac{3(k+1)h}{2C_{\rm LSI}(p)}\Big)-\exp\Big(\frac{3kh}{2C_{\rm LSI}(p)}\Big)\Big\}32L^{4}h^{2}C_{\rm LSI}(p)\kl(\widehat{p}_{kh},p) \\
&\quad +\frac{2C_{\rm LSI}(p)}{3}\Big\{\exp\Big(\frac{3(k+1)h}{2C_{\rm LSI}(p)}\Big)-\exp\Big(\frac{3kh}{2C_{\rm LSI}(p)}\Big)\Big\} \\
&\quad \times\Big\{12dL^{2}h+6\int\widehat{p}_{kh}(x_{kh})\|\what{s}(x_{kh})-\nabla\log p(x_{kh})\|_{2}^{2}\d x_{kh}\Big\},
\end{align*}
which implies 
\begin{align*}
&\kl(\widehat{p}_{(k+1)h},p)-\exp\Big(-\frac{3h}{2C_{\rm LSI}(p)}\Big)\kl(\widehat{p}_{kh},p) \\
&\leq \exp\Big(-\frac{3h}{2C_{\rm LSI}(p)}\Big)\frac{2C_{\rm LSI}(p)}{3}\Big\{\exp\Big(\frac{3h}{2C_{\rm LSI}(p)}\Big)-1\Big\}32L^{4}h^{2}C_{\rm LSI}(p)\kl(\widehat{p}_{kh},p) \\
&\quad +\exp\Big(-\frac{3h}{2C_{\rm LSI}(p)}\Big)\frac{2C_{\rm LSI}(p)}{3}\Big\{\exp\Big(\frac{3h}{2C_{\rm LSI}(p)}\Big)-1\Big\} \\
&\quad \times\Big\{12dL^{2}h+6\int\widehat{p}_{kh}(x_{kh})\|\what{s}(x_{kh})-\nabla\log p(x_{kh})\|_{2}^{2}\d x_{kh}\Big\} \\
&\leq 64\exp\Big(-\frac{3h}{2C_{\rm LSI}(p)}\Big)L^{4}h^{3}C_{\rm LSI}(p)\kl(\widehat{p}_{kh},p) \\
&\quad +\Big\{24dL^{2}h^{2} + 12h\int\widehat{p}_{kh}(x_{kh})\|\what{s}(x_{kh})-\nabla\log p(x_{kh})\|_{2}^{2}\d x_{kh}\Big\},
\end{align*}
where the second inequality holds from $\exp(z)-1\leq 2z$ for $z\in[0,1]$ and $\exp(-z)\leq 1$ for $z\geq 0$. Since $1+64L^{4}h^{3}C_{\rm LSI}(p)\leq 1+\frac{h}{2C_{\rm LSI}(p)}\leq\exp(\frac{h}{2C_{\rm LSI}(p)})$, we have the following recursion 
\begin{align*}
\kl(\widehat{p}_{(k+1)h},p)
&\leq \exp\Big(-\frac{3h}{2C_{\rm LSI}(p)}\Big)(1+64L^{4}h^{3}C_{\rm LSI}(p))\kl(\widehat{p}_{kh},p) \\
&\quad +\Big\{24dL^{2}h^{2} + 12h\int\widehat{p}_{kh}(x_{kh})\|\what{s}(x_{kh})-\nabla\log p(x_{kh})\|_{2}^{2}\d x_{kh}\Big\} \\
&\leq \exp\Big(-\frac{h}{C_{\rm LSI}(p)}\Big)\kl(\widehat{p}_{kh},p) \\
&\quad +\Big\{24dL^{2}h^{2} + 12h\int\widehat{p}_{kh}(x_{kh})\|\what{s}(x_{kh})-\nabla\log p(x_{kh})\|_{2}^{2}\d x_{kh}\Big\}.
\end{align*}
Applying this recursion yields
\begin{align*}
&\kl(\widehat{p}_{Kh},p) \\
&\leq \exp\Big(-\frac{Kh}{C_{\rm LSI}(p)}\Big)\kl(\widehat{p}_{0},p) + \Big(1-\exp\Big(-\frac{h}{C_{\rm LSI}(p)}\Big)\Big)^{-1}24dL^{2}h^{2} \\
&\quad +\sum_{j=0}^{K-1}\exp\Big(-\frac{K-1-j}{C_{\rm LSI}(p)}h\Big)12h\int\widehat{p}_{jh}(x_{jh})\|\what{s}(x_{jh})-\nabla\log p(x_{jh})\|_{2}^{2}\d x_{jh} \\
&\leq \exp\Big(-\frac{Kh}{C_{\rm LSI}(p)}\Big)\kl(\widehat{p}_{0},p) + 32dL^{2}hC_{\rm LSI}(p) \\
&\quad +\sum_{j=0}^{K-1}\exp\Big(-\frac{K-1-j}{C_{\rm LSI}(p)}h\Big)12h\int\widehat{p}_{jh}(x_{jh})\|\what{s}(x_{jh})-\nabla\log p(x_{jh})\|_{2}^{2}\d x_{jh},
\end{align*}
where the last inequality holds from $1-\exp(-\frac{h}{C_{\rm LSI}(p)})\geq\frac{3}{4}\frac{h}{C_{\rm LSI}(p)}$ as $\frac{h}{C_{\rm LSI}(p)}\in(0,\frac{1}{4}]$. This completes the proof.
\end{proof}

\subsection{Error Bounds of Velocity Estimation}
\label{section:proof:convergence:velocity}

\begin{customlemma}{\ref{lemma:velocity:estimation}}
Let Assumption~\ref{assumption:Gaussian:convolution} be fulfilled. For $t\in(T^{*},1)$ with $T^{*}$ in~\eqref{eq:T}, let $\what{U}(t,\cdot)$ be the velocity estimator defined in~\eqref{eq:velocity:estimator}, and let $\widehat{D}(t,\cdot)$ be the denoiser estimator defined in~\eqref{eq:denoiser:estimator}. Then, for any $\delta\in(0,1)$ and every $x_{t}\in\bbR^{d}$, we have
\begin{equation}
\bbE\Big[\|D(t,x_{t})-\what{D}(t,x_{t})\|_{2}^{2}\Big]
\lesssim\delta^{2},
\end{equation}
and, as a consequence, 
\begin{equation}
\bbE\Big[\|u(t,x_{t})-\what{U}(t,x_{t})\|_{2}^{2}\Big]
\lesssim
\frac{\delta^{2}}{(1-t)^{2}},
\end{equation}
provided that, with $\beta_t$ in~\eqref{beta_t}  the following relations are fulfilled
\begin{align*}
&n=\Theta\Big(\frac{(1-t)^{2}((1-t)^{2}R^2 +d\sigma^{4}t^{2}+d\sigma^{2}(1-t)^{2})}{(\sigma^{2}t^{2}+(1-t)^{2})^{2}\delta^{2}}\Big), \quad \eta=\Theta\Big(\frac{\sigma^{8}\beta_{t}^{2}\delta^{2}}{d(\sigma^{4}\beta_{t}+R^{2})^{2}}\Big), \\
&K=\Theta\Big(\frac{d}{\beta_{t}\delta^{2}}\Big(1+\frac{R^{2}}{\sigma^{4}\beta_{t}}\Big)^{2}\log\Big(\frac{W_{2}^{2}(p_{Z_{0}},p_{X_{1}|X_{t}=x_{t}})}{\delta^{2}}\vee 1\Big)\Big).
\end{align*}
\end{customlemma}

\begin{proof} Let $\bar{Z}_{K\eta}^{i}$, $i = 1,\ldots,n$ be computed by~\eqref{eq:langevin:monte:carlo}.
Let $Z^{1},\ldots,Z^{n}$  be i.i.d. random variables with law $p_{X_{1}|X_{t}=x_{t}}$, such that $(Z^{i},\bar{Z}_{K\eta}^{i})$ is an optimal coupling of $p_{X_{1}|X_{t}=x_{t}}$ and $p_{\bar{Z}_{K\eta}}$ for each $i = 1,\ldots,n$. This means 
\begin{equation}\label{eq:lemma:velocity:estimation:1}
\bbE\Big[\|Z^{i}-\bar{Z}_{K\eta}^{i}\|_{2}^{2}\Big]= W_{2}^{2}\Big(p_{X_{1}|X_{t}=x_{t}},p_{\bar{Z}_{K\eta}}\Big), \quad 1\leq i\leq n.
\end{equation}
According to the definition of the velocity estimator~\eqref{eq:velocity:estimator}, we obtain
\begin{equation*}
\bbE\Big[\|u(t,x_{t})-\what{U}(t,x_{t})\|_{2}^{2}\Big]
= \frac{1}{(1-t)^2} \bbE\Big[\|\bbE[X_{1}|X_{t}=x_{t}]-\frac{1}{n}\sum_{i=1}^{n}\bar{Z}_{K\eta}^{i}\|_{2}^{2}\Big],
\end{equation*}
so that it suffices to estimate
\begin{align}
&\bbE\Big[\|\bbE[X_{1}|X_{t}=x_{t}]-\frac{1}{n}\sum_{i=1}^{n}\bar{Z}_{K\eta}^{i}\|_{2}^{2}\Big] \nonumber \\
&=\bbE\Big[\|\bbE[X_{1}|X_{t}=x_{t}]-\frac{1}{n}\sum_{i=1}^{n}Z^{i}+\frac{1}{n}\sum_{i=1}^{n}(Z^{i}-\bar{Z}_{K\eta}^{i})\|_{2}^{2}\Big] \nonumber \\
&\leq 
2 \bbE\Big[\|\bbE[X_{1}|X_{t}=x_{t}]-\frac{1}{n}\sum_{i=1}^{n}Z^{i}\|_{2}^{2}\Big] 
+ 2 \bbE\Big[\|\frac{1}{n}\sum_{i=1}^{n}(Z^{i}-\bar{Z}_{K\eta}^{i})\|_{2}^{2}\Big]. \label{eq:lemma:velocity:estimation:2}
\end{align}
For the first summand in~\eqref{eq:lemma:velocity:estimation:2}, by a similar argument as the proof of~\cite[Proposition 3.1]{he2024zeroth} and~\cite[Theorem 1]{huang2024reverse}, we obtain 
by independence of the $Z^i$, $i=1,\ldots,n$ that
\begin{align}
\bbE\Big[\|\bbE[X_{1}|X_{t}=x_{t}]-\frac{1}{n}\sum_{i=1}^{n}Z^{i}\|_{2}^{2}\Big] 
&=\frac{1}{n^{2}}\bbE\Big[\|\sum_{i=1}^{n}(\bbE[X_{1}|X_{t}=x_{t}]-Z^{i})\|_{2}^{2}\Big] \nonumber \\
&=\frac{1}{n^{2}}\sum_{i=1}^{n}\trace(\Cov(X_{1}|X_{t}=x_{t}))\\
&\leq\frac{1}{n}\Big(\frac{\sigma_t^4}{\sigma^{4}}R^2+d\sigma_{t}^{2}\Big), \label{eq:lemma:velocity:estimation:3}
\end{align}
where the last inequality follows from Lemma~\ref{lemma:representation:conditional}. 
For the second summand in~\eqref{eq:lemma:velocity:estimation:2}, we conclude
\begin{align}
\bbE\Big[\|\frac{1}{n}\sum_{i=1}^{n}(Z^{i}-\bar{Z}_{K\eta}^{i})\|_{2}^{2}\Big] 
&=\frac{1}{n^{2}}\sum_{i=1}^{n}\sum_{j=1}^{n}\bbE\Big[\langle Z^{i}-\bar{Z}_{K\eta}^{i},Z^{j}-\bar{Z}_{K\eta}^{j} \rangle\Big] \nonumber \\
&\leq \frac{1}{n^{2}}\sum_{i=1}^{n}\sum_{j=1}^{n}\bbE^{\frac{1}{2}}\Big[\|Z^{i}-\bar{Z}_{K\eta}^{i}\|_{2}^{2}\Big]\bbE^{\frac{1}{2}}\Big[\|Z^{j}-\bar{Z}_{K\eta}^{j}\|_{2}^{2}\Big] \nonumber \\
&\leq W_{2}^{2}\Big(p_{X_{1}|X_{t}=x_{t}},p_{\bar{Z}_{K\eta}}\Big). \label{eq:lemma:velocity:estimation:4}
\end{align}
It remains to estimate the Wasserstein-2 distance in~\eqref{eq:lemma:velocity:estimation:4}. By Theorem~\ref{lemma:lmc} with
$\nabla \log p = \widehat s$, $p= p_{X_1|X_t = x_t}$, $\alpha=\beta_{t}$, $L=\beta_t + \frac{R^2}{\sigma^4}$ and $h =\eta$
that
\begin{align}
W_{2}^{2}\Big(p_{X_{1}|X_{t}=x_{t}},p_{\bar{Z}_{K\eta}}\Big)
&\lesssim 
\exp\Big(-\beta_{t}K \eta\Big)W_{2}^{2}\Big(p_{Z_{0}},p_{X_{1}|X_{t}=x_{t}}\Big)+d\Big(1+\frac{R^{2}}{\sigma^{4}\beta_{t}}\Big)^{2}\eta \label{eq:lemma:velocity:estimation:5}
\end{align}
for $\eta \le \frac{\sigma^{8}\beta_{t}^{2}}{d(\sigma^{4}\beta_{t}+R^{2})^{2}}$. Substituting~\eqref{eq:lemma:velocity:estimation:3},~\eqref{eq:lemma:velocity:estimation:4}, and~\eqref{eq:lemma:velocity:estimation:5} into~\eqref{eq:lemma:velocity:estimation:2} yields
\begin{align*}
&\bbE\Big[\|\bbE[X_{1}|X_{t}=x_{t}]-\frac{1}{n}\sum_{i=1}^{n}\bar{Z}_{K\eta}^{i}\|_{2}^{2}\Big] \nonumber \\
&\lesssim 
\frac{1}{n}\Big(\frac{\sigma_t^4}{\sigma^{4}}R^2+d\sigma_{t}^{2}\Big)+\exp\Big(-\beta_{t}K\eta\Big)W_{2}^{2}\Big(p_{Z_{0}},p_{X_{1}|X_{t}=x_{t}}\Big)+d\Big(1+\frac{R^{2}}{\sigma^{4}\beta_{t}}\Big)^{2}\eta.
\end{align*} 
Setting the first and third summand to  $\delta^{2}$, we obtain
\begin{align*}
&n=\frac{\sigma_{t}^{4}R^2+d\sigma_{t}^{2}\sigma^{4}}{\sigma^{4}\delta^{2}}, \quad 
\eta=\frac{\sigma^{8}\beta_{t}^{2}\delta^{2}}{d(\sigma^{4}\beta_{t}+R^{2})^{2}}. 
\end{align*}
For the second term, if $W_{2}^{2}(p_{Z_{0}},p_{X_{1}|X_{t}=x_{t}})\leq\delta^{2}$, which means the initial distribution $p_{Z_{0}}$ has satisfied the accuracy tolerance, then we set $K=0$. 
On the other hand, if $W_{2}^{2}(p_{Z_{0}},p_{X_{1}|X_{t}=x_{t}})>\delta^{2}$, then setting the second term to $\delta^2$, gives
\begin{equation*}
K = \frac{ d}{\beta_{t}\delta^{2}}\Big(1+\frac{R^{2}}{\sigma^{4}\beta_{t}}\Big)^{2}\log\Big(\frac{W_{2}^{2}(p_{Z_{0}},p_{X_{1}|X_{t}=x_{t}})}{\delta^{2}}\Big)>0.
\end{equation*}
This completes the proof.
\end{proof}

\begin{customlemma}{\ref{lemma:velocity:estimation:stable}}
Let Assumption~\ref{assumption:Gaussian:convolution} be fulfilled. For $t\in(T^{*},1)$ with $T^{*}$ in~\eqref{eq:T}, let $\what{U}_{\mathrm{stab}}(t,\cdot)$ be the velocity estimator defined in~\eqref{eq:stable:velocity:estimator}, and let $\widehat{F}(t,\cdot)$ defined as~\eqref{eq:stable:F:estimator}. Then, for any $\delta\in(0,1)$ and every $x_{t}\in\bbR^{d}$, we have 
\begin{equation}
\bbE\Big[\|F(t,x_{t})-\what{F}(t,x_{t})\|_{2}^{2}\Big]
\lesssim (1-t)^{2}\delta^{2},
\end{equation}
and, as a consequence, 
\begin{equation}
\bbE\Big[\|u(t,x_{t})-\what{U}_{\mathrm{stab}}(t,x_{t})\|_{2}^{2}\Big]
\lesssim
\frac{(1-t)^{2}}{t^{4}}\delta^{2},
\end{equation}
provided that, with $\beta_t$ in~\eqref{beta_t}  the following relations are fulfilled 
\begin{align*}
&n=\Theta\Big(\frac{d(1-t)^{2}(\sigma^{2}t^{2}+(1-t)^{2})+\sigma^{2}t^{4}R^{2}}{\sigma^{2}(\sigma^{2}t^{2}+(1-t)^{2})^{2}\delta^{2}}\Big), \quad \eta=\Theta\Big(\frac{\sigma^{8}\beta_{t}^{2}\delta^{2}}{d(\sigma^{4}\beta_{t}+R^{2})^{2}}\Big(\frac{\sigma^{4}}{\sigma^{2}+R^{2}}\Big)^2\Big), \\
&K=\Theta\Big(\frac{d}{\beta_{t}\delta^{2}}\Big(\frac{\sigma^{2}+R^{2}}{\sigma^{4}}\Big)^{2}\Big(1+\frac{R^{2}}{\sigma^{4}\beta_{t}}\Big)^{2}\log\Big(\Big(\frac{\sigma^{2}+R^{2}}{\sigma^{4}}\Big)^{2}\frac{W_{2}^{2}(p_{Z_{0}},p_{X_{1}|X_{t}=x_{t}})}{\delta^{2}}\vee 1\Big)\Big).
\end{align*}
\end{customlemma}

\begin{proof}
Let $Z^{1},\ldots,Z^{n}$  i.i.d. random variables with law $p_{X_{1}|X_{t}=x_{t}}$, such that $(Z^{i},\bar{Z}_{K\eta}^{i})$ is an optimal coupling of $p_{X_{1}|X_{t}=x_{t}}$ and $p_{\bar{Z}_{K\eta}}$ for each $1\leq i\leq n$.
By definition of the stable velocity estimator~\eqref{eq:stable:velocity:estimator}, and Proposition~\ref{proposition:rescaling:velocity}, we obtain
\begin{align*}
&\bbE\Big[\|u(t,x_{t})-\what{U}_{\mathrm{stab}}(t,x_{t})\|_{2}^{2}\Big] \\
&= \frac{(1-t)^2}{t^4} \bbE\Big[\|\bbE[\nabla\log p_{X_{1}}(X_{1})|X_{t}=x_{t}]-\frac{1}{n}\sum_{i=1}^{n}\nabla\log p_{X_{1}}(\bar{Z}_{K\eta}^{i})\|_{2}^{2}\Big].
\end{align*}
Skipping the factor  $\frac{(1-t)^2}{t^4}$, the right-hand side can be estimated as 
\begin{align}
&\bbE\Big[\|\bbE[\nabla\log p_{X_{1}}(X_{1})|X_{t}=x_{t}]-\frac{1}{n}\sum_{i=1}^{n}\nabla\log p_{X_{1}}(Z^{i}) \\
 &\quad +\frac{1}{n}\sum_{i=1}^{n}(\nabla\log p_{X_{1}}(Z^{i})-\nabla\log p_{X_{1}}(\bar{Z}_{K\eta}^{i}))\|_{2}^{2}\Big] \nonumber \\
&\leq 
2 \underbrace{\bbE\Big[\|\bbE[\nabla\log p_{X_{1}}(X_{1})|X_{t}=x_{t}]-\frac{1}{n}\sum_{i=1}^{n}\nabla\log p_{X_{1}}(Z^{i})\|_{2}^{2}\Big]}_{A_1} \\
&\quad 
+2 \underbrace{\bbE\Big[\|\frac{1}{n}\sum_{i=1}^{n}(\nabla\log p_{X_{1}}(Z^{i})-\nabla\log p_{X_{1}}(\bar{Z}_{K\eta}^{i}))\|_{2}^{2}\Big]}_{A_2}. \label{eq:lemma:velocity:estimation:2:2}
\end{align}
For the first term, we obtain by independency of the $Z^i \sim p_{X_1|X_t = x_t}$, $i=1,\ldots,n$ that
\begin{align}
A_1
&=\frac{1}{n^{2}}\bbE\Big[\|\sum_{i=1}^{n}(\bbE[\nabla\log p_{X_{1}}(X_{1})|X_{t}=x_{t}]-\nabla\log p_{X_{1}}(Z^{i}))\|_{2}^{2}\Big] \nonumber \\
&=\frac{1}{n^{2}}\sum_{i=1}^{n}\sum_{j=1}^{n}\bbE\Big[\Big\langle \bbE[\nabla\log p_{X_{1}}(X_{1})|X_{t}=x_{t}]-\nabla\log p_{X_{1}}(Z^{i}), \\
&\qquad\qquad\qquad 
\bbE[\nabla\log p_{X_{1}}(X_{1})|X_{t}=x_{t}]-\nabla\log p_{X_{1}}(Z^{j}) \Big\rangle\Big] \nonumber \\
&=\frac{1}{n^{2}}\sum_{i=1}^{n}\bbE\Big[\|\bbE[\nabla\log p_{X_{1}}(X_{1})|X_{t}=x_{t}]-\nabla\log p_{X_{1}}(Z^{i})\|_{2}^{2}\Big] \nonumber \\
&\quad+\frac{1}{n^{2}}\sum_{i\neq j}\Big\langle \bbE[\nabla\log p_{X_{1}}(X_{1})|X_{t}=x_{t}]-\bbE[\nabla\log p_{X_{1}}(Z^{i})], \\
&\qquad\qquad\qquad \bbE[\nabla\log p_{X_{1}}(X_{1})|X_{t}=x_{t}]-\bbE[\nabla\log p_{X_{1}}(Z^{j})] \Big\rangle \\
&=\frac{1}{n^{2}}\sum_{i=1}^{n}\bbE\Big[\|\bbE[\nabla\log p_{X_{1}}(X_{1})|X_{t}=x_{t}]-\nabla\log p_{X_{1}}(Z^{i})\|_{2}^{2}\Big] \nonumber \\
&=\frac{1}{n^{2}}\sum_{i=1}^{n}\trace(\Cov(\nabla\log p_{X_{1}}(X_{1})|X_{t}=x_{t})) 
\end{align}
and by Lemma~\ref{lemma:conditional:cov:score} then
\begin{align}
A_1&\leq\frac{1}{n}\Big(\frac{(1-t)^{2}d}{\sigma^{2}(\sigma^{2}t^{2}+(1-t)^{2})}+\frac{t^{4}R^{2}}{(\sigma^{2}t^{2}+(1-t)^{2})^{2}}\Big), \label{eq:lemma:velocity:estimation:2:3}
\end{align}
For second term, we get  
\begin{align}
&A_2
=\frac{1}{n^{2}}\sum_{i=1}^{n}\sum_{j=1}^{n}\bbE\Big[\langle \nabla\log p_{X_{1}}(Z^{i})-\nabla\log p_{X_{1}}(\bar{Z}_{K\eta}^{i}), \nabla\log p_{X_{1}}(Z^{j})-\nabla\log p_{X_{1}}(\bar{Z}_{K\eta}^{j}) \rangle\Big] \nonumber \\
&\leq \frac{1}{n^{2}}\sum_{i,j=1}^{n}\bbE^{\frac{1}{2}}\Big[\|\nabla\log p_{X_{1}}(Z^{i})-\nabla\log p_{X_{1}}(\bar{Z}_{K\eta}^{i})\|_{2}^{2}\Big]
\, \bbE^{\frac{1}{2}}\Big[\|\nabla\log p_{X_{1}}(Z^{j})-\nabla\log p_{X_{1}}(\bar{Z}_{K\eta}^{j})\|_{2}^{2}\Big] .
\end{align}
and further by Lemma~\ref{lemma:lipschitz:target:score},~\eqref{eq:lemma:velocity:estimation:1} and~\eqref{eq:lemma:velocity:estimation:5} that
\begin{align}
A_2 &\leq \Big(\frac{\sigma^{2}+R^{2}}{\sigma^{4}}\Big)^{2}\frac{1}{n^{2}}\sum_{i,j=1}^{n}\bbE^{\frac{1}{2}}\Big[\|Z^{i}-\bar{Z}_{K\eta}^{i}\|_{2}^{2}\Big]\bbE^{\frac{1}{2}}\Big[\|Z^{j}-\bar{Z}_{K\eta}^{j}\|_{2}^{2}\Big] \\
&\leq 
\Big(\frac{\sigma^{2}+R^{2}}{\sigma^{4}}\Big)^{2}W_{2}^{2}\Big(p_{X_{1}|X_{t}=x_{t}},p_{\bar{Z}_{K\eta}}\Big)
\\
&\lesssim \Big(\frac{\sigma^{2}+R^{2}}{\sigma^{4}}\Big)^{2} \Big(\exp(-\beta_{t}Kh)W_{2}^{2}(p_{Z_{0}},p_{X_{1}|X_{t}=x_{t}})+d\Big(1+\frac{R^{2}}{\sigma^{4}\beta_{t}}\Big)^{2}\eta \Big). 
\end{align}
Adding $A_1$ and  $A_2$ yields
\begin{align*}
&\bbE\Big[\|\bbE[\nabla\log p_{X_{1}}(X_{1})|X_{t}=x_{t}]-\frac{1}{n}\sum_{i=1}^{n}\nabla\log p_{X_{1}}(\bar{Z}_{K\eta}^{i})\|_{2}^{2}\Big] \nonumber \\
&\lesssim \frac{1}{n}\Big(\frac{(1-t)^{2}d}{\sigma^{2}(\sigma^{2}t^{2}+(1-t)^{2})}+\frac{t^{4}R^{2}}{(\sigma^{2}t^{2}+(1-t)^{2})^{2}}\Big) \nonumber \\
&\quad +\Big(\frac{\sigma^{2}+R^{2}}{\sigma^{4}}\Big)^{2}\exp(-\beta_{t}K\eta)W_{2}^{2}(p_{Z_{0}},p_{X_{1}|X_{t}=x_{t}})+d\Big(\frac{\sigma^{2}+R^{2}}{\sigma^{4}}\Big)^{2}\Big(1+\frac{R^{2}}{\sigma^{4}\beta_{t}}\Big)^{2}\eta.
\end{align*} 
Setting $K=0$ if $W_{2}^{2}(p_{Z_{0}},p_{X_{1}|X_{t}=x_{t}})\leq(\frac{\sigma^{4}}{\sigma^{2}+R^{2}})^{2}\delta^{2}$ and otherwise each terms in the right-hand side to $\delta^{2}$ yields
\begin{align*}
&n=\frac{d(1-t)^{2}(\sigma^{2}t^{2}+(1-t)^{2})+\sigma^{2}t^{4}R^{2}}{\sigma^{2}(\sigma^{2}t^{2}+(1-t)^{2})^{2}\delta^{2}}, \quad \eta=\frac{\sigma^{8}\beta_{t}^{2}\delta^{2}}{d(\sigma^{4}\beta_{t}+R^{2})^{2}}\Big(\frac{\sigma^{4}}{\sigma^{2}+R^{2}}\Big)^2, \\
&K=\frac{d}{\beta_{t}\delta^{2}}\Big(\frac{\sigma^{2}+R^{2}}{\sigma^{4}}\Big)^{2}\Big(1+\frac{R^{2}}{\sigma^{4}\beta_{t}}\Big)^{2}\log\Big(\Big(\frac{\sigma^{2}+R^{2}}{\sigma^{4}}\Big)^{2}\frac{W_{2}^{2}(p_{Z_{0}},p_{X_{1}|X_{t}=x_{t}})}{\delta^{2}}\Big).
\end{align*}
This completes the proof.
\end{proof}

\subsection{Error bounds of flow initialization}
\label{section:proof:convergence:initialization}

\begin{customlemma}{\ref{lemma:error:initialization}}
Let Assumption~\ref{assumption:Gaussian:convolution} be fulfilled. Let $\what{X}_{T_{0}}\coloneq\what{U}_{L\tau}$ be the terminal state of the Langevin Monte Carlo in~\eqref{eq:langevin:monte:carlo:warmstart}. Then, for any $\kappa \in(0,1)$, we have 
\begin{equation}
\bbE\Big[W_{2}^{2}\Big(p_{\what{X}_{T_{0}}},p_{X_{T_{0}}}\Big)\Big] \lesssim \kappa^{2},    
\end{equation}
provided that the error of the velocity estimation $\delta^{2}$ in~\eqref{eq:lemma:velocity:estimation:0}, the step size $\tau$, and the number of steps $L$ fulfill
\begin{align*}
&\delta^{2}=\Theta\Big(\exp\Big\{-\frac{16T_0^{2}R^{2}}{\big(1- T_0(1- \sigma)\big)^2}\Big\}\frac{(1-T_{0})^{4}}{T_{0}^{2}}\kappa^{2}\Big), \\
&\tau=\Theta\Big(\exp\Big\{-\frac{16T_0^{2}R^{2}}{\big(1- T_0(1- \sigma)\big)^2}\Big\}\frac{(1-T_{0})^{2}}{d(G+1)^{2}}\kappa^{2}\Big), \\
&L=\Theta\Big(\exp\Big\{\frac{24T_0^{2}R^{2}}{\big(1- T_0(1- \sigma)\big)^2}\Big\}\frac{d(G+1)^{2}}{(1-T_{0})^{2}\kappa^{2}}\Big(\frac{8T_0^{2}R^{2}}{\big(T_0^2\sigma^2+(1-T_0)^2\big)^2}+\log\frac{\kl(p_{U_{0}},p_{X_{T_{0}}})}{\kappa^{2}}\Big)\vee 0\Big).
\end{align*}
Here $G$ is a constant from Theorem~\ref{thm} only depending on $R$ and $\sigma$, the expectation in~\eqref{eq:lemma:error:initialization:0} is taken with respect to particles of Monte Carlo approximation~\eqref{eq:velocity:estimator} to the velocity field.
\end{customlemma}

\begin{proof}
By~\eqref{eq:score:velocity}, we have 
\begin{equation}
\nabla^{2}\log p_{X_{T_{0}}}(x)=\frac{T_{0}}{1-T_{0}}\nabla u(T_{0},x)-\frac{1}{1-T_{0}} I_{d}.
\end{equation}
Then it follows from Theorem~\ref{thm} that 
\begin{equation}\label{eq:lip:T0}
\|\nabla^{2}\log p_{X_{T_{0}}}(x)\|_{\rm op} \leq \frac{G  \, T_0 +1}{1-T_{0}} \le \frac{G +1}{1-T_{0}}.
\end{equation}
Further, by applying Lemma~\ref{lemma:velocity:estimation}, we get
\begin{equation*}
\mathbb{E}\Big[\|\nabla\log p_{T_0}(x)-\widehat{S}(T_0,x)\|_2^2\Big]\lesssim 
\frac{\delta^2 T_0^2}{(1-T_0)^4},
\end{equation*}
for any $x\in\mathbb{R}^{d}$. Applying Theorem~\ref{lemma:lmc:score} with $p=p_{T_0}$, $\widehat s$ as in~\eqref{eq:score:estimation}, Lipschitz constant $L$ in~\eqref{eq:lip:T0}, and $h = \tau$ implies 
\begin{align}
&\bbE\Big[\kl\Big(p_{\what{U}_{L\tau}},p_{X_{T_{0}}}\Big)\Big] \\
&\leq \exp\Big(-\frac{L\tau}{C_{\rm LSI}(p_{X_{T_{0}}})}\Big)\kl\Big(p_{\widehat{U}_{0}},p_{X_{T_{0}}}\Big) + 32d\frac{(G+1)^{2}}{(1-T_{0})^{2}}C_{\rm LSI}(p_{X_{T_{0}}})\tau \\
&\quad +\sum_{\ell=0}^{L-1}\exp\Big(-\frac{L-1-\ell}{C_{\rm LSI}(p_{X_{T_{0}}})}\tau\Big)12\tau\int\widehat{p}_{\ell\tau}(x_{\ell\tau})\mathbb{E}\Big[\|\what{s}(x_{\ell\tau})-\nabla\log p(x_{\ell\tau})\|_{2}^{2}\Big]\d x_{\ell\tau} \\
&\lesssim \exp\Big(-\frac{L\tau}{C_{\rm LSI}(p_{X_{T_{0}}})}\Big)\kl\Big(p_{\widehat{U}_{0}},p_{X_{T_{0}}}\Big) + 32d\frac{(G+1)^{2}}{(1-T_{0})^{2}}\tau C_{\rm LSI}(p_{X_{T_{0}}}) \\
&\quad +\sum_{j=0}^{L-1}\exp\Big(-\frac{L-1-\ell}{C_{\rm LSI}(p_{X_{T_{0}}})}\tau\Big)12\tau\frac{\delta^2 T_0^2}{(1-T_0)^4}  \\
&= \exp\Big(-\frac{L\tau}{C_{\rm LSI}(p_{X_{T_{0}}})}\Big)\kl\Big(p_{\widehat{U}_{0}},p_{X_{T_{0}}}\Big) + 32d\frac{(G+1)^{2}}{(1-T_{0})^{2}}C_{\rm LSI}(p_{X_{T_{0}}})\tau \\
&\quad +\Big(1-\exp\Big(-\frac{\tau}{C_{\rm LSI}(p_{X_{T_{0}}})}\Big)\Big)^{-1}12\tau\frac{\delta^2 T_0^2}{(1-T_0)^4}  \\
&\leq \exp\Big(-\frac{L\tau}{C_{\rm LSI}(p_{X_{T_{0}}})}\Big)\kl\Big(p_{\widehat{U}_{0}},p_{X_{T_{0}}}\Big) + 32d\frac{(G+1)^{2}}{(1-T_{0})^{2}}C_{\rm LSI}(p_{X_{T_{0}}})\tau \\
&\quad +16C_{\rm LSI}(p_{X_{T_{0}}})\frac{\delta^2 T_0^2}{(1-T_0)^4},
\end{align}
where the last inequality holds from $1-\exp(-\frac{\tau}{C_{\rm LSI}(p_{X_{T_{0}}})})\geq\frac{3}{4}\frac{\tau}{C_{\rm LSI}(p_{X_{T_{0}}})}$ as $\frac{\tau}{C_{\rm LSI}(p_{X_{T_{0}}})}\in(0,\frac{1}{4}]$. Then Otto--Villani theorem~\cite{Gentil2020entropc} shows that $p_{X_{T_{0}}}$ satisfies Talagrand's $T_{2}$ inequality, which yields 
\begin{align*}
\bbE\Big[W_{2}^{2}\Big(p_{\what{U}_{L\tau}},p_{X_{T_{0}}}\Big)\Big]
&\lesssim 
C_{\rm LSI}(p_{X_{T_{0}}})\exp\Big(-\frac{L\tau}{C_{\rm LSI}(p_{X_{T_{0}}})}\Big)\kl\Big(p_{\widehat{U}_{0}},p_{X_{T_{0}}}\Big) \\
&\quad +32d\frac{(G+1)^{2}}{(1-T_{0})^{2}}C_{\rm LSI}(p_{X_{T_{0}}})^{2}\tau+16C_{\rm LSI}(p_{X_{T_{0}}})^{2}\frac{\delta^2 T_0^2}{(1-T_0)^4}.
\end{align*}
Setting the second and third terms in the right-hand side as $\kappa^{2}$ yields
\begin{equation*}
\tau=\frac{(1-T_{0})^{2}}{d(G+1)^{2}}\frac{\kappa^{2}}{C_{\rm LSI}(p_{X_{T_{0}}})^{2}}, \quad\text{and}\quad \delta^{2}=\frac{(1-T_{0})^{4}}{T_{0}^{2}}\frac{\kappa^{2}}{C_{\rm LSI}(p_{X_{T_{0}}})^{2}}.
\end{equation*}
Then setting the first term as $\kappa^{2}$ implies
\begin{equation*}
L=\frac{d(G+1)^{2}}{(1-T_{0})^{2}}\frac{C_{\rm LSI}(p_{X_{T_{0}}})^{3}}{\kappa^{2}}\log\Big(\frac{C_{\rm LSI}(p_{X_{T_{0}}})\kl(p_{\widehat{U}_{0}},p_{X_{T_{0}}})}{\kappa^{2}}\vee 1\Big).
\end{equation*}
Substituting $C_{\text{LSI}}(p_{X_{T_0}})$ from~\eqref{eq:Clsi} completes the proof.
\end{proof}

\subsection{Main results}\label{section:error:ode}

\begin{customtheorem}{\ref{theorem:error:ode}}[Flow ODEs with vanilla velocity estimation]
Let Assumption~\ref{assumption:Gaussian:convolution} be fulfilled. Let $\what{\psi}$ be the discrete-time flow map defined by the exponential integrator in~\eqref{eq:PFODE:velocity:ei}. Assume $T^{*}<T_{0}<T_{\rm end}<1$. Then we have 
\begin{align*}
&\bbE\Big[W_{2}^{2}\Big(p_{X_{1}},(T_{\rm end}^{-1}\what{\psi}(T_{\rm end},\cdot))_{\sharp}p_{\what{X}_{T_{0}}}\Big)\Big]  \\
&\lesssim \underbrace{\frac{d(1-T_{\rm end})^{2}}{T_{\rm end}^{2}}}_{\text{early-stopping error}}
+ \underbrace{\frac{\exp(2G(T_{\rm end}-T_{0}))}{T_{\rm end}^{2}}\kappa^{2}}_{\text{initialization error}} \\
&\quad + \underbrace{\frac{\exp(2G(T_{\rm end}-T_{0}))}{T_{\rm end}^{2}}\frac{H^{2}}{T_{0}^{4}}\frac{d+R^{2}+1}{(1-T_{\rm end})^{2}}h^{2}}_{\text{discretization error}} + \underbrace{\frac{\exp(2G(T_{\rm end}-T_{0}))}{T_{\rm end}^{2}}\frac{(T_{\rm end}-T_0)^2}{(1-T_0)(1-T_{\rm end})}\delta^{2}}_{\text{velocity estimation error}},
\end{align*}
where $G$ the a constant from Theorem~\ref{thm} only depending on $R$ and $\sigma$, the constant $H$ from Lemma~\ref{lemma:derivative:denoiser} only depends on $R$ and $\sigma$. The error of the velocity estimation $\delta$ is defined in Lemma~\ref{lemma:velocity:estimation}, and the error of flow initialization $\kappa$ is defined in Lemma~\ref{lemma:error:initialization}.
\end{customtheorem}

\begin{proof}
We first estimate the early-stopping error. It is apparent that $(X_{1},X_{1}+(1-T_{\rm end})T_{\rm end}^{-1}X_{0})$ is a coupling of $(p_{X_{1}},(T_{\rm end}^{-1})_{\sharp}p_{X_{T_{\rm end}}})$, where $(X_{0},X_{1})\sim\gamma\otimes p_{X_{1}}$. Then
\begin{align}
W_{2}^{2}(p_{X_{1}},(T_{\rm end}^{-1})_{\sharp}p_{X_{T_{\rm end}}})
&\leq \bbE\Big[\|X_{1}-\Big(X_{1}+\frac{1-T_{\rm end}}{T_{\rm end}}X_{0}\Big)\|_{2}^{2}\Big] \\
&=\frac{(1-T_{\rm end})^{2}}{T_{\rm end}^{2}}\bbE\big[\|X_{0}\|_{2}^{2}\big]=\frac{d(1-T_{\rm end})^{2}}{T_{\rm end}^{2}}.
\end{align}
On the other hand,
\begin{align*}
W_{2}^{2}\Big((T_{\rm end}^{-1})_{\sharp}p_{X_{T_{\rm end}}},(T_{\rm end}^{-1}\, \what{\psi}(T_{\rm end},\cdot))_{\sharp}p_{\what{X}_{T_{0}}}\Big)
\leq 
\frac{1}{T_{\rm end}^{2}} W_{2}^{2}\Big(p_{X_{T_{\rm end}}},(\what{\psi}(T_{\rm end},\cdot))_{\sharp}p_{\what{X}_{T_{0}}}\Big),
\end{align*}
so that
\begin{equation*}
\bbE\Big[W_{2}^{2}\Big(p_{X_{1}},(T_{\rm end}^{-1})\, \what{\psi}(T_{\rm end},\cdot)_{\sharp}p_{\what{X}_{T_{0}}}\Big)\Big] 
\le 2 \frac{d(1-T_{\rm end})^{2}}{T_{\rm end}^{2}} + \frac{2}{T_{\rm end}^{2}} 
W_{2}^{2}\Big(p_{X_{T_{\rm end}}},(\what{\psi}(T_{\rm end},\cdot))_{\sharp}p_{\what{X}_{T_{0}}}\Big).
\end{equation*}
Using Lemma~\ref{lemma:error:ode:decomposition} completes the proof. 
\end{proof}

Recall that we defined $D(t,x_t)\coloneqq \bbE[X_1|X_t=x_t]$ and $\what{D}(t,x_{t}) \coloneq \frac{1}{n}\sum_{i=1}^n\bar{Z}_{K\eta}^i$, see Section~\ref{subsect:vel_approx}. 

\begin{remark}[Measurability of $\what{X}_{T_0}$]
For making sense of the term \[\bbE\Big[W_{2}^{2}\Big(p_{X_{1}},(T_{\rm end}^{-1}\what{\psi}(T_{\rm end},\cdot))_{\sharp}p_{\what{X}_{T_{0}}}\Big)\Big],\] we view $\what{X}_{T_{0}}$ as a random variable on a product space, where one factor is used to define the associated measure on $\bbR^d$ and the other is used for the expectation. For details, see below. 
We operate on the product probability space $(\Omega_S \times \Omega_I, \mathcal{F}_S \otimes \mathcal{F}_I, \mathbb{P}_S \otimes \mathbb{P}_I)$. Here, the space $\Omega_S$ carries the $n$ independent standard Brownian motions $(B^1_s, \dots, B^n_s)$ used to simulate the Langevin dynamics for estimating the score, see~\eqref{eq:langevin:score:estimation} or~\eqref{eq:langevin:score:estimation_wied}, while the integration space $\Omega_I$ carries the initial state $U_0$ and the sequence of integration noises $\zeta_1, \dots, \zeta_L \sim \calN(0, I_d)$ of~\eqref{eq:langevin:monte:carlo:warmstart}. Recall that $\what{X}_{T_0}=\what{U}_{L\tau}$ from~\eqref{eq:langevin:monte:carlo:warmstart}.\\
\textbf{Joint measurability of $\what{S}(t,x_t)(\omega_S)$ on $\mathcal{F}_S \otimes \mathcal{B}(\bbR^d)$:}
Recall, see~\eqref{eq:langevin:score:estimation}, that for a fixed diffusion time $t \in (0,1)$, the score estimator is given by the empirical average $\what{S}(t, x_t)(\omega_S) = \frac{t}{(1-t)^2n}\sum_{i=1}^n Z^{i, t, x_t}_T(\omega_S)-\frac{1}{(1-t)^2}x_t$, where each $Z^{i, t, x_t}_T$ is the solution evaluated at time $T$ of the SDE:
\begin{equation} \label{eq:langevin:score:estimation_wied}
\d Z_{s}^{i, t,x_t} = \left( \frac{t(x_t - tZ_s^{i, t,x_t})}{(1-t)^2} + \nabla\log p_{X_1}(Z_s^{i, t,x_t}) \right) \ds + \sqrt{2}\d B^i_{s}, \quad s\in[0,T].
\end{equation}
Since the marginal score $\nabla\log p_{X_1}(z)$ is uniformly Lipschitz continuous in $z$, the drift of this SDE satisfies global Lipschitz conditions and is affine with respect to the spatial parameter $x_t$. By continuous dependence of SDEs on the initial condition, see e.g.~\cite[Theorem 3.4.1]{kunita2019stochastic}, the mapping $x_t \mapsto Z^{i, t, x_t}_T(\omega_S)$ is continuous for fixed $\omega_S \in \Omega_S$. Note that we can use~\cite[Theorem 3.4.1]{kunita2019stochastic} by augmenting the space and set the change in the $x_t$ variable to constant. Furthermore, the solution to an SDE driven by the Brownian motions on $\Omega_S$ is $\mathcal{F}_S$-measurable. Thus, the map $(\omega_S, x_t) \mapsto \what{S}(t, x_t)(\omega_S)$ is measurable in $\omega_S$ and continuous in $x_t$. This defines $\what{S}$ as a \textit{Carath\'{e}odory function}, guaranteeing that it is jointly $\mathcal{F}_S \otimes \mathcal{B}(\bbR^d)$-measurable, see~\cite[Lemma 4.51]{aliprantis2006infinite}.\\
\textbf{Joint measurability of $\what{U}_{l\tau}$ on $\mathcal{F}_S \otimes \mathcal{F}_I$:}
We can now establish the joint measurability of the sequence $\what{U}_{\ell\tau}$, defined in~\eqref{eq:langevin:monte:carlo:warmstart},  by induction. The initial condition $U_0: \Omega_I \to \bbR^d$ trivially extends to a jointly measurable function on $\Omega_S \times \Omega_I$ that is constant with respect to $\omega_S$. Assuming that the $\ell$-th iterate $\what{U}_{\ell\tau}$ is jointly $\mathcal{F}_S \otimes \mathcal{F}_I$-measurable, the evaluated score term $\what{S}\big(t, \what{U}_{\ell\tau}(\omega_S, \omega_I)\big)(\omega_S)$ is the composition of the jointly measurable random state and the Carath\'{e}odory function $\what{S}$. Because the composition of measurable functions remains measurable, this term is $\mathcal{F}_S \otimes \mathcal{F}_I$-measurable. The subsequent update,
\[
\what{U}_{(\ell+1)\tau}(\omega_S, \omega_I) = \what{U}_{\ell\tau}(\omega_S, \omega_I) + \tau \what{S}\big(t, \what{U}_{\ell\tau}(\omega_S, \omega_I)\big)(\omega_S) + \sqrt{2\tau}\zeta_{\ell+1}(\omega_I),
\]
is a linear combination of jointly measurable components and is therefore jointly measurable itself. By induction, every iterate, including the final state $\what{X}_{T_0} = \what{U}_{L\tau}$, is jointly measurable.\\
\textbf{Conclusion:} First note that the joint measurability implies that the map $\Omega_S\to L^2(\bbP_I,\bbR^d)$ is measurable, see e.g.~\cite[Proposition 1.2.7]{hytonen2016analysis}. They use strong measurability in their assumptions, but since all our probability space can be chosen to be separable, measurability and strong measurability coincide.  Furthermore, the map from square integrable random variables to its law is continuous, if we view it as a map from $L^2(\mathbb P_I,\bbR^d)$ to $(\mathcal P_2,W_2)$. Consequently we obtain that the mapping $\omega_S \mapsto W_2(\mu, p_{\what{X}_{T_0}})$ is also $\mathcal{F}_S$-measurable.
\end{remark}

\begin{lemma}\label{lemma:error:ode:decomposition}
Let Assumption~\ref{assumption:Gaussian:convolution} be fulfilled. Suppose that there exists $\delta\in(0,1)$, such that
\begin{equation}\label{eq:lemma:error:ode:decomposition:0}
\bbE\big[\|D(t_{m},x)-\what{D}(t_{m},x)\|_{2}^{2}\big] \leq \delta^{2},
\end{equation}
for each $0\leq m\leq M-1$ and $x\in\mathbb{R}^{d}$. Then it follows 
\begin{align*}
&\bbE\Big[ W_{2}^{2}\Big(p_{X_{T_{\rm end}}},(\what{\psi}(T_{\rm end},\cdot))_{\sharp}p_{\what{X}_{T_{0}}}\Big)\Big]  \\
&\leq 3\exp(2G(T_{\rm end}-T_{0})) \Big\{ \bbE\Big[ W_{2}^{2}\Big(p_{X_{T_{0}}},p_{\what{X}_{T_{0}}}\Big)\Big] + \frac{3H^{2}}{T_{0}^{4}}\frac{d+R^{2}+1}{(1-T_{\rm end})^{2}}h^{2} + \frac{(T_{\rm end}-T_0)^2}{(1-T_0)(1-T_{\rm end})}\delta^{2} \Big\},
\end{align*}
where $G$ and $H$ are constants only depending on $R$ and $\sigma$, defined in Theorem~\ref{thm} and Lemma~\ref{lemma:derivative:denoiser}, respectively.
\end{lemma}

\begin{proof}
Let $\pi_{T_0}\in\mathcal{P}_2(\bbR^{d}\times\bbR^{d})$ be an optimal coupling of $p_{X_{T_{0}}}$ and $p_{\what{X}_{T_{0}}}$ realized by $(X_{T_0},\widehat{X}_{T_0})$, which implies
\begin{equation}\label{eq:lemma:error:ode:0}
 W_{2}^{2}\Big(p_{X_{T_{0}}},p_{\what{X}_{T_{0}}}\Big)=\bbE\Big[\|X_{T_{0}}-\what{X}_{T_{0}}\|_{2}^{2}\Big].
\end{equation}
We define the $L^{2}(\pi_{T_{0}})$-norm of  $f:\bbR^{d}\times\bbR^{d}\rightarrow\bbR^{d}$ by
\begin{equation*}
\|f\|_{L^{2}(\pi_{T_{0}})}^{2}\coloneq \int\|f(x_{T_{0}},\what{x}_{T_{0}})\|_{2}^{2} \, \d \pi_{T_{0}}(x_{T_{0}},\widehat x_{T_{0}}).
\end{equation*}
Applying the method of integrating factors the flow map of the probability flow ODE~\eqref{eq:PFODE} becomes
\begin{equation*}
\psi(t_{m+1},X_{T_{0}}) = \frac{1-t_{m+1}}{1-t_{m}}\psi(t_{m},X_{T_{0}})+(1-t_{m+1})\int_{t_{m}}^{t_{m+1}}\frac{1}{(1-t)^{2}}D(t,\psi(t,X_{T_{0}}))\dt.
\end{equation*}
Recall that the exponential integrator in~\eqref{eq:PFODE:velocity:ei} reads as
\begin{align*}
\what{\psi}(t_{m+1},\what{X}_{T_{0}}) 
&= \frac{1-t_{m+1}}{1-t_{m}}\what{\psi}(t_{m},\what{X}_{T_{0}})+(1-t_{m+1})\int_{t_{m}}^{t_{m+1}}\frac{1}{(1-t)^{2}}\what{D}(t_{m},\widehat \psi(t_{m},\what{X}_{T_{0}}))\dt \\
&= \frac{1-t_{m+1}}{1-t_{m}}\what{\psi}(t_{m},\what{X}_{T_{0}})+\Big(1-\frac{1-t_{m+1}}{1-t_{m}}\Big)\what{D}(t_{m},\what{\psi}(t_{m},\what{X}_{T_{0}})).
\end{align*}
Then it follows from the triangular inequality that 
\begin{equation}\label{eq:lemma:error:ode:1}
\begin{aligned}
&\|\psi(t_{m+1},\cdot)-\what{\psi}(t_{m+1},\cdot)\|_{L^{2}(\pi_{T_{0}})}  
\leq \frac{1-t_{m+1}}{1-t_{m}}\|\psi(t_{m},\cdot)-\what{\psi}(t_{m},\cdot)\|_{L^{2}(\pi_{T_{0}})} \\
&\quad +\underbrace{(1-t_{m+1})\int_{t_{m}}^{t_{m+1}}\frac{1}{(1-t)^{2}}\|D(t,\psi(t,\cdot))-D(t_{m},\psi(t_{m},\cdot))\|_{L^{2}(\pi_{T_{0}})} \dt}_{A_1} \\
&\quad +\underbrace{\frac{h}{1-t_{m}}\|D(t_{m},\psi(t_{m},\cdot))-D(t_{m},\what{\psi}(t_{m},\cdot))\|_{L^{2}(\pi_{T_{0}})}}_{A_2} \\
&\quad +\underbrace{\frac{h}{1-t_{m}}\|D(t_{m},\what{\psi}(t_{m},\cdot))-\what{D}(t_{m},\what{\psi}(t_{m},\cdot))\|_{L^{2}(\pi_{T_{0}})}}_{A_3}.
\end{aligned}
\end{equation}
\noindent{\textbf{Estimation of $A_1$}:}
By Lemma~\ref{lemma:derivative:denoiser} and  Lemma~\ref{lemma:norm:Xt}, we obtain
\begin{align*}
\|\frac{\d}{\ds}D(s,\psi(s,\cdot))\|_{L^{2}(\pi_{T_{0}})}^{2} 
&= \int\|\frac{\d}{\ds}D(s,\psi(s,x_{T_{0}}))\|_{2}^{2}p_{X_{T_{0}}}(x_{T_{0}}) \d x_{T_{0}} \\
&\leq \frac{2H^{2}}{T_{0}^{4}}\frac{\bbE[\|\psi(s,X_{T_{0}})\|_{2}^{2}]+1}{(1-s)^{2}} = \frac{2H^{2}}{T_{0}^{4}}\frac{\bbE[\|X_{s}\|_{2}^{2}]+1}{(1-s)^{2}} \\
&\leq \frac{2H^{2}}{T_{0}^{4}}\frac{d+R^{2}+1}{(1-s)^{2}}.
\end{align*}
As a consequence, 
\begin{equation}\label{eq:lemma:error:ode:1:1}
\|\frac{\d}{\ds}D(s,\psi(s,\cdot))\|_{L^{2}(\pi_{T_{0}})} \leq \frac{2H}{T_{0}^{2}}\frac{\sqrt{d}+R+1}{1-s}.
\end{equation}
For the term (i) in~\eqref{eq:lemma:error:ode:1}, we have 
\begin{align}
&(1-t_{m+1})\int_{t_{m}}^{t_{m+1}}\frac{1}{(1-t)^{2}}\|D(t,\psi(t,\cdot))-D(t_{m},\psi(t_{m},\cdot))\|_{L^{2}(\pi_{T_{0}})} \dt \nonumber \\
&= (1-t_{m+1})\int_{t_{m}}^{t_{m+1}}\frac{1}{(1-t)^{2}}\left\|\int_{t_{m}}^{t}\frac{\d}{\ds}D(s,\psi(s,\cdot))\ds\right\|_{L^{2}(\pi_{T_{0}})} \dt \nonumber \\
&\leq (1-t_{m+1})\int_{t_{m}}^{t_{m+1}}\frac{1}{(1-t)^{2}}\int_{t_{m}}^{t}\|\frac{\d}{\ds}D(s,\psi(s,\cdot))\|_{L^{2}(\pi_{T_{0}})}\ds\dt \nonumber \\
&\leq (1-t_{m+1})\frac{2H}{T_{0}^{2}}(\sqrt{d}+R+1)\int_{t_{m}}^{t_{m+1}}\frac{1}{(1-t)^{2}}\int_{t_{m}}^{t}\frac{\ds}{1-s}\dt \nonumber \\
&\leq (1-t_{m+1})\frac{2H}{T_{0}^{2}}(\sqrt{d}+R+1)\int_{t_{m}}^{t_{m+1}}\frac{t-t_{m}}{(1-t)^{3}}\dt \nonumber \\
&= \frac{H}{T_{0}^{2}}\frac{(\sqrt{d}+R+1)h^{2}}{(1-t_{m})(1-t_{m+1})}, \label{eq:lemma:error:ode:1:2}
\end{align}
where the first inequality holds from the Jensen's inequality, the second inequality invokes~\eqref{eq:lemma:error:ode:1:1}, and the last inequality used the fact that $1-s\geq 1-t$. 

\noindent{\textbf{Estimation of $A_2$}:}
It is straightforward that 
\begin{align}
&\frac{h}{1-t_{m}}\|D(t_{m},\psi(t_{m},\cdot))-D(t_{m},\what{\psi}(t_{m},\cdot))\|_{L^{2}(\pi_{T_{0}})} \nonumber \\
&\leq \frac{h}{1-t_{m}}(1+(1-t_{m})G)\|\psi(t_{m},\cdot)-\what{\psi}(t_{m},\cdot)\|_{L^{2}(\pi_{T_{0}})} \nonumber \\
&= \Big(\frac{h}{1-t_{m}}+Gh\Big)\|\psi(t_{m},\cdot)-\what{\psi}(t_{m},\cdot)\|_{L^{2}(\pi_{T_{0}})}, \label{eq:lemma:error:ode:2:1}
\end{align}
where the inequality is owing to~\eqref{eq:lipschitz:denoiser}.

\noindent{\textbf{Conclusion:}}
Substituting~\eqref{eq:lemma:error:ode:1:2} and~\eqref{eq:lemma:error:ode:2:1} into~\eqref{eq:lemma:error:ode:1} yields
\begin{align*}
&\|\psi(t_{m+1},\cdot)-\what{\psi}(t_{m+1},\cdot)\|_{L^{2}(\pi_{T_{0}})}  \\
&\leq \frac{1-t_{m+1}}{1-t_{m}}\|\psi(t_{m},\cdot)-\what{\psi}(t_{m},\cdot)\|_{L^{2}(\pi_{T_{0}})} + \frac{H}{T_{0}^{2}}\frac{(\sqrt{d}+R+1)h^{2}}{(1-t_{m})(1-t_{m+1})} \\
&\quad +\Big(\frac{h}{1-t_{m}}+Gh\Big)\|\psi(t_{m},\cdot)-\what{\psi}(t_{m},\cdot)\|_{L^{2}(\pi_{T_{0}})} + \frac{1}{1-t_{m}}h\varepsilon_{m} \\
&= (1+Gh)\|\psi(t_{m},\cdot)-\what{\psi}(t_{m},\cdot)\|_{L^{2}(\pi_{T_{0}})} + \frac{H}{T_{0}^{2}}\frac{(\sqrt{d}+R+1)h^{2}}{(1-t_{m})(1-t_{m+1})} + \frac{h\varepsilon_{m}}{1-t_{m}} \\
&\leq \exp(Gh)\|\psi(t_{m},\cdot)-\what{\psi}(t_{m},\cdot)\|_{L^{2}(\pi_{T_{0}})} + \frac{H}{T_{0}^{2}}\frac{(\sqrt{d}+R+1)h^{2}}{(1-t_{m})(1-t_{m+1})} + \frac{h\varepsilon_{m}}{1-t_{m}},
\end{align*}
where the last inequality holds from $1+z\leq \exp(z)$, and $\varepsilon_{m}$ is defined as 
\begin{equation}\label{eq:lemma:error:ode:3:1}
\varepsilon_{m} \coloneq \|D(t_{m},\what{\psi}(t_{m},\cdot))-\what{D}(t_{m},\what{\psi}(t_{m},\cdot))\|_{L^{2}(\pi_{T_{0}})}.
\end{equation}
\noindent{\textbf{Recursion over $m$:}}
Applying this recursion sequentially yields
\begin{equation}\label{eq:lemma:error:ode:3:2}
\begin{aligned}
&\|\psi(T_{\rm end},\cdot)-\what{\psi}(T_{\rm end},\cdot)\|_{L^{2}(\pi_{T_{0}})} = \|\psi(t_{M},\cdot)-\what{\psi}(t_{M},\cdot)\|_{L^{2}(\pi_{T_{0}})} \\
&\leq \exp(G(T_{\rm end}-T_{0})) \Big\{\|\psi(T_{0},\cdot)-\what{\psi}(T_{0},\cdot)\|_{L^{2}(\pi_{T_{0}})} \\
&\quad + \sum_{m=0}^{M-1}\frac{H}{T_{0}^{2}}\frac{(\sqrt{d}+R+1)h^{2}}{(1-t_{m})(1-t_{m+1})} + \sum_{m=0}^{M-1}\frac{h\varepsilon_{m}}{1-t_{m}}\Big\}.
\end{aligned}
\end{equation}
Note that 
\begin{align}
\sum_{m=0}^{M-1}\frac{h}{(1-t_{m})(1-t_{m+1})} 
&= \sum_{m=0}^{M-1}\Big(\frac{1}{1-t_{m+1}}-\frac{1}{1-t_{m}}\Big) \nonumber \\
&= \frac{1}{1-T_{\rm end}}-\frac{1}{1-T_{0}} \leq \frac{1}{1-T_{\rm end}} .\label{eq:lemma:error:ode:3:3}
\end{align}
Since the function $t \mapsto (1-t)^{-1}$ is increasing, it follows that 
\begin{align}
\sum_{m=0}^{M-1}\frac{h\varepsilon_{m}}{1-t_{m}}
&\leq \Bigg(\sum_{m=0}^{M-1}h\varepsilon_{m}^{2}\Bigg)^{\frac{1}{2}}\Bigg(\sum_{m=0}^{M-1}\frac{h}{(1-t_{m})^{2}}\Bigg)^{\frac{1}{2}} \nonumber \\
&= \Bigg(\sum_{m=0}^{M-1}h\varepsilon_{m}^{2}\Bigg)^{\frac{1}{2}}\Bigg(\sum_{m=0}^{M-1}\int_{t_{m}}^{t_{m+1}}\frac{\dt}{(1-t_{m})^{2}}\Bigg)^{\frac{1}{2}} \nonumber \\
&\leq \Bigg(\sum_{m=0}^{M-1}h\varepsilon_{m}^{2}\Bigg)^{\frac{1}{2}}\Bigg(\sum_{m=0}^{M-1}\int_{t_{m}}^{t_{m+1}}\frac{\dt}{(1-t)^{2}}\Bigg)^{\frac{1}{2}} \nonumber \\
&= \Bigg(\sum_{m=0}^{M-1}h\varepsilon_{m}^{2}\Bigg)^{\frac{1}{2}}\Bigg(\int_{T_{0}}^{T_{\rm end}}\frac{\dt}{(1-t)^{2}}\Bigg)^{\frac{1}{2}} \nonumber \\
&= \Bigg(\sum_{m=0}^{M-1}h\varepsilon_{m}^{2}\Bigg)^{\frac{1}{2}}\left(\frac{T_{\rm end}-T_0}{(1-T_0)(1-T_{\rm end})}\right)^{\frac{1}{2}}. \label{eq:lemma:error:ode:3:4}
\end{align}
Substituting~\eqref{eq:lemma:error:ode:3:3} and~\eqref{eq:lemma:error:ode:3:4} into~\eqref{eq:lemma:error:ode:3:2} and taking square on both sides yield
\begin{align*}
&\|\psi(T_{\rm end},\cdot)-\what{\psi}(T_{\rm end},\cdot)\|_{L^{2}(\pi_{T_{0}})}^{2} \\
&\leq 3\exp(2G(T_{\rm end}-T_{0})) \Bigg\{\|\psi(T_{0},\cdot)-\what{\psi}(T_{0},\cdot)\|_{L^{2}(\pi_{T_{0}})}^{2} + \Big(\frac{H}{T_{0}^{2}}\frac{\sqrt{d}+R+1}{1-T_{\rm end}}h\Big)^{2}  \\
&\quad + \frac{T_{\rm end}-T_0}{(1-T_0)(1-T_{\rm end})}\sum_{m=0}^{M-1}h\|D(t_{m},\what{\psi}(t_{m},\cdot))-\what{D}(t_{m},\what{\psi}(t_{m},\cdot))\|_{L^{2}(\pi_{T_{0}})}^{2}\Bigg\},
\end{align*}
where we used~\eqref{eq:lemma:error:ode:3:1}. 
    
As a consequence, using 
\begin{equation*}
\|\psi(T_0,\cdot)-\widehat{\psi}(T_0,\cdot)\|_{L^2(\pi_{T_0})}=\|x_{T_0}-\widehat{x}_{T_0}\|_{L^2(\pi_{T_0})}=W_2\Big(p_{X_{T_0}},p_{\widehat{X}_{T_0}}\Big),
\end{equation*}
we obtain
\begin{align*}
& W_{2}^{2}\Big(p_{X_{T_{\rm end}}},(\what{\psi}(T_{\rm end},\cdot))_{\sharp}p_{\what{X}_{T_{0}}}\Big) \leq \|\psi(T_{\rm end},\cdot)-\what{\psi}(T_{\rm end},\cdot)\|_{L^{2}(\pi_{T_{0}})}^{2} \\
&\leq 3\exp(2G(T_{\rm end}-T_{0})) \Bigg\{\|\psi(T_{0},\cdot)-\what{\psi}(T_{0},\cdot)\|_{L^{2}(\pi_{T_{0}})}^{2} + \Big(\frac{H}{T_{0}^{2}}\frac{\sqrt{d}+R+1}{1-T_{\rm end}}h\Big)^{2} \\
&\quad  + \frac{T_{\rm end}-T_0}{(1-T_0)(1-T_{\rm end})}\sum_{m=0}^{M-1}h\|D(t_{m},\what{\psi}(t_{m},\cdot))-\what{D}(t_{m},\what{\psi}(t_{m},\cdot))\|_{L^{2}(\pi_{T_{0}})}^{2}\Bigg\} \\
&\leq 3\exp(2G(T_{\rm end}-T_{0})) \Bigg\{ W_{2}^{2}\Big(p_{X_{T_{0}}},p_{\what{X}_{T_{0}}}\Big) + \frac{3H^{2}}{T_{0}^{4}}\frac{d+R^{2}+1}{(1-T_{\rm end})^{2}}h^{2}  \\
&\quad  +\frac{T_{\rm end}-T_0}{(1-T_0)(1-T_{\rm end})}\sum_{m=0}^{M-1}h\|D(t_{m},\what{\psi}(t_{m},\cdot))-\what{D}(t_{m},\what{\psi}(t_{m},\cdot))\|_{L^{2}(\pi_{T_{0}})}^{2} \Bigg\},
\end{align*}
By taking expectation with respect to $\what{X}_{T_{0}}$ and $\what{D}$, we have 
\begin{align*}
&\bbE\Big[ W_{2}^{2}\Big(p_{X_{T_{\rm end}}},(\what{\psi}(T_{\rm end},\cdot))_{\sharp}p_{\what{X}_{T_{0}}}\Big)\Big]  \\
&\leq 3\exp(2G(T_{\rm end}-T_{0})) \Bigg\{ \bbE\Big[ W_{2}^{2}\Big(p_{X_{T_{0}}},p_{\what{X}_{T_{0}}}\Big)\Big] + \frac{3H^{2}}{T_{0}^{4}}\frac{d+R^{2}+1}{(1-T_{\rm end})^{2}}h^{2} \\
&\quad  + \frac{T_{\rm end}-T_0}{(1-T_0)(1-T_{\rm end})}\sum_{m=0}^{M-1}h\bbE\Big[\|D(t_{m},\what{\psi}(t_{m},\cdot))-\what{D}(t_{m},\what{\psi}(t_{m},\cdot))\|_{L^{2}(\pi_{T_{0}})}^{2}\Big] \Bigg\} \\
&= 3\exp(2G(T_{\rm end}-T_{0})) \Bigg\{ \bbE\Big[ W_{2}^{2}\Big(p_{X_{T_{0}}},p_{\what{X}_{T_{0}}}\Big)\Big] + \frac{3H^{2}}{T_{0}^{4}}\frac{d+R^{2}+1}{(1-T_{\rm end})^{2}}h^{2}  \\
&\quad  + \frac{T_{\rm end}-T_0}{(1-T_0)(1-T_{\rm end})}\sum_{m=0}^{M-1}h\int\bbE\big[\|D(t_{m},\what{\psi}(t_{m},x_{T_{0}}))-\what{D}(t_{m},\what{\psi}(t_{m},x_{T_{0}}))\|_{2}^{2}\big]p_{X_{T_{0}}}(x_{T_{0}})\d x_{T_{0}} \Bigg\} \\
&\leq 3\exp(2G(T_{\rm end}-T_{0})) \Big\{ \bbE\Big[ W_{2}^{2}\Big(p_{X_{T_{0}}},p_{\what{X}_{T_{0}}}\Big)\Big] + \frac{3H^{2}}{T_{0}^{4}}\frac{d+R^{2}+1}{(1-T_{\rm end})^{2}}h^{2} + \frac{(T_{\rm end}-T_0)^2}{(1-T_0)(1-T_{\rm end})}\delta^{2} \Big\},
\end{align*}
where the last inequality invokes the condition~\eqref{eq:lemma:error:ode:decomposition:0}. This completes the proof.
\end{proof}

\begin{customtheorem}{\ref{theorem:error:ode:stable}}
[Flow ODEs with stable velocity estimation]
Let Assumption~\ref{assumption:Gaussian:convolution} be fulfilled. Let $\what{\psi}_{\rm stab}$ be the discrete-time flow map defined by the exponential integrator in~\eqref{eq:PFODE:velocity:ei:stable}. Assume $T^{*}<T_{0}<T_{\rm end}<1$. Then we have 
\begin{align*}
&\bbE\Big[W_{2}^{2}\Big(p_{X_{1}},(T_{\rm end}^{-1}\what{\psi}_{\rm stab}(T_{\rm end},\cdot))_{\sharp}p_{\what{X}_{T_{0}}}\Big)\Big]  \\
&\lesssim \underbrace{\frac{d(1-T_{\rm end})^{2}}{T_{\rm end}^{2}}}_{\text{early-stopping error}} \!
+ \underbrace{\exp\Big(2\Big(G+\frac{2}{T_{0}}\Big)(T_{\rm end}-T_{0})\Big)\frac{\kappa^{2}}{T_{\rm end}^{2}}}_{\text{initialization error}} \\
&\quad + \underbrace{\exp\Big(2\Big(G+\frac{2}{T_{0}}\Big)(T_{\rm end}-T_{0})\Big)\frac{V^{2}}{T_{0}^{6}}\frac{d+R^{2}+1}{(1-T_{\rm end})^{2}}h^{2}}_{\text{discretization error}} \\ 
&\quad + \underbrace{\exp\Big(2\Big(G+\frac{2}{T_{0}}\Big)(T_{\rm end}-T_{0})\Big)\frac{(T_{\rm end}-T_{0})^{2}}{T_{\rm end}^{2}T_{0}^{4}}\delta^{2}}_{\text{velocity estimation error}},
\end{align*}
where $G$ the a constant from Theorem~\ref{thm} only depending on $R$ and $\sigma$, the constant $V$ from Lemma~\ref{lemma:derivative:stable} only depends on $R$ and $\sigma$. The error of the velocity estimation $\delta$ is defined in Lemma~\ref{lemma:velocity:estimation:stable}, and the error of flow initialization $\kappa$ is defined in Lemma~\ref{lemma:error:initialization}.
\end{customtheorem}

\begin{proof}
Using Lemma~\ref{lemma:error:ode:decomposition:stable} and the same arguments as Theorem~\ref{theorem:error:ode} provides the proof.
\end{proof}

\begin{lemma}\label{lemma:error:ode:decomposition:stable}
Let Assumption~\ref{assumption:Gaussian:convolution} be fulfilled. Suppose there exists $\delta\in(0,1)$, such that
\begin{equation}\label{eq:lemma:error:ode:decomposition:stable:0}
\bbE\big[\|F(t_{m},x)-\what{F}(t_{m},x)\|_{2}^{2}\big] \leq \delta^{2},
\end{equation}
for each $0\leq m\leq M-1$ and $x\in\mathbb{R}^{d}$. Then it follows 
\begin{align*}
&\bbE\Big[ W_{2}^{2}\Big(p_{X_{T_{\rm end}}},(\what{\psi}_{\rm stab}(T_{\rm end},\cdot))_{\sharp}p_{\what{X}_{T_{0}}}\Big)\Big]  \\
&\leq 3\exp\Big(2\Big(G+\frac{2}{T_{0}}\Big)(T_{\rm end}-T_{0})\Big) \Big\{ \bbE\Big[ W_{2}^{2}\Big(p_{X_{T_{0}}},p_{\what{X}_{T_{0}}}\Big)\Big] + \frac{3T_{\rm end}^{2}V^{2}}{T_{0}^{6}}\frac{d+R^{2}+1}{(1-T_{\rm end})^{2}}h^{2} \\
&\quad + \frac{(T_{\rm end}-T_{0})^{2}}{T_{0}^{4}}\delta^{2} \Big\},
\end{align*}
where $G$ and $V$ are constants only depending on $R$ and $\sigma$, defined in Theorem~\ref{thm} and Lemma~\ref{lemma:derivative:stable}, respectively.
\end{lemma}

\begin{proof}
Let $\pi_{T_0}\in\mathcal{P}_2(\bbR^{d}\times\bbR^{d})$ be an optimal coupling of $p_{X_{T_{0}}}$ and $p_{\what{X}_{T_{0}}}$ realized by $(X_{T_0},\widehat{X}_{T_0})$, which implies
\begin{equation}
 W_{2}^{2}\Big(p_{X_{T_{0}}},p_{\what{X}_{T_{0}}}\Big)=\bbE\Big[\|X_{T_{0}}-\what{X}_{T_{0}}\|_{2}^{2}\Big].
\end{equation}
We define the $L^{2}(\pi_{T_{0}})$-norm of  $f:\bbR^{d}\times\bbR^{d}\rightarrow\bbR^{d}$ by
\begin{equation*}
\|f\|_{L^{2}(\pi_{T_{0}})}^{2}\coloneq \int\|f(x_{T_{0}},\what{x}_{T_{0}})\|_{2}^{2} \, \d \pi_{T_{0}}(x_{T_{0}},\widehat x_{T_{0}}).
\end{equation*}
According to the variation-of-constants in Lemma~\ref{lemma:variation:constant} and Proposition~\ref{proposition:rescaling:velocity}, the flow map of probability flow ODE~\eqref{eq:PFODE} satisfies
\begin{equation*}
\psi(t_{m+1},X_{T_{0}}) = \frac{t_{m+1}}{t_{m}}\psi(t_{m},X_{T_{0}})+t_{m+1}\int_{t_{m}}^{t_{m+1}}\frac{1}{t^{3}}F(t,\psi(t,X_{T_{0}}))\dt.
\end{equation*}
Recall the exponential integrator in~\eqref{eq:PFODE:velocity:ei:stable}
\begin{align*}
\what{\psi}_{\rm stab}(t_{m+1},\what{X}_{T_{0}}) 
&= \frac{t_{m+1}}{t_{m}}\what{\psi}_{\rm stab}(t_{m},\what{X}_{T_{0}})+t_{m+1}\int_{t_{m}}^{t_{m+1}}\frac{1}{t^{3}}\what{F}(t_{m},\what{\psi}_{\rm stab}(t_{m},\what{X}_{T_{0}}))\dt \\
&= \frac{t_{m+1}}{t_{m}}\what{\psi}_{\rm stab}(t_{m},\what{X}_{T_{0}})+\frac{t_{m}+t_{m+1}}{2t_{m}^{2}t_{m+1}} h\what{F}(t_{m},\what{\psi}_{\rm stab}(t_{m},\what{X}_{T_{0}})).
\end{align*}
Then it follows from the triangular inequality that 
\begin{equation}\label{eq:lemma:error:ode:stable:1}
\begin{aligned}
&\|\psi(t_{m+1},\cdot)-\what{\psi}_{\rm stab}(t_{m+1},\cdot)\|_{L^{2}(\pi_{T_{0}})}  \\
&\leq \frac{t_{m+1}}{t_{m}}\|\psi(t_{m},\cdot)-\what{\psi}_{\rm stab}(t_{m},\cdot)\|_{L^{2}(\pi_{T_{0}})} \\
&\quad +\underbrace{t_{m+1}\int_{t_{m}}^{t_{m+1}}\frac{1}{t^{3}}\|F(t,\psi(t,\cdot))-F(t_{m},\psi(t_{m},\cdot))\|_{L^{2}(\pi_{T_{0}})} \dt}_{A_1} \\
&\quad +\underbrace{\frac{t_{m}+t_{m+1}}{2t_{m}^{2}t_{m+1}} h\|F(t_{m},\psi(t_{m},\cdot))-F(t_{m},\what{\psi}_{\rm stab}(t_{m},\cdot))\|_{L^{2}(\pi_{T_{0}})}}_{A_2} \\
&\quad +\underbrace{\frac{t_{m}+t_{m+1}}{2t_{m}^{2}t_{m+1}} h\|F(t_{m},\what{\psi}_{\rm stab}(t_{m},\cdot))-\what{F}(t_{m},\what{\psi}_{\rm stab}(t_{m},\cdot))\|_{L^{2}(\pi_{T_{0}})}}_{A_3}.
\end{aligned}
\end{equation}

\noindent{\textbf{Estimation of $A_1$:}}
By a direct calculation, we have 
\begin{align*}
\|\frac{\d}{\ds}F(s,\psi(s,\cdot))\|_{L^{2}(\pi_{T_{0}})}^{2} 
&= \int\|\frac{\d}{\ds}F(s,\psi(s,x_{T_{0}}))\|_{2}^{2}p_{X_{T_{0}}}(x_{T_{0}}) \d x_{T_{0}} \\
&\leq 2V^{2}\frac{\bbE[\|\psi(s,X_{T_{0}})\|_{2}^{2}]+1}{(1-s)^{4}} = 2V^{2}\frac{\bbE[\|X_{s}\|_{2}^{2}]+1}{(1-s)^{4}} \\
&\leq 2V^{2}\frac{d+R^{2}+1}{(1-s)^{4}},
\end{align*}
where the first inequality invokes Lemma~\ref{lemma:derivative:stable}, and the last inequality is due to Lemma~\ref{lemma:norm:Xt}. As a consequence, 
\begin{equation}\label{eq:lemma:error:ode:stable:1:1}
\|\frac{\d}{\ds}F(s,\psi(s,\cdot))\|_{L^{2}(\pi_{T_{0}})} \leq 2V\frac{\sqrt{d}+R+1}{(1-s)^{2}}.
\end{equation}
For the term $A_1$ in~\eqref{eq:lemma:error:ode:stable:1}, we have 
\begin{align}
&t_{m+1}\int_{t_{m}}^{t_{m+1}}\frac{1}{t^{3}}\|F(t,\psi(t,\cdot))-F(t_{m},\psi(t_{m},\cdot))\|_{L^{2}(\pi_{T_{0}})} \dt \nonumber \\
&= t_{m+1}\int_{t_{m}}^{t_{m+1}}\frac{1}{t^{3}}\left\|\int_{t_{m}}^{t}\frac{\d}{\ds}F(s,\psi(s,\cdot))\ds\right\|_{L^{2}(\pi_{T_{0}})} \dt \nonumber \\
&\leq t_{m+1}\int_{t_{m}}^{t_{m+1}}\frac{1}{t^{3}}\int_{t_{m}}^{t}\|\frac{\d}{\ds}F(s,\psi(s,\cdot))\|_{L^{2}(\pi_{T_{0}})}\ds\dt \nonumber \\
&\leq 2t_{m+1}V(\sqrt{d}+R+1)\int_{t_{m}}^{t_{m+1}}\frac{1}{t^{3}}\int_{t_{m}}^{t}\frac{\ds}{(1-s)^{2}}\dt \nonumber \\
&\leq \frac{2T_{\rm end}V}{T_{0}^{3}}\frac{\sqrt{d}+R+1}{(1-t_{m+1})^{2}}\int_{t_{m}}^{t_{m+1}}t-t_{m}\dt \nonumber \\
&= \frac{T_{\rm end}V}{T_{0}^{3}}\frac{(\sqrt{d}+R+1)h^{2}}{(1-t_{m+1})^{2}}, \label{eq:lemma:error:ode:stable:1:2}
\end{align}
where the first inequality holds from Jensen's inequality, the second inequality invokes~\eqref{eq:lemma:error:ode:stable:1:1}, and the last inequality used the fact that $1-s\geq 1-t$ and $t_{m}\geq T_{0}$. 

\noindent{\textbf{Estimation of $A_2$:}}
It follows from Proposition~\ref{proposition:rescaling:velocity} that 
\begin{equation*}
\nabla F(t,x_{t}) = t^{2}\nabla u(t,x_{t})-t I_{d},
\end{equation*}
which implies 
\begin{equation}
\|\nabla F(t,x_{t})\|_{\rm op} \leq Gt^{2}+t.
\end{equation}
Then we have 
\begin{align}
&\frac{t_{m}+t_{m+1}}{2t_{m}^{2}t_{m+1}} h\|F(t_{m},\psi(t_{m},\cdot))-F(t_{m},\what{\psi}_{\rm stab}(t_{m},\cdot))\|_{L^{2}(\pi_{T_{0}})} \nonumber \\
&\leq \frac{t_{m}+t_{m+1}}{2t_{m}t_{m+1}} h(Gt_{m}+1)\|\psi(t_{m},\cdot)-\what{\psi}_{\rm stab}(t_{m},\cdot)\|_{L^{2}(\pi_{T_{0}})}, \label{eq:lemma:error:ode:stable:2:1}
\end{align}
where the inequality holds from Theorem~\ref{thm}.

\noindent{\textbf{Conclusion:}}
Substituting~\eqref{eq:lemma:error:ode:stable:1:2} and~\eqref{eq:lemma:error:ode:stable:2:1} into~\eqref{eq:lemma:error:ode:stable:1} yields
\begin{align*}
&\|\psi(t_{m+1},\cdot)-\what{\psi}_{\rm stab}(t_{m+1},\cdot)\|_{L^{2}(\pi_{T_{0}})}  \\
&\leq \frac{t_{m+1}}{t_{m}}\|\psi(t_{m},\cdot)-\what{\psi}_{\rm stab}(t_{m},\cdot)\|_{L^{2}(\pi_{T_{0}})} + \frac{T_{\rm end}V}{T_{0}^{3}}\frac{(\sqrt{d}+R+1)h^{2}}{(1-t_{m+1})^{2}} \\
&\quad + \frac{t_{m}+t_{m+1}}{2t_{m}t_{m+1}} h(Gt_{m}+1)\|\psi(t_{m},\cdot)-\what{\psi}_{\rm stab}(t_{m},\cdot)\|_{L^{2}(\pi_{T_{0}})} + \frac{t_{m}+t_{m+1}}{2t_{m}^{2}t_{m+1}} h\varepsilon_{m} \\
&\leq \exp\Big(\Big(G+\frac{2}{T_{0}}\Big)h\Big)\|\psi(t_{m},\cdot)-\what{\psi}_{\rm stab}(t_{m},\cdot)\|_{L^{2}(\pi_{T_{0}})} + \frac{T_{\rm end}V}{T_{0}^{3}}\frac{(\sqrt{d}+R+1)}{(1-t_{m+1})^{2}}h^{2} + \frac{h\varepsilon_{m}}{T_{0}^{2}},
\end{align*}
where the last inequality holds from $1+z\leq \exp(z)$, and $\varepsilon_{m}$ is defined as 
\begin{equation}\label{eq:lemma:error:ode:stable:3:1}
\varepsilon_{m} \coloneq \|F(t_{m},\what{\psi}_{\rm stab}(t_{m},\cdot))-\what{F}(t_{m},\what{\psi}_{\rm stab}(t_{m},\cdot))\|_{L^{2}(\pi_{T_{0}})}.
\end{equation}
Here we also used
\begin{align*}
\frac{t_{m+1}}{t_{m}}+\frac{t_{m}+t_{m+1}}{2t_{m}t_{m+1}} h(Gt_{m}+1)
&\leq 1+\frac{h}{t_m}+\frac{1}{t_m}h(Gt_m+1)\\
&= 1+h\left(G+\frac{2}{t_m}\right)\\
&\leq \exp\left(\left(G+\frac{2}{T_0}\right)h\right),
\end{align*}
where we used $t_{m+1}>t_{m}\geq T_{0}$. 

\par\noindent{\textbf{Recursion over $m$:}}
Applying this recursion sequentially yields
\begin{equation}\label{eq:lemma:error:ode:stable:3:2}
\begin{aligned}
&\|\psi(T_{\rm end},\cdot)-\what{\psi}_{\rm stab}(T_{\rm end},\cdot)\|_{L^{2}(\pi_{T_{0}})} 
\leq \exp\Big(\Big(G+\frac{2}{T_{0}}\Big)(T_{\rm end}-T_{0})\Big) \\
& \times \Big\{\|\psi(T_{0},\cdot)-\what{\psi}_{\rm stab}(T_{0},\cdot)\|_{L^{2}(\pi_{T_{0}})} 
+ \sum_{m=0}^{M-1}\frac{T_{\rm end}V}{T_{0}^{3}}\frac{(\sqrt{d}+R+1)h^{2}}{(1-t_{m+1})^{2}} + \sum_{m=0}^{M-1}\frac{h\varepsilon_{m}}{T_{0}^{2}}\Big\}.
\end{aligned}
\end{equation}
Note that 
\begin{equation}\label{eq:lemma:error:ode:stable:3:3}
\sum_{m=0}^{M-1}\frac{h}{(1-t_{m+1})^{2}} = \sum_{m=0}^{M-1}\int_{t_{m}}^{t_{m+1}}\frac{\dt}{(1-t_{m+1})^{2}} \leq \sum_{m=0}^{M-1}\int_{t_{m}}^{t_{m+1}}\frac{\dt}{(1-t)^{2}} \leq \frac{1}{1-T_{\rm end}}.
\end{equation}
Using Cauchy-Schwarz, it follows that 
\begin{equation}\label{eq:lemma:error:ode:stable:3:4}
\sum_{m=0}^{M-1}h\varepsilon_{m} \leq \Bigg(\sum_{m=0}^{M-1}h\varepsilon_{m}^{2}\Bigg)^{\frac{1}{2}}\Bigg(\sum_{m=0}^{M-1}h\Bigg)^{\frac{1}{2}} =\Bigg(\sum_{m=0}^{M-1}h\varepsilon_{m}^{2}\Bigg)^{\frac{1}{2}}\sqrt{T_{\rm end}-T_{0}}. 
\end{equation}
Substituting~\eqref{eq:lemma:error:ode:stable:3:3} and~\eqref{eq:lemma:error:ode:stable:3:4} into~\eqref{eq:lemma:error:ode:stable:3:2} and taking squares on both sides yields
\begin{align*}
&\|\psi(T_{\rm end},\cdot)-\what{\psi}_{\rm stab}(T_{\rm end},\cdot)\|_{L^{2}(\pi_{T_{0}})}^{2} \\
&\leq 3\exp\Big(2\Big(G+\frac{2}{T_{0}}\Big)(T_{\rm end}-T_{0})\Big) \Bigg\{\|\psi(T_{0},\cdot)-\what{\psi}_{\rm stab}(T_{0},\cdot)\|_{L^{2}(\pi_{T_{0}})}^{2} + \Big(\frac{T_{\rm end}V}{T_{0}^{3}}\frac{\sqrt{d}+R+1}{1-T_{\rm end}}h\Big)^{2}  \\
&\quad + \frac{T_{\rm end}-T_{0}}{T_{0}^{4}}\sum_{m=0}^{M-1}h\|F(t_{m},\what{\psi}_{\rm stab}(t_{m},\cdot))-\what{F}(t_{m},\what{\psi}_{\rm stab}(t_{m},\cdot))\|_{L^{2}(\pi_{T_{0}})}^{2}\Bigg\},
\end{align*}
where we used~\eqref{eq:lemma:error:ode:stable:3:1}. As a consequence, 
\begin{align*}
& W_{2}^{2}\Big(p_{X_{T_{\rm end}}},(\what{\psi}_{\rm stab}(T_{\rm end},\cdot))_{\sharp}p_{\what{X}_{T_{0}}}\Big) \leq \|\psi(T_{\rm end},\cdot)-\what{\psi}_{\rm stab}(T_{\rm end},\cdot)\|_{L^{2}(\pi_{T_{0}})}^{2} \\
&\leq 3\exp\Big(2\Big(G+\frac{2}{T_{0}}\Big)(T_{\rm end}-T_{0})\Big) \Bigg\{\|\psi(T_{0},\cdot)-\what{\psi}_{\rm stab}(T_{0},\cdot)\|_{L^{2}(\pi_{T_{0}})}^{2} + \Big(\frac{T_{\rm end}V}{T_{0}^{3}}\frac{\sqrt{d}+R+1}{1-T_{\rm end}}h\Big)^{2} \\
&\quad  + \frac{T_{\rm end}-T_{0}}{T_{0}^{4}}\sum_{m=0}^{M-1}h\|F(t_{m},\what{\psi}_{\rm stab}(t_{m},\cdot))-\what{F}(t_{m},\what{\psi}_{\rm stab}(t_{m},\cdot))\|_{L^{2}(\pi_{T_{0}})}^{2}\Bigg\} \\
&= 3\exp\Big(2\Big(G+\frac{2}{T_{0}}\Big)(T_{\rm end}-T_{0})\Big) \Bigg\{ W_{2}^{2}\Big(p_{X_{T_{0}}},p_{\what{X}_{T_{0}}}\Big) + \frac{3T_{\rm end}^{2}V^{2}}{T_{0}^{6}}\frac{d+R^{2}+1}{(1-T_{\rm end})^{2}}h^{2}  \\
&\quad  + \frac{T_{\rm end}-T_{0}}{T_{0}^{4}}\sum_{m=0}^{M-1}h\|F(t_{m},\what{\psi}_{\rm stab}(t_{m},\cdot))-\what{F}(t_{m},\what{\psi}_{\rm stab}(t_{m},\cdot))\|_{L^{2}(\pi_{T_{0}})}^{2} \Bigg\},
\end{align*}
where the first inequality is due to the definition of the Wasserstein-2 distance, and the equality holds from the fact that $\pi_{T_{0}}$ is the joint density of the optimal coupling $(X_{T_{0}},\what{X}_{T_{0}})$. By taking expectation with respect to $\what{X}_{T_{0}}$ and $\what{S}$, we have 
\begin{align*}
&\bbE\Big[ W_{2}^{2}\Big(p_{X_{T_{\rm end}}},(\what{\psi}_{\rm stab}(T_{\rm end},\cdot))_{\sharp}p_{\what{X}_{T_{0}}}\Big)\Big]  \\
&\leq 3\exp\Big(2\Big(G+\frac{2}{T_{0}}\Big)(T_{\rm end}-T_{0})\Big) \Bigg\{ \bbE\Big[ W_{2}^{2}\Big(p_{X_{T_{0}}},p_{\what{X}_{T_{0}}}\Big)\Big] + \frac{3T_{\rm end}^{2}V^{2}}{T_{0}^{6}}\frac{d+R^{2}+1}{(1-T_{\rm end})^{2}}h^{2} \\
&\quad  + \frac{T_{\rm end}-T_{0}}{T_{0}^{4}}\sum_{m=0}^{M-1}h\bbE\Big[\|F(t_{m},\what{\psi}_{\rm stab}(t_{m},\cdot))-\what{F}(t_{m},\what{\psi}_{\rm stab}(t_{m},\cdot))\|_{L^{2}(\pi_{T_{0}})}^{2}\Big] \Bigg\} \\
&\leq 3\exp\Big(2\Big(G+\frac{2}{T_{0}}\Big)(T_{\rm end}-T_{0})\Big) \Big\{ \bbE\Big[ W_{2}^{2}\Big(p_{X_{T_{0}}},p_{\what{X}_{T_{0}}}\Big)\Big] + \frac{3T_{\rm end}^{2}V^{2}}{T_{0}^{6}}\frac{d+R^{2}+1}{(1-T_{\rm end})^{2}}h^{2} \\
&\quad + \frac{(T_{\rm end}-T_{0})^{2}}{T_{0}^{4}}\delta^{2} \Big\},
\end{align*}
where the last inequality invokes the condition~\eqref{eq:lemma:error:ode:decomposition:stable:0}. This completes the proof.
\end{proof}

\begin{lemma}\label{lemma:time:derivative:denoiser}
Let Assumption~\ref{assumption:Gaussian:convolution} be fulfilled. 
Then, for any $t\in(T_{0},1)$, it holds
\begin{equation*}
\|\partial_{t}D(t,x_{t})\|_{2}
\leq 
\frac{\tilde{\tilde{H}}}{T_{0}^{2}}\frac{\|x_{t}\|_{2}+1}{1-t}, \quad x_{t}\in\bbR^{d},
\end{equation*}
where $\tilde{\tilde{H}}$ is a constant only depending on $R$ and $\sigma$.
\end{lemma}

\begin{proof}
By~\eqref{eq:velocity} and~\eqref{eq:score:velocity}, we have
\begin{equation*}
D(t,x_{t}) = \frac{1}{t}x_{t}+\frac{(1-t)^{2}}{t}\nabla\log p_{X_{t}}(x_{t}).
\end{equation*}
Taking partial derivative with respect to $t$ yields
\begin{equation*}
\partial_{t}D(t,x_{t}) = -\frac{1}{t^{2}}x_{t}+\frac{t^{2}-1}{t^{2}}\nabla\log p_{X_{t}}(x_{t})+\frac{(1-t)^{2}}{t}\partial_{t}\nabla\log p_{X_{t}}(x_{t}).
\end{equation*}
Then, using the triangular inequality together with~\eqref{eq:lemma:time:derivative:score:6:0} and Lemma~\ref{lemma:time:derivative:score}, we obtain
\begin{align}\label{eq:lemma:time:derivative:denoiser:1}
\|\partial_{t}D(t,x_{t})\|_{2}
&\leq \frac{1}{t^{2}}\|x_{t}\|_{2}+\frac{1-t^{2}}{t^{2}}\|\nabla\log p_{X_{t}}(x_{t})\|_{2}+\frac{(1-t)^{2}}{t}\|\nabla\partial_{t}\log p_{X_{t}}(x_{t})\|_{2} \\
&\leq \frac{1}{t^{2}}\|x_{t}\|_{2}+\frac{1-t^{2}}{t^{2}}\Big(\frac{1}{\zeta_{t}^{2}}\|x_{t}\|_{2}+\frac{t}{\zeta_{t}^{2}}R\Big)+ \tilde H\frac{\|x_{t}\|_{2}+1}{t(1-t)}
\end{align}
where $\zeta_t^2 = \sigma^2 t^2 + (1-t)^2$. 
Finally, noting that $\zeta_t^2 \ge c = \sigma^2/(\sigma^2+1)$, we get the assertion by
\begin{align}
\|\partial_{t}D(t,x_{t})\|_{2} &=  \frac{1}{t^2(1-t)} 
\Big( \|x_t\|_2 (1-t + \frac{(1-t^2)(1-t)}{\zeta_t^2} + \tilde H t) + \frac{(1-t^2)(1-t)}{\zeta_t^2} R t + \tilde H t
\Big)\\
&\le 
\frac{1}{(1-t) T_0^2}
\Big( \|x_t\|_2 (1+ \frac{1}{c} + \tilde H) + \frac{1}{c} R + \tilde H
\Big) \le \frac{\tilde{\tilde H}}{T_{0}^{2}}\frac{\|x_{t}\|_{2}+1}{1-t}
\end{align}
with $\tilde{\tilde H} \coloneqq \max \{1+1/c + \tilde H, R/c + \tilde H\}$.
\end{proof}

\begin{lemma}\label{lemma:derivative:denoiser}
Let Assumption~\ref{assumption:Gaussian:convolution} be fulfilled. Then, for any $t\in(T_{0},1)$, it holds
\begin{equation*}
\|\frac{\d}{\d t}D(t,\psi(t,x))\|_{2}\leq \frac{H}{T_{0}^{2}}\frac{\|\psi(t,x)\|_{2}+1}{1-t},
\end{equation*}
where $H$ is a constant only depending on $R$ and $\sigma$.
\end{lemma}

\begin{proof}
We have
\begin{align*}
\frac{\d}{\d t}D(t,\psi(t,x))
&=\partial_{t}D(t,\psi(t,x))+\nabla D(t,\psi(t,x))\frac{\d}{\d t}\psi(t,x) \\
&=\partial_{t}D(t,\psi(t,x))+\nabla D(t,\psi(t,x))u(t,\psi(t,x)).
\end{align*}
By~\eqref{eq:velocity}, we obtain 
\begin{equation*}
\nabla D(t,\psi(t,x)) =  I_{d}+(1-t)\nabla u(t,\psi(t,x)),
\end{equation*}
which implies by the triangular inequality and Theorem~\ref{thm} that 
\begin{equation}\label{eq:lipschitz:denoiser}
\|\nabla D(t,\psi(t,x))\|_{\rm op} \leq 1+(1-t)\|\nabla u(t,\psi(t,x))\|_{\rm op} \leq 1+(1-t)G.
\end{equation}
Together with~\eqref{eq:lemma:time:derivative:score:8} and $\|Y^{x_{t}}_{t}\|_{2}\leq R$ this implies 
\begin{equation*}
\|\nabla D(t,\psi(t,x))u(t,\psi(t,x))\|_{2} 
\leq 
(1+(1-t)G)\Big(\frac{\max\{\sigma^{2},1\}}{\zeta_{t}^{2}}\|x_{t}\|_{2}+\frac{1-t}{\zeta_{t}^{2}}R\Big).
\end{equation*}
Combining this inequality with Lemma~\ref{lemma:time:derivative:denoiser}, we obtain
$$
\|\frac{\d}{\d t}D(t,\psi(t,x))\|_{2}
\leq 
\frac{\tilde{\tilde H}}{T_{0}^{2}}\frac{\|\psi(t,x)\|_{2}+1}{1-t}
+
\frac{1}{T_0^2(1-t)} \frac{1+G}{c} \max\{1, \sigma^2,R\}\Big(\|\psi(t,x)\|_{2}+ 1\Big) .
$$
This completes the proof with $H \coloneqq  \tilde{\tilde H}+ \frac{1+G}{c} \max\{1, \sigma^2,R\} $.
\end{proof}

\begin{lemma}\label{lemma:derivative:stable}
Let Assumption~\ref{assumption:Gaussian:convolution} be fulfilled. Then, for any $t\in(T_{0},1)$, it holds
\begin{equation*}
\|\frac{\d}{\d t}F(t,\psi(t,x))\|_{2}\leq V\frac{\|\psi(t,x)\|_{2}+1}{(1-t)^{2}},
\end{equation*}
where $V$ is a constant only depending on $R$ and $\sigma$.
\end{lemma}

\begin{proof}
Combining Proposition~\ref{proposition:rescaling:velocity} and~\eqref{eq:score:velocity} implies 
\begin{equation}\label{eq:lemma:derivative:stable:1}
S(t,\psi(t,x)) = t(1-t)\nabla\log p_{X_{t}}(\psi(t,x)),
\end{equation}
and consequently
\begin{align}
\frac{\d}{\dt}S(t,\psi(t,x)) 
&= \partial_{t}S(t,\psi(t,x))+\nabla S(t,\psi(t,x))\frac{\d}{\dt}\psi(t,x) \\
&= (1-2t)\nabla\log p_{X_{t}}(\psi(t,x))+t(1-t)\partial_{t}\nabla\log p_{X_{t}}(\psi(t,x)) \\
&\quad +t(1-t)\nabla^{2}\log p_{X_{t}}(\psi(t,x))u(t,\psi(t,x)).
\end{align}
Taking $\ell^{2}$-norm on both sides of the inequality, using the triangular inequality, and applying~\eqref{eq:lemma:time:derivative:score:6:0}, Lemma~\ref{lemma:time:derivative:score}, and~\eqref{eq:lemma:time:derivative:score:7} completes the proof.
\end{proof}

\section{Preconditioning}
\label{section:precondition:alg}

Algorithms~\ref{alg:velocity:estimation:preconditioned} and~\ref{alg:initialization:preconditioning} present the preconditioned counterparts of Algorithms~\ref{alg:velocity:estimation} and~\ref{alg:initialization}, respectively. They detail the procedures for velocity estimation and initialization using preconditioned Langevin algorithms, which are discrete-time approximations of the preconditioned Langevin diffusion~\eqref{eq:langevin:preconditioned}.

\begin{algorithm}[htbp]\label{alg:velocity:estimation:preconditioned}
\caption{Langevin-based velocity estimation with preconditioning}
\KwIn{Time and location of interest $(t,x_{t})$, the target score $\nabla\log p_{X_{1}}$, the step size $\eta$, the number of steps $K$, the smoothing constant $\alpha$, and the tolerance $\epsilon$ for numerical stability.}
\KwOut{Velocity estimator $\what{U}(t,x_{t})$ or $\widehat{U}_{\mathrm{stab}}(t,x_{t})$.}
Initialize particles $\bar{Z}_{0}^{1},\ldots,\bar{Z}_{0}^{n}$ via importance sampling~\eqref{eq:is:Z0}. \\  
Initialize the variance $v_{0}^{i}\leftarrow 0$ for $1\leq i\leq n$. \\
\For {$k\in\{0,\ldots,K-1\}$}{
\texttt{\# Calculate denoising score} \\
$S_{k+1}^{i}\leftarrow-\frac{t^{2}}{(1-t)^{2}}\bar{Z}_{k\eta}^{i}+\frac{t}{(1-t)^{2}}x_{t}+\nabla\log p_{X_{1}}(\bar{Z}_{k\eta}^{i})$ for $1\leq i\leq n$. \\
\texttt{\# Preconditioned Langevin update} \\
$v_{k+1}^{i}\leftarrow\alpha v_{k}^{i}+(1-\alpha)S_{k+1}^{i}\odot S_{k+1}^{i}$ for $1\leq i\leq n$. \Comment{Moving average} \\
$P_{k+1}^{i}\leftarrow\diag(1/(\sqrt{v_{k+1}^{i}}+\epsilon))$ for $1\leq i\leq n$. \Comment{Preconditioner} \\
$\bar{Z}_{(k+1)\eta}^{i}\sim\calN(\bar{Z}_{k\eta}^{i}+\eta P_{k+1}^{i}S_{k+1}^{i},2\eta P_{k+1}^{i})$ for $1\leq i\leq n$. \Comment{Langevin update} \\
} 
\texttt{\# Monte Carlo velocity estimator} \\
\eIf{\normalfont using stable estimation}{
$\widehat{U}_{\mathrm{stab}}(t,x_{t}) \leftarrow \frac{1}{t}x_{t} + \frac{1-t}{t^2}\frac{1}{n}\sum_{i=1}^{n}\nabla\log p_{X_1}(\bar{Z}_{K\eta}^{i})$.}{
$\what{U}(t,x_{t})=-\frac{1}{1-t}x_{t}+ \frac{1}{1-t}\frac{1}{n}\sum_{i=1}^{n}\bar{Z}_{K\eta}^{i}$.
}
\Return{\normalfont $\what{U}(t,x_{t})$ or $\widehat{U}_{\mathrm{stab}}(t,x_{t})$}
\end{algorithm}

\begin{algorithm}[htbp]\label{alg:initialization:preconditioning}
\caption{Initialization of probability flow ODE with preconditioning}
\KwIn{The step size $\tau$, the number of steps $L$, the smoothing constant $\alpha$, and the tolerance $\epsilon$ for numerical stability.}
\KwOut{Particle $\what{U}_{L\tau}$ approximately following $p_{X_{T_{0}}}$.}
Initialize particle: $\what{U}_{0}$. \\
Initialize the variance $v_{0}\leftarrow 0$. \\
\For {$\ell\in\{0,\ldots,L-1\}$}{
\texttt{\# Score estimation} \\
Calculate the velocity estimator $\what{U}(T_{0},\what{U}_{\ell\tau})$ using Algorithm~\ref{alg:velocity:estimation}. \\
Calculate the score estimator $\what{S}_{\ell+1}\coloneq\what{S}(T_{0},\what{U}_{\ell\tau})$ using~\eqref{eq:score:estimation}. \\
\texttt{\# Preconditioned Langevin update} \\
$v_{\ell+1}\leftarrow\alpha v_{\ell}+(1-\alpha)\what{S}_{\ell+1}\odot\what{S}_{\ell+1}$.\Comment{Moving average} \\
$P_{\ell+1}\leftarrow\diag(1/(\sqrt{v_{\ell+1}}+\epsilon))$. \Comment{Preconditioner} \\
$\what{U}_{(\ell+1)\tau}\sim\calN(\what{U}_{\ell\tau}+\tau P_{\ell+1}\what{S}_{\ell+1},2\tau P_{\ell+1})$. \Comment{Langevin update}} 
\Return{$\what{U}_{L\tau}$}
\end{algorithm}

\section{Experimental Details and Additional Experimental Results}
\label{section:exp:details}

\subsection{Experimental details}

\paragraph{Hyper-parameters of SSI}
The hyper-parameters of SSI are summarized in Table~\ref{tab:parameters:ssi}. The two-dimensinal target distributions \texttt{rings}, \texttt{MoG7x7}, and \texttt{MoG40} are studied in Section~\ref{section:experiments:two:dim}, the eight-dimensional distribution \texttt{Many Well} is investigated in Section~\ref{section:experiments:high:dim}, while  \texttt{Bayesian} represents Bayesian inference in Section~\ref{Bayesian:inferrence:gmm}. 

\begin{table}[htbp]
\centering
\caption{Hyper-parameters of SSI in Table~\ref{tab:exp:results}.}
\label{tab:parameters:ssi}
\begin{tabular}{lccccccccccc}
\toprule
 &  & \multicolumn{3}{c}{PF ODE} & \multicolumn{2}{c}{Initialization} & \multicolumn{3}{c}{Score estimation} \\
 \cmidrule(lr){3-5} \cmidrule(lr){6-7} \cmidrule(lr){8-10}
 & dim. & $T_{0}$ & $T_{\rm end}$ & $M$ & $\tau$ & $L$ & $\eta$ & $K$ & $n$ \\
\midrule
\texttt{rings} & 2 & 0.2 & 0.99 & 100 & 0.1 & 100 & 0.01 & 100 & 800 \\
\texttt{MoG7x7} & 2 & 0.2 & 0.99 & 100 & 0.1 & 100 & 0.01 & 100 & 800 \\
\texttt{MoG40} & 2 & 0.2 & 0.99 & 100 & 0.1 & 100 & 0.01 & 100 & 800 \\
\texttt{ManyWell} & 8 & 0.6 & 0.99 & 100 & 0.1 & 100 & 0.01 & 100 & 800 \\
\texttt{Bayesian} & 4 & 0.8 & 0.99 & 20 & 0.01 & 50 & 0.01 & 20 & 80 \\
\bottomrule
\end{tabular}
\end{table}

\par For the hyper-parameters of probability flow ODE (PF ODE), consistent with Algorithm~\ref{alg:pfode:sampling}, $T_0$ and $T_{\text{end}}$ denote the initialization and early-stopping times, respectively, while $M$ represents the number of discretization steps in~\eqref{eq:PFODE:velocity:ei}. Regarding the PF ODE initialization in Algorithm~\ref{alg:initialization}, $\tau$ and $L$ correspond to the step size and number of steps of the Langevin Monte Carlo in~\eqref{eq:langevin:monte:carlo:warmstart}. Similarly, for score estimation, $\eta$ and $K$ denote the step size and number of steps of the Langevin Monte Carlo in~\eqref{eq:langevin:score:estimation}. Furthermore, to reduce the computational cost, we execute Langevin Monte Carlo chains in parallel and utilize samples obtained after convergence. Consequently, $n$ denotes the total number of Monte Carlo particles in velocity estimation as~\eqref{eq:velocity:estimator}, calculated as the product of the number of chains and the sampling steps retained after the warm up period in each chain. For preconditioning of both velocity estimation and flow initialization, we use the hyperparameter $\alpha=0.999$ and $\epsilon=1.0\times 10^{-3}$ in Algorithms~\ref{alg:velocity:estimation:preconditioned} and~\ref{alg:initialization:preconditioning}.

\paragraph{Hyper-parameters of baselines} 
We compared our method against ULA, MALA, pULA, and HMC in Sections~\ref{section:experiments:two:dim} and~\ref{section:experiments:high:dim}.

\begin{table}[htbp]
\centering
\caption{Hyper-parameters of baselines in Table~\ref{tab:exp:results}.}
\label{tab:parameters:baselines}
\small
\setlength{\tabcolsep}{4pt}
\begin{tabular}{lccccccccc}
\toprule
 &  & \multicolumn{2}{c}{ULA} & \multicolumn{2}{c}{MALA} & \multicolumn{2}{c}{pULA} & \multicolumn{2}{c}{HMC} \\
 \cmidrule(lr){3-4} \cmidrule(lr){5-6} \cmidrule(lr){7-8} \cmidrule(lr){9-10}
 & dim. & $\eta$ & $N$ & $\eta$ & $N$ & $\eta$ & $N$ & $\eta$ & $N$ \\
\midrule
\texttt{Rings} & 2 & 0.05 & 10,000 & 0.1 & 10,000 & 0.5 & 10,000 & 0.05 & 100$\times$100 \\
\texttt{MoG7x7} & 2 & 0.1 & 10,000 & 0.1 & 10,000 & 0.1 & 10,000 & 1.0 & 100$\times$100 \\
\texttt{MoG40} & 2 & 0.5 & 50,000 & 0.5 & 50,000 & 0.5 & 50,000 & 1.0 & 100$\times$100 \\
\bottomrule
\end{tabular}
\end{table}

\par We report the step size $\eta$, and the number of steps $N$ for each method in Table~\ref{tab:parameters:baselines}. In particular, for HMC, we consistently used a trajectory of 100 transitions with 100 leapfrog steps per transition across all tasks, which means $N=100\times 100$ of steps in total. The hyperparameter and implementation of PT aligns with that in~\cite{Chen2024Diffusive}.

\paragraph{Ablation Studies}
We performed ablation studies on the \texttt{MoG7x7} task to analyze the impact of the initialization time $T_0$ and the benefits of preconditioning. The initialization time was varied across the set $T_0 \in \{0.01, 0.1, 0.2, 0.3, 0.4, 0.5, 0.6\}$. Consistent with our implementation, for $T_0=0.01$, the number of outer Langevin steps was set to $L=0$, effectively reducing the initialization to a standard Gaussian sample. For other $T_0$, we employed the hyper-parameters listed for \texttt{MoG7x7} in Table~\ref{tab:parameters:ssi}, while specifically comparing the standard preconditioned setting against a non-preconditioned baseline.

\subsection{Empirical ring weights}\label{appendix:exp:weights}

Table~\ref{tab:exp:rings:weight} reports the normalized empirical weights assigned to the individual rings. For each generated sample \(x\), we compute its Euclidean distance from the origin, \(\lVert x\rVert_{2}\), and assign it to the ring whose radius is closest to this distance. The empirical weight of each ring is then computed from the number of samples assigned to it and normalized such that its reference value is \(1\).

\begin{table}[htbp]
\centering
\small
\setlength{\tabcolsep}{2pt}
\caption{Normalized empirical ring weights of the sample sets generated by different methods. The reference weight is \(1\) for each ring.}
\label{tab:exp:rings:weight}
\begin{tabular}{lcccccccc}
\toprule
Method
& 1st ring
& 2nd ring
& 3rd ring
& 4th ring
& 5th ring
& 6th ring
& 7th ring
& 8th ring \\
\midrule
ULA
& 1.32 & 0.73 & 0.90 & 0.93 & 0.95 & 0.92 & 0.75 & 1.50 \\
MALA
& 2.83 & 0.05 & 0.02 & 0.02 & 0.02 & 0.01 & 1.45 & 3.61 \\
pULA
& 1.64 & 1.11 & 0.68 & 0.57 & 0.57 & 0.73 & 1.14 & 1.56 \\
HMC
& 1.74 & 0.65 & 0.40 & 0.95 & 1.52 & 1.82 & 0.42 & 0.51 \\
PT
& 1.20 & 1.82 & 1.38 & 1.10 & 0.83 & 0.64 & 0.55 & 0.48 \\
\textbf{SSI w/o precond.}
& 0.60 & 0.88 & 1.05 & 1.12 & 1.17 & 1.19 & 1.13 & 0.86 \\
\textbf{SSI}
& 0.62 & 0.86 & 1.00 & 1.13 & 1.14 & 1.24 & 1.12 & 0.89 \\
\textbf{Stab.\ SSI}
& 0.60 & 0.80 & 0.98 & 1.11 & 1.19 & 1.10 & 1.15 & 1.07 \\
Reference
& 1.00 & 1.00 & 1.00 & 1.00 & 1.00 & 1.00 & 1.00 & 1.00 \\
\bottomrule
\end{tabular}
\end{table}

The empirical weights produced by SSI and its variants remain close to the reference weights across all eight rings, indicating that these methods accurately recover the relative mass assigned to the different radial modes. In contrast, the baseline methods exhibit more pronounced over- or under-representation of particular rings. This imbalance is especially severe for MALA, which assigns almost no mass to several intermediate rings while substantially overrepresenting the innermost and outermost rings. Overall, these results demonstrate that SSI and its variants provide a more faithful recovery of the target distribution's relative ring weights.

\subsection{Computational costs}\label{appendix:exp:cost}

Table~\ref{tab:exp:comp:cost} compares the computational costs of the different methods on the \texttt{MoG40} target using the hyperparameters in Tables~\ref{tab:parameters:ssi} and~\ref{tab:parameters:baselines}. We report three complementary measures: the end-to-end wall-clock runtime, the number of target score evaluations (NTSE) per generated particle, and the peak allocated memory. Here, NTSE counts the total number of evaluations of the target score required to generate one particle and therefore provides an implementation-independent measure of computational effort. Peak allocated memory is defined as the maximum number of bytes held by live tensors at any point during a training or sampling step. All measurements were obtained using a single NVIDIA Tesla V100-SXM2 GPU with \(16\) GB of memory.

\begin{table}[htbp]
\centering
\small
\setlength{\tabcolsep}{4pt}
\caption{Computational costs on the \texttt{MoG40} target. Runtime denotes the end-to-end wall-clock time, and NTSE denotes the number of target score evaluations per generated particle.}
\label{tab:exp:comp:cost}
\begin{tabular}{lccccc}
\toprule
Cost metric & ULA & MALA & pULA & HMC & PT \\
\midrule
Runtime (s)  $\downarrow$
& 107.6 & 346.1 & 107.6 & 20.9 & 21.2 \\
NTSE per particle  $\downarrow$
& \(50{,}000\) & \(100{,}000\) & \(50{,}000\)
& \(10{,}100\) & \(10{,}100\) \\
Peak memory (MB)  $\downarrow$
& 30.2 & 30.5 & 30.3 & 30.4 & 30.9 \\
\midrule
Cost metric  
& RDMC & SLIPS & SSI w/o precond. & SSI & Stab.\ SSI \\
\midrule
Runtime (s)  $\downarrow$
& 1771.7 & 246.1 & 1823.6 & 1756.6 & 3810.5 \\
NTSE per particle  $\downarrow$
& \(480{,}000\) & \(605{,}632\) & \(480{,}000\)
& \(480{,}000\) & \(561{,}000\) \\
Peak memory (MB)  $\downarrow$
& 2382.2 & 371.4 & 2382.0 & 2382.1 & 11098.4 \\
\bottomrule
\end{tabular}
\end{table}

The computational costs of conventional MCMC baselines is significantly smaller than those of RDMC, SLIPS and SSI; however, be interpreted jointly with the sampling-quality results, since the least expensive methods do not necessarily recover the multimodal target distribution accurately.

Among RDMC, SLIPS, and the SSI variants, SLIPS has the lowest runtime and memory footprint. It requires \(246.1\) seconds and \(371.4\) MB of peak memory, despite using the largest NTSE among the non-stabilized methods. In particular, SLIPS performs approximately \(26\%\) more target score evaluations than SSI, because SLIPS uses MALA for score estimation. SLIPS is about \(7.1\) times faster and uses about \(6.4\) times less memory. This discrepancy is because SLIPS does not reply on the importance sampling for initialization of Langevin Monte Carlo for score estimation, while both RDMC and SSI use.

Preconditioning introduces essentially no additional computational cost within SSI. Preconditioned and unpreconditioned SSI use the same NTSE, \(481{,}000\), and have nearly identical peak memory requirements of approximately \(2.38\) GB. Therefore, the substantial improvements in sample quality obtained through preconditioning are not accompanied by an increase in either the number of target score evaluations or memory consumption. This makes preconditioned SSI preferable to its unpreconditioned counterpart from both statistical and computational perspectives.

The computational profile of SSI is also nearly identical to that of RDMC. Relative to RDMC, SSI uses the same target score evaluations, and has essentially the same peak memory footprint. Consequently, the improvements of SSI over RDMC in MMD and \(\mathcal{W}_{2}\), reported in Table~\ref{tab:exp:slips:rdmc}, are achieved at approximately the same computational cost. In contrast, although SLIPS is substantially more efficient in terms of runtime and memory, SSI provides considerably better MMD and \(\mathcal{W}_{2}\) values. The comparison therefore reveals a clear trade-off between computational efficiency and sampling accuracy.

Stabilized SSI incurs the largest computational overhead. Relative to standard SSI, it increases the NTSE by approximately \(16.6\%\), from \(481{,}000\) to \(561{,}000\), while increasing the runtime by a factor of approximately \(2.2\). Its peak memory usage rises from \(2.38\) GB to approximately \(11.1\) GB, corresponding to a factor of approximately \(4.7\). Although this memory requirement remains within the \(16\)-GB capacity of the GPU used in our experiments, it substantially restricts the available memory for larger particle batches or higher-dimensional problems. Thus, stabilization provides an additional robustness mechanism at a non-negligible computational cost, whereas preconditioning yields improved sampling accuracy with essentially no additional overhead.

\subsection{Experimental setting of comparisons with DEO-PT}
\label{appendix:exp:pt}

For DEO-PT, we use 16 replicas and choose
$\mathcal{N}\!\left((0,0),8.0^{2}I_{2}\right)$ as the reference distribution. The hyperparameters used for preconditioned SSI are reported in Table~\ref{tab:parameters:ssi:pt}. We employ the vanilla velocity estimator for $t \leq 0.8$ and switch to the stable velocity estimator for $t>0.8$.

\begin{table}[htbp]
\centering
\caption{Hyperparameters of preconditioned SSI for sampling from~\eqref{eq:gmm:unequal}.}
\label{tab:parameters:ssi:pt}
\begin{tabular}{cccccccc}
\toprule
\multicolumn{3}{c}{PF ODE}
&
\multicolumn{2}{c}{Initialization}
&
\multicolumn{3}{c}{Score estimation}
\\
\cmidrule(lr){1-3}
\cmidrule(lr){4-5}
\cmidrule(lr){6-8}
$T_{0}$ & $T_{\mathrm{end}}$ & $M$
&
$\tau$ & $L$
&
$\eta$ & $K$ & $n$
\\
\midrule
0.1 & 0.98 & 200
&
0.1 & 1000
&
0.01 & 100 & 800
\\
\bottomrule
\end{tabular}
\end{table}

\section{Neural Sampler and Additional Experimental Results}

\subsection{Related work}

\par Unlike our method that estimates velocity fields on-the-fly, a distinct line of research parameterizes these fields using neural networks, leveraging their high expressive power. These approaches, often termed neural samplers~\cite{wu2020Stochastic,Arbel2021Annealed,wang2026energy}, typically minimize a divergence measure between the parameterized and target distributions.

\par A natural choice for the training objective is the reverse Kullback-Leibler (KL) divergence, primarily due to its computational tractability. However, training via reverse KL can be unstable when the target distribution is multimodal with high-energy barriers, often resulting in mode collapse. To mitigate this, several alternative objectives have been proposed. For instance, Flow Annealed Importance Sampling Bootstrap (FAB)~\cite{midgley2023flow} minimizes the $\alpha$-2 divergence.~\cite{mate2023learning} and~\cite{chemseddine2025neural} utilize loss functions derived from the residuals of the continuity equation, while~\cite{wu2025annealing} employ an optimal transport formulation.~\cite{he2025training} proposed minimizing the reverse KL divergence along diffusion trajectories to improve mode coverage. In parallel, recent works have focused on constructing neural samplers through the lens of diffusion models and stochastic optimal control~\cite{berner2024an,zhang2022path,Phillips2024Particle,liu2025adjoint,Havens2025Adjoint,guo2025proximal}.

\subsection{Additional experimental results}
\par In this subsection, we compare our method with a neural sampler~\cite{mate2023learning}. Specifically, the method leverages the continuity equation:
\[
    \partial_t f_t - \underbrace{\mathbb{E}_{p_{X_t}}[\partial_t f_t]}_{A_t} = -\langle \nabla f_t, u(t,x) \rangle + \nabla \cdot u(t,x),
\]
where the density is defined as $p_{X_t}(x) = e^{-f_t(x)} / Z_t$ for $Z_t\coloneqq \int  e^{-f_t(x)} \d x $. 
To solve this, the potential $f_t$ is parameterized as an interpolation $f_t = t f_1 + (1-t) f_0 + t(1-t) \phi_t$, where $\phi_t$ is approximated by a neural network $\phi_t^{\theta}$. The optimization objective is formulated as:
\[
\mathcal{L}(\theta) = \mathbb{E}_{t \sim \mathcal{U}[0,1], z \sim p_0} \Big[ \big| \partial_t f^{\theta_1}_t - A_t^{\theta_2} + \langle \nabla f_t^{\theta_1}, u^{\theta_3}(t,x) \rangle - \nabla \cdot u^{\theta_3}(t,x) \big| \Big], 
\]
where $\theta=(\theta_1, \theta_2, \theta_3)$ represents the trainable parameters. Since the approximation of high-dimensional integrals over the intermediate density $p_{X_t}$ is computationally intractable,~\cite{mate2023learning} proposed an iterative optimization strategy. Let $\theta'$ denote the parameters obtained from the previous minimization step. We solve the flow ODE $\partial_t \psi^{\theta'}_t = u^{\theta'}(t, \psi^{\theta'}_t)$ with the initial condition $\psi^{\theta'}_0 = \text{Id}$ using these fixed parameters $\theta'$. The loss is then evaluated on points generated by sampling $z\sim p_0$ and then applying $\psi_t^{\theta'}$. Figure~\ref{fig:example2d:neural} illustrates the experimental results on the \texttt{rings}, \texttt{MoG7x7}, and \texttt{Mog40} datasets.

\begin{figure}[htbp]
\centering
\includegraphics[width=0.7\linewidth]{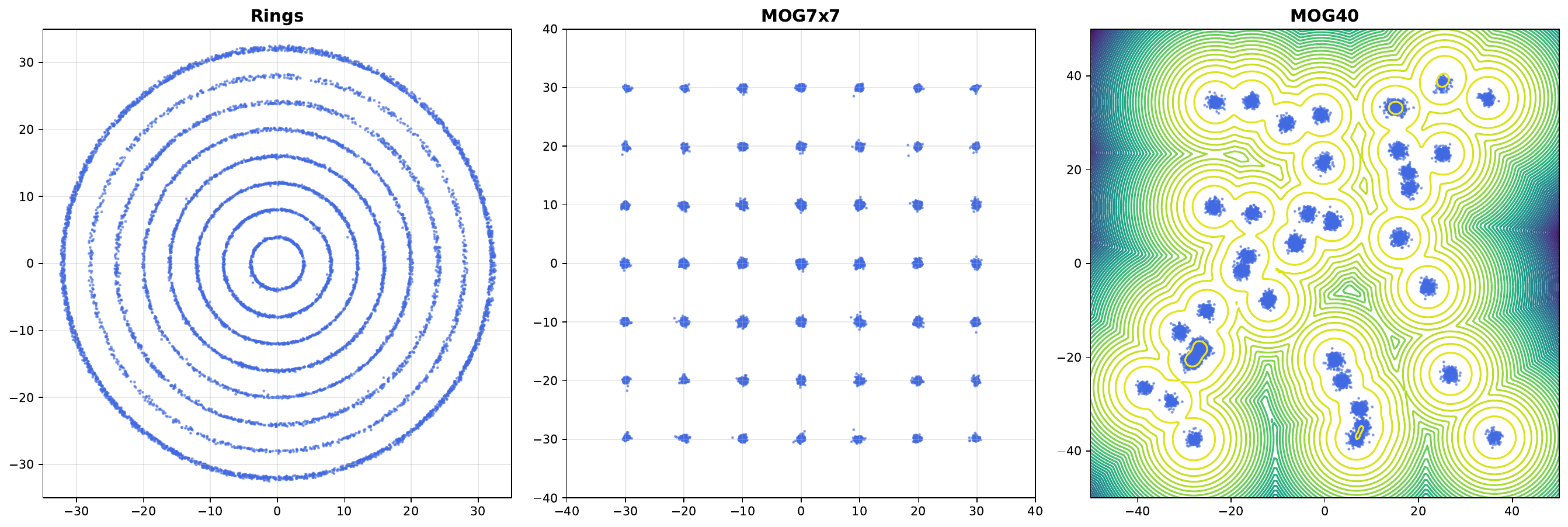}
\caption{Results for the neural sampling.}
\label{fig:example2d:neural}
\end{figure}

\par For these experiments, we utilized the training configuration with consistent hyper-parameters across all three datasets. The training process used a batch size of 4096, a learning rate of $1.0 \times 10^{-3}$, and a decay rate of 0.98. The total training duration was set to $N_{\text{train}} = 30,000$ steps, utilizing 51 inner steps for the flow integration. We initialized the noise level at $\sigma = 20.0$ and set the generalized Gaussian exponent to $p=2.0$ (corresponding to a standard Gaussian).

\end{document}